\numberwithin{equation}{section}
\newcommand{\SL}{\operatorname{SL}}
\newcommand{\SO}{\operatorname{SO}}
\newcommand{\cL}{\mathcal{L}}
\newcommand{\bN}{\mathbb{N}}
\newcommand{\bR}{\mathbb{R}}
\newcommand{\bT}{\mathbb{T}}
\newcommand{\bZ}{\mathbb{Z}}
\newcommand{\R}{\mathbb{R}}
\newcommand\subsetsim{\mathrel{%
\ooalign{\raise0.2ex\hbox{$\subset$}\cr\hidewidth\raise-0.8ex\hbox{\scalebox{0.9}{$\sim$}}\hidewidth\cr}}}
\newcommand{\eps}{\varepsilon}
\DeclareMathOperator{\supp}{supp}
\newcommand{\Q}{\mathbb Q}
\newcommand{\Z}{\mathbb Z}
\theoremstyle{theorem}
\newtheorem{theorem}{Theorem}[section]
\newtheorem{corollary}[theorem]{Corollary}
\newtheorem{proposition}[theorem]{Proposition}
\newtheorem{lemma}[theorem]{Lemma}
\newtheorem{problem}{Problem}
\theoremstyle{definition}
\newtheorem{definition}[theorem]{Definition}
\newtheorem{remark}[theorem]{Remark}
\newtheorem{example}[theorem]{Example}
\patchcmd{\subsection}{-.5em}{.5em}{}{}
\patchcmd{\subsubsection}{-.5em}{.5em}{}{}
\begin{document}

\title{Approximate lattices}

\author{Michael Bj\"orklund}
\address{Department of Mathematics, Chalmers, Gothenburg, Sweden}
\email{micbjo@chalmers.se}
\thanks{}

\author{Tobias Hartnick}
\address{Mathematics Department, Technion, Haifa 32000, Israel}
\curraddr{}
\email{hartnick@tx.technion.ac.il}
\thanks{}

\keywords{Approximate groups, Delone sets in groups, quasi-isometric rigidity}

\subjclass[2010]{Primary: 20N99; Secondary: 20F65, 22F10}

\date{}

\dedicatory{}

\maketitle

\begin{abstract} 
In this article we introduce and study uniform and non-uniform approximate lattices in locally compact second countable (lcsc) groups. These
are approximate subgroups (in the sense of Tao) which simultaneously generalize lattices in lcsc group and mathematical quasi-crystals (a.k.a. Meyer sets) in lcsc abelian groups. 

We show that envelopes of strong approximate lattices are unimodular, and that approximate lattices in nilpotent groups are uniform. We also establish several results relating properties of approximate lattices and their envelopes. For example, we prove a version of the Milnor-Schwarz lemma for uniform approximate lattices in compactly-generated lcsc groups, which we then use to relate metric amenability of uniform approximate lattices to amenability of the envelope. 

Finally we extend a theorem of Kleiner and Leeb to show that the isometry groups of higher rank symmetric spaces of non-compact type are QI rigid with respect to finitely-generated approximate groups.
\end{abstract}

\section{Introduction}
\subsection{Approximate groups and approximate lattices}

In this article we introduce and study certain approximate subgroups of locally compact second countable (lcsc) groups which share many properties with lattices in such groups, and which we therefore propose to call ``approximate lattices''. 

The notion of an abelian ``approximate subgroup'' was already implicit in the early works by Freiman \cite{Freiman}, 
while the notion of a non-abelian ``approximate subgroup'' appears implicitly in the paper \cite{ErdSze} by 
Erd\"os and Szemeredi on the sum-product phenomenon, in the paper \cite{BouGam} by Bourgain and Gamburd 
on superstrong approximation and in the work \cite{Helfgott} by Helfgott on expansion in finite simple groups. 

The formal definition of an approximate subgroup (as recalled in Definition \ref{DefAG} below) was first put forward by Tao in \cite{Tao}. In this influential paper, the beginnings of the basic theory of such objects, based on previous fundamental works by Freiman \cite{Freiman}, Ruzsa \cite{Ruzsa} and Pl\"unnecke \cite{Pl}, were outlined. Since then, many groundbreaking results on \emph{finite} approximate subgroups have been established; in particular, Breuillard, Green and Tao established in \cite{BrGrTao2} their celebrated structure theorem for finite approximate groups. (We refer the reader to the surveys \cite{Breuillard} and \cite{BrSur} for more detailed bibliographies on these matters.) 

Developing a structure theory for general \emph{infinite} approximate groups, or even just general infinite groups is utterly hopeless. In geometric and measurable group theory one therefore often restricts the attention to infinite groups which admit interesting actions on metric, respectively measure, spaces. This leads to the study of lattices in lcsc groups. We recall that a subgroup $\Gamma$ of a lcsc group $G$ is called a \emph{lattice} if it is discrete and the homogeneous space $G/\Gamma$ admits a $G$-invariant probability measure. It is called a \emph{uniform lattice} if $G/\Gamma$ is moreover compact, and a \emph{non-uniform lattice} otherwise. 

The goal of this article is to extend these notions to the realm of approximate groups and to establish versions of some of the basic theorems concerning lattices in lcsc groups in this extended setting. Let us start by recalling the definition of an approximate (sub-)group:

\begin{definition}\label{DefAG}
A \emph{$k$-approximate group} is a pair $(\Lambda, \Lambda^\infty)$, where $ \Lambda^\infty$ is a group and $\Lambda \subset \Lambda^\infty$ is a subset such that 
\begin{enumerate}[({AG}1)]
\item $\Lambda$ is symmetric, i.e.\ $\Lambda = \Lambda^{-1}$, and contains the identity;
\item $\Lambda$ generates $ \Lambda^\infty$ as a group;
\item there exists a finite subset $F \subset  \Lambda^\infty$ of cardinality at most $k$ such that $\Lambda^2 \subset F\Lambda$. 
\end{enumerate}
If $(\Lambda, \Lambda^\infty)$ is a $k$-approximate group and $G$ is a group, then a homomorphism $\rho: \Lambda^\infty \to G$ of groups is called a \emph{representation} of $(\Lambda, \Lambda^\infty)$ and the image $\rho(\Lambda)$ of $\Lambda$ is called a \emph{$k$-approximate subgroup} of $G$. 
\end{definition}
By definition, a $1$-approximate subgroup is just a subgroup. Here we are mostly interested in approximate subgroups with $k \geq 2$. We emphasize that, unlike some authors, we do not assume our approximate subgroups to be finite.

Concerning the generalization of uniform lattices to the setting of approximate groups, we observe that a subgroup $\Gamma < G$ is a uniform lattice if and only if it is a Delone subset. Here a subset $\Lambda$ of a lcsc group $G$ is called a \emph{Delone set} if it is uniformly discrete and relatively dense with respect to some (hence any) proper left-invariant metric $d$ on $G$ which induces the given topology (see Subsection \ref{SecDelone} for a discussion of these notions). 
\begin{definition} Let $G$ be a lcsc group. An approximate subgroup $\Lambda \subset G$ is called a \emph{uniform approximate lattice} if it is a Delone set in $G$.
\end{definition}
The definition of a non-uniform approximate lattice is more involved, and we suggest two tentative definitions. Both definitions are based on the notion of the right-hull of a closed approximate subgroup $\Lambda \subset G$, which is a weak substitute for the homogeneous space $G/\Gamma$ of a closed subgroup. 

Given a lcsc group $G$ we denote by $\mathcal C(G)$ the compact space of closed subsets of $G$ with the Chabauty-Fell topology (see Subsection \ref{SecHull}). We then consider the action of $G$ on $\mathcal C(G)$ by $g.\Lambda := g\Lambda$, and given a closed subset $\Lambda \subset G$ define the \emph{right-hull} $X_\Lambda$ as the closure of the $G$-orbit of $\Lambda$ in $\mathcal C(G)$, i.e., 
\[
X_\Lambda := \overline{G.{\Lambda}}\; \subset\; \mathcal C(G).
\]
The right-hull of a closed subset is always compact by definition. If $\Lambda$ is not relatively dense in $G$, then it will contain the empty set.
\begin{definition} Let $G$ be a lcsc group. A uniformly discrete approximate subgroup $\Lambda \subset G$  is called a \emph{strong approximate lattice} if there exists a $G$-invariant probability measure $\nu$ on $X_\Lambda$ with $\nu(\{\emptyset\}) = 0$.
\end{definition}
Invariant measures on right-hulls of subsets of $\R^n$ have received much attention in the context of the dynamical approach to tilings and mathematical quasi-crystals (see \cite{Meyer}, and also \cite{BaakeGrimm} for a general overview and recent reference list of the subject). In the context of non-abelian groups, the study of invariant random subgroups and uniformly recurrent subgroups  \cite{IRS1, IRS2, URS} has led to an intensive study of invariant measures on conjugation hulls, i.e.\ orbit closures under the conjugation action of $G$. However, the right-hulls considered here are different from these conjugation hulls.

If $\Lambda$ is a uniform approximate lattice in an amenable group $G$, then it is also a strong approximate lattice. Indeed, by amenability of $G$ there exists an invariant probability measure on $X_\Lambda$, and one automatically has $\emptyset \not \in X_\Lambda$ in this case. For non-amenable groups, such an invariant measure need not exist, and we suggest the following weaker definition to deal with this case. Let us call a probability measure on $G$ \emph{admissible} if it is absolutely continuous with respect to the Haar measure class on $G$ and its support generates $G$ as a semigroup. Then on every compact $G$-space there is a probability measure $\nu$ which is \emph{$\mu$-stationary} in the sense that $\mu \ast \nu = \nu$ and we define:
\begin{definition} Let $G$ be a lcsc group. A uniformly discrete approximate subgroup $\Lambda \subset G$  is called an \emph{approximate lattice} if for every admissible probability measure $\mu$ on $G$ there exists a $\mu$-stationary probability measure $\nu$ on $X_\Lambda$ with $\nu(\{\emptyset\}) = 0$.
\end{definition}
With this definition, every uniform approximate lattice is an approximate lattice. It turns out that in amenable groups, every approximate lattice is strong, see Remark \ref{Strongness}.(1).
As for non-amenable groups, we do not currently know any example of an approximate lattice which is not strong. If one removes the symmetry condition on $\Lambda$, then such examples exist.

\begin{remark} Note that in the definition of a strong approximate lattices, we do not demand the invariant measure to be unique. In fact, there are many natural examples of strong approximate lattices (even in $\R$) which admit more than one non-trivial invariant measure on their hull, see e.g. Example \ref{Fish}. Similarly, in the case of approximate lattices, there may exist several non-trivial $\mu$-stationary probability measures on the hull for every given admissible probability measure $\mu$. Moreover, if $\mu$ and $\mu'$ are different admissible probability measures on $G$, then the $\mu$-stationary probability measures on the hull need not be related in any way to the $\mu'$-stationary probability measures.
\end{remark}

\subsection{Examples of approximate lattices}

Before we describe our results concerning approximate lattices, let us provide some important classes of examples of approximate lattices to convince the reader that the theory developed below has some content.

\begin{example} Every lattice in a lcsc group is a strong approximate lattice, and it is a uniform lattice if and only if is a uniform approximate lattice.
\end{example}
\begin{example}[see Corollary \ref{RelDenseSubsets}]\label{Ex1Intro} Every relatively dense and symmetric subset of a uniform approximate lattice containing the identity is again a uniform approximate lattice. In particular, relatively dense and symmetric subsets of uniform lattices containing the identity are uniform approximate lattices.
\end{example}

\begin{example}[see Proposition \ref{ModelSet}]\label{Ex2Intro} The following example goes back to Y. Meyer in the abelian case. Let $G$ and $H$ be lcsc groups and let $\Gamma < G \times H$ be a uniform lattice which projects injectively to $G$ and densely to $H$. We denote by $\pi_G: G\times H \to G$ the projection onto the first coordinate. Given a compact subset $W_0 \subset H$ with non-empty interior, the set
\[
\Lambda := \pi_G((G\times W_0) \cap \Gamma)
\]
is called a \emph{uniform model set} in $G$. If $W_0$ is moreover chosen symmetric (i.e.\ $W_0^{-1} = W_0$) and contains the identity, then $\Lambda \subset G$ is a uniform approximate lattice.
\end{example}
\begin{example} A relatively dense subset of a uniform model set is called a \emph{Meyer set}. Combining Example \ref{Ex1Intro} and Example \ref{Ex2Intro} we see that a symmetric Meyer set containing the identity is a uniform approximate lattice.
\end{example}
Meyer sets in $\R^n$ are among the most common models for mathematical quasi-crystals and have gained considerable interest (as witnessed by several hundred references in the bibliography of \cite{BaakeGrimm}). Meyer \cite{Meyer} proved (translated into our terminology) that every uniform approximate lattice in an abelian lcsc group is a Meyer set. We do not know whether this holds in more general classes of lcsc groups.
\begin{example}[see \cite{BHP}] Meyer's construction also applies to non-uniform lattices under some additional technical assumptions. Given a lattice $\Gamma < G \times H$ (not necessarily uniform) and a compact subset $W_0 \subset H$, the set
\[
\Lambda := \pi_G((G\times W_0) \cap \Gamma)
\] 
is called a \emph{regular model set} provided $W_0$ is Jordan-measurable with dense interior, aperiodic (i.e.\ ${\rm Stab}_H(W_0) = \{e\}$) and satisfies $\partial W_0 \cap \pi_H(\Gamma) = \emptyset$. If $\Lambda$ is a symmetric regular model set containing the identity, then it is a strong approximate lattice. This strong approximate lattice is uniform if and only if the underlying lattice is. This provides examples of strong approximate lattices, which are neither uniform nor contained in a lattice. In these examples, the invariant probability measure on $X_\Lambda \setminus\{\emptyset\}$ is actually unique (and in fact the unique $\mu$-stationary measure for every admissible $\mu$).
\end{example}

\subsection{Envelopes of approximate groups}

A lcsc group $G$ is called a \emph{(uniform) envelope} of an abstract group $\Gamma$ if $\Gamma$ is isomorphic to a (uniform) lattice in $G$. Similarly we call $G$ a (uniform) envelope of an abstract approximate group $(\Lambda, \Lambda^\infty)$ if there is an injective homomorphism $\rho: \Lambda^\infty \to G$ such that $\rho(\Lambda)$ is a (uniform) approximate lattice in $G$. The question of determining the lcsc groups which are (uniform) envelopes of (certain) groups has attained considerable attention recently (see e.g. \cite{BFS}), and we can ask the same question for approximate groups. The most basic necessary condition for a lcsc group $G$ to be an envelope of a group is unimodularity of $G$. This necessary condition carries over to the approximate setting in the following form.
\begin{theorem}[Unimodularity of envelopes, see Theorem \ref{ThmUnimodular}]\label{UnimodIntro} Every lcsc group which contains either a strong approximate lattice or a finitely-generated uniform approximate lattice is unimodular.
\end{theorem}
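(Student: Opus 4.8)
The plan is to show directly that the modular homomorphism $\Delta=\Delta_G\colon G\to\R_{>0}$ is trivial. Fix a left Haar measure $m$ and a proper left-invariant metric $d$ on $G$, and treat first the strong case, where there is a $G$-invariant probability measure $\nu$ on $X_\Lambda$ with $\nu(\{\emptyset\})=0$. For $f\in C_c(G)$ I introduce the intensity functional
\[
I(f):=\int_{X_\Lambda}\Big(\sum_{x\in\Lambda'}f(x)\Big)\,d\nu(\Lambda').
\]
Uniform discreteness bounds $\#(\Lambda'\cap\supp f)$ uniformly in $\Lambda'$, so $I$ is a well-defined positive functional, finite on $C_c(G)$, and invariance of $\nu$ under $\Lambda'\mapsto g\Lambda'$ gives $I(f(g\,\cdot))=I(f)$, whence $I=c\cdot m$ for some $c$ by uniqueness of Haar measure. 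If $c=0$ then $\nu$-almost every $\Lambda'$ meets no compact set and is therefore empty, contradicting $\nu(\{\emptyset\})=0$; hence $0<c<\infty$.

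Next I record the modular scaling. Right translation $\rho_{g_0}\colon\Lambda'\mapsto\Lambda'g_0$ commutes with the left $G$-action and carries $X_\Lambda$ onto $X_{\Lambda g_0}$, so $(\rho_{g_0})_*\nu$ is again $G$-invariant; computing $I(f(\,\cdot\, g_0))$ through the hull and through the identity $\int_G f(xg_0)\,dm(x)=\Delta(g_0)^{-1}m(f)$ yields the exact relation $\operatorname{intensity}(\Lambda g_0)=\Delta(g_0)^{-1}c$. Combined with the defining inclusion $\Lambda^2\subset F\Lambda$, $|F|\le k$, the symmetry $\Lambda=\Lambda^{-1}$, and the fact that left-translates preserve intensity, monotonicity and finite subadditivity of the intensity under inclusions and finite unions force $\Delta(\lambda)\in[1/k,k]$ for every $\lambda\in\Lambda$.

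The main obstacle, which I expect to be the heart of the matter, is to promote this estimate to the exact statement $\Delta|_{\langle\Lambda\rangle}=1$: since $\Delta$ is a homomorphism and $\Lambda^n\subset F^{n-1}\Lambda$ only gives $\Delta\in[k^{-n},k^n]$ on $\Lambda^n$, no purely combinatorial bound suffices and the measure must be used more sharply. Here I would pass to the Palm measure $\hat\nu$ of $\nu$ on the transversal $\{\Lambda''\in X_\Lambda: e\in\Lambda''\}$ and evaluate the pair-counting functional
\[
\Phi(f,h):=\int_{X_\Lambda}\sum_{x,y\in\Lambda'}f(x)\,h(x^{-1}y)\,d\nu(\Lambda')
\]
by unfolding at $x$ and at $y$. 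Unfolding at $x$ gives $m(f)\int_{X^0}\hat h\,d\hat\nu$, while unfolding at $y$ produces a modular weight through $\int_G f(yv)\,dm(y)=\Delta(v)^{-1}m(f)$; equating the two computations yields the exact mass-transport identity $\beta=\Delta\cdot\iota_*\beta$ for the autocorrelation (Palm intensity) measure $\beta$, where $\iota$ denotes inversion. For a genuine lattice $\hat\nu$ is the single atom at $\Gamma$, and because $\Gamma^{-1}=\Gamma$ this identity collapses to a term-by-term comparison that forces $\Delta|_\Gamma=1$; the difficulty is to run the analogous argument when $\hat\nu$ is diffuse, using the symmetry $\Lambda=\Lambda^{-1}$ together with the uniformly bounded point multiplicities to recover inversion symmetry of $\beta$ and thereby $\Delta|_{\langle\Lambda\rangle}=1$.

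Finally I would bootstrap from $\langle\Lambda\rangle$ to all of $G$. In the uniform case relative density makes $H:=\overline{\langle\Lambda\rangle}$ a cocompact subgroup, and $\Delta$ vanishes on $H$ by continuity; then $\ker\Delta\supseteq H$ is cocompact, so $G/\ker\Delta\cong\Delta(G)$ is a compact subgroup of $\R_{>0}$ and hence trivial, giving $\Delta\equiv1$. (In the non-uniform strong case the same conclusion follows once $\langle\Lambda\rangle$ is cocompact or dense, as it is in the model-set examples.) For a finitely generated uniform approximate lattice in a non-amenable envelope, where no invariant $\nu$ need exist, I would repeat the construction with a $\mu$-stationary measure for an admissible $\mu$: the intensity functional is then $\check\mu$-harmonic rather than invariant, and the extra work is to control this harmonic intensity and verify that the modular scaling persists, while finite generation guarantees that $\langle\Lambda\rangle$ is finitely generated so that the cocompact bootstrap applies verbatim.
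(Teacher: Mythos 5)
Your opening moves are essentially the paper's: the intensity functional is the periodization $\mathcal P^*\nu$, it is a nonzero left Haar measure, and the inclusion $\Lambda^2\subset\Lambda F$ together with the modular scaling of right translates yields $\Delta_G(t^{-1})\le\sum_{c\in F}\Delta_G(c^{-1})$ for all $t\in\Lambda$, i.e.\ $\log\Delta_G$ is bounded on $\Lambda$ (your claimed constant $[1/k,k]$ is off --- the bound involves the values $\Delta_G(c)$, not just $|F|$ --- but boundedness is all that matters). The gap is in what you do next. You treat the passage from ``bounded on $\Lambda$'' to ``trivial on $G$'' as requiring the exact identity $\Delta_G|_{\langle\Lambda\rangle}=1$, and the Palm-measure/mass-transport argument you propose for it is not carried out; worse, even granting that identity, your bootstrap needs $\overline{\langle\Lambda\rangle}$ to be cocompact, which you only assert ``as in the model-set examples'' --- for a general non-uniform strong approximate lattice there is no reason for this (a bi-syndetic subgroup need not be cocompact: the diagonal subgroup of $\SL_2(\R)$ satisfies $G=KAK$ but has non-compact quotient). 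The missing ingredient is Theorem \ref{bi-syndetic}: every strong approximate lattice satisfies $G=K\Lambda L$ with $K,L$ compact. Since $\log\Delta_G$ is a homomorphism, $\log\Delta_G(k\lambda l)=\log\Delta_G(k)+\log\Delta_G(\lambda)+\log\Delta_G(l)$ is then bounded on all of $G$ using only a \emph{single} factor from $\Lambda$, so the bound never degrades with word length; a bounded subgroup of $\R$ is trivial, and $\Delta_G\equiv1$. The ``main obstacle'' you identify therefore dissolves, and no exact identity on $\langle\Lambda\rangle$ is ever needed.

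The second gap is the finitely generated uniform case in a non-amenable envelope, which you dispatch in one sentence (``the extra work is to control this harmonic intensity and verify that the modular scaling persists''). That sentence is the entire difficulty, and it does not follow by ``repeating the construction'': a stationary periodization is no longer a Haar measure, so the clean scaling identity disappears. The paper's argument here is a genuine harmonic-function argument: assuming $\Delta_G\not\equiv1$, one chooses a compactly supported admissible $\mu=\rho\, m_G$ with $\int_G\rho\,\Delta_G\,dm_G<1$, shows the periodized stationary measure has a continuous $\rho$-harmonic density $u$, combines the estimate from $\Lambda^2\subset\Lambda F$ with $G=K\Lambda$ to obtain $u\le M\Delta_G^{-1}$, and then iterates harmonicity to get $u(e)\le M\bigl(\int_G\rho\,\Delta_G\,dm_G\bigr)^n\to0$, forcing $u\equiv0$ and a contradiction. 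None of this is present in your sketch, so that half of the theorem remains unproved.
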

There remains the question which unimodular lcsc groups are envelopes. By a classical theorem of Borel--Harish-Chandra \cite{BoHC}, semisimple real Lie groups admits both uniform and non-uniform lattices. Using model sets one can show that they also contain both uniform and non-uniform approximate lattices, which are not contained in any lattice.

Remarkably, there also exist some $p$-adic semisimple group (like ${\rm SL}_n(\Q_p)$, see Remark \ref{padic}) which admit non-uniform approximate lattices, despite the fact that they do not admit non-uniform lattices. This is in contrast to the case of nilpotent Lie groups:
\begin{theorem}[Approximate lattices in nilpotent groups, see Theorem \ref{NilpotentUniform}]\label{NilpotentIntro} Every approximate lattice in a nilpotent lcsc group is uniform.
\end{theorem}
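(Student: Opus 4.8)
The plan is to leverage two special features of a nilpotent lcsc group $G$ --- amenability and unimodularity --- to reduce the statement to a question about invariant point processes, and then to use the polynomial growth of $G$ together with the self-similarity built into axiom (AG3) to exclude arbitrarily large holes. Since $\Lambda$ is already uniformly discrete, being a uniform approximate lattice just means being relatively dense, and relative density is equivalent to $\emptyset \notin X_\Lambda$: if $\Lambda$ had holes of unbounded radius then suitable translates $g_i\Lambda$ would converge to $\emptyset$ in $\cC(G)$, and conversely $\emptyset \in X_\Lambda$ produces such holes. So the goal is to show $\emptyset \notin X_\Lambda$. A nilpotent lcsc group is amenable, so by Remark~\ref{Strongness}.(1) the approximate lattice $\Lambda$ is in fact strong: there is a $G$-invariant Borel probability measure $\nu$ on $X_\Lambda$ with $\nu(\{\emptyset\}) = 0$. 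By Theorem~\ref{UnimodIntro} (or directly, as nilpotent groups are unimodular) we fix a bi-invariant Haar measure $m$. Since $\{\emptyset\}$ is a $G$-fixed point, the ergodic measure $\delta_\emptyset$ enters the ergodic decomposition of $\nu$ with weight $\nu(\{\emptyset\}) = 0$, so $\nu$-almost every ergodic component $\nu'$ satisfies $\nu' \neq \delta_\emptyset$ and $\nu'(\{\emptyset\}) = 0$; we replace $\nu$ by such a component. After a preliminary reduction we may also assume $G$ is compactly generated, hence of polynomial growth, since a relatively dense uniformly discrete set forces compact generation.

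Next I would produce positive intensity. All elements of $X_\Lambda$ are uniformly discrete with one common separation constant, so the counting functionals $N_A(P) := \#(P \cap A)$ are finite and locally bounded, and the intensity
\[ i(\nu) := \frac{1}{m(A)} \int_{X_\Lambda} N_A(P)\, d\nu(P) \]
is, by $G$-invariance of $\nu$ and unimodularity of $G$, independent of the bounded Borel set $A$ of positive measure. If $i(\nu)$ vanished, then $N_A = 0$ holds $\nu$-a.e.\ for every $A$, and exhausting $G$ by balls would give $\nu = \delta_\emptyset$, a contradiction; thus $i(\nu) > 0$. Because $G$ has polynomial growth, the balls $B_n := B(e,n)$ form a tempered F\o lner sequence, so Lindenstrauss' pointwise ergodic theorem yields $m(B_n)^{-1} N_{B_n}(P) \to i(\nu) > 0$ for $\nu$-almost every $P$: the generic configuration has positive uniform density.

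The remaining, and hardest, step is to upgrade positive density to genuine relative density. Positive density alone does not forbid large holes, since a sufficiently sparse family of growing gaps still has full density, so axiom (AG3) must be used. I would track the hole probabilities $p_R := \nu(\{P : P \cap B(e,R) = \emptyset\})$; $G$-invariance identifies $p_R$, again via the pointwise ergodic theorem, with the uniform asymptotic density of the centres of radius-$R$ holes, and the aim is to show $p_R = 0$ for every $R$, which together with $\supp(\nu) \subseteq X_\Lambda$ forces $\emptyset \notin X_\Lambda$. The mechanism should be that the inclusion $\Lambda^2 \subseteq F\Lambda$ makes the configuration statistically self-similar under multiplication by the bounded set $F$, so that a positive density of radius-$R$ holes propagates to a fixed proportionate density of holes at every larger scale; were $p_R$ bounded away from $0$ along $R \to \infty$, the uniform density would be driven to $0$, contradicting $i(\nu) > 0$. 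Here polynomial growth is indispensable: it is what lets a local volume deficit be registered by F\o lner averaging, and it is exactly what fails for exponential-growth amenable groups.

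I expect this propagation estimate to be the main obstacle --- converting the qualitative presence of a single unbounded hole into a quantitative positive-frequency statement using only the weak self-similarity of an approximate group. The cleanest implementation is probably an induction on the nilpotency class. One pushes $\Lambda$ forward to the quotient of $G$ by a central closed subgroup; the image is an approximate subgroup whose hull inherits an invariant, non-$\emptyset$-charging measure (the pushforward of $\nu$), hence is relatively dense by the inductive hypothesis, with the abelian quotient furnishing the base case. One then shows, using the abelian case again, that the fibrewise intersections with the centre are relatively dense there, and reassembles base and fibre density into relative density of $\Lambda$ in $G$. In either route the technical heart is the same --- controlling the fibre, respectively hole, behaviour quantitatively --- and it is precisely there that nilpotency, through polynomial growth, enters in an essential way.
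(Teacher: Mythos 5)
Your reduction to showing $\emptyset \notin X_\Lambda$ and your use of amenability (indeed the Choquet--Deny property of nilpotent groups) to obtain a genuinely $G$-invariant measure $\nu$ with $\nu(\{\emptyset\})=0$ match the paper's starting point, but the core of your argument is missing and one reduction is logically backwards. First, the ``propagation estimate'' that you yourself identify as the technical heart --- converting (AG3) plus positive intensity into the vanishing of all hole probabilities $p_R$ --- is nowhere proved; the whole difficulty of the theorem is concentrated there, and it is not clear that polynomial growth plus F\o lner averaging can deliver it, since a configuration can have positive uniform density while the frequency of radius-$R$ holes tends to $0$ as $R\to\infty$. Second, even if you proved $p_R=0$ for every $R$, you would only conclude $\emptyset\notin\supp(\nu)$; since $\supp(\nu)$ may be a proper closed invariant subset of $X_\Lambda$, this does not give $\emptyset\notin X_\Lambda$, i.e.\ it does not give relative density of $\Lambda$ itself. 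The inclusion $\supp(\nu)\subseteq X_\Lambda$ that you invoke points the wrong way for this purpose.

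The paper's route avoids both problems and uses neither polynomial growth nor ergodic theorems. Step one (Theorem \ref{bi-syndetic}, valid in \emph{every} lcsc group): from the invariant measure one shows that for each $\epsilon>0$ the sets $FU_\epsilon\cap gU_\epsilon$ are non-empty open sets of positive measure, hence meet the dense orbit $G.\Lambda$ --- this is how one passes from $\nu$-generic statements back to $\Lambda$ itself --- and unwinding the return times gives $G=K\Lambda L$ for compact $K,L$, i.e.\ bi-syndeticity. Step two is purely algebraic: one calls $G$ balanced if every bi-syndetic approximate subgroup is left-syndetic, notes that abelian groups are trivially balanced, and proves by an explicit fiberwise computation (Proposition \ref{NilpotentMainProp}, via Lemmas \ref{MainLemmaNilpotent}, \ref{efiber} and \ref{KL}) that balancedness passes up central extensions; nilpotency enters through centrality of the kernel, not through growth. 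Your suggested induction on the nilpotency class is in the right spirit, but without the bi-syndeticity intermediate and the fiber lemmas it remains a plan rather than a proof.
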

Not every nilpotent lcsc group admits a lattice. For example, by  a classical theorem of Malcev (see e.g. \cite[Thm. 2.12]{Raghunathan}) a simply-connected nilpotent Lie group admits a (uniform) lattice if and only if its Lie algebra admits a basis with rational structure constants. Using this criterion we provide in Corollary \ref{Bigger} an explicit example\footnote{The example was pointed out to us by Y. Benoist.} of a simply-connected nilpotent Lie group which admits uniform approximate lattices, but no lattices. In fact, in higher dimensions one can even find simply-connected nilpotent Lie groups which admit uniform approximate lattices, but are not quasi-isometric to any finitely-generated group (or even any vertex-transitive graph), see Example \ref{Elek}.


\subsection{Geometric properties of finitely-generated uniform approximate lattices}
If $G$ is a compactly generated lcsc group (for example, a finitely generated discrete group), then any two word metrics on $G$ with respect to compact generating sets are quasi-isometric. We refer to their common quasi-isometry (QI) class as the \emph{canonical QI class} of $G$ (see Subsection \ref{SecQIclass}). The study of the canonical QI class of finitely-generated groups is one of the main subjects of geometric group theory. Here we propose a generalization of this theory to approximate groups. We call an approximate group $(\Lambda, \Lambda^\infty)$ \emph{finitely generated} if $\Lambda^\infty$ is finitely generated as a group. In this case, all word metrics with respect to finite generating sets on $\Lambda^\infty$ restrict to quasi-isometric metrics on $\Lambda$, and we call their common QI class the \emph{canonical QI class} of $\Lambda$. With this terminology understood, the following theorem can be seen as a generalization of the Milnor-Schwarz lemma -- see Subsection \ref{SecMS} for an explanation how the present version translates into the more classical one in the case of groups.
\begin{theorem}[Milnor-Schwarz lemma for uniform approximate lattices, see Theorem \ref{MilnorSchwarz}]\label{MSIntro} Let $G$ be a compactly-generated lcsc group and $\Lambda \subset G$ a uniform approximate lattice. Then $\Lambda^\infty$ is finitely-generated, and the canonical QI class of $G$ restricts to the canonical QI class of $\Lambda$.
\end{theorem}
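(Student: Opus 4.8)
The plan is to run the classical Milnor--Schwarz argument for the left-translation action of $\Lambda^\infty = \langle \Lambda\rangle$ on $G$, the essential new point being to cope with the fact that $\Lambda$ is not a subgroup. Fix a proper left-invariant metric $d$ on $G$ inducing the topology, and recall that $d$ lies in the canonical QI class of the compactly-generated group $G$; concretely, if $B_\rho$ denotes the closed $d$-ball of radius $\rho$ around $e$, chosen large enough to generate $G$, then the associated word metric $d_\rho$ satisfies $d_\rho(g,g') \le A' d(g,g') + B'$ for suitable constants. Since $\Lambda$ is Delone, fix constants $r$ (uniform discreteness) and $R$ (relative density, so that $G = \Lambda B_R$), and set $C := 2R + \rho$.

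First I would establish finite generation by exhibiting an explicit finite generating set. Put $\Sigma := \Lambda^2 \cap B_C$. This set is \emph{finite}: uniform discreteness gives that $\Lambda \cap K$ is finite for every compact $K$, and $(\mathrm{AG}3)$ gives $\Lambda^2 \subset F\Lambda = \bigcup_{f \in F} f\Lambda$, so $\Lambda^2 \cap B_C$ is contained in a finite union of translates $f(\Lambda \cap f^{-1}B_C)$, each finite. To see that $\Sigma$ generates $\Lambda^\infty$, take $\lambda \in \Lambda$ and, using that $B_\rho$ generates $G$, write $\lambda = s_1\cdots s_m$ with $s_i \in B_\rho$; let $g_j = s_1\cdots s_j$ be the partial products, so that $d(g_{j-1},g_j) \le \rho$. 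By relative density choose $\lambda_j \in \Lambda$ with $d(g_j,\lambda_j)\le R$, taking $\lambda_0 = e$ and $\lambda_m = \lambda$. Then left-invariance and the triangle inequality give $\lambda_{j-1}^{-1}\lambda_j \in \Lambda^2 \cap B_C = \Sigma$, and the telescoping product $\lambda = \prod_{j=1}^m \lambda_{j-1}^{-1}\lambda_j$ exhibits $\lambda$ as a word in $\Sigma$. Hence $\Lambda^\infty = \langle \Sigma\rangle$ is finitely generated.

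Next I would verify that the inclusion $\iota\colon (\Lambda, d_\Sigma) \hookrightarrow (G,d)$ is a quasi-isometry, where $d_\Sigma$ is the word metric on $\Lambda^\infty$ restricted to $\Lambda$; since all finite generating sets give QI word metrics, this identifies the two canonical QI classes. Coarse surjectivity is immediate from $G = \Lambda B_R$. For the upper Lipschitz bound, every generator satisfies $d(e,\sigma)\le C$, so writing $\lambda^{-1}\lambda'$ as a $d_\Sigma$-geodesic word of length $n$ and telescoping gives $d(\lambda,\lambda') \le C\, d_\Sigma(\lambda,\lambda')$. For the reverse bound one repeats the chain argument of the previous paragraph quantitatively: connect $\lambda$ to $\lambda'$ by a $B_\rho$-path of length $m \le A' d(\lambda,\lambda') + B'$ (this is where $d_\rho \asymp d$ enters), shadow it by lattice points $\lambda_0 = \lambda, \dots, \lambda_m = \lambda'$ as above, and conclude $d_\Sigma(\lambda,\lambda') \le m \le A' d(\lambda,\lambda') + B'$. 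Together these give the two QI inequalities, completing the proof.

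The main obstacle, and the only place the argument genuinely departs from the group case, is the shadowing step: the consecutive differences $\lambda_{j-1}^{-1}\lambda_j$ land in $\Lambda^2$ rather than in $\Lambda$, so they need not themselves be lattice elements. The point is that axiom $(\mathrm{AG}3)$ is exactly what rescues finiteness: $\Lambda^2 \subset F\Lambda$ forces $\Lambda^2$ to be locally finite, so the short differences $\Sigma = \Lambda^2 \cap B_C$ still form a finite set and hence a legitimate finite generating set. Passing to $\Lambda^2$ rather than $\Lambda$ for the generators causes no harm, since $\Lambda^2 \subset \Lambda^\infty$ and the telescoping identities only ever multiply such differences; it is precisely this interplay between relative density (producing the shadowing points), uniform discreteness and $(\mathrm{AG}3)$ (together producing finiteness) that makes the approximate-group version go through.
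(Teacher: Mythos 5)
Your argument follows the same overall strategy as the paper's. The finite-generation step (shadow a coarse chain from $e$ to $\lambda$ by points of $\Lambda$, observe that consecutive differences land in $\Lambda^2$ inside a fixed ball, and use (AG3) together with uniform discreteness to see that this set $\Sigma$ of short differences is finite) is essentially the paper's proof of Theorem \ref{ThmFiniteType}. For the quasi-isometry statement the paper's execution differs slightly: it compares the word metrics $d_K$ and $d_F$ directly and, rather than bounding the \emph{length} of the shadowing chain, only produces a monotone function $\rho$ with $\|\cdot\|_F \le \rho(\|\cdot\|_K)$ and then invokes Gromov's trivial lemma (Lemma \ref{GromovLemma}) on the large-scale geodesic space $(\Lambda, d_K|_\Lambda)$ to upgrade this to an affine bound. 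You instead run the shadowing argument quantitatively, which is more direct but places the whole burden on the chain-length estimate $m \le A'\,d(\lambda,\lambda') + B'$.

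That estimate is the one place where your write-up has a real gap. You ``recall'' that an arbitrary proper left-invariant metric $d$ inducing the topology lies in the canonical QI class of $G$; this is false in general (on $\R$, the metric $\log(1+|x-y|)$ is proper, translation-invariant and compatible, yet not quasi-isometric to the Euclidean/word metric), and it is precisely the inequality $d_\rho(g,g') \le A'\,d(g,g') + B'$ that fails for such metrics --- mere coarse connectedness, which is all that holds for an arbitrary compatible proper metric, gives a chain but no affine control on its length. What you need is to \emph{choose} $d$ to be large-scale geodesic, hence quasi-isometric to the word metric of a compact generating set; such metrics exist on any compactly generated lcsc group by \cite{CdlH}. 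Fixing such a $d$ both validates the chain-length bound and ensures that $d|_\Lambda$ genuinely represents the restriction of the canonical QI class of $G$ (note that $\Lambda$ remains Delone for this $d$, since being Delone is independent of the choice of compatible proper left-invariant metric by Proposition \ref{TopCharDelone}). With that one-line adjustment your proof is complete; the paper's detour through Gromov's lemma is exactly a device for avoiding any length control on the chain.
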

A sample application of Theorem \ref{MSIntro} (using also Theorem \ref{UnimodIntro}) is the following:
\begin{corollary}[Amenability, see Proposition \ref{PropAmenable}]\label{AmIntro} Let $G$ be a compactly generated lcsc group and $\Lambda \subset G$ a uniform approximate lattice. Then $\Lambda$ is metrically amenable with respect to its canonical QI class if and only if $G$ is an amenable lcsc group.
\end{corollary}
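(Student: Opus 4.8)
The plan is to deduce the corollary by combining the Milnor--Schwarz lemma (Theorem \ref{MSIntro}) with the quasi-isometry invariance of metric amenability and the classical F\o{}lner characterization of amenability for unimodular groups; the whole argument amounts to transporting amenability along a chain of coarse equivalences between $\Lambda$ and $G$. First I would record two structural facts. By Theorem \ref{MSIntro}, $\Lambda^\infty$ is finitely generated, the canonical QI class of $\Lambda$ is represented by the restriction $d_S|_\Lambda$ of a word metric $d_S$ on $G$ associated to a compact generating set $S$, and the inclusion $\Lambda \hookrightarrow (G,d_S)$ is a quasi-isometry because $\Lambda$ is relatively dense. In particular $\Lambda$ is a finitely-generated uniform approximate lattice, so Theorem \ref{UnimodIntro} guarantees that $G$ is unimodular. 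Moreover $\Lambda$ has bounded geometry: uniform discreteness gives a uniform positive lower bound on the Haar measure of a fixed small ball around each point of $\Lambda$, while the Haar measure of a ball of fixed radius is finite, so every metric ball meets $\Lambda$ in a uniformly bounded number of points.

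Next I would invoke the fact that metric amenability -- the existence, for every $R>0$ and $\eps>0$, of a nonempty finite set $F$ with $|\partial_R F| \le \eps\,|F|$ -- is a quasi-isometry invariant among spaces of bounded geometry, since a quasi-isometry carries a F\o{}lner sequence to a F\o{}lner sequence up to the distortion and bounded-geometry constants. Consequently the statement that $\Lambda$ is metrically amenable with respect to its canonical QI class is equivalent to the combinatorial F\o{}lner condition for the Delone set $\Lambda$ sitting inside $(G,d_S)$.

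It then remains to identify this combinatorial F\o{}lner condition with topological amenability of $G$. For a unimodular compactly-generated lcsc group this is classical: $G$ is amenable precisely when it satisfies the Haar-measure F\o{}lner condition, and, using unimodularity together with the Delone property to compare up to multiplicative constants the Haar measure of an $R$-neighbourhood of a finite subset of $\Lambda$ with its cardinality, the Haar F\o{}lner condition on $G$ is equivalent to the combinatorial F\o{}lner condition on $\Lambda$. Chaining the three equivalences yields the corollary.

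The main obstacle is precisely this last comparison between the continuous (Haar) F\o{}lner condition on $G$ and the discrete counting condition on the net $\Lambda$. It is here that unimodularity is indispensable: only for unimodular $G$ does thickening a set by a fixed radius distort its Haar measure by a uniformly bounded factor, so that counting measure on $\Lambda$ and Haar measure on $G$ become coarsely proportional; for non-unimodular groups such as the $ax+b$-group the two notions of amenability genuinely diverge. Thus Theorem \ref{UnimodIntro} is exactly what removes this obstruction, while Theorem \ref{MSIntro} is what renders $\Lambda$ and $G$ coarsely interchangeable in the first place.
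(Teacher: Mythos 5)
Your proposal is correct and follows essentially the same route as the paper: Theorem \ref{MSIntro} plus quasi-isometry invariance of metric amenability reduces the statement to the equivalence of metric and topological amenability for $G$, which holds for unimodular compactly generated lcsc groups (the paper cites \cite[Lemma 4.F.4(2) and Prop. 4.F.8]{CdlH} for exactly the Haar-versus-counting comparison you sketch), with unimodularity supplied by Theorem \ref{UnimodIntro}. The only difference is that you spell out the bounded-geometry and measure-comparison details that the paper delegates to the cited reference.
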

The classical QI rigidity problem concerns the converse of the Milnor--Schwarz lemma: Given a finitely-generated group $\Gamma$ quasi-isometric to $G$, is there a homomorphism $\rho: \Gamma \to G$ with finite kernel whose image is a uniform lattice in $G$? If $G$ is the isometry group of a higher rank symmetric space of non-compact type, then the answer to this question is positive by a celebrated result of Kleiner and Leeb \cite{KleinerLeeb} (also established independently, but slightly later by Eskin and Farb \cite{EskinFarb}). We extend their result to the context of finitely-generated approximate groups:
\begin{theorem}[QI rigidity with respect to approximate groups, see Theorem \ref{QIRigidity}]\label{QIIntro} Let $G$ be the isometry group of a higher rank symmetric space of non-compact type. If $(\Lambda, \Lambda^\infty)$ is a finitely-generated approximate group with $\Lambda$ quasi-isometric to $G$, then there exists a homomorphism $\rho: \Lambda^\infty \to G$ with small kernel such that $\rho(\Lambda)$ is a uniform approximate lattice in $G$.
\end{theorem}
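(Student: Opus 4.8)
The plan is to run the Kleiner--Leeb mechanism for quasi-isometric rigidity, adapted to the approximate setting by replacing the left-translation action of a group on itself by a \emph{partial} translation action of $\Lambda^\infty$ on $\Lambda$. Since $\Lambda$ generates the finitely-generated group $\Lambda^\infty$, a finite symmetric generating set $S\subset\Lambda$ (with $e\in S$) exists, and the canonical QI class of $\Lambda$ is represented by $(\Lambda,d_S)$, where $d_S$ is the left-invariant word metric. Writing $X$ for the symmetric space with $G=\operatorname{Isom}(X)$ (so that $G$ is quasi-isometric to $X$ via the orbit map), fix a quasi-isometry $f\colon(\Lambda,d_S)\to X$ with coarse inverse $\bar f$, normalised so that $o:=f(e)$ is a basepoint. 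The first and central step is to manufacture a quasi-action of $\Lambda^\infty$ on $X$. Although $\gamma\Lambda\neq\Lambda$ in general, axiom (AG3) rewritten on the \emph{right} — invert $\Lambda^2\subset F\Lambda$ to get $\Lambda^2\subset\Lambda F^{-1}$, and iterate to $\Lambda^{m+1}\subset\Lambda(F^{-1})^{m}$ — controls the stray: for $\gamma\in\Lambda^m$ and any $\lambda\in\Lambda$ we may write $\gamma\lambda=\lambda' w$ with $\lambda'\in\Lambda$ and $w\in(F^{-1})^{m}$, so that left-invariance gives $d_S(\gamma\lambda,\Lambda)\le|w|_S\le m\max_{h\in F}|h|_S=:D_\gamma<\infty$, \emph{uniformly in $\lambda$}, and with $D_\gamma\le\max_{h\in F}|h|_S$ \emph{uniformly} once $\gamma\in\Lambda$. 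Hence a coarse nearest-point projection $\pr\colon\gamma\Lambda\to\Lambda$ is defined up to bounded error, $a_\gamma:=\pr\circ L_\gamma\colon\Lambda\to\Lambda$ is a self-quasi-isometry with coarse inverse $a_{\gamma^{-1}}$ (using $d_S(\gamma^{-1}\mu,\lambda)=d_S(\mu,\gamma\lambda)$), and the maps $b_\gamma:=f\circ a_\gamma\circ\bar f$ are self-quasi-isometries of $X$ with $b_{\gamma\gamma'}$ at finite distance from $b_\gamma b_{\gamma'}$ for each fixed pair.

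Next I would straighten this quasi-action using the theorem of Kleiner--Leeb that, for a higher rank symmetric space of non-compact type, the canonical map $\operatorname{Isom}(X)\to\operatorname{QI}(X)$ is an isomorphism, i.e.\ every self-quasi-isometry of $X$ lies at bounded distance from a \emph{unique} isometry. Assigning to $b_\gamma$ this unique nearby isometry defines $\rho\colon\Lambda^\infty\to G$, and the per-pair relations $b_{\gamma\gamma'}\approx b_\gamma b_{\gamma'}$ together with uniqueness force $\rho(\gamma\gamma')=\rho(\gamma)\rho(\gamma')$, so $\rho$ is a homomorphism. To see that $\ker\rho$ is small, I would use the uniform displacement bound for $\gamma\in\Lambda$: if $\rho(\gamma)=e$ then $a_\gamma$ lies within a uniform constant of $\operatorname{id}_\Lambda$, so $d_S(\gamma\lambda,\lambda)$ is uniformly bounded over $\lambda\in\Lambda$; evaluating at $\lambda=e$ bounds $|\gamma|_S$, whence $\ker\rho\cap\Lambda$ lies in a fixed $d_S$-ball and is finite — the exact analogue of a finite kernel. (Note $\ker\rho$ itself may be infinite, e.g.\ a central $\Z$-factor acting coarsely trivially; this is why one only asks for \emph{small} kernel.)

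Finally I would verify that $\rho(\Lambda)$ is a uniform approximate lattice. As the image of the approximate subgroup $\Lambda$ under a homomorphism it is symmetric and satisfies $\rho(\Lambda)^2=\rho(\Lambda^2)\subset\rho(F)\rho(\Lambda)$ with $\rho(F)$ finite, so it is an approximate subgroup of $G$. For the Delone property, the estimate $\rho(\lambda)\cdot o\approx f(\lambda)$ — valid up to a constant that is uniform over $\lambda\in\Lambda$, because the QI-constants of $b_\lambda$ are uniform for $\gamma\in\Lambda$ and because $\bar f(o)\approx e$ — transfers coarse surjectivity of $f$ onto $X$ into relative density of the orbit $\rho(\Lambda)\cdot o$, hence of $\rho(\Lambda)$ in $G$; while quasi-injectivity of $f$ together with finiteness of $\ker\rho\cap\Lambda$ yields uniform discreteness. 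Thus $\rho(\Lambda)$ is a Delone approximate subgroup, i.e.\ a uniform approximate lattice.

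I expect the construction of the quasi-action to be the main obstacle. One must check that the coarse projection genuinely produces quasi-isometries and respects the quasi-action relations, and one must handle the asymmetry between the left-invariant metric $d_S$ and the approximate-group axioms — which is precisely why (AG3) has to be used in its right-hand form $\Lambda^2\subset\Lambda F^{-1}$ rather than $\Lambda^2\subset F\Lambda$. The subtle structural point is that the displacement constants $D_\gamma$ are uniform only for $\gamma\in\Lambda$ and not on all of $\Lambda^\infty$; this is exactly what forces $\ker\rho$ to be merely small (finite in restriction to $\Lambda$) rather than finite, and what singles out $\rho(\Lambda)$ — not $\rho(\Lambda^\infty)$ — as the object that is a uniform approximate lattice.
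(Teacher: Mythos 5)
Your construction follows the paper's proof essentially step for step: the coarse retractions $\gamma\Lambda\to\Lambda$ built from the right-hand form of (AG3) are exactly the paper's left-regular quasi-action $\lambda:\Lambda^\infty\to{\rm QI}(\Lambda)$ (Proposition \ref{LRQA}), the straightening via the Kleiner--Leeb isomorphism ${\rm Is}(X)\cong{\rm QI}(X)$ is the paper's composition $\rho=\lambda_G^{-1}\circ\psi\circ\lambda$, and your uniform estimate $\rho(\lambda)\cdot o\approx f(\lambda)$ for $\lambda\in\Lambda$ is the paper's step (GR1), from which relative density follows; your displacement/properness argument for uniform discreteness matches the paper's (GR2).

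The one genuine gap is in your verification that $\rho$ has small kernel. The paper \emph{defines} ``small kernel'' to mean that the induced surjection $\Lambda\to\rho(\Lambda)$ is a quasi-isometry with respect to the canonical QI classes of $(\Lambda,\Lambda^\infty)$ and $(\rho(\Lambda),\rho(\Lambda)^\infty)$; it is not the condition that $\ker\rho\cap\Lambda$ be finite, which is what you prove. Finiteness of $\ker\rho\cap\Lambda$ does not by itself control the word metric on $\rho(\Lambda)^\infty=\rho(\Lambda^\infty)$, so it does not yield the required QI statement (and in general a bijective-up-to-finite-fibers map between approximate groups need not be a quasi-isometry for their canonical classes). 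The correct argument --- the paper's (GR3) --- uses ingredients you already have: once $\rho(\Lambda)$ is known to be a uniform approximate lattice, the inclusion $\rho(\Lambda)\hookrightarrow G$ is a quasi-isometry for the canonical QI class of $\rho(\Lambda)$ by the Milnor--Schwarz theorem for approximate lattices (Theorem \ref{MilnorSchwarz}), and your uniform estimate $\rho(\lambda)\cdot o\approx f(\lambda)$ shows that the composition $\Lambda\to\rho(\Lambda)\hookrightarrow G$ is at bounded distance from the quasi-isometry $f$, hence itself a quasi-isometry; dividing out the second map gives that $\Lambda\to\rho(\Lambda)$ is a quasi-isometry. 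You should replace your finiteness argument by this one (or at least add it); as written, the small-kernel claim is not established in the sense the theorem requires.
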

The notion of ``small kernel'' appearing in the theorem is a (necessary) adaption of the finite kernel condition in the group case; see Subsection \ref{SecQI} for the precise definition.

This article is organized as follows: In Section \ref{SecUAL} we discuss the definition and basic examples of uniform approximate lattices. In Section \ref{SecAGGT} we introduce and discuss the canonical QI class of uniform approximate lattices. In analogy with the group case, we refer to this study as ``geometric approximate group theory''. In particular, we establish Theorem \ref{MSIntro} and Theorem \ref{QIIntro}. The discussion in Sections \ref{SecUAL} and \ref{SecAGGT} concerns only metric properties of lcsc groups and their Delone subsets, and is thus independent of the study of invariant and stationary measures on the hull. In Section \ref{SecNonUniform} we turn to general (i.e.\ not necessarily uniform) approximate lattices. We discuss different possible definitions of non-uniform approximate lattices and establish Theorem \ref{NilpotentIntro}. Section \ref{SecUnimodular} discusses the relation between measures on the hull of an approximate lattice and measures on the ambient lcsc group via periodization. This is then applied to prove Theorem \ref{UnimodIntro}. We also combine this theorem with Theorem \ref{MSIntro} to derive Corollary \ref{AmIntro}. For a more detailed list of subsections see the table of contents below.\\

{\bf Acknowledgments.} The authors thank Y. de Cornulier for numerous comments and clarifications and for pointing out Example \ref{ExCornulier}, and T. Dymarz for detailed explanations concerning the QI rigidity problem. They are indebted to Y. Benoist, A. Fish and E. Stark for suggesting the examples in Subsection \ref{SubsecLie}, Example \ref{Fish} and Example \ref{EStark} respectively. Finally, they thank B. Farb, A. Lubotzky and F. Pogorzelski for comments on an earlier draft and R. K\"ohl and A. Nevo for pointing out the references \cite{CapraceMonod} and \cite{CorwinGreenleaf} respectively.

\tableofcontents

\section{Uniform approximate lattices}\label{SecUAL}

\subsection{Delone sets in lcsc groups} \label{SecDelone}
Let $(X, d)$ be a metric space.  Given $R>0$ and $x \in X$ we denote by $B_R(x)$ and $\overline{B}_R(x)$ the open, respectively closed $d$-ball of radius $R$ around $x$, and given $A \subset G$ we denote by $N_R(A) := \bigcup_{x \in A} B_r(A)$ the $R$-neighbourhood of $A$. \begin{definition}\label{Delone}
Let $r, R >0$. A non-empty subset $\Lambda \subset X$ is called
\begin{enumerate}
\item $r$-\emph{uniformly discrete} if $d(x, y) \geq r$ for all $x,y \in \Lambda$;
\item $R$-\emph{relatively dense} if $N_R(\Lambda) = G$;
\item a $(r, R)$-\emph{Delone set} if it is both $r$-uniformly discrete and $R$-relatively dense.
\end{enumerate}
It is called \emph{uniformly discrete}, \emph{relatively dense} or a \emph{Delone set} if it has the respective property for some $r, R>0$.
\end{definition}
We will be interested in Delone sets in locally compact and second countable (lcsc) groups. By Struble's theorem  \cite[Thm. 2.B.4]{CdlH}, every lcsc group $G$ admits a proper left-invariant metric $d$ which induces the given topology on $G$. The following proposition shows that the notion of a Delone set in $(G, d)$ does not depend on the choice of $d$. In fact, it provides a purely topological characterization of Delone sets in groups. Here a subset $\Lambda \subset G$ is called \emph{left-syndetic} if  there exists a compact subset $K \subset G$ such that $\Lambda K  = G$. 
\begin{proposition}\label{TopCharDelone} Let $G$ be a lcsc group, $\Lambda \subset G$ a subset and $d$  a proper left-invariant metric on $G$ inducing the given topology.
\begin{enumerate}[(i)]
\item $\Lambda$ is uniformly discrete in $G$ if and only if the identity $e \in G$ is not an accumulation point of $\Lambda^{-1}\Lambda$.
\item $\Lambda$ is relatively dense in $G$ if and only if it is left-syndetic.
\end{enumerate}
In particular, the property of being a Delone set in $G$ is independent of the choice of $d$.
\end{proposition}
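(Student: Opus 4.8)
The plan is to prove the two equivalences separately, in each case exploiting the two defining features of $d$: left-invariance, which lets me recenter every distance computation at the identity $e$, and properness, which is what converts metric boundedness into compactness and back. The concluding independence statement then follows formally, because the right-hand characterizations in (i) and (ii) make no reference to $d$ at all.

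For (i), I would use the identity $d(x,y) = d(e, x^{-1}y)$, valid for all $x,y \in G$ by left-invariance. Thus $\Lambda$ is $r$-uniformly discrete precisely when $d(e, x^{-1}y) \geq r$ for all distinct $x,y \in \Lambda$, i.e.\ when $B_r(e)$ meets $\Lambda^{-1}\Lambda$ only in $\{e\}$. Since $d$ induces the given topology, the balls $B_r(e)$ form a neighbourhood basis at $e$; hence such an $r$ exists if and only if some neighbourhood of $e$ isolates $e$ within $\Lambda^{-1}\Lambda$, which is exactly the assertion that $e$ is not an accumulation point of $\Lambda^{-1}\Lambda$. Both implications are then immediate (taking care only that $x^{-1}y = e$ if and only if $x = y$, so that distinctness corresponds to avoiding the point $e$).

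For (ii), in the forward direction I would assume $\Lambda$ is $R$-relatively dense and set $K := \overline{B}_R(e)$, which is compact because $d$ is proper. For any $g \in G$ there is $\lambda \in \Lambda$ with $d(\lambda, g) \leq R$, and left-invariance gives $d(e, \lambda^{-1}g) \leq R$, so $\lambda^{-1}g \in K$ and $g \in \Lambda K$; hence $\Lambda$ is left-syndetic. Conversely, if $\Lambda K = G$ with $K$ compact, then the continuous function $k \mapsto d(e,k)$ attains a maximum $R$ on $K$, so $K \subseteq \overline{B}_R(e)$; writing an arbitrary $g = \lambda k$ with $\lambda \in \Lambda$ and $k \in K$, left-invariance yields $d(\lambda, g) = d(e,k) \leq R$, whence $\Lambda$ is $R$-relatively dense.

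Finally, for the independence claim I would observe that the conditions ``$e$ is not an accumulation point of $\Lambda^{-1}\Lambda$'' and ``$\Lambda$ is left-syndetic'' are purely topological, involving only the fixed topology of $G$ and not the auxiliary metric. Hence for any two proper left-invariant metrics inducing this topology, uniform discreteness (respectively relative density) with respect to one is equivalent to the corresponding topological condition, and therefore to the same property with respect to the other; so the Delone property is metric-independent. This argument has no genuine obstacle: the only points requiring care are the two standard facts where the hypotheses on $d$ actually enter, namely that the balls centred at $e$ form a neighbourhood basis (because $d$ induces the topology) and that in a proper metric space the compact sets are exactly the closed bounded ones. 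These, together with the recentering via left-invariance, constitute the entire content of the proof.
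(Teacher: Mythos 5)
Your proposal is correct and follows essentially the same route as the paper's proof: recentering all distances at $e$ via left-invariance for both parts, taking $K = \overline{B}_R(e)$ (compact by properness) in the forward direction of (ii), and bounding $d(e,\cdot)$ on $K$ in the converse; the only cosmetic difference is that in (i) you argue directly with the neighbourhood basis of balls at $e$ where the paper uses sequences. The concluding observation that both right-hand conditions are metric-free is likewise exactly how the paper obtains independence of $d$.
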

\begin{proof} (i) If $\Lambda$ is not uniformly discrete, then there exist elements $x_n \neq y_n \in \Lambda$ such that $d(x_n, y_n)< \frac 1 n$, hence $z_n := x_n^{-1}y_n \in  \Lambda^{-1}\Lambda$ satisfies $d(z_n, e) = d(y_n^{-1}x_n, e) = d(x_n, y_n) < \frac 1 n$, i.e.\ $\lim z_n = e$. Thus $e$ is not an isolated point of $\Lambda^{-1}\Lambda$. 

Conversely, if $e$ is not an isolated point of $\Lambda^{-1}\Lambda$ and $x_n, y_n \in \Lambda$ with $z_n := x_n^{-1}y_n \to e$, then
\[
d(x_n, y_n) = d(z_n, e)  \to 0,
\]
showing that $\Lambda$ is not uniformly discrete.

(ii) Assume that $\Lambda$ is $R$-relatively dense and let $K := \overline{B}_R(e)$. Given $g \in G$ there exists $x \in \Lambda$ such that $d(g, x) = d(x^{-1}g, e)  < R$, hence $x^{-1}g \in B_R(e)$ and thus $G = \Lambda K$.

Conversely, if $G = \Lambda K$ with $K$ compact, then every $g \in G$ can be written as $g = xk$ with $x \in \Lambda$, $k \in K$, whence
\[
d(g, x) = d(xk, x) = d(k, e) \leq \max_{k \in K} d(k, e) < \infty.\qedhere
\]
\end{proof}
It is well-known that every lcsc group contains a Delone set, see e.g. \cite[Prop. 3.D.11]{CdlH}. For our further discussion of Delone sets we will need a number of discreteness properties related to uniform discreteness. We say that $\Lambda$ is \emph{locally finite} if it is closed and discrete. Equivalently, the intersection with every compact set (equivalently, with every ball) is finite. We say that $\Lambda$ has \emph{(left) finite local complexity} (FLC) if $\Lambda^{-1}\Lambda$ is locally finite. We call $\Lambda$ \emph{left-uniformly locally finite} if for every compact set $K \subset G$ we have
\[
\sup_{g \in G} |\Lambda \cap gK| < \infty.
\] 
To relate these different notions of discreteness, we need the following alternative characterization of uniformly discrete sets.
\begin{lemma}\label{UniformlyDiscrete2} A subset $\Lambda \subset G$ is uniformly discrete if and only if there exists an open subset $V \subset G$ such that $|\Lambda \cap gV| \leq 1$ for all $g \in G$.
\end{lemma}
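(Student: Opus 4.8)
The plan is to read the packing condition through the proper left-invariant metric $d$ and to exploit left-invariance, which turns a left-translate of a ball into a ball of the same radius; the whole statement then reduces to the triangle inequality. One point worth flagging at the outset is that $V$ must be taken nonempty for the condition to carry information, since the empty set satisfies $|\Lambda \cap gV| \le 1$ vacuously for every $g$; accordingly I regard $V$ as a nonempty open set throughout, which is automatic in the construction below.

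For the forward implication I would argue as follows. Suppose $\Lambda$ is $r$-uniformly discrete and set $V := B_{r/2}(e)$, which is open and nonempty. If $x, y \in \Lambda \cap gV$ for some $g \in G$, then $g^{-1}x, g^{-1}y \in B_{r/2}(e)$, so by left-invariance $d(x, g) = d(g^{-1}x, e) < r/2$ and likewise $d(y, g) < r/2$; the triangle inequality then gives $d(x,y) < r$, and $r$-uniform discreteness forces $x = y$. Hence $|\Lambda \cap gV| \le 1$ for all $g \in G$.

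For the converse I would argue by contraposition. Suppose $\Lambda$ is not uniformly discrete; by Proposition \ref{TopCharDelone}(i) there exist distinct points $x_n, y_n \in \Lambda$ with $z_n := x_n^{-1}y_n \to e$, so that $d(x_n, y_n) = d(z_n, e) \to 0$. Let $V$ be an arbitrary nonempty open set, fix $v \in V$, and choose $r > 0$ with $B_r(v) = vB_r(e) \subset V$, using the identity $B_r(v) = vB_r(e)$ which again follows from left-invariance of $d$. For $n$ large enough that $d(x_n, y_n) < r$, put $g := x_n v^{-1}$; then
\[
gV \;\supseteq\; g\cdot vB_r(e) \;=\; x_n B_r(e) \;=\; B_r(x_n),
\]
which contains $x_n$ trivially and $y_n$ because $d(x_n, y_n) < r$. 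Thus $|\Lambda \cap gV| \ge 2$, so no nonempty open set can have the claimed packing property.

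I do not expect a genuine obstacle here: the argument is essentially a reformulation of uniform discreteness in terms of left-translates of a single open ``fundamental'' set, and the only steps requiring care are the reduction $B_r(v) = vB_r(e)$ and the vacuous role of the empty set, both dealt with above. The content of the lemma is precisely the geometric packing restatement of uniform discreteness that is needed in order to compare it with left-uniform local finiteness in the subsequent discussion.
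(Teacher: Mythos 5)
Your proof is correct and uses essentially the same ingredients as the paper's: both reduce the packing condition to the metric definition via left-invariance of $d$ (a translate $gV$ of a ball is again a ball of the same radius), the only difference being that you prove each implication in contrapositive form relative to the paper, which instead shows directly that $V$ containing $e$ forces $\Lambda^{-1}\Lambda \cap V = \{e\}$ and appeals to Proposition \ref{TopCharDelone}(i). Your remark that $V$ must be nonempty is a fair observation; the paper uses this implicitly in the step ``we may assume that $V$ contains $e$''.
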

\begin{proof} If a set $V$ as in the lemma does not exist then for every $r>0$ we find $g_r, h_r \in \Lambda$ and $g \in G$ with $\{g_r, h_r\} \subset gB_r(e)$. Thus $g_r^{-1}h_r \in B_r(e)^2 \subset B_{2r}(e)$ and hence $g_r^{-1}h_r \to e$ as $r \to 0$. Then $e \in G$ is an accumulation point of $\Lambda^{-1}\Lambda$, and thus $\Lambda$ is not uniformly discrete. Conversely assume that a set $V$ as in the lemma exists. We may assume that $V$ contains $e$; then $\Lambda \cap \lambda V = \{\lambda\}$ for every $\lambda \in \Lambda$ and thus
\[
\Lambda^{-1}\Lambda \cap V = \bigcup_{\lambda \in \Lambda^{-1}} \lambda^{-1}(\Lambda \cap \lambda V) = \bigcup_{\lambda \in \Lambda^{-1}} \lambda^{-1}\{\lambda\} = \{e\},
\]
showing that $\Lambda$ is uniformly discrete.
\end{proof}

It follows from Lemma \ref{UniformlyDiscrete2} that every uniformly discrete set is left-uniformly locally finite, since every compact set can be covered by finitely many translates of $V$ as in the lemma. Combining this observation with Proposition \ref{TopCharDelone}.(i) we obtain the chain of implications
\begin{eqnarray}
&& \Lambda \text{ has FLC} \Rightarrow \Lambda^{-1}\Lambda \text{ discrete} \Rightarrow \Lambda \text{ uniformly discrete} \nonumber\\
&&  \Rightarrow \Lambda \text{ left-uniformly locally finite} \Rightarrow \Lambda \text{ locally finite} \Rightarrow \Lambda \text{ discrete}.\label{Discreteness}
\end{eqnarray}
\begin{remark} We have defined uniform discreteness in terms of left-invariant metrics, and consequently it is related to left-FLC and left-uniform local finiteness. We could define similar notions of right-uniform discreteness, right-FLC (local finiteness of $\Lambda\Lambda^{-1}$) and right-uniform local finiteness and obtain similar implications. However, in the sequel we are mostly interested in symmetric sets (i.e.\ $\Lambda$ satisfying $\Lambda = \Lambda^{-1}$) for which these distinctions do not matter, so we will not dwell on this point.
\end{remark}

\subsection{Definitions and first examples}

\begin{definition}
An \emph{approximate group} is a pair $(\Lambda, \Lambda^\infty)$, where $ \Lambda^\infty$ is a group and $\Lambda \subset \Lambda^\infty$ is a subset such that 
\begin{enumerate}[({AG}1)]
\item $\Lambda$ is symmetric, i.e.\ $\Lambda = \Lambda^{-1}$, and contains the identity;
\item $\Lambda$ generates $ \Lambda^\infty$ as a group;
\item there exists a finite subset $F \subset  \Lambda^\infty$ such that $\Lambda^2 \subset F\Lambda$. 
\end{enumerate}
A \emph{homomorphism} $\rho: (\Lambda, \Lambda^\infty) \to (\Xi, \Xi^\infty)$ of approximate groups is a group homomorphism $\rho: \Lambda^\infty \to \Xi^\infty$ which satisfies $\rho(\Lambda) \subset \Xi$.
\end{definition}
We consider groups as approximate groups by identifying a group $\Gamma$ with the approximate group $(\Gamma, \Gamma)$. We then refer to a homomorphism $(\Lambda, \Lambda^\infty) \to (\Gamma, \Gamma)$ as a \emph{representation} of $(\Lambda, \Lambda^\infty)$. The image of such a representation is then called an \emph{approximate subgroup} of $\Gamma$. This notion of approximate subgroup agrees with \cite{Tao, Breuillard}.

The definition of homomorphism of approximate groups defined here is stronger than the notion of a Freiman-homomorphism often used in approximate group theory. We will discuss this weaker notion briefly in Subsection \ref{SecFreiman} below.

Condition (AG3) is asymmetric. We could define a condition (AG3${}^{\rm op}$) by demanding that there is $F \subset \Lambda^\infty$ finite such that $\Lambda^2 \subset \Lambda F$. 
While (AG3) and (AG3${}^{\rm op}$) are not equivalent in general, they agree for symmetric sets $\Lambda$, since if $\Lambda = \Lambda^{-1}$ and $\Lambda^2 \subset F \Lambda$, then
\[\Lambda^2 = (\Lambda^2)^{-1} = (F\Lambda)^{-1} = \Lambda F^{-1}.\]
Replacing $F$ by $F \cup F^{-1}$ we may assume that $\Lambda^2 \subset F\Lambda \cap \Lambda F$ for every approximate group.

\begin{definition}
Let $G$ be a lcsc group. An approximate subgroup $\Lambda \subset G$ is called a \emph{uniform approximate lattice} if it is moreover a Delone subset. In this case $G$ is called an \emph{envelope} of $(\Lambda, \Lambda^\infty)$.
\end{definition}
Note that by definition $\Lambda \subset G$ is an approximate lattice if it is a Delone set which satisfies (AG1) and (AG3) for some $F \subset G$.
\begin{example} A subgroup $\Gamma < G$ is an approximate uniform lattice if and only if it is a uniform lattice. Indeed, if $\Gamma$ is Delone, then it is discrete by \eqref{Discreteness} and left-syndetic, hence a uniform lattice. Conversely, if $\Gamma$ is a uniform lattice, then it is left-syndetic, and  $\Gamma^{-1}\Gamma = \Gamma$ is discrete, hence $\Gamma$ is uniformly discrete by \eqref{Discreteness}.
\end{example}
In the last example, $\Lambda^\infty = \Gamma$ was also a discrete subset of $G$. However, discreteness of $\Lambda^\infty$ is not part of the definition of an approximate lattice, and we will discuss examples of approximate lattices $\Lambda \subset G$ in Subsection \ref{SecModelMeyer} below for which $\Lambda^\infty$ is actually dense in $G$. 
\begin{example} Relatively dense symmetric subsets of uniform lattices, and in fact of approximate uniform lattices are approximate uniform lattices provided the contain the identity. \end{example}
The proof is based on the following characterization of approximate lattices, which is of independent interest.
\begin{proposition}\label{PropDiscreteEquiv} Let $G$ be a lcsc group and assume $\Lambda = \Lambda^{-1} \subset G$ is relatively dense and contains the identity. Then the following are equivalent.
\begin{enumerate}[(i)]
\item $\Lambda$ is an approximate lattice.
\item $\Lambda^k$ is uniformly discrete for all $k \geq 1$.
\item $\Lambda^6$ is discrete.
\item $\Lambda^3$ is locally finite.
\end{enumerate}
\end{proposition}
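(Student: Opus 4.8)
The plan is to prove the cyclic chain of implications (i) $\Rightarrow$ (ii) $\Rightarrow$ (iii) $\Rightarrow$ (iv) $\Rightarrow$ (i). Throughout I fix a proper left-invariant metric $d$ on $G$ (which exists by Struble's theorem) and repeatedly use the characterization of uniform discreteness from Proposition \ref{TopCharDelone}.(i)---namely that a symmetric set $\Xi$ is uniformly discrete if and only if $e$ is not an accumulation point of $\Xi^{-1}\Xi$---together with the chain of implications \eqref{Discreteness}. Note that since $\Lambda = \Lambda^{-1}$, each power $\Lambda^k$ is again symmetric, so $(\Lambda^k)^{-1}\Lambda^k = \Lambda^{2k}$; this identifies the relevant accumulation-point condition in each step.

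For (i) $\Rightarrow$ (ii) I would argue as follows. Condition (i) provides a finite set $F$ with $\Lambda^2 \subset F\Lambda$, and an immediate induction gives $\Lambda^{2k} \subset F^{2k-1}\Lambda$ for every $k$. To see that $\Lambda^k$ is uniformly discrete I show that $e$ is not an accumulation point of $\Lambda^{2k}$: given $z_n \to e$ with $z_n \in \Lambda^{2k}$, write $z_n = f_n \lambda_n$ with $f_n \in F^{2k-1}$ and $\lambda_n \in \Lambda$; since $F^{2k-1}$ is finite, after passing to a subsequence $f_n$ is a constant $f$, whence $\lambda_n = f^{-1}z_n \to f^{-1}$. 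As $\Lambda$ is uniformly discrete, hence closed and discrete by \eqref{Discreteness}, the convergent sequence $\lambda_n$ is eventually constant, forcing $z_n$ to equal $e$ eventually. Thus $e$ is isolated in $\Lambda^{2k}$ and $\Lambda^k$ is uniformly discrete.

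The implications (ii) $\Rightarrow$ (iii) and (iii) $\Rightarrow$ (iv) are short. The first is immediate, since uniform discreteness of $\Lambda^6$ implies discreteness by \eqref{Discreteness}. For the second I apply the chain \eqref{Discreteness} to the symmetric set $\Lambda^3$: the hypothesis that $\Lambda^6 = (\Lambda^3)^{-1}\Lambda^3$ is discrete yields that $\Lambda^3$ is uniformly discrete, and hence locally finite.

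The crux is (iv) $\Rightarrow$ (i), and this is the step I expect to carry the real content. Uniform discreteness of $\Lambda$ is easy: since $e \in \Lambda$ we have $\Lambda^2 = \Lambda^{-1}\Lambda \subset \Lambda^3$, which is locally finite and hence discrete, so $e$ is isolated in $\Lambda^{-1}\Lambda$. The genuinely new point is the approximate-group condition (AG3), and this is where relative density must enter---local finiteness of $\Lambda^3$ alone is a purely local statement near the identity, whereas (AG3) is a global, translation-invariant covering statement. Fix $R$ such that $\Lambda$ is $R$-relatively dense. For $x \in \Lambda^2$ choose $\lambda \in \Lambda$ with $d(x,\lambda) < R$; then $\lambda^{-1}x \in \Lambda^{-1}\Lambda^2 = \Lambda^3$ and, by left-invariance of $d$, also $\lambda^{-1}x \in \overline{B}_R(e)$. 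Since $d$ is proper, $\overline{B}_R(e)$ is compact, so $F := \Lambda^3 \cap \overline{B}_R(e)$ is finite by local finiteness of $\Lambda^3$. As $x = \lambda(\lambda^{-1}x) \in \Lambda F$, this proves $\Lambda^2 \subset \Lambda F$, and taking inverses (using $\Lambda = \Lambda^{-1}$) gives $\Lambda^2 \subset F^{-1}\Lambda$, which is (AG3). Thus $\Lambda$ is a Delone set satisfying (AG1) and (AG3), i.e.\ an approximate lattice, closing the cycle. The one point to watch is precisely this insertion of relative density: it is what converts the local finiteness of $\Lambda^3$ into a uniform bound, since every element of $\Lambda^2$ can be pushed back into the single compact ball $\overline{B}_R(e)$ by a left translation from $\Lambda$.
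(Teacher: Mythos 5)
Your proof is correct and follows essentially the same route as the paper's: the same cyclic scheme, the same induction $\Lambda^{2k}\subset F^{2k-1}\Lambda$ for (i)~$\Rightarrow$~(ii), and for (iv)~$\Rightarrow$~(i) the same use of $R$-relative density to push an element of $\Lambda^2$ into the finite set $\Lambda^3\cap \overline{B}_R(e)$ by a translation from $\Lambda$ (the paper multiplies on the right by $w$ near $z^{-1}$ where you multiply on the left by $\lambda$ near $x$ and then invert, a cosmetic difference). No gaps.
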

\begin{proof}  (i) $\Rightarrow$ (ii) For all $k \geq 1$ we have $\Lambda^k \subset F\Lambda^{k-1}$ and hence $\Lambda^k \subset F^{k-1}\Lambda$ by induction. Since $F^{k-1}$ is finite and $\Lambda$ is discrete, $F^{k-1}\Lambda$ is discrete. Thus for all $k \geq 1$ we have that $\Lambda^{2k}$ is discrete, whence $\Lambda^k$ is uniformly discrete by \eqref{Discreteness}.

(iv) $\Rightarrow$ (i) If $\Lambda^3$ is locally finite, then $\Lambda$ is uniformly discrete by \eqref{Discreteness}. To establish (AG3), we define $F:= B_R(e) \cap \Lambda^3$, where $R$ is chosen in such a way that $\Lambda$ is $R$-relatively dense. Since $\Lambda^3$ is locally finite, $F$ is finite. Now let $z \in \Lambda^2$ and write $z = xy$ with $x,y \in \Lambda$. By definition of $R$ there exists $w \in \Lambda$ with $d_G(w, z^{-1}) < R$, hence $zw = zxy \in F$. Thus $z \in (zw)w^{-1} \in F\Lambda$.

Now the implication (ii) $\Rightarrow$ (iii) is obvious, and (iii) $\Rightarrow$ (iv) follows from \eqref{Discreteness}. This finishes the proof of the proposition.
\end{proof}
\begin{corollary}\label{RelDenseSubsets} Let $\Lambda \subset G$ be an approximate lattice and $\Lambda_0 \subset \Lambda$ be a symmetric subset containing the identity. Then the following are equivalent.
\begin{enumerate}[(i)] 
\item$\Lambda_0$ is an approximate lattice in $G$.
\item $\Lambda_0$ is relatively dense in $G$.
\item $\Lambda_0$ is relatively dense in $\Lambda$.
\end{enumerate}
\end{corollary}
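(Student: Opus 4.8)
The plan is to establish the cyclic chain of implications (i) $\Rightarrow$ (ii) $\Rightarrow$ (iii) $\Rightarrow$ (i), where the first two implications are essentially formal and all the real content lies in the last one. Throughout, I fix once and for all a proper left-invariant metric $d$ on $G$ provided by Struble's theorem, so that relative density has its metric meaning; by Proposition \ref{TopCharDelone} the resulting notion is independent of this choice, so there is no ambiguity in the three relative-density statements. The guiding observation is a monotonicity principle: since $\Lambda_0 \subset \Lambda$ we have $\Lambda_0^k \subset \Lambda^k$ for every $k \geq 1$, and both discreteness and uniform discreteness are inherited by subsets. Hence any good discreteness property of $\Lambda^k$ automatically passes to the smaller set $\Lambda_0^k$.

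For (i) $\Rightarrow$ (ii): if $\Lambda_0$ is an approximate lattice then by definition it is a Delone set, and in particular relatively dense in $G$. For (ii) $\Rightarrow$ (iii): if $N_R(\Lambda_0) = G$ for some $R > 0$, then a fortiori $N_R(\Lambda_0) \supseteq \Lambda$, which is exactly the statement that $\Lambda_0$ is relatively dense in $\Lambda$.

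The substance is in (iii) $\Rightarrow$ (i). First I would upgrade relative density in $\Lambda$ to relative density in $G$ by a triangle-inequality argument: choosing $R_0$ with $\Lambda \subset N_{R_0}(\Lambda_0)$ and $R_1$ with $N_{R_1}(\Lambda) = G$, every $g \in G$ lies within $R_1$ of some $\lambda \in \Lambda$, which in turn lies within $R_0$ of some $x \in \Lambda_0$, so that $d(g,x) < R_0 + R_1$; thus $\Lambda_0$ is $(R_0+R_1)$-relatively dense in $G$. It then remains to verify the approximate-subgroup structure, for which I apply Proposition \ref{PropDiscreteEquiv} to $\Lambda_0$ itself. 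This is legitimate, because $\Lambda_0$ is symmetric, contains the identity, and has just been shown to be relatively dense. Applying the same proposition to $\Lambda$, which is an approximate lattice by hypothesis, yields that $\Lambda^6$ is discrete; by the monotonicity principle above, $\Lambda_0^6 \subset \Lambda^6$ is discrete as well. This is precisely condition (iii) of Proposition \ref{PropDiscreteEquiv} for the set $\Lambda_0$, and that proposition then delivers the conclusion that $\Lambda_0$ is an approximate lattice, closing the cycle.

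I do not anticipate a genuine obstacle here: the only non-formal ingredient is Proposition \ref{PropDiscreteEquiv}, invoked twice (once to extract the discreteness of $\Lambda^6$ and once to reconstitute the approximate-lattice structure of $\Lambda_0$). The one point deserving a line of care is the monotonicity claim, namely that a subset of a (uniformly) discrete set is again (uniformly) discrete; this is immediate from the definitions, but it is the hinge on which the whole argument turns.
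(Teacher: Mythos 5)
Your proof is correct and follows essentially the same route as the paper: the equivalence of (i) and (ii) is reduced to Proposition \ref{PropDiscreteEquiv} via the observation that a subset of a discrete set is discrete, and the equivalence with (iii) is elementary. The only cosmetic difference is that you phrase the latter in terms of metric neighbourhoods (so that (ii) $\Rightarrow$ (iii) becomes a one-line containment and (iii) $\Rightarrow$ (ii) a triangle inequality), whereas the paper works with compact translates and invokes Proposition \ref{PropDiscreteEquiv} a second time to see that $K \cap \Lambda^2$ is finite.
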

\begin{proof} The equivalence (i)$\Leftrightarrow$(ii) is immediate from Proposition \ref{PropDiscreteEquiv} since a subset of a discrete set is discrete. If $\Lambda_0$ is relatively dense in $\Lambda$, then there exists a finite subset $K_1 \subset \Lambda$ and a compact subset $K_2 \subset G$ such that $\Lambda = \Lambda_0K_1$ and $G = \Lambda K_2$ and hence $G = \Lambda_0 K_1K_2$ showing that $\Lambda_0$ is relatively dense in $G$. Conversely, if $\Lambda_0$ is relatively dense in $G$, say $G = \Lambda_0 K$ with $K$ compact, then every $g \in \Lambda$ can be written as $g = \gamma k$ with $\gamma \in \Lambda_0$ and $k \in K$. Then $k = \gamma^{-1}g \in \Lambda^2$, hence $\Gamma = \Gamma_0 (K \cap \Lambda^2)$. Now $(K \cap \Lambda^2)$ is finite by Proposition \ref{PropDiscreteEquiv}, and hence $\Lambda_0$ is relatively dense in $\Lambda$.
\end{proof}
Among the various characterizations of approximate lattice in Proposition \ref{PropDiscreteEquiv}, (iii) is often the most convenient one, since one does not have to check that $\Lambda^6$ is closed. Note that if $\Lambda \subset G$ is a uniform approximate lattice, then by Proposition \ref{PropDiscreteEquiv}.(ii) the set $\Lambda^{-1}\Lambda = \Lambda^2$ is uniformly discrete, hence $\Lambda$ has FLC by \eqref{Discreteness}.

\subsection{Model sets and Meyer sets}\label{SecModelMeyer}
We now turn to examples of uniform approximate lattices which are not contained in uniform lattices. These examples are based on Meyer's construction of cut-and-project sets, which we recall briefly.
\begin{definition} A \emph{cut-and-project-scheme} is a triple $(G, H, \Gamma)$ where $G$ and $H$ are lcsc groups and $\Gamma < G \times H$ is a lattice which projects injectively to $G$ and densely to $H$. A cut-and-project scheme is called \emph{uniform} if $\Gamma$ is moreover a uniform lattice.
\end{definition}
Given a cut-and-project scheme $(G, H, \Gamma)$ we denote by $\pi_G$, $\pi_H$ the coordinate projections of $G \times H$ and set $\Gamma_G := \pi_G(\Gamma)$ and $\Gamma_H := \pi_H(\Gamma)$. We then define a map $\tau: \Gamma_G \to H$ as $\tau := \pi_H \circ (\pi_G|_\Gamma)^{-1}$. Note that the image of $\tau$ is precisely $\Gamma_H$; in the abelian case this map is sometimes called the ``$*$-map''.

\begin{definition} Let  $(G, H, \Gamma)$ be a cut-and-project scheme with associated ``$*$-map'' $\tau: \Gamma_G \to H$. Given a compact subset $W_0 \subset H$, the pre-image
\[
P_0(\Gamma, W_0) := \tau^{-1}(W_0) \subset G
\]
is called a \emph{weak model set}, and $W_0$ is called its \emph{window}. A weak model set is called a \emph{model set} if its window has non-empty interior. It is called \emph{regular} if the window $W_0$ is Jordan-measurable with dense interior, aperiodic (i.e.\ ${\rm Stab}_H(W_0) = \{e\}$) and satisfies $\partial W_0 \cap \Gamma_H = \emptyset$. A model set is called a \emph{uniform model set} if the underlying cut-and-project scheme is uniform. A \emph{Meyer set} is a relatively dense subset of a uniform model set.
\end{definition}
The following proposition goes back to Meyer \cite{Meyer} for abelian groups.
\begin{proposition}\label{ModelSet} $\Lambda = P_0(\Gamma, W_0)$ be a 
model set over $(G, H, \Gamma)$.
\begin{enumerate}[(i)]
\item $\Lambda$, and in fact $\Lambda^{-1}\Lambda$, is uniformly discrete. In particular, $\Lambda$ has finite local complexity.
\item $\Lambda$ satisfies (AG3). In particular, it is an approximate subgroup if its window is symmetric and contains the identity.
\item If $\Gamma$ is uniform, then $\Lambda$ is moreover relatively dense, hence a Delone set.
\item If $\Gamma$ is non-uniform, then $\Lambda$ is not relatively dense.
\end{enumerate}
In particular, a uniform model set is a uniform approximate lattice if its window is symmetric and contains the identity.
\end{proposition}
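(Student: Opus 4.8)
The common engine behind all four parts is that the $*$-map $\tau\colon\Gamma_G\to H$ is a \emph{group homomorphism} (it is the composite of $(\pi_G|_\Gamma)^{-1}$ and $\pi_H$), so that every word in $\Lambda$ and $\Lambda^{-1}$ lands in $\tau^{-1}$ of the corresponding product of copies of $W_0$ and $W_0^{-1}$; in particular $\Lambda^n\subseteq\tau^{-1}(W_0^n)$ and $\Lambda^{-1}\Lambda\subseteq\tau^{-1}(W_0^{-1}W_0)$. The one lemma I would isolate first is that \emph{for every compact $K\subseteq H$ the set $\tau^{-1}(K)=\pi_G(\Gamma\cap(G\times K))$ is locally finite in $G$}: for compact $C\subseteq G$ one has $\tau^{-1}(K)\cap C\subseteq\pi_G(\Gamma\cap(C\times K))$, and $\Gamma\cap(C\times K)$ is finite since $\Gamma$ is discrete and $C\times K$ is compact. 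Part (i) is then immediate: $\Lambda^{-1}\Lambda\subseteq\tau^{-1}(W_0^{-1}W_0)$ sits inside a locally finite set, hence is itself locally finite, i.e.\ $\Lambda$ has FLC; the uniform-discreteness assertions for $\Lambda$ and for $\Lambda^{-1}\Lambda$ then follow formally from the implication chain \eqref{Discreteness} (the ``FLC'' of $\Lambda^{-1}\Lambda$ reducing to local finiteness of $\tau^{-1}(W_0^{-1}W_0W_0^{-1}W_0)$).

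For part (ii) I would use $\Lambda^2\subseteq\tau^{-1}(W_0^2)$ with $W_0^2$ compact, together with the identity $\pi_G(\gamma)\Lambda=\tau^{-1}(\pi_H(\gamma)W_0)$ valid for $\gamma\in\Gamma$. This reduces (AG3) to covering the compact set $W_0^2$ by finitely many left-translates $\pi_H(\gamma_i)\,\mathrm{int}(W_0)$. The sets $\{\pi_H(\gamma)\,\mathrm{int}(W_0):\gamma\in\Gamma\}$ form an open cover of $H$ — given $h$, density of $\Gamma_H$ and $\mathrm{int}(W_0)\neq\emptyset$ produce $\gamma$ with $\pi_H(\gamma)\in h(\mathrm{int}\,W_0)^{-1}$, i.e.\ $h\in\pi_H(\gamma)\,\mathrm{int}(W_0)$ — and compactness of $W_0^2$ extracts a finite subcover indexed by $\gamma_1,\dots,\gamma_n$; then $\Lambda^2\subseteq F\Lambda$ with $F=\{\pi_G(\gamma_1),\dots,\pi_G(\gamma_n)\}$. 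Axiom (AG1) is clear once $W_0=W_0^{-1}\ni e$, and (AG2) holds by definition of $\Lambda^\infty$.

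Parts (iii) and (iv) together establish the dichotomy ``$\Gamma$ cocompact $\iff\Lambda$ relatively dense'', and I expect (iv) to be the main obstacle, since the naive attempt to deduce cocompactness from relative density fails: relative density only controls the $G$-coordinate. For (iii), write $G\times H=\Gamma(K_1\times K_2)$ with $K_1,K_2$ compact. Given $g$, decompose $(g,e)=\gamma(k_1,k_2)$, so $\pi_H(\gamma)=k_2^{-1}\in K_2^{-1}$ and $g=\pi_G(\gamma)k_1$. Covering the compact set $K_2^{-1}$ by finitely many sets $\mathrm{int}(W_0)\,\pi_H(\delta)^{-1}$ ($\delta$ in a finite $D\subseteq\Gamma$, again by density) lets me \emph{right}-multiply $\gamma$ by a suitable $\delta$ so that $\pi_H(\gamma\delta)\in W_0$, i.e.\ $\lambda:=\pi_G(\gamma\delta)\in\Lambda$, while $g=\lambda\,\pi_G(\delta)^{-1}k_1\in\Lambda\,(\pi_G(D)^{-1}K_1)$; thus $G=\Lambda K$ with $K$ compact. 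The only subtlety here is handedness: one must adjust on the correct side so that $\lambda$ ends up on the left of the compact factor.

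For (iv) the trick I would use is to pin down the two coordinates \emph{in the right order}. Assuming $\Lambda$ relatively dense, say $G=\Lambda K$ with $K$ compact, fix $(g,h)\in G\times H$. First use density of $\Gamma_H$ and $\mathrm{int}(W_0)\neq\emptyset$ to pick $\gamma_1\in\Gamma$ with $\pi_H(\gamma_1)^{-1}h\in\mathrm{int}(W_0)\subseteq W_0$. Then apply relative density to $g':=\pi_G(\gamma_1)^{-1}g$ to get $\gamma_2\in\Gamma$ with $\pi_H(\gamma_2)\in W_0$ and $\pi_G(\gamma_2)^{-1}g'\in K$. Setting $\gamma:=\gamma_1\gamma_2$ gives $\pi_G(\gamma)^{-1}g=\pi_G(\gamma_2)^{-1}g'\in K$ and $\pi_H(\gamma)^{-1}h=\pi_H(\gamma_2)^{-1}\pi_H(\gamma_1)^{-1}h\in W_0^{-1}W_0$, so $(g,h)\in\gamma\,(K\times W_0^{-1}W_0)$. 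As $(g,h)$ was arbitrary, $G\times H=\Gamma(K\times W_0^{-1}W_0)$ with compact second factor, so $\Gamma$ is cocompact, contradicting non-uniformity. The decisive point is that fixing the window coordinate first (using only density) and the group coordinate second (using relative density) confines \emph{both} coordinates to a fixed compact set at once. Finally the concluding statement is immediate: for a uniform model set with symmetric window containing $e$, part (i) gives uniform discreteness, part (iii) gives relative density, and part (ii) makes $\Lambda$ an approximate subgroup, so $\Lambda$ is a Delone approximate subgroup, i.e.\ a uniform approximate lattice.
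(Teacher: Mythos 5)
Your proof is correct and follows essentially the same route as the paper's: local finiteness of $\tau^{-1}(\text{compact})$ for (i), covering $W_0^2$ by translates $\tau(\gamma)W_0^o$ via density of $\Gamma_H$ for (ii), adjusting $\gamma$ into the window by a finite covering of the compact $H$-factor for (iii), and the two-step ``window coordinate first, then relative density'' argument producing a compact fundamental domain $K\times W_0^{-1}W_0$ for (iv). The only differences are cosmetic (cocompactness written on the other side in (iii), and reusing $W_0$ where the paper introduces an auxiliary compact identity neighbourhood in (iv)).
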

\begin{proof} (i) If $K \subset G$ is a compact subset, then
\[
\Lambda^{-1}\Lambda \cap K = \tau^{-1}(W_0)^{-1}\tau^{-1}(W_0) \cap K \subset \tau^{-1}(W_0^{-1}W_0) \cap K = \pi_G((K \times W_0^{-1}W_0) \cap \Gamma),
\]
which is finite since $\Gamma$ is locally finite and $K \times W_0^{-1}W_0$ is compact. It thus follows from \eqref{Discreteness} that $\Lambda$ is uniformly discrete, and the same argument applies to $\Lambda^{-1}\Lambda$.

(ii) Since $\Gamma_H = \tau(\Gamma_G)$ is dense in $H$ and $W_0$ has non-empty interior we have $\tau(\Gamma_G)W^o_0 = H$, and in particular $W_0^2$ is a subset of $\tau(\Gamma_G)W_0^o$. Since it is compact, there is actually a finite subset $F \subset \Gamma_G$ with $W_0^2 \subset \tau(F)W_0^o \subset \tau(F)W_0$. Then (AG3) follows from
\[
\Lambda^2 = \tau^{-1}(W_0)^2 \subset \tau^{-1}( \tau(F)W_0) \subset F \tau^{-1}(W_0) = F \Lambda.
\]
(iii) Assume now that $\Gamma < G\times H$ is cocompact. We then find compact subsets $K \subset G$ and $L \subset H$ such that $G \times H = (K \times L) \Gamma$. Since $L$ is compact we can argue as in the proof of (AG3) to find a finite subset $F \subset \Gamma_G$ such that $L^{-1} \subset \tau(F) (W_0^o)^{-1} \subset \tau(F)W_0^{-1}$. We deduce that
\begin{equation}\label{FLambdaInverse}
F\Lambda^{-1} = F \pi_G((G \times W_0^{-1}) \cap \Gamma) = \pi_G((G \times \tau(F)W_0^{-1}) \cap \Gamma) \supset \pi((G \times L^{-1}) \cap \Gamma).
\end{equation}
On the other hand, if $g \in G$, then $(g,e) \in G \times H = (K \times L) \Gamma$ and thus $ (K^{-1}g \times L^{-1}) \cap \Gamma \neq \emptyset$. Since $\pi_G|_\Gamma$ is injective we deduce with \eqref{FLambdaInverse} that
\[
\emptyset \neq   \pi_G((K^{-1}g \times L^{-1}) \cap \Gamma) \subset K^{-1}g \cap    \pi_G((G \times L^{-1}) \cap \Gamma) \subset K^{-1}g \cap F\Lambda^{-1},
\]
hence $g \in KF\Lambda^{-1}$. Since $g \in G$ was arbitrary we obtain $G = KF\Lambda^{-1}$ and hence $G = G^{-1} = \Lambda F^{-1}K^{-1}$, showing that $\Lambda$ is relatively dense.

(iv) Assume that $\Lambda$ is relatively dense and let $K \subset G$ compact with $G = \Lambda K$. Let $L$ be a compact identity neighbourhood in $H$. We claim that $G \times H = \Gamma(K \times W_0^{-1}U)$. Indeed, let $(g,h) \in G \times H$; since $\Gamma_H$ is dense in $H$ we find $\gamma_1 \in \Gamma_G$ and $u \in U$ such that $h = \tau(\gamma_1) u$. Since $G = \Lambda K$ we then find $\gamma_2 \in \Lambda = \tau^{-1}(W_0)$ and $k \in K$ with $\gamma_1^{-1}g = \gamma_2k$. Then
\[
(g, h) = (\gamma_1, \tau(\gamma_1))(\gamma_1^{-1}g, u) =  (\gamma_1, \tau(\gamma_1))(\gamma_2, \tau(\gamma_2))(k, \tau(\gamma_2)^{-1}u) \in \Gamma(K \times W_0^{-1}U).
\]
Since $(g,h) \in G \times H$ was arbitrary, this finishes the proof.
\end{proof}
\begin{remark} The assumption that the window $W_0$ has non-empty interior is crucial in the proof of Proposition \ref{ModelSet}, and the proposition fails for weak model sets. For example, let $V := \{(x,y)\in \Z^2 \mid {\rm gcd}(x,y) = 1\}$ be the set of visible lattice points in $\R^2$. Then $V$ is a weak model set, which is uniformly discrete and satisfies $V^2= \Z^2$, but is not relatively dense in $\Z^2$ (or equivalently $\R^2$, see e.g. \cite[Prop. 10.4]{BaakeGrimm}). Moreover, since $V^2 = \Z^2$ and $V$ is not relatively dense in $\Z^2$, it does not satisfy (AG3). Thus $V \cup \{(0,0)\}$ is not even an approximate group.
\end{remark}
Combining Proposition \ref{ModelSet} with Corollary \ref{RelDenseSubsets} we deduce:
\begin{example} A Meyer set is a uniform approximate lattice if it is symmetric and contains the identity.
\end{example}
The following converse theorem is due to Meyer \cite{Meyer}; see \cite{Moody} for a detailed discussion.
\begin{theorem}[Meyer] Every uniform approximate lattice in an abelian lcsc group is a Meyer set.
\end{theorem}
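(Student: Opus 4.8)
The plan is to exhibit $\Lambda$ explicitly as a relatively dense subset of a uniform model set. Since $\Lambda$ is already relatively dense in $G$ by hypothesis, it suffices to produce a \emph{uniform cut-and-project scheme} $(G,H,\Gamma)$ together with a compact window $W_0\subset H$ with nonempty interior such that $\Lambda\subset P_0(\Gamma,W_0)$; Proposition~\ref{ModelSet} then identifies $P_0(\Gamma,W_0)$ as a uniform model set, of which $\Lambda$ is a relatively dense subset, i.e.\ a Meyer set. The whole problem therefore reduces to the following: construct an lcsc abelian group $H$ and an injective homomorphism $\tau\colon\Lambda^\infty\to H$ with dense image such that (a) $\tau(\Lambda)$ is relatively compact in $H$, and (b) the graph $\Gamma:=\{(x,\tau(x)):x\in\Lambda^\infty\}$ is a \emph{uniform} lattice in $G\times H$. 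Granting (a) and (b), the projection $\pi_G|_\Gamma$ is a bijection onto $\Lambda^\infty$ whose associated $*$-map is exactly $\tau$, so $(G,H,\Gamma)$ is a uniform cut-and-project scheme; taking $W_0$ to be a compact neighbourhood of $\overline{\tau(\Lambda)}$ (which has nonempty interior since $H$ is locally compact) yields $\Lambda\subset\tau^{-1}(W_0)=P_0(\Gamma,W_0)$, as required.

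Before constructing $H$ I would record the structural input from the earlier results. By Proposition~\ref{PropDiscreteEquiv} every power $\Lambda^k$ is uniformly discrete, so $\Lambda^\infty$ has finite local complexity at every level; moreover (AG3) provides a finite set $F$ with $\Lambda^2\subset F\Lambda$, and since $G$ is abelian an easy induction gives $\Lambda^n\subset F^{n-1}\Lambda$, whence $\Lambda^\infty=\langle F\rangle\Lambda$ with $\langle F\rangle$ finitely generated. In particular $\Lambda^\infty$ is a countable subgroup of $G$ which, containing the relatively dense set $\Lambda$, is itself relatively dense in $G$. To build $(H,\tau)$ I would then follow Meyer's harmonic-analytic route through the dual group $\widehat G$: for $0<\eps<1$ set $\Lambda^{\eps}:=\{\chi\in\widehat G:\ |\chi(\lambda)-1|\le\eps\ \text{for all }\lambda\in\Lambda\}$. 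The crux is to prove that $\Lambda$ is \emph{harmonious}, i.e.\ that $\Lambda^{\eps}$ is relatively dense in $\widehat G$ for all small $\eps$; this is where finite local complexity together with $\Lambda^2\subset F\Lambda$ enters, through an averaging/pigeonhole argument over the characters that are nearly trivial on the relatively dense set $\Lambda$. Given harmoniousness, one equips $\Lambda^\infty$ with the \emph{coarser} group topology $\mathcal U$ whose basic identity neighbourhoods are pulled back from these nearly-trivial characters, defines $H$ as the Hausdorff completion of $(\Lambda^\infty,\mathcal U)$, and lets $\tau$ be the canonical (dense-image) map into $H$.

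It then remains to verify (a) and (b). Relative density of the $\eps$-duals is precisely what makes $\Lambda$ \emph{totally bounded} for $\mathcal U$, giving (a); uniform discreteness of the level sets $\Lambda^k$ makes $\mathcal U$ transverse to the topology of $G$, so the product topology restricts to the discrete topology on $\Gamma$, and relative density of $\Lambda^\infty$ in $G$ together with density of $\tau(\Lambda^\infty)$ in $H$ upgrades discreteness to cocompactness, giving the uniform-lattice half of (b). With $(H,\tau)$ in hand the reduction of the first paragraph finishes the proof. As a sanity check, when $\Lambda^\infty$ happens to be finitely generated the construction degenerates to the linear-algebra picture of choosing a complementary embedding, e.g.\ $H=\R$ and $\tau$ the Galois conjugation for the $\mathbb{Z}[\sqrt{2}]$-model set.

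The main obstacle is exactly this harmonious property, together with the tension built into (a) versus (b): to make $\tau(\Lambda)$ precompact one wants $\mathcal U$ as coarse as possible, whereas to keep $\Gamma$ discrete one needs $\mathcal U$ fine enough to separate the $G$-small elements of $\Lambda^\infty$. Balancing these competing demands — equivalently, showing that the relatively dense family $(\Lambda^{\eps})_{\eps}$ is rich enough to complete $\Lambda^\infty$ to a lattice complement of $G$ — is the essential content of Meyer's theorem, and I would either reprove it in the lcsc setting along the lines above or invoke it directly (see \cite{Meyer} and the exposition in \cite{Moody}).
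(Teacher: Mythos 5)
The paper does not actually prove this statement: it is recorded as a theorem of Meyer and the reader is referred to \cite{Meyer} and to \cite{Moody} for a detailed discussion. Your proposal is a faithful sketch of exactly that classical route (reduce to constructing an abelian lcsc group $H$ and a dense-image homomorphism $\tau:\Lambda^\infty\to H$ with $\tau(\Lambda)$ precompact and $\Gamma=\{(x,\tau(x))\}$ a uniform lattice in $G\times H$, then take $W_0$ a compact neighbourhood of $\overline{\tau(\Lambda)}$), and your structural preliminaries are correct: Proposition \ref{PropDiscreteEquiv} does give uniform discreteness of all $\Lambda^k$, the induction $\Lambda^n\subset F^{n-1}\Lambda$ is valid, and the reduction in your first paragraph is sound given (a) and (b).

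The genuine gap is that (a) and (b) are not established, and they are not routine consequences of the framing — they \emph{are} Meyer's theorem. Concretely: (i) you assert that $\Lambda$ is harmonious, i.e.\ that the $\eps$-dual sets $\Lambda^{\eps}\subset\widehat G$ are relatively dense, "through an averaging/pigeonhole argument", but no such argument is given, and this is the analytic heart of the result (in Meyer's treatment it rests on a Fourier-analytic study of the autocorrelation of $\Lambda$, or on the arithmetic characterization of Meyer sets via $\Lambda-\Lambda\subset\Lambda+F$); (ii) even granting harmoniousness, local compactness of the completion $H$, discreteness of $\Gamma$ in $G\times H$, and cocompactness of $\Gamma$ each require separate verification — in particular discreteness does not follow merely from uniform discreteness of the $\Lambda^k$ without controlling how $\mathcal U$ interacts with $G$-small elements of $\Lambda^\infty$, which is the "tension" you yourself identify. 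Since you explicitly flag this and offer to "invoke it directly", your proposal is best read as an accurate outline of the standard proof with its central lemma cited rather than proved; as a self-contained argument it does not close. This puts you on the same footing as the paper, which likewise defers entirely to \cite{Meyer} and \cite{Moody}.
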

Currently we do not know a single example of a uniform approximate lattice in a non-abelian lcsc group for which we can show that it is not Meyer.
\begin{problem} For which lcsc groups $G$ is every uniform approximate lattice in $G$ a Meyer set?
\end{problem}
\begin{remark} A related question is whether certain classes of approximate lattices are contained in specific classes of model sets. For example, it is proved in \cite{BF} that every ``sufficiently aperiodic'' $2$-approximate lattice in a countable amenable group is a subsets of a Sturmian set (a very special model set) with the same upper Banach density.
\end{remark} 

\subsection{Uniform approximate lattices in nilpotent Lie groups without lattices}\label{SubsecLie}

The goal of this subsection is to show that there exist Lie groups which admit approximate uniform lattices, but no lattices. In fact, these Lie groups can be chosen to be nilpotent and of dimension $7$. The following concrete example below was pointed out to us by Y. Benoist.

The list of all $7$-dimensional nilpotent real Lie algebras is provided in \cite{Gong}. According to item (123457I) of this list, there exists a family of pairwise non-isomorphic nilpotent real Lie algebra $\mathfrak g_\lambda$ parametrized by $\lambda \in \R \setminus \{0,1\}$ with basis $X_1, \dots, X_7$ and bracket relations
\[
[X_1, X_i] = X_{i+1} \; (2 \leq i \leq 6), \quad [X_2, X_3] = X_5, \quad [X_2, X_4] = X_6,\]
\[[X_2, X_5] = \lambda X_7, \quad [X_3, X_4] = (1-\lambda)X_7,\]
where we use the standard convention that all Lie brackets not determined by the relations above are $0$. (The definition actually extends to $\lambda \in \{0,1\}$, but the argument in Lemma \ref{MalcevBasis} does not.)

We recall from \cite[Note, following Thm. 1.1.13]{CorwinGreenleaf} that a basis $(Y_1, \dots, Y_7)$ of $\mathfrak g_\lambda$  is called a \emph{strong Malcev basis} provided that the linear subspaces $\mathfrak h_m := \langle Y_1, \dots, Y_m \rangle \subset \mathfrak g_\lambda$ are ideals for all $m=1, \dots, 7$. 
\begin{lemma}\label{MalcevBasis} Let $Y_1, \dots, Y_7$ be a strong Malcev basis of $\mathfrak g_\lambda$ for some $\lambda \in \R \setminus \{0,1\}$. Then there exist $\mu_1, \dots, \mu_7 \in \R^\times$ such that
\[
Y_1 = \mu_7 X_7, \; Y_2 = \mu_6 X_6, \;\dots,\; Y_6 = \mu_2X_2,\; Y_7 = \mu_1 X_1.
\]
\end{lemma}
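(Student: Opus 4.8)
The plan is to show that the flag $\goh_1 \subset \cdots \subset \goh_7$ underlying a strong Malcev basis is essentially forced by the descending central series of $\gog_\lambda$, which I first compute explicitly. Writing $C^1 = \gog_\lambda$ and $C^{k+1} = [\gog_\lambda, C^k]$, the given brackets yield
\[
C^2 = \langle X_3, \dots, X_7\rangle, \quad C^3 = \langle X_4, \dots, X_7\rangle, \quad \dots, \quad C^6 = \langle X_7\rangle, \quad C^7 = 0,
\]
so that $\gog_\lambda$ is a $6$-step filiform algebra whose descending central series is a flag of coordinate subspaces and whose centre is $Z(\gog_\lambda) = C^6 = \langle X_7\rangle$. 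The only feature worth recording is that the first step drops the two generators $X_1, X_2$ simultaneously, while every later step drops a single dimension; this asymmetry is what makes the top of the flag subtle.

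The engine of the argument is the elementary fact that in a nilpotent Lie algebra every one-dimensional ideal is central: if $[\gog_\lambda, Y] \subseteq \langle Y\rangle$, then $\ad(x)Y = c(x)Y$, and nilpotency of $\ad(x)$ forces $c(x) = 0$. Applied to the flag, this gives $\goh_m/\goh_{m-1} \subseteq Z(\gog_\lambda/\goh_{m-1})$ for all $m$. I would then run a bottom-up induction. First $\goh_1 \subseteq Z(\gog_\lambda) = \langle X_7\rangle$, so $Y_1 = \mu_7 X_7$. Passing to $\gog_\lambda/\langle X_7\rangle$, a direct computation shows its centre is the image of $\langle X_6, X_7\rangle$, whence $\goh_2 = \langle X_6, X_7\rangle$; iterating, each quotient $\gog_\lambda/\goh_{m-1}$ for $m \leq 5$ has one-dimensional centre equal to the image of $\langle X_{8-m}, \dots, X_7\rangle$. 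This pins down $\goh_m = C^{7-m} = \langle X_{8-m}, \dots, X_7\rangle$ for $m = 1, \dots, 5$, so that $Y_m$ is a nonzero scalar multiple of $X_{8-m}$ modulo $\goh_{m-1}$; rescaling produces $Y_m = \mu_{8-m} X_{8-m}$ in these degrees.

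The main obstacle is the top of the flag. Once $\goh_5 = C^2 = [\gog_\lambda, \gog_\lambda]$, the quotient $\gog_\lambda/\goh_5$ is two-dimensional and abelian, so its centre is everything and the induction can no longer single out $\goh_6$. The crux is therefore to \emph{orient} the last two steps and break the $X_1 \leftrightarrow X_2$ symmetry, and for this I would exploit that the nilpotency index of $\ad$ is an isomorphism invariant. Using the relations $[X_2,X_3]=X_5$, $[X_2,X_5]=\lambda X_7$ and $[X_3,X_4]=(1-\lambda)X_7$, and the fact that $\ad(x)$ shifts the filtration $C^k$, one checks that $\ad(X_1)^5 \neq 0$ whereas $\ad(x)^4 = 0$ for every $x \in \langle X_2, \dots, X_7\rangle$; hence $X_1$ is regular and $\langle X_2, \dots, X_7\rangle$ is a characteristic hyperplane of non-regular elements. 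The remaining — and genuinely delicate — step is to show that the strong Malcev flag must place the regular direction at the very top, i.e.\ that $\goh_6 = \langle X_2, \dots, X_7\rangle$ and $Y_7$ has nonzero $X_1$-component; granting this, a final rescaling (and a check that the admissible lower-order admixtures can be normalized away) yields $Y_7 = \mu_1 X_1$ and $Y_6 = \mu_2 X_2$. I expect this orientation of the top of the flag, rather than the bottom-up center computation, to be the heart of the matter.
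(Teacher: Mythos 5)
Your bottom-up computation is correct as far as it goes: the centres of the successive quotients do pin down $\mathfrak h_m=\langle X_{8-m},\dots,X_7\rangle$ for $m\le 5$, and this is actually \emph{more} careful than the paper's own argument, which invokes only $\ad(X_1)$-invariance of the flag and therefore cannot by itself exclude an $X_1$-component from $Y_1,\dots,Y_5$ (one needs $\ad(X_2)$, i.e.\ the full centrality you use). But your proof is not complete, and the step you defer (``granting this'') is not merely delicate --- it is false. Since $[\mathfrak g_\lambda,\mathfrak g_\lambda]=\langle X_3,\dots,X_7\rangle=\mathfrak h_5$, \emph{every} hyperplane of $\mathfrak g_\lambda$ containing $\mathfrak h_5$ is an ideal; in particular $Y_1=X_7,\,Y_2=X_6,\dots,Y_5=X_3,\,Y_6=X_1,\,Y_7=X_2$ is a strong Malcev basis with $\mathfrak h_6=\langle X_1\rangle\oplus\mathfrak h_5$. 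Your observation that $X_1$ is the regular direction ($\ad(X_1)^5\ne 0$ while $\ad(x)^4=0$ on the complementary hyperplane) is correct but cannot help: the strong Malcev condition only asks $\mathfrak h_6$ to be an ideal, not to avoid regular elements, so no invariant will force the regular direction to the top. A milder version of the same problem occurs at every level: $Y_2=X_6+X_7$ is also admissible, and such lower-order admixtures cannot be ``normalized away'' by rescaling, so the literal conclusion $Y_m=\mu_{8-m}X_{8-m}$ is unattainable; only $Y_m\equiv\mu_{8-m}X_{8-m}\pmod{\mathfrak h_{m-1}}$ can be proved.

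In fairness, the paper's one-line recursion glosses over exactly these two points, and Lemma \ref{MalcevBasis} as stated is too strong; the honest statement is the congruence modulo $\mathfrak h_{m-1}$ for $m\le 5$ together with the fact that $Y_6,Y_7$ project to an arbitrary basis of $\mathfrak g_\lambda/[\mathfrak g_\lambda,\mathfrak g_\lambda]\cong\langle X_1,X_2\rangle$. For the transposed basis above this is harmless, since the set of rationality conditions \eqref{Rationality} is unchanged under relabelling; for a genuinely skew basis of the top quotient the computation in Proposition \ref{NoLattice} has to be redone, but the conclusion $\lambda\in\Q$ survives. So the correct repair is not to prove the orientation claim you identify as the heart of the matter --- it cannot be proved --- but to weaken the lemma to a statement about the associated graded basis and absorb the residual two-dimensional ambiguity into the rationality argument downstream.
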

\begin{proof} If $Y_1, \dots, Y_7$ is a strong Malcev basis, then $[X_1, Y_1] \in \R \cdot Y_1$. Since
\[
\left[X_1, \sum_{j=1}^7 \mu_j X_j \right] = \sum_{j=1}^6 \mu_j X_{j+1},
\]
this is only possible if $Y_1 = \mu_7X_7$ for some $\mu_7 \in \R^\times$ and thus $\mathfrak h_1 = \R \cdot Y_7$. Similarly the condition $[X_1, \mathfrak h_2/\mathfrak h_1] \subset \mathfrak h_2/\mathfrak h_1$ implies that $Y_2 = \mu_6 X_6$, and recursively we obtain the condition of the lemma.
\end{proof}
Given $\lambda \in \R \setminus\{0,1\}$ let us denote by $G_\lambda$ the unique simply-connected Lie group with Lie algebra $\mathfrak g_\lambda$.
\begin{proposition}\label{NoLattice} The Lie group $G_\lambda$ admits a (uniform) lattice if and only if $\lambda \in \mathbb Q \setminus\{0,1\}$.
\end{proposition}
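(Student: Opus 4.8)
The plan is to invoke Malcev's criterion (\cite[Thm. 2.12]{Raghunathan}): a simply-connected nilpotent Lie group admits a lattice if and only if its Lie algebra possesses a basis with respect to which all structure constants are rational, equivalently a rational structure $\mathfrak{g}_{\mathbb{Q}}$ with $\mathfrak{g}_\lambda = \mathfrak{g}_{\mathbb{Q}} \otimes_{\mathbb{Q}} \R$. The ``if'' direction is immediate: when $\lambda \in \mathbb{Q} \setminus \{0,1\}$ the structure constants of $\mathfrak{g}_\lambda$ in the given basis $X_1, \dots, X_7$ all lie in $\{1, \lambda, 1-\lambda\} \subset \mathbb{Q}$, so $G_\lambda$ admits a uniform lattice. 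It remains to prove that the existence of a lattice forces $\lambda \in \mathbb{Q}$.

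So suppose $G_\lambda$ admits a lattice. By Malcev's theorem $\mathfrak{g}_\lambda$ carries a rational structure $\mathfrak{g}_{\mathbb{Q}}$. The terms of the upper central series of $\mathfrak{g}_\lambda$ are then defined over $\mathbb{Q}$, and refining this rational flag of ideals to a full flag of rational ideals produces a \emph{rational strong Malcev basis} $Y_1, \dots, Y_7 \in \mathfrak{g}_{\mathbb{Q}}$ (the standard compatibility of rational structures with strong Malcev bases, cf.\ \cite{CorwinGreenleaf}). Now I apply Lemma \ref{MalcevBasis}: any strong Malcev basis of $\mathfrak{g}_\lambda$ has the diagonal form $Y_1 = \mu_7 X_7, \dots, Y_7 = \mu_1 X_1$ with $\mu_i \in \R^\times$. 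The idea is that rationality of the structure constants in the basis $(Y_i)$ translates into rationality of certain monomials in the $\mu_i$, and these relations are rigid enough to pin down $\lambda$.

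Concretely, I would compute the relevant brackets $[Y_i, Y_j]$ from the bracket relations of $\mathfrak{g}_\lambda$; four of them suffice:
\begin{align*}
[Y_7, Y_6] &= (\mu_1\mu_2/\mu_3)\, Y_5, & [Y_7, Y_4] &= (\mu_1\mu_4/\mu_5)\, Y_3,\\
[Y_6, Y_3] &= \lambda(\mu_2\mu_5/\mu_7)\, Y_1, & [Y_5, Y_4] &= (1-\lambda)(\mu_3\mu_4/\mu_7)\, Y_1.
\end{align*}
Writing $c_1 = \mu_1\mu_2/\mu_3$, $c_3 = \mu_1\mu_4/\mu_5$, $e_1 = \lambda\mu_2\mu_5/\mu_7$ and $e_2 = (1-\lambda)\mu_3\mu_4/\mu_7$, all four lie in $\mathbb{Q}$, and all four are nonzero because the corresponding brackets of the $X_i$ are nonzero (here the hypotheses $\lambda \neq 0$ and $\lambda \neq 1$ are used) and the $\mu_i$ are units. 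Since $c_1/c_3 = \mu_2\mu_5/(\mu_3\mu_4)$, one obtains
\[
\frac{e_1}{e_2} \cdot \frac{c_3}{c_1} = \frac{\lambda}{1-\lambda} \in \mathbb{Q},
\]
and solving $\lambda/(1-\lambda) = q$ gives $\lambda = q/(1+q)$ with $1+q = 1/(1-\lambda) \neq 0$; hence $\lambda \in \mathbb{Q}$, as required.

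I expect the only genuine obstacle to be the reduction in the second paragraph: Lemma \ref{MalcevBasis} applies to \emph{strong} Malcev bases, whereas Malcev's theorem a priori supplies only \emph{some} rational basis. The crux is therefore to argue that a rational structure always admits a rational strong Malcev basis — which holds because the upper central series of a rational nilpotent Lie algebra is rational, and any refinement of it by rational subspaces automatically consists of ideals (each refining step sits inside a successive center of the corresponding quotient). Once this compatibility is in place, the structure-constant computation above is entirely routine.
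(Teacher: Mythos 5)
Your proposal is correct and follows essentially the same route as the paper: reduce to a rational strong Malcev basis (the paper cites \cite[Thm. 5.1.6 and Thm. 5.1.8]{CorwinGreenleaf} directly, where you re-derive the compatibility from rationality of the upper central series, which is fine), apply Lemma \ref{MalcevBasis} to force the diagonal form $Y_i = \mu_{8-i}X_{8-i}$, and extract $\lambda \in \Q$ from rationality of monomials in the $\mu_i$. Your particular combination of four brackets yielding $\lambda/(1-\lambda) \in \Q$ is a slightly more economical version of the paper's chain of $\equiv_\Q$ relations, but it is the same idea.
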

\begin{proof} By \cite[Thm. 5.1.8]{CorwinGreenleaf} the group $G_\lambda$ admits a (uniform) lattice if it admits a basis with rational structure constants. If $\lambda \in \Q\setminus\{0,1\}$, then $X_1, \dots, X_7$ is such a basis.
For the converse, we observe that it follows from \cite[Thm. 5.1.6 and Thm. 5.1.8]{CorwinGreenleaf} that if $G_\lambda$ contains a (uniform) lattice, then not only does it admit a basis with rational structure constants, but even a strong Malcev basis with rational structure constants. By Lemma \ref{MalcevBasis} this implies that there exist $\mu_1, \dots, \mu_7 \in \R^\times$ such that
\[
[\mu_i X_i, \mu_j X_j] \in \Q \quad (1 \leq i,j \leq 7),
\]
which amounts to
\begin{equation}\label{Rationality}
\left\{\frac{\mu_1\mu_2}{\mu_3}, \frac{\mu_1\mu_3}{\mu_4},  \frac{\mu_1\mu_4}{\mu_5},\frac{\mu_1\mu_5}{\mu_6},  \frac{\mu_1\mu_6}{\mu_7},   \frac{\mu_2\mu_3}{\mu_5},   \frac{\mu_2\mu_4}{\mu_6},   \frac{\mu_2\mu_5}{\lambda \mu_7},   \frac{\mu_3\mu_4}{(1-\lambda)\mu_7}\right\} \subset \Q.
\end{equation}
Thus if we write $a \equiv_\Q b$ to denote that $a,b\in \R^\times$ are rational multiples of each other, then we have
\[
\mu_7  \equiv_\Q \mu_1\mu_6 \equiv_\Q \mu_1\mu_2\mu_4 = \mu_2 \mu_1\mu_4  \equiv_\Q \mu_2\mu_5  \equiv_\Q \lambda \mu_7,
\]
hence $\lambda  \equiv_\Q 1$, i.e.\ $\lambda$ is rational.
\end{proof}
\begin{corollary}\label{Bigger} Let $\lambda \in \overline{\Q} \setminus \Q$. Then $G_\lambda$ admits a uniform model set, but does not admit any lattice. 
\end{corollary}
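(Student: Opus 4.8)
The plan is to dispatch the two assertions separately. The non-existence of a lattice is immediate: since $\lambda \in \overline{\Q}\setminus\Q$ we in particular have $\lambda \notin \Q$, so Proposition \ref{NoLattice} shows that $G_\lambda$ admits no (uniform) lattice. All the work goes into producing a uniform model set, i.e.\ into building a uniform cut-and-project scheme $(G_\lambda, H, \Gamma)$ and then invoking Proposition \ref{ModelSet}.

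The idea is to exploit that, although the structure constants of $\mathfrak g_\lambda$ are not rational, they lie in the number field $K := \Q(\lambda)$, of degree $d := [K:\Q] \geq 2$. First I would regard $\mathfrak g_\lambda$ as a Lie algebra $\mathfrak g_\lambda^K$ over $K$ (same basis $X_1,\dots,X_7$, same relations, whose constants lie in $\Z[\lambda]\subset K$) and form the restriction of scalars $\mathfrak q := \operatorname{Res}_{K/\Q}(\mathfrak g_\lambda^K)$, a $\Q$-Lie algebra of dimension $7d$. The key point is that $\mathfrak q$ has rational structure constants: fixing a $\Q$-basis $\omega_1,\dots,\omega_d$ of $K$, the vectors $\omega_a X_i$ form a $\Q$-basis and $[\omega_a X_i,\omega_b X_j] = \sum_{c,k} r_{abk}^{c}\,\omega_c X_k$ with $r_{abk}^c\in\Q$, since both the products $\omega_a\omega_b$ and the $K$-valued constants of $\mathfrak g_\lambda^K$ re-expand $\Q$-rationally in the $\omega_c$. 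By Malcev's criterion \cite[Thm. 5.1.8]{CorwinGreenleaf} the simply connected nilpotent Lie group $\mathbf H$ with $\Lie(\mathbf H)=\mathfrak q\otimes_\Q\R$ therefore admits a lattice $\Gamma$, which is automatically uniform since $\mathbf H$ is nilpotent.

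Next I would unpack the real group. Using $K\otimes_\Q\R\cong\prod_v K_v$ over the archimedean places $v$ of $K$, one gets $\mathfrak q\otimes_\Q\R\cong \prod_v\bigl(\mathfrak g_\lambda^K\otimes_{K,\sigma_v}K_v\bigr)$; as $\lambda$ is real, the place $v_1$ given by $\sigma_1=\mathrm{id}$ is real and its factor is exactly $\mathfrak g_\lambda$. Hence $\mathbf H\cong G_\lambda\times H$ as a direct product, where $H$ is the simply connected nilpotent group attached to the remaining factors, and $\Gamma< G_\lambda\times H$. It remains to check that $(G_\lambda,H,\Gamma)$ is a cut-and-project scheme, i.e.\ that $\pi_{G_\lambda}|_\Gamma$ is injective and $\pi_H(\Gamma)$ is dense. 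Injectivity is easy: on the rational points $\mathfrak q_\Q$ the projection to the $v_1$-factor is $\sum_{a,i} c_{ai}\omega_a X_i\mapsto \sum_{a,i} c_{ai}\omega_a X_i$ (here $\sigma_1=\mathrm{id}$), which vanishes only when all $c_{ai}=0$ by $\Q$-independence of the $\omega_a$; thus $\Gamma\cap(\{e\}\times H)=\{e\}$. With injectivity and density in hand, relative density and uniform discreteness of the resulting model set are automatic from Proposition \ref{ModelSet}, so choosing any compact window $W_0\subset H$ with non-empty interior (symmetric and containing $e$, if one wants a uniform approximate lattice) produces a uniform model set in $G_\lambda$.

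The hard part is the density of $\pi_H(\Gamma)$ in $H$, i.e.\ the irreducibility of the restriction-of-scalars lattice. Concretely one must show that the image of the rational lattice $\log\Gamma$ under the projection to the complementary factors $\prod_{v\neq v_1}K_v$ is dense, which is the classical density statement underlying arithmetic cut-and-project schemes over number fields (as in Meyer's construction; see \cite{Meyer, Moody}). I would prove it by duality: it suffices that no nonzero $\Q$-rational linear functional on $\mathfrak q\otimes_\Q\R$ vanishes on the $v_1$-factor $\mathfrak g_\lambda$, equivalently that the smallest $\Q$-rational subspace containing $\mathfrak g_\lambda$ is everything. After extension of scalars to $\overline\Q$ the $\Q$-structure is stable under $\mathrm{Gal}(\overline\Q/\Q)$, which permutes the conjugate factors $\mathfrak g_\lambda^K\otimes_{K,\sigma}\overline\Q$ transitively, so any rational subspace containing the $\sigma_1$-factor must contain them all and hence equal $\mathfrak q\otimes_\Q\overline\Q$. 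This is exactly where $[K:\Q]\geq 2$ (i.e.\ $\lambda\notin\Q$) enters, and it is the crux of the construction.
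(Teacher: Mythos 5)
Your proof is correct and follows essentially the same route as the paper's: restriction of scalars from $K=\Q(\lambda)$ to $\Q$ produces a $\Q$-structure on $G_\lambda\times H$, hence (by Malcev's criterion) a uniform lattice $\Gamma$ whose irreducibility makes $(G_\lambda,H,\Gamma)$ a uniform cut-and-project scheme, and Proposition \ref{ModelSet} finishes. The only difference is that you carry out the Weil restriction at the Lie-algebra level and actually supply the injectivity and Galois-transitivity/duality argument for density of $\pi_H(\Gamma)$, which the paper compresses into ``one can show that this lattice is irreducible.''
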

\begin{proof} That $G_\lambda$ does not admit a lattice was already established in Proposition \ref{NoLattice}. Conversely, let $K := \Q(\lambda)$ and let $n := [K:\Q] < \infty$.
Since the structure constants with respect to the basis $(X_1, \dots, X_7)$ of $\mathfrak g_\lambda$ are contained in $K$, the group $G_\lambda = \mathbb G(\R)$ is given by the real points of an algebraic group $\mathbb G$ defined over $K$. Let $\mathbb H := {\rm Res}_{K/\Q}\mathbb G$ denote the Weil restriction of $\mathbb G$ to $\Q$. Then $\mathbb H$ is defined over $\Q$, and there exists a real Lie group $H$ such that  $\mathbb H(\R) = G_\lambda \times H$. Since $\mathbb H$ is defined over $\Q$ the group $G_\lambda \times H$ contains a uniform lattice $\Gamma$, and one can show that this lattice is irreducible. It thus gives rise to a uniform model set in 
$G_\lambda$ via the construction in Example \ref{Ex2Intro} (by choosing appropriate compact windows in $H$).
\end{proof}
The same argument applies to any simply-connected nilpotent algebraic group, which is defined over a number field, but not over $\Q$. However, since writing out such examples explicitly in higher dimension gets complicated very quickly, we confine ourself to one more example with interesting additional properties.
\begin{example}\label{Elek} A class of $8$-dimensional nilpotent Lie algebras $\mathfrak g_d$, parametrized by $d \in \R$, is given by generators $\{X_1, \dots, X_5, Y_1, Y_2\}$ and bracket relations
\[
[X_1, X_2] = [X_3, X_4] = Y_1,\quad [X_3, X_5] = [X_6, X_4] = Y_2, \quad [X_5, X_6] = dY_1,
\]
where again we use the convention that all Lie brackets not determined by the relations above are $0$. One can again show, that the corresponding simply-connected Lie group $G_d$ admits a (uniform) lattice if and only if $d \in \Q$, though the proof is more involved (see \cite{Scheunemann}). On the other hand, one shows just as in Corollary \ref{Bigger} that if $d \in \overline{\Q}$, then $G_d$ admits a uniform model set, and hence a uniform approximate lattice. This example is remarkable for the following reason: It was established in \cite[Prop. 2]{ElekTardos} that for $d \not \in \Q$ the group $G_d$ is not quasi-isometric to any finitely-generated group (see Subsection \ref{SecQIclass} below for a discussion of the canonical QI class of a compactly-generated group), in fact not even to any vertex-transitive graph. On the other hand, we will see in Theorem \ref{MilnorSchwarz} below that for $d \in \overline{\Q}\setminus \Q$ it is quasi-isometric to a finitely-generated \emph{approximate} group, namely any of its uniform approximate lattices.
\end{example}

\subsection{Finite generation}
In analogy with the group case we say that an approximate group $(\Lambda, \Lambda^\infty)$ is \emph{finitely generated} if $\Lambda^\infty$ is finitely generated as an abstract group. 
Recall that a uniform lattice in a lcsc group $G$ is finitely generated if and only if $G$ is compactly generated \cite[Prop. 5.C.3]{CdlH}. This statements generalizes to uniform approximate lattices. 
\begin{theorem}\label{ThmFiniteType}\label{CompatibleGeneratingSets} Assume $\Lambda \subset G$ is a uniform approximate lattice. Then the following are equivalent.
\begin{enumerate}[(i)]
\item $G$ is compactly generated.
\item $\Lambda$ is finitely generated.
\end{enumerate}
In this case, there exists a compact subset $K \subset G$ such that
\[
G=\langle K \rangle \quad \text{and} \quad \langle \Lambda \rangle = \langle \Lambda^2 \cap K \rangle.
\]
\end{theorem}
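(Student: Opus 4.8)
The plan is to treat the two implications separately, with essentially all the work in $(i) \Rightarrow (ii)$, and to extract the refined generating statement as a by-product of that argument.

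First I would dispose of $(ii) \Rightarrow (i)$, which is purely formal. By Proposition \ref{TopCharDelone}.(ii) relative density of $\Lambda$ yields a compact set $K_0$ with $G = \Lambda K_0$, and finite generation of $(\Lambda, \Lambda^\infty)$ yields a finite set $T \subseteq \Lambda^\infty = \langle \Lambda\rangle \subseteq G$ with $\langle T\rangle = \Lambda^\infty$. Then $K_0 \cup T$ is compact, and since $\Lambda \subseteq \Lambda^\infty = \langle T\rangle \subseteq \langle K_0 \cup T\rangle$ one gets $G = \Lambda K_0 \subseteq \langle K_0 \cup T\rangle$, so $G$ is compactly generated.

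The substantive direction is $(i) \Rightarrow (ii)$, for which I would run a Milnor--Schwarz-style telescoping argument. Fix a compact symmetric $C$ with $e \in C$ and $\langle C\rangle = G$, a compact $K_0$ with $G = \Lambda K_0$, and set $K := C \cup C^{-1} \cup K_0 \cup K_0^{-1} \cup\{e\}$, a compact symmetric identity neighbourhood with $\langle K\rangle = G$ and $G = \Lambda K$. Given $\lambda \in \Lambda$ I would write $\lambda = c_1 \cdots c_n$ with $c_i \in C \subseteq K$, form the partial products $g_0 = e,\, g_1 = c_1,\, \ldots,\, g_n = \lambda$, and use $G = \Lambda K$ to pick for each $j$ a decomposition $g_j = \mu_j a_j$ with $\mu_j \in \Lambda$ and $a_j \in K$; here I take $\mu_0 = a_0 = e$ and, since $\lambda \in \Lambda$, $\mu_n = \lambda$, $a_n = e$. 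Comparing $g_j = g_{j-1}c_j$ with the two decompositions gives
\[
\mu_{j-1}^{-1}\mu_j = a_{j-1} c_j a_j^{-1} \in K^3,
\]
while symmetry of $\Lambda$ gives $\mu_{j-1}^{-1}\mu_j \in \Lambda^{-1}\Lambda = \Lambda^2$. Hence each consecutive quotient lies in $S := \Lambda^2 \cap K^3$, and telescoping $\lambda = \mu_n = (\mu_0^{-1}\mu_1)(\mu_1^{-1}\mu_2)\cdots(\mu_{n-1}^{-1}\mu_n)$ exhibits $\lambda \in \langle S\rangle$.

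Finiteness of $S$ is where the approximate-lattice hypothesis enters, and it is the one genuinely delicate point. By Proposition \ref{PropDiscreteEquiv}.(ii) the set $\Lambda^2$ is uniformly discrete, hence locally finite by \eqref{Discreteness}, so $S = \Lambda^2 \cap K^3$ is a finite subset of $\Lambda^2$. Since the above holds for every $\lambda \in \Lambda$ and $\Lambda$ generates $\Lambda^\infty$, I conclude $\Lambda^\infty = \langle S\rangle$ is finitely generated. To obtain the displayed refinement with a single compact set, I relabel $K^3$ as $K$: the new $K$ still generates $G$ (as $C \subseteq K \subseteq K^3$) and now satisfies $\langle \Lambda\rangle = \langle \Lambda^2 \cap K\rangle$. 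The place where the argument departs from the classical lattice case — where $\Gamma^2 = \Gamma$ is discrete for free — is exactly that the generators produced lie in $\Lambda^2$ rather than in $\Lambda$; it is precisely the uniform discreteness of $\Lambda^2$ from Proposition \ref{PropDiscreteEquiv}, a non-trivial consequence of the approximate-group axioms together with the Delone property, that renders $S$ finite.
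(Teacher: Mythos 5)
Your proof is correct and follows essentially the same Milnor--Schwarz-type telescoping argument as the paper: approximate a chain from $e$ to $\lambda$ by points of $\Lambda$, observe that consecutive quotients lie in $\Lambda^2$ intersected with a fixed compact set, which is finite by Proposition \ref{PropDiscreteEquiv}, and telescope. The only (harmless) difference is that you build the chain from the partial products of a word in a compact generating set, whereas the paper builds it metrically, from coarse connectedness of $(G,d)$ for a proper left-invariant metric, and then takes $K$ to be a large ball.
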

\begin{proof} The implication (ii) $\Rightarrow$ (i) of the proposition is obvious:  If $F$ is a finite generating set for $\Lambda^\infty$ and $K \subset G$ is compact with $G = \Lambda K$, then $F \cup K$ is a compact generating set for $G$. Conversely assume that (i) holds. We fix a proper left-invariant metric $d$ inducing the given topology on $G$. By \cite[Prop. 4.B.8]{CdlH} the space $(G,d)$ is coarsely connected, i.e.\ there exists $C>0$ such that for every $x,y$ in $G$ there exist points $x = x_0, x_1, \dots, x_n = y$ in $G$ with $d(x_i, x_{i+1}) \leq C$ for each $i\in \{0, \dots, n-1\}$. Also, by assumption $\Lambda$ is $R$-relatively dense for some $R > 0$. Let now $K$ be any compact generating set of $G$ containing the ball of radius $C+2R$ around $e$. We claim that 
every element $g \in \Lambda$ is a finite product of elements in $F := \Lambda^2 \cap K$. This will imply that $F$ generates $\Lambda^\infty$ and since $F$ is finite by Proposition \ref{PropDiscreteEquiv} this will finish the proof.

Given $g \in \Lambda$ we choose elements $e = x_0, x_1, \dots, x_n = g$ in $G$ such that $d(x_i, x_{i+1}) \leq C$. Using relative density of $\Lambda$  we choose  $y_i \in B_R(x_i)$ such that $y_i \in \Lambda$. We may assume $y_0 = x_0 = e$ and $y_n = x_n = g$. For every $i\in \{0, \dots, n-1\}$ we then have
\[
d(y_i^{-1}y_{i+1}, e) = d(y_i, y_{i+1}) < C+2R.
\]
On the other hand, $y_i^{-1}y_{i+1} \in \Lambda^2$, hence $y_i^{-1}y_{i+1} \in \Lambda^2 \cap B_{C+2R}(e) \subset F$. Now, using that $y_0 = e$ and that $y_i^{-1}y_{i+1} \in F$ we get
\[
g = y_n = y_{n-1}(y_{n-1}^{-1}y_n) = \dots = (y_0^{-1}y_1)(y_1^{-1}y_2) \cdots (y_{n-1}^{-1}y_n) \in F^n.
\]
This finishes the proof.
\end{proof}

\section{Some geometric approximate group theory}\label{SecAGGT}

\subsection{The canonical QI class of a finitely-generated approximate group} \label{SecQIclass}

Finitely generated groups carry a distinguished quasi-isometry class of metrics, whose study is the subject of geometric group theory. Here we propose a generalization of geometric group theory to finitely generated approximate groups. To this end, let us fix our notation concerning quasi-isometries.


Given metric spaces $(X, d_X), (Y, d_Y)$ a map $f: X \to Y$ is called a \emph{$(K,C)$-quasi-isometric embedding} if for all $x_1, x_2 \in X$,
\[
\frac{1}{K} \cdot (d_X(x_1,x_2) - C) \leq d_Y(f(x_1), f(x_2)) \leq K \cdot d_X(x_1,x_2)+C.
\]
It is called a \emph{$(K,C)$-quasi-isometry} if moreover $N_R(f(X)) = Y$ for some $R>0$. In this case there exists a \emph{quasi-inverse} $\overline{f}: Y \to X$, i.e.\ a quasi-isometry such that 
\[
\sup_{x \in X} d_X(x, \overline{f}(f(x))) < \infty \quad \text{and} \quad \sup_{y\in Y} d_Y(y, f(\overline{f}(y))) < \infty.
\]
\begin{example}
The inclusion of a relatively dense subset into a metric space is a quasi-isometry. 
In particular, every Delone set is quasi-isometric to its ambient space.
\end{example}
If $\Gamma$ is a finitely generated group, then any two word metrics with respect to finite generating sets of $S$ are quasi-isometric. We refer to the common quasi-isometry (QI) class of these metrics as the \emph{canonical QI class} of $\Gamma$. We need two generalizations of this concept.

Firstly, let $G$ be a compactly generated lcsc group. By \cite[Prop. 4.B.4]{CdlH} any two word metrics with respect to compact generating sets on $G$ are quasi-isometric, and we refer to their common QI class as the \emph{canonical QI class} of $G$. 

Secondly, note that if a set $X$ is equipped with a QI class $[d]$ of metrics and $Y\subset X$ is an arbitrary non-empty subset, then any two metrics in $[d]$ restrict to quasi-isometric metrics on $Y$, hence the restriction $[d]|_Y := [d|_Y]$ is a well-defined QI class on $Y$. In particular, if $(\Lambda, \Lambda^\infty)$ is a finitely generated approximate group, then the canonical QI class of $\Lambda^\infty$ restricts to a QI class of metrics on $\Lambda$ which we refer to as the \emph{canonical QI class} of $\Lambda$.

\subsection{A Milnor-Schwarz lemma for uniform approximate lattices}\label{SecMS} A fundamental theorem of geometric group theory is the Milnor-Schwarz Lemma. A classical version of this theorem can be stated as follows \cite[Prop. I.8.19]{BridsonHaefliger}:
\begin{theorem}[Milnor--Schwarz Lemma, geometric version]\label{MSClassic} Let $X$ be a proper geodesic metric space and let $\Gamma$ be a group. If $\Gamma$ acts properly and cocompactly on $X$, then $\Gamma$ is finitely-generated and for every $x \in X$ the orbit map $\Gamma \to X$, $\gamma\mapsto x$ is a quasi-isometry with respect to the canonical QI class of $\Gamma$.
\end{theorem}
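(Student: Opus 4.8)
The plan is to run the classical chain argument, which parallels almost verbatim the proof of Theorem \ref{ThmFiniteType} earlier in the paper: I extract a finite generating set from the action and simultaneously read off the two-sided comparison between the word metric and the orbit distance. Fix a basepoint $x_0 \in X$ and write $f \colon \Gamma \to X$, $\gamma \mapsto \gamma x_0$, for the orbit map. First I would isolate the two consequences of the hypotheses that drive everything. Cocompactness yields a constant $R > 0$ with $N_R(\Gamma x_0) = X$, i.e.\ the orbit is $R$-relatively dense; this is what will upgrade a quasi-isometric embedding to an honest quasi-isometry. Properness gives that only finitely many group elements carry $x_0$ into a fixed bounded region, so that
\[
S := \{\gamma \in \Gamma : d(x_0, \gamma x_0) \leq 2R + 1\}
\]
is finite, since $\gamma \in S$ forces $\gamma \overline{B}_{2R+1}(x_0) \cap \overline{B}_{2R+1}(x_0) \neq \emptyset$. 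This $S$ is the candidate generating set.

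The heart of the argument is the chain construction. Given $\gamma \in \Gamma$, I would join $x_0$ to $\gamma x_0$ by a geodesic (using that $X$ is geodesic and proper) and subdivide it into $n \leq d(x_0, \gamma x_0) + 1$ points $x_0 = p_0, p_1, \dots, p_n = \gamma x_0$ with consecutive gaps at most $1$. Relative density lets me pick $\gamma_i \in \Gamma$ with $d(p_i, \gamma_i x_0) \leq R$, taking $\gamma_0 = e$ and $\gamma_n = \gamma$. Then each bridging element $s_i := \gamma_i^{-1}\gamma_{i+1}$ satisfies, using that $\Gamma$ acts by isometries,
\[
d(x_0, s_i x_0) = d(\gamma_i x_0, \gamma_{i+1} x_0) \leq R + 1 + R = 2R + 1,
\]
so $s_i \in S$, and telescoping gives $\gamma = s_0 s_1 \cdots s_{n-1}$. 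Hence $S$ generates $\Gamma$ (so $\Gamma$ is finitely generated) and the word length obeys $|\gamma|_S \leq d(x_0, \gamma x_0) + 1$.

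It then remains to produce the reverse bound and assemble the estimate. If $\gamma = t_1 \cdots t_m$ is a shortest $S$-word, the triangle inequality along the orbit together with $t_j \in S$ gives $d(x_0, \gamma x_0) \leq (2R+1)\,|\gamma|_S$. Translating both bounds by $\delta = \gamma^{-1}\gamma'$ and using $d_S(\gamma, \gamma') = |\delta|_S$ and $d(f(\gamma), f(\gamma')) = d(x_0, \delta x_0)$, I obtain
\[
\frac{1}{2R+1}\, d(f(\gamma), f(\gamma')) \leq d_S(\gamma, \gamma') \leq d(f(\gamma), f(\gamma')) + 1,
\]
which is a quasi-isometric embedding with constants $K = C = 2R+1$; combined with the $R$-relative density of the image from the first paragraph, $f$ is a quasi-isometry onto $X$. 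I expect the only genuinely delicate point to be the chain construction: one must guarantee that every bridging element $s_i$ lands in the \emph{finite} set $S$ (which is what pins down the radius $2R+1$) while keeping the number of steps linear in the distance, and it is precisely the interplay of geodesity, properness and relative density that forces the correct constants. Everything else is bookkeeping with the triangle inequality.
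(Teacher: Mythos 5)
Your proof is correct, and it is the standard chain argument for the Milnor--Schwarz lemma. Note, however, that the paper does not actually prove Theorem \ref{MSClassic}: it cites \cite[Prop. I.8.19]{BridsonHaefliger} and only proves the approximate-group analogues, namely Theorem \ref{ThmFiniteType} (finite generation) and Theorem \ref{MilnorSchwarz} (the quasi-isometry statement). Your argument is essentially the one behind Theorem \ref{ThmFiniteType}: there the chain of points comes from coarse connectedness of $(G,d)$ rather than from subdividing a geodesic, and the orbit is replaced by the relatively dense set $\Lambda$, but the bridging step $d(y_i^{-1}y_{i+1},e) < C+2R$ is exactly your estimate $d(\gamma_i x_0,\gamma_{i+1}x_0)\leq 2R+1$. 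The one genuine divergence is in how the two approaches obtain the upper bound $|\gamma|_S \lesssim d(x_0,\gamma x_0)$: you read it off directly from the chain (the number of links is linear in the distance because you subdivided a geodesic), whereas the paper's Theorem \ref{MilnorSchwarz} cannot do this inside $\Lambda$ and instead establishes only a monotone bound $\|g\|_F \leq \rho(\|g\|_K)$ and then invokes Gromov's ``Trivial Lemma'' (Lemma \ref{GromovLemma}) on the large-scale geodesic space $(\Lambda, d_K|_\Lambda)$ to upgrade it to an affine bound. Your direct route is cleaner in the classical geodesic setting and yields explicit constants $K=C=2R+1$; the paper's route is the one that survives when honest geodesics are unavailable. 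All the individual steps in your write-up (finiteness of $S$ via properness, the telescoping identity, and the two-sided comparison) check out.
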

It is possible to reformulate this theorem in terms of lcsc groups. The key point here is that the isometry group ${\rm Is}(X)$ of a proper metric space carries a natural lcsc group topology. Namely, the compact-open topology, the topology of pointwise convergence or uniform convergence on compact sets all coincide on ${\rm Is}(X)$ and turn the latter into a lcsc group \cite[Prop. 5.B.5]{CdlH}. In fact, by a theorem of Malicki and Solicki  \cite[Thm. 5.B.14]{CdlH}, every lcsc group is isomorphic to  ${\rm Is}(X)$ for a suitable proper metric space $X$. Moreover, if $X$ is geodesic then ${\rm Is}(X)$ is compactly generated \cite[Thm. 4.C.5]{CdlH}. A proper cocompact action of $\Gamma$ on $X$ corresponds to a homomorphism with finite kernel into ${\rm Is}(X)$ whose image is a uniform lattice. Thus Theorem \ref{MSClassic} admits the following reformulation:
\begin{theorem}[Milnor--Schwarz Lemma, group theoretical version] Let $G$ be a compactly generated lcsc group and let $\Gamma < G$ be a uniform lattice. Then $\Gamma$ is finitely generated and the canonical QI class of $G$ restricts to the canonical QI class of $\Gamma$. 
\end{theorem}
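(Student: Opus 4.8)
The plan is to deduce this statement from the geometric version, Theorem~\ref{MSClassic}, by means of the dictionary between compactly generated lcsc groups and geometric actions on proper geodesic spaces recalled above. First I would realize $G$ as a group of isometries of a proper geodesic metric space. Since $G$ is compactly generated, one can construct such a space $X$ (for instance a Cayley--Abels graph, or more generally by combining the Malicki--Solecki realization $G \cong {\rm Is}(X_0)$ from \cite{CdlH} with a coarsening to a geodesic model) on which $G$ acts properly, cocompactly and by isometries, and in such a way that for any base point $x_0 \in X$ the orbit map $\phi_G : G \to X$, $g \mapsto g.x_0$, is a quasi-isometry with respect to the canonical QI class of $G$. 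In other words, $G$ is quasi-isometric to the geodesic space $X$ via a geometric action; this is the geometric model I will work with.

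Next I would restrict the $G$-action to the uniform lattice $\Gamma < G$ and check that $\Gamma$ acts geometrically on $X$. Properness of the $\Gamma$-action is automatic: the $G$-action is metrically proper and $\Gamma$ is discrete in $G$, so for every compact $C \subset X$ the set $\{\gamma \in \Gamma : \gamma.C \cap C \neq \emptyset\}$ is the intersection of a relatively compact subset of $G$ with the discrete set $\Gamma$, hence finite. Cocompactness follows by combining the two cocompactness hypotheses: if $Q \subset X$ is compact with $G.Q = X$ and $K \subset G$ is compact with $\Gamma K = G$ (such $K$ exists since $\Gamma$ is a uniform lattice), then $X = \Gamma.(K.Q)$, and $K.Q$ is compact as the continuous image of $K \times Q$. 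Thus $\Gamma$ acts properly and cocompactly on the proper geodesic space $X$, and Theorem~\ref{MSClassic} applies: $\Gamma$ is finitely generated and the orbit map $\phi_\Gamma : \Gamma \to X$, $\gamma \mapsto \gamma.x_0$, is a quasi-isometry for the canonical QI class of $\Gamma$.

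It remains to identify the two QI classes on $\Gamma$. The key observation is that $\phi_\Gamma$ is nothing but the restriction $\phi_G|_\Gamma$ of the $G$-orbit map. Viewing this single map through the two metrics on $\Gamma$: with the canonical QI class of $\Gamma$ it is the quasi-isometry just produced, while with the restriction $[d_G]|_\Gamma$ of the canonical QI class of $G$ it is the restriction of a quasi-isometry to a relatively dense subset (relative density of $\Gamma$ in $G$ being exactly cocompactness of the lattice), hence a quasi-isometric embedding onto a relatively dense image in $X$, i.e.\ again a quasi-isometry onto $X$. Composing one of these two quasi-isometries with a quasi-inverse of the other shows that the identity map of $\Gamma$ is a quasi-isometry between the canonical QI class of $\Gamma$ and $[d_G]|_\Gamma$, which is precisely the assertion that the canonical QI class of $G$ restricts to the canonical QI class of $\Gamma$.

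The step I expect to be the main obstacle is the very first one: producing a proper \emph{geodesic} space $X$ on which $G$ acts geometrically with the orbit map a quasi-isometry. The Malicki--Solecki theorem only furnishes a proper (not necessarily geodesic) metric model, and upgrading it to a geodesic model in a way compatible with the QI class genuinely uses compact generation of $G$; one must also take care that the realization is an isomorphism of topological groups, so that the discreteness and cocompactness of $\Gamma$ inside $G$ transfer faithfully to properness and cocompactness of the $\Gamma$-action on $X$. Once this geometric model is in place, the remainder is the routine bookkeeping with quasi-isometries sketched above.
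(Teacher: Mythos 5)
Your proposal is correct and follows essentially the same route as the paper: the paper obtains this statement precisely as a reformulation of the geometric Milnor--Schwarz lemma via the dictionary between compactly generated lcsc groups and geometric actions on proper geodesic spaces, restricting the action to the uniform lattice exactly as you do. Your write-up merely makes explicit the details (properness and cocompactness of the restricted action, and the comparison of the two QI classes through the common orbit map) that the paper leaves to the reader.
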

We generalize this formulation to uniform approximate lattices.
\begin{theorem}[Milnor--Schwarz Lemma for approximate groups]\label{MilnorSchwarz} Let $G$ be a compactly generated lcsc group and let $\Lambda \subset G$ be a uniform approximate lattice. Then $\Lambda^\infty$ is finitely generated and the canonical QI class of $G$ restricts to the canonical QI class of $\Lambda$. 
\end{theorem}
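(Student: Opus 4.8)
The plan is to reduce the statement to a two-sided comparison of word lengths on $\Lambda$. The finite generation of $\Lambda^\infty$ is nothing but the implication (i)$\Rightarrow$(ii) of Theorem~\ref{ThmFiniteType}, which moreover supplies a finite symmetric generating set $S$ of $\Lambda^\infty$ of the form $\Lambda^2\cap K_0$ for a suitable compact $K_0$; here $S$ is finite because $\Lambda^2$ is uniformly discrete, hence locally finite, by Proposition~\ref{PropDiscreteEquiv} together with \eqref{Discreteness}. I would then fix, once and for all, a compact symmetric generating set $K$ of $G$ with $e\in K$ and $S\subseteq K$, and write $\ell_K,\ell_S$ for the corresponding word lengths and $d_K,d_S$ for the associated left-invariant word metrics on $G$ and on $\Lambda^\infty$. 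By the discussion in Subsection~\ref{SecQIclass}, $d_K$ represents the canonical QI class of $G$ and $d_S|_\Lambda$ represents the canonical QI class of $\Lambda$, so the whole theorem comes down to showing that the identity map $(\Lambda,d_K|_\Lambda)\to(\Lambda,d_S|_\Lambda)$ is a quasi-isometry; since both metrics are left-invariant, this amounts to comparing $\ell_K$ and $\ell_S$ on $\Lambda^2$.

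The easy inequality $\ell_K\le\ell_S$ is immediate from $S\subseteq K$, since every $S$-word is a $K$-word, giving $d_K|_\Lambda\le d_S|_\Lambda$. The hard part will be the reverse bound $\ell_S\le M\,\ell_K$ for some constant $M$, and this is precisely where the Delone property of $\Lambda$ enters. Fixing a proper left-invariant metric $d$ on $G$ (Struble), a relative-density radius $R$ for $\Lambda$, and $D:=\sup_{k\in K}d(e,k)<\infty$, I would take $\lambda_1,\lambda_2\in\Lambda$, set $g=\lambda_1^{-1}\lambda_2\in\Lambda^2$, and write $g=k_1\cdots k_n$ with $k_i\in K$ and $n=\ell_K(g)$. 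The partial products $x_j:=\lambda_1k_1\cdots k_j$ then form a chain from $\lambda_1$ to $\lambda_2$ with consecutive $d$-distances at most $D$; shadowing them by points $y_j\in\Lambda$ with $d(x_j,y_j)<R$, and taking $y_0=\lambda_1$, $y_n=\lambda_2$, forces each $y_j^{-1}y_{j+1}$ to lie in $\Lambda^2\cap\overline{B}_{D+2R}(e)$ by left-invariance of $d$.

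The decisive observation is that this last set is finite, once more because $\Lambda^2$ is locally finite and $\overline{B}_{D+2R}(e)$ is compact, so $M:=\max\{\ell_S(t): t\in\Lambda^2\cap\overline{B}_{D+2R}(e)\}$ is finite; telescoping $g=(y_0^{-1}y_1)\cdots(y_{n-1}^{-1}y_n)$ then yields $\ell_S(g)\le Mn=M\,\ell_K(g)$. Combined with the easy direction this gives $d_K|_\Lambda\le d_S|_\Lambda\le M\,d_K|_\Lambda$, so the two metrics are in fact bi-Lipschitz equivalent, a fortiori quasi-isometric. I expect the only genuine obstacle to be this reverse estimate, whose engine is exactly the mechanism already exploited in Theorem~\ref{ThmFiniteType}: relative density lets one shadow a $K$-word path in $G$ by points of $\Lambda$, while uniform discreteness of $\Lambda^2$ bounds the number of possible ``step types'' $y_j^{-1}y_{j+1}$ and hence the $S$-cost incurred per unit of $\ell_K$. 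Everything else is routine bookkeeping, and the argument is visibly independent of the choices of $S$ and $K$, as it must be since both QI classes are canonical.
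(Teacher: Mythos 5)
Your proposal is correct, and it reaches the key estimate by a genuinely different route than the paper. The skeleton is the same: finite generation and the generating set $S=\Lambda^2\cap K_0$ come from Theorem~\ref{ThmFiniteType}, the problem is reduced to comparing $d_K|_\Lambda$ with $d_S|_\Lambda$, and the inequality $d_K\le d_S$ is immediate from $S\subseteq K$. For the reverse inequality, however, the paper only establishes the \emph{coarse} bound $\|g\|_F\le\rho(\|g\|_K)$ for a non-decreasing function $\rho$ (obtained from the inclusion $K^n\cap\Lambda\subset F^{\rho(n)}$, using finiteness of $K^n\cap\Lambda$ and $\Lambda^\infty=\bigcup F^k$), and then upgrades this to an affine bound by invoking Gromov's ``trivial lemma'' (Lemma~\ref{GromovLemma}) together with large-scale geodesicity of $(\Lambda,d_K|_\Lambda)$. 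You instead rerun the path-shadowing mechanism from the proof of Theorem~\ref{ThmFiniteType} quantitatively: shadowing the partial products of a geodesic $K$-word for $\lambda_1^{-1}\lambda_2$ by points of $\Lambda$ produces steps $y_j^{-1}y_{j+1}$ confined to the finite set $\Lambda^2\cap\overline{B}_{D+2R}(e)$, whose maximal $S$-length $M$ gives $\ell_S\le M\,\ell_K$ directly. Your argument is more elementary (no appeal to Lemma~\ref{GromovLemma} or to QI-invariance of large-scale geodesicity) and yields the stronger conclusion that $d_K|_\Lambda$ and $d_S|_\Lambda$ are bi-Lipschitz equivalent rather than merely quasi-isometric; the paper's route is more modular, in that the Gromov-lemma step automatically converts any coarse control into a quasi-isometry without tracking constants. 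All the finiteness inputs you use (local finiteness of $\Lambda^2$ via Proposition~\ref{PropDiscreteEquiv} and \eqref{Discreteness}, properness of $d$, finiteness of $\ell_S$ on $\Lambda^\infty$) are available, so there is no gap.
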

The following example illustrates the theorem and shows that the inclusion $\Lambda^\infty \to G$ is in general not a quasi-isometry. 
\begin{example} Let $(-)^*: \Z[\sqrt 2] \to \Z[\sqrt 2]$ denote the Galois conjugation $(a+b\sqrt 2)^* := a-b\sqrt 2$ and let $\Lambda \subset \Lambda^\infty \subset \R^2$ be given by
\[
 \Lambda^\infty := \{(a, a^*) \in \R^2\mid a \in \Z[\sqrt 2]\} \supset \Lambda := \{(a, a^*) \in \R^2\mid a \in \Z[\sqrt 2], |a^*|\leq5\}.
\]
Then $(\Lambda, \Lambda^\infty)$ is an approximate group. Since $\Lambda^\infty$ is a lattice in $\R^2$, its canonical quasi-isometry class is represented by the restriction of the Euclidean metric from $\R^2$, an its subset $\Lambda$ is easily seen to be quasi-isometric to $\R$. Projection onto the first coordinate defines an embedding $\pi_1: \Lambda^\infty \hookrightarrow \R$, and $\pi_1(\Lambda)$ is a uniform approximate lattice in $\R$ by Proposition \ref{ModelSet}. In particular, the restriction $\pi_1|_\Lambda: \Lambda \to \R$ is a quasi-isometry by Theorem \ref{MilnorSchwarz}, as can also be checked easily directly in this case. Note that the map $\pi_1: \Lambda^\infty \to \R$ is not a quasi-isometry, since points with  a small first and large second component are large in $ \Lambda^\infty$ but have small image in $\R$. 
\end{example}
The proof of Theorem \ref{MilnorSchwarz} uses a version of Gromov's ``Trivial Lemma'' \cite[0.2.D]{Gromov} which we state in the following convenient form. (For a proof see e.g. \cite[Prop. 3.B.9]{CdlH}.) Here a space is called \emph{large-scale geodesic} if it is quasi-isometric to a geodesic metric space. 
\begin{lemma}[Gromov]\label{GromovLemma}
Let $X$ be a large-scale geodesic metric space, $Y$ an arbitrary metric space and $f: X \to Y$ a map. Assume that there exists a non-decreasing function $\rho: [0, \infty) \to [0, \infty)$ such that for all $x_1, x_2 \in X$.
\[
d_Y(f(x_1), f(x_2))  \leq \rho(d_X(x_1,x_2)).
\] 
Then there exist constants $C>1$, $D>0$ such that for all $x_1, x_2 \in X$ we have
\begin{equation*}
d_Y(f(x_1), f(x_2))  \leq C \cdot d_X(x_1,x_2) + D. \qed
\end{equation*}
\end{lemma}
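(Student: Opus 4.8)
The plan is to use the only structural hypothesis on $X$ --- that it is large-scale geodesic --- to reduce the desired bound to a statement about finitely many \emph{short} increments, each of which is controlled by the single number $\rho(s)$ for a fixed step-size $s$. The mechanism is a telescoping estimate along a chain of short hops in $X$, together with the monotonicity of $\rho$. First I would record the chain characterisation of large-scale geodesic spaces that will be needed: there exist constants $s>0$, $\lambda\geq 1$ and $\kappa\geq 0$ such that any two points $x,y\in X$ can be joined by a finite sequence $x=z_0,z_1,\dots,z_n=y$ with $d_X(z_i,z_{i+1})\leq s$ for all $i$ and with $n\leq \lambda\,d_X(x,y)+\kappa$. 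Informally, points at distance $D$ can be connected by roughly $D$ hops, each of bounded size.

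Granting this, the estimate is immediate. Given $x_1,x_2\in X$, choose such a chain $x_1=z_0,\dots,z_n=x_2$. By the triangle inequality in $Y$, the hypothesis, and the fact that $\rho$ is non-decreasing with $d_X(z_i,z_{i+1})\leq s$, I obtain
\[
d_Y(f(x_1),f(x_2)) \;\leq\; \sum_{i=0}^{n-1} d_Y(f(z_i),f(z_{i+1})) \;\leq\; \sum_{i=0}^{n-1}\rho\bigl(d_X(z_i,z_{i+1})\bigr) \;\leq\; n\,\rho(s).
\]
Substituting $n\leq \lambda\,d_X(x_1,x_2)+\kappa$ gives
\[
d_Y(f(x_1),f(x_2)) \;\leq\; \lambda\rho(s)\,d_X(x_1,x_2) + \kappa\rho(s),
\]
and setting $C:=\max\{\lambda\rho(s),\,2\}$ and $D:=\kappa\rho(s)+1$ yields the claimed linear bound with $C>1$ and $D>0$. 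Note that the hypothesis is used only through the bound $\rho(s)$ on a \emph{single} short hop, which is exactly why no uniform Lipschitz-type control on $\rho$ is required.

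The main obstacle, and the only place where ``large-scale geodesic'' does real work, is establishing the chain characterisation with a \emph{linearly} controlled number of steps; indeed the lemma fails for general metric spaces precisely because distances there may only be realised by single large jumps. To derive it I would fix a quasi-isometry $q\colon X\to Z$ onto a geodesic space $Z$ with quasi-inverse $\overline q$, and for given $x,y$ connect $q(x)$ to $q(y)$ by a genuine geodesic in $Z$, sampled at unit spacing into points $p_0,\dots,p_m$ with $d_Z(p_j,p_{j+1})\leq 1$ and $m\leq d_Z(q(x),q(y))+2 \leq K\,d_X(x,y)+C_0+2$ by the upper quasi-isometry inequality. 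Pulling these back by $\overline q$ produces points $w_j:=\overline q(p_j)\in X$ with $d_X(w_j,w_{j+1})$ bounded by a constant depending only on the quasi-isometry data (since consecutive $p_j$ are at distance $\leq 1$), and the quasi-inverse property bounds $d_X(x,w_0)$ and $d_X(y,w_m)$ by a constant. Prepending $x$ and appending $y$ to the sequence $(w_j)$ then gives the required $s$-chain, with $s$ and the step count $n$ controlled exactly as claimed. Alternatively, one may simply invoke the standard equivalence between being large-scale geodesic and admitting such coarsely-controlled chains, as in \cite[Prop.~3.B.9]{CdlH}.
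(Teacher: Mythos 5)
Your proof is correct and is essentially the standard argument for this lemma, which the paper itself does not prove but delegates to \cite[Prop.~3.B.9]{CdlH}: reduce large-scale geodesicity to the existence of $s$-chains of linearly controlled length, then telescope the triangle inequality in $Y$ using the single value $\rho(s)$ on each hop. Both your derivation of the chain property from a quasi-isometry to a geodesic space and the final bookkeeping of constants are sound, so nothing needs to be added.
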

\begin{proof}[Proof of Theorem \ref{MilnorSchwarz}] That $\Lambda^\infty$ is finitely generated was already established in Theorem \ref{ThmFiniteType}. Moreover, according to this theorem we can find a compact generating set $K$ of $G$ with the property that $F :=K \cap \Lambda^2$ generates $\Lambda^\infty$. We may assume that $K$, and hence $F$, contains the identity so that $\Lambda^\infty$ is the ascending union of the sets $F^k$.

Let us denote the word lengths with respect to $K$ and $F$ by $\|-\|_K$ and $\|-\|_F$ respectively, and by $d_K$ and $d_F$ the corresponding left-invariant word metrics. Since $\Lambda$ is a Delone set in $G$, the inclusion $(\Lambda, d_K|_\Lambda) \hookrightarrow (G, d_K)$ is a quasi-isometry. We thus need to show only that the identity map
\[\iota: (\Lambda, d_K|_\Lambda) \to (\Lambda, d_F|_\Lambda)\]
is a quasi-isometry. Since $F \subset K$ we have $\|g\|_F \geq \|g\|_K$ for all $g \in \Lambda^\infty$ and thus $d_K(x_1,x_2) \leq d_F(x_1,x_2) = d_F(\iota(x_1), \iota(x_2)) $ for all $x_1,x_2 \in \Lambda$. It thus remains to show that there exist $C>1$, $D>0$  such that for all $x_1, x_2 \in \Lambda$,
\[
d_F(\iota(x_1), \iota(x_2))  \leq C \cdot d_K(x_1,x_2)) + D
\]
By \cite[Prop. 4.B.4]{CdlH} the space $(G, d_K)$ is large-scale geodesic. Since this property is a quasi-isometry invariant, we deduce that also $(\Lambda, d_K|_\Lambda)$ is large-scale geodesic. Consequently, Lemma \ref{GromovLemma} applies and we are reduced to showing that there exists a non-decreasing function $\rho: [0, \infty) \to [0, \infty)$  such that for all $x_1, x_2 \in \Lambda$,
\begin{equation}\label{MSToShow}
d_F(\iota(x_1), \iota(x_2)) = d_F(x_1,x_2) \leq \rho(d_K(x_1,x_2)).
\end{equation}
Now for every $n \in \mathbb N$ the set $K^n$ is compact, and hence $K^n \cap \Lambda$ is finite. Since $\Lambda$ is contained in the ascending union $\bigcup F^k$, every finite subset of $\Lambda$ is contained in one of these set. We thus find a non-descreasing function $\rho: \mathbb N \to \mathbb N$ such that
\[
K^n \cap \Lambda \subset F^{\rho(n)}.
\]
Thus if $g \in \Lambda$ and $n:= \|g\|_K $, then $g \in K^n \cap \Lambda \subset F^{\rho(n)}$ and thus $\|g\|_F \leq \rho(n) = \rho(\|g\|_K)$. Since $g$ was arbitrary we deduce that for all $g \in G$ we have $\|g\|_F \leq \rho(\|g\|_K)$ which establishes \eqref{MSToShow} and finishes the proof.
\end{proof}

\subsection{The left-regular quasi-action}

Let $(X,d)$ be a metric space. Since the composition of a  $(K_1, C_1)$-quasi-isometry and a $(K_2, C_2)$-quasi-isometry from $X$ to $X$ is a $(K_1K_2, K_1C_2+K_2C_1)$-quasi-isometry, the set $\widetilde{\rm QI}(X)$ of all self-quasi-isometries of $X$ is closed under composition. We call $f,g \in \widetilde{\rm QI}(X)$ equivalent, denoted $f\sim g$, provided
\[
\sup_{x \in X} \; d(f(x), g(x)) < \infty.
\]
Then ${\rm QI}(X) := \widetilde{\rm QI}(X)/\sim$ is a group under composition of representatives, called the \emph{quasi-isometry group} of $X$. Implicitly the group ${\rm QI}(X)$ appears in many applications in geometric group theory. Explicitly it appears e.g. in \cite{Furman, Dymarz}. Given $f \in \widetilde{\rm QI}(X)$ we denote by $[f]$ the equivalence class of $f$ in ${\rm QI}(X)$. We say that a class in ${\rm QI}(X)$ is a $(K,C)$-quasi-isometry class if it can be represented by a $(K,C)$-quasi-isometry. A subset $A \subset {\rm QI}(X)$ is called \emph{uniform} if there exist $K, C$ such that every class in $A$ is a $(K,C)$-quasi-isometry class.
\begin{definition} Let $\Gamma$ be a group and $(\Lambda, \Lambda^\infty)$ be an approximate group. A homomorphism $\rho: \Gamma \to {\rm QI}(X)$ is called a  \emph{quasi-action} of $\Gamma$ on $X$ if $\rho(\Gamma)$ is a uniform subset of ${\rm QI}(X)$. More generally, a homomorphism $\rho: \Lambda^\infty \to {\rm QI}(X)$ is called a \emph{quasi-action} of $(\Lambda, \Lambda^\infty)$ on $X$ if $\rho(\Lambda)$ is a uniform subset of ${\rm QI}(X)$.
\end{definition}
For groups, this is the standard definition of a quasi-action, see e.g. \cite{Quasiaction}.
\begin{example}
If $G$ is a compactly-generated lcsc group, then for every $g \in G$ the left-multiplication $\lambda_g: h \mapsto gh$ define a quasi-isometry of $G$ with respect to the canonical QI class on $G$, and the homomorphism $\lambda_G: G \to {\rm QI}(G)$, $g \mapsto [\lambda_g]$ has uniform image, hence defines a quasi-action.
\end{example}
\begin{definition}\label{DefLRQAGroup} Given a compactly-generated lcsc group $G$, the quasi-action $\lambda_G: G \to {\rm QI}(G)$, $g \mapsto [\lambda_g]$ 
is called the \emph{left-regular quasi-action} of $G$.
\end{definition}
\begin{remark}\label{QAnotfaithful} 
Given a metric space $X$, the canonical group homomorphism ${\rm Is}(X) \to {\rm QI}(X)$ need not be injective. For example, the the image of ${\rm Is}(\R) \cong \R \rtimes \Z/2\Z$ in ${\rm QI}(\R)$ collapes to $\Z/2\Z$. For the same reason, the left-regular quasi-action of a compactly-generated lcsc group need not be faithful. For example, the left-regular quasi-action of an abelian group is always trivial. One can show that in general the kernel of the left-regular quasi-action is given by the elements whose conjugacy class is bounded. 
\end{remark}
We now extend the definition of the left-regular quasi-action to finitely-generated approximate groups.
\begin{proposition}\label{LRQA}  Let $(\Lambda, \Lambda^\infty)$ be a finitely-generated approximate group and $S \subset \Lambda^\infty$ a finite generating set. There exists a unique quasi-action $\lambda: \Lambda^\infty \to {\rm QI}(\Lambda)$ and
(non-unique) representatives $\lambda_g \in \lambda(g)$ with the following properties.
\begin{enumerate}[(i)]
\item For every $g \in \Lambda^\infty$,
\[
D(g) := \sup_{h \in \Lambda} d_S(\lambda_g(h), gh) < \infty,
\]
and for every $k \geq 1$ the constant $D(g)$ is bounded uniformly over $\Lambda^k$.
\item For every $k \geq 1$ the subset $\lambda(\Lambda^k) \subset {\rm QI}(\Lambda)$ is uniform.
\item If $k,l \geq 1$ then $\lambda_g \lambda_h$ is at uniformly bounded distance from $\lambda_{gh}$ as $g$ ranges over $\Lambda^k$ and $h$ ranges over $\Lambda^l$.
\end{enumerate}
\end{proposition}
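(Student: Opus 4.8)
The plan is to build the representatives $\lambda_g$ by composing honest left translation on $\Lambda^\infty$ with a fixed \emph{coarse retraction} onto $\Lambda$. The point is that the translation $\ell_g\colon h \mapsto gh$ does not preserve $\Lambda$, but when $g \in \Lambda^k$ it maps $\Lambda$ into $\Lambda^{k+1}$, and $\Lambda^{k+1}$ lies at bounded $d_S$-distance from $\Lambda$. First I would record the consequence of (AG3): assuming (as we may) that $e \in F$ and $\Lambda^2 \subset F\Lambda \cap \Lambda F$, induction gives $\Lambda^{k+1} \subset F^k\Lambda \cap \Lambda F^k$ for every $k$. Writing $\ell(x) := \min\{k : x \in \Lambda^k\}$ (this is finite since $\Lambda$ generates $\Lambda^\infty$ and the $\Lambda^k$ are ascending), I fix a map $p\colon \Lambda^\infty \to \Lambda$ with $p|_\Lambda = \mathrm{id}$ and $x \in p(x)\,F^{\ell(x)-1}$. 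Then $d_S(x, p(x)) \le c(\ell(x))$, where $c(k) := \max_{w \in F^{k-1}} \|w\|_S < \infty$ is non-decreasing (because $F^{k-1} \subset F^k$) with $c(1)=0$.

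Next I define $\lambda_g := p \circ \ell_g|_\Lambda$, that is $\lambda_g(h) := p(gh) \in \Lambda$, and set $\lambda(g) := [\lambda_g] \in {\rm QI}(\Lambda)$. Property (i) is then immediate: for $g \in \Lambda^k$ and $h \in \Lambda$ we have $gh \in \Lambda^{k+1}$, so $d_S(\lambda_g(h), gh) = d_S(p(gh), gh) \le c(k+1)$, which bounds $D(g)$ uniformly over $\Lambda^k$. Since $d_S$ is left-invariant, $\ell_g$ is an isometric embedding of $(\Lambda, d_S|_\Lambda)$ into $(\Lambda^\infty, d_S)$, so being within $c(k+1)$ of $\ell_g$ exhibits $\lambda_g$ as a $(1, 2c(k+1))$-quasi-isometric embedding of $(\Lambda, d_S|_\Lambda)$ into itself. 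For coarse surjectivity I would run the same argument with $g^{-1} \in \Lambda^k$: given $h'' \in \Lambda$, use $\Lambda^{k+1} \subset \Lambda F^k$ to write $g^{-1}h'' = h\,w$ with $h \in \Lambda$, $w \in F^k$; then $gh = h''w^{-1}$, so $d_S(\lambda_g(h), h'') \le c(k+1) + \|w\|_S \le 2c(k+1)$. This shows $\lambda_g \in \widetilde{\rm QI}(\Lambda)$ and gives (ii), the uniformity of $\lambda(\Lambda^k)$, with constants controlled by $c(k+1)$.

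For multiplicativity and (iii) I would estimate, for $g \in \Lambda^k$, $h \in \Lambda^l$ and $t \in \Lambda$, the distance $d_S(\lambda_g(\lambda_h(t)), \lambda_{gh}(t))$ by a chain of left-invariant triangle inequalities: $\lambda_g(\lambda_h(t))$ is within $c(k+1)$ of $g\,\lambda_h(t)$; by left-invariance $g\,\lambda_h(t)$ is within $c(l+1)$ of $g(ht) = (gh)t$; and $\lambda_{gh}(t)$ is within $c(k+l+1)$ of $(gh)t$. Hence $d_S(\lambda_g\lambda_h(t), \lambda_{gh}(t)) \le c(k+1) + c(l+1) + c(k+l+1)$ uniformly in $t$, which is exactly (iii) and at the same time shows $\lambda(g)\lambda(h) = \lambda(gh)$ in ${\rm QI}(\Lambda)$. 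Together with $\lambda_e = \mathrm{id}$ and the fact that each $\lambda(g)$ is invertible in ${\rm QI}(\Lambda)$, this makes $\lambda$ a genuine quasi-action. Uniqueness is then forced by (i) alone: any homomorphism $\lambda'\colon \Lambda^\infty \to {\rm QI}(\Lambda)$ with representatives $\lambda'_g$ satisfying $\sup_{h \in \Lambda} d_S(\lambda'_g(h), gh) < \infty$ has each $\lambda'_g$ at bounded distance from $\lambda_g$, whence $\lambda'(g) = \lambda(g)$ for all $g$.

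I do not expect a serious obstacle here: the only genuine idea is the coarse retraction $p$, after which both coarse Lipschitzness and coarse surjectivity of $\lambda_g$ are governed by the single inclusion $\Lambda^{k+1} \subset F^k\Lambda \cap \Lambda F^k$, and everything else is a bookkeeping exercise with left-invariant triangle inequalities. The one point requiring care is that the constants $c(k)$ genuinely grow with $k$ and need not be bounded over all of $\Lambda^\infty$; consequently all the uniformity assertions in (i)--(iii) must be read as uniform \emph{on each $\Lambda^k$ separately}, precisely as they are stated, and the main discipline in the write-up is to track which power of $\Lambda$ each element inhabits.
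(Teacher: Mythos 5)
Your proposal is correct and follows essentially the same strategy as the paper: define $\lambda_g$ as left translation followed by a coarse retraction of $\Lambda^{k+1}$ onto $\Lambda$ whose displacement is controlled via (AG3), then verify (i)--(iii) and uniqueness by left-invariant triangle-inequality bookkeeping. The only (cosmetic) difference is that you build the retraction $p$ in a single step from the inclusion $\Lambda^{k+1}\subset \Lambda F^{k}$, whereas the paper composes one-step retractions $p_n^{n+1}\colon \Lambda^{n+1}\to\Lambda^{n}$ along the tower $\Lambda\subset\Lambda^2\subset\cdots$; the resulting maps and estimates are equivalent.
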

Note that in the group case $(\Lambda, \Lambda^\infty) = (\Gamma, \Gamma)$ the quasi-acion $\lambda$ is just the left-regular quasi-action. We thus define:
\begin{definition}\label{DefLRQA} The quasi-action $\lambda: \Lambda^\infty \to {\rm QI}(\Lambda)$, $g \mapsto [\lambda_g]$ is called the \emph{left-regular quasi-action} of the finitely-generated approximate group $(\Lambda, \Lambda^\infty)$. 
\end{definition}

Concerning the proof of Proposition \ref{LRQA}, it is clear that $\lambda$ is uniquely determined by (i). To show existence of $\lambda$, we choose a  finite set $F \subset \Lambda^\infty$ such that $\Lambda^2 \subset \Lambda F$ and define 
\[
\delta := \max_{f \in F} d_S(e, f).
\]
We consider each of the sets $\Lambda^n \subset \Lambda^\infty$ as a metric space with respect to the restriction of $d_S$ so that the inclusions $\Lambda \subset \Lambda^2 \subset \Lambda^3 \subset \dots$ are isometric. Note that since $\Lambda^{n+1} \subset \Lambda^nF$, every $x \in \Lambda^{n+1}$ can be written as $x = x'f$ with $f \in F$ and $x' \in \Lambda^n$. Thus
\[
d(x,x') = d(x'f, x')  = d(f,e) \leq \delta,
\]
i.e.\ $\Lambda^{n}$ is $\delta$-relatively dense in $\Lambda^{n+1}$ and the inclusion $\iota_n^{n+1}: \Lambda^{n} \to \Lambda^{n+1}$ is a quasi-isometry. We can thus find a map $p^{n+1}_n: \Lambda^{n+1} \to \Lambda^n$ such that \[
p_n^{n+1} \iota^{n+1}_n(x) = x \text{ for all } x \in \Lambda^n  \quad \text{ and } \quad d(\iota_n^{n+1}(p^{n+1}_n(x)), x) \leq \delta \text{ for all }x \in \Lambda^{n+1}.\]
We thus have a tower of $(1, 2\delta)$-quasi-isometries
\[\begin{xy}\xymatrix{
\Lambda \ar@/^/@/^/[r]^{\iota_1^2}& \ar@/^/[l]^{p_1^2}\Lambda^2 \ar@/^/[r]^{\iota_2^3}& \ar@/^/[l]^{p_2^3}\Lambda^3 \ar@/^/[r]^{\iota_3^4}&  \ar@/^/[l]^{p_3^4}\Lambda^4 \ar@/^/[r]^{\iota_4^5}& \ar@/^/[l]^{p_4^5}\Lambda^5 \ar@/^/[r]^{\iota_5^6}& \ar@/^/[l]^{p_5^6} \Lambda^6 \ar@/^/[r]^{\iota_6^7}&  \ar@/^/[l]^{p_6^7} \dots
}\end{xy}
\]
In the sequel we denote for all $k<n$ by $\iota_k^n: \Lambda^k \to \Lambda^n$ the isometric embedding and define
\[
 p_{k}^n: \Lambda^n \to \Lambda^k, \quad x \mapsto p^n_{n-1} \circ \dots \circ p^{k+1}_k(x).
\]
Then for all $k<n$ the maps $\iota_k^n$ and $p_k^n$ are $(1, 2(n-k)\delta)$-quasi-isometries and satisfy \[p^n_k(\iota_k^n(x)) = x.\] Now let $g \in \Lambda^k$ and $x \in \Lambda$. Then $gx \in \Lambda^{k+1}$, and for every $n \geq k+1$ we have
\[
p^{n}_1(gx) = p^n_1 \iota_{k+1}^n(gx) =  p_1^{k+1}p^{n}_{k+1}\iota_{k+1}^n(gx)=p^{k+1}(gx).
\]
Thus $\lambda_g(x) := p^{n}(gx)$ is well-defined independent of $n$ as long as $n$ is sufficiently large. Moreover, if $g \in \Lambda^k$ then $\lambda_g: \Lambda \to \Lambda$ is a $(1, 2k\delta)$-quasi-isometry, which we can visualize as
\[\begin{xy}\xymatrix{
\lambda_g: &\Lambda \ar@/^1pc/[rrrr]^{g\cdot}& \ar@/^/[l]^{p_1^2}\Lambda^2& \ar@/^/[l]^{p_2^3}\Lambda^3&  \ar@/^/[l]^{p_3^4}
 \dots & \ar@/^/[l]^{p_{k-1}^k} \Lambda^{k+1}
}\end{xy}.
\]
\begin{proof}[Proof of Proposition \ref{LRQA}] Set $\lambda(g) := [\lambda_g]$. Then (i) and (ii) hold by our previous discussion. As for (iii), given $g \in \Lambda^k$, $h \in \Lambda^l$ and $x \in \Lambda$ we have
\begin{eqnarray*}
d(\lambda_{gh}(x), \lambda_g\lambda_h(x)) &=& d(p_1^{k+l+1}(ghx), p_1^{k+1}(gp_1^{l+1}(hx)))\\
&\leq& d(p_1^{k+l+1}(ghx), ghx) + d(ghx, gp_1^{l+1}(hx)) \\&& + d(gp_1^{l+1}(hx), p_1^{k+1}(gp_1^{l+1}(hx)))\\
&\leq& 2(k+l)\delta +  2l\delta + 2k\delta\\
&\leq& 4(k+l)\delta,
\end{eqnarray*}
This implies in particular that $[\lambda_{gh}] = [\lambda_g][\lambda_h]$, which shows that $\lambda$ is a homomorphism and finishes the proof.
\end{proof}


\subsection{The QI-rigidity problem} \label{SecQI} 

Let $G$ be a compactly generated lcsc group. A homomorphism from a finitely generated group $\Gamma$ to $G$ is called \emph{geometric} if it has finite kernel and its image is a uniform lattice in $G$. By a variation of the Milnor-Schwarz lemma, if $\rho: \Gamma \to G$ is geometric, then $\Gamma$ is quasi-isometric to $G$. The QI-rigidity problem asks for a converse to this statement. There are various different inequivalent versions in the literature. We will generalize the following weak notion.
\begin{definition} A lcsc group $G$ is called \emph{QI-rigid} (with respect to groups) if every finitely generated group $\Gamma$ quasi-isometric to $G$ admits a geometric homomorphism into $G$. 
\end{definition}
We would like to formulate an analogous notion of QI-rigidity with respect to approximate groups. A subtle point concerns the definition of geometric homomorphism. It turns out that allowing for finite kernels is not good enough:
\begin{example} Let $\Gamma_1$ be a finitely-generated uniform lattice in a lcsc group $G$, $\Gamma_2$ a finitely generated group  which does not embed in $G$ and $S \subset \Gamma_2$ a finite generating set. Then $(\Lambda, \Lambda^\infty) := (\Gamma_1 \times S, \Gamma_1 \times \Gamma_2)$ is a finitely-generated approximate group quasi-isometric to $G$, but there is no embedding of $\Lambda^\infty$ into $G$. However, projection to the first factor provides a map $\pi: \Lambda^\infty \to G$ with infinite kernel which restricts to a quasi-isometry $\Lambda \to G$. Note that the induced map $\Lambda \to \pi(\Lambda)$ is a quasi-isometry with respect to the respective canonical QI classes.
\end{example}
In view of this example we define:
\begin{definition} Let $ (\Lambda, \Lambda^\infty)$ be a finitely generated approximate group, $ (\Xi, \Xi^\infty)$ an arbitrary approximate group and $\rho: (\Lambda, \Lambda^\infty) \to (\Xi, \Xi^\infty)$ a homomorphism. We say that $\rho$ has \emph{small kernel} if the induced surjection $\Lambda \to \rho(\Lambda)$ is a quasi-isometry with respect to the canonical QI classes of $(\Lambda, \Lambda^\infty)$ and $(\rho(\Lambda), \rho(\Lambda)^\infty)$.
\end{definition}
\begin{definition} Let $G$ be a lscs group and  $(\Lambda, \Lambda^\infty)$ a finitely generated approximate group.
A \emph{geometric representation} of a finitely-generated approximate group $(\Lambda, \Lambda^\infty)$ is a representation $\rho: \Lambda^\infty \to G$ with small kernel such that $\varphi(\Lambda)$ is a uniform approximate lattice in $G$.
\end{definition}
It is immediate from Theorem \ref{MilnorSchwarz} that every geometric representation $\varphi: \Lambda^\infty \to G$ restricts to a quasi-isometry $\Lambda \to G$.
\begin{definition} A lcsc group $G$ is \emph{QI-rigid with respect to approximate groups} if every finitely-generated approximate group quasi-isometric to $G$ admits a geometric representation into $G$.
\end{definition}
\begin{problem} Find examples of lcsc groups $G$ which are QI-rigid with respect to approximate groups.
\end{problem}
To show that our definition of QI-rigidity is meaningful we extend the celebrated QI-rigidity results of Kleiner and Leeb \cite{KleinerLeeb} concerning higher rank symmetric spaces of non-compact type to the setting of approximate groups. We expect that other classical QI-rigidity results can be extended to the approximate setting along similar lines.
\begin{definition} Let $(X, d)$ be a proper locally compact geodesic metric space and assume that ${\rm Is}(X,d)$ acts cocompactly on $X$. Then $X$ is called a \emph{Kleiner--Leeb (KL) space} if for every $K \geq 1$, $C>0$ there exists $D>0$ such that every $(K,C)$-quasi-isometry $f$ of $X$ there exists a unique isometry $\widehat{f}$ of $(X,d)$ such that $d(f(g), \widehat{f}(g)) < D$ for all $g \in G$.
\end{definition}
Note that the KL condition implies that the inclusion ${\rm Is}(X, d) \to {\rm QI}(X)$ is an isomorphism; however, it is slightly stronger in that it demands that the constant $D$ in the definition is uniform in the quasi-isometry constants $K$ and $C$. By the theorem of Kleiner and Leeb alluded to earlier, every higher rank symmetric space of non-compact type is a KL-space \cite{KleinerLeeb}, hence the name. Other examples are given by quaternion hyperbolic spaces \cite{Pansu} and certain higher rank Euclidean Tits buildings \cite{KleinerLeeb}. We are going to prove: 
\begin{theorem}\label{QIRigidity} Let $X$ be a Kleiner--Leeb space and $G := {\rm Is}(X)$. Then $G$ is QI-rigid with respect to approximate groups.
\end{theorem}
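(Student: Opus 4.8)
The plan is to straighten the left-regular quasi-action of $(\Lambda,\Lambda^\infty)$ against a quasi-isometry to $X$ and to read off the desired representation from the rigidity encoded in the KL condition. Since $X$ is proper and geodesic and $G={\rm Is}(X)$ acts properly and cocompactly (so $G$ is compactly generated, $X$ being geodesic), the orbit map ${\rm orb}\colon G\to X$, $g\mapsto g\cdot o$ for a fixed basepoint $o\in X$, is a quasi-isometry with respect to the canonical QI class of $G$. Composing a given quasi-isometry $\Lambda\to G$ with ${\rm orb}$ produces a quasi-isometry $q\colon\Lambda\to X$ with quasi-inverse $\overline q$. I conjugate the representatives $\lambda_g$ from Proposition \ref{LRQA} through $q$ and set $\alpha_g:=q\circ\lambda_g\circ\overline q$. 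By Proposition \ref{LRQA}(ii), for each $k$ the family $\{\alpha_g:g\in\Lambda^k\}$ is uniform, say consisting of $(K_k,C_k)$-quasi-isometries of $X$; the KL condition then yields a single constant $D_k$ and, for each $g\in\Lambda^k$, a \emph{unique} isometry $\rho(g):=\widehat{\alpha_g}$ with $\sup_{x}d(\alpha_g(x),\rho(g)(x))\le D_k$. Uniqueness guarantees that $\rho(g)$ does not depend on which power $\Lambda^k$ contains $g$, so we obtain a well-defined map $\rho\colon\Lambda^\infty\to G$.

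Next I would check that $\rho$ is a homomorphism. For $g\in\Lambda^k$ and $h\in\Lambda^l$, Proposition \ref{LRQA}(iii) gives $\lambda_g\lambda_h\sim\lambda_{gh}$, and since $\overline q q\sim{\rm id}$ conjugation yields $\alpha_g\alpha_h\sim\alpha_{gh}$. As each $\rho(g)$ is an isometry lying at bounded distance from $\alpha_g$, the composite $\rho(g)\rho(h)$ lies at bounded distance from $\alpha_g\alpha_h$, hence from $\alpha_{gh}$, hence from $\rho(gh)$. Thus $\rho(g)\rho(h)$ and $\rho(gh)$ are isometries at bounded distance, and since the KL condition makes ${\rm Is}(X)\to{\rm QI}(X)$ injective they coincide. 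Therefore $\rho$ is a representation of $(\Lambda,\Lambda^\infty)$ into $G$.

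It remains to see that $\rho$ is geometric. Being a group homomorphism, $\rho$ automatically sends the approximate group $(\Lambda,\Lambda^\infty)$ to an approximate subgroup $\rho(\Lambda)\subset G$. A direct estimate—using $\lambda_\gamma(e)=\gamma$ for $\gamma\in\Lambda$ and $\overline q(o)\sim e$—shows that ${\rm orb}\circ\rho|_\Lambda$ is at uniformly bounded distance from $q$; since $q$ and ${\rm orb}$ are quasi-isometries, $\rho|_\Lambda\colon\Lambda\to G$ is itself a quasi-isometry. In particular $\rho(\Lambda)$ is relatively dense in $G$, and, corestricting and invoking Theorem \ref{MilnorSchwarz} to identify the canonical QI class of $\rho(\Lambda)$ with the restriction of that of $G$, the surjection $\Lambda\to\rho(\Lambda)$ is a quasi-isometry; that is, $\rho$ has small kernel. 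For uniform discreteness I would show that $e$ is not an accumulation point of $\rho(\Lambda)^{-1}\rho(\Lambda)=\rho(\Lambda^2)$: if $\rho(g_n)\to e$ with $g_n\in\Lambda^2$, then $\rho(g_n)\cdot o\to o$, so the ${\rm orb}\circ\rho$ estimate forces $p_1^2(g_n)$, and hence $g_n$, into a bounded—therefore finite—subset of the finitely generated group $\Lambda^\infty$; then $\rho(g_n)$ takes only finitely many values and cannot converge to $e$ without being eventually equal to $e$. Hence $\rho(\Lambda)$ is uniformly discrete, so it is a uniform approximate lattice, and $\rho$ is a geometric representation.

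The heart of the argument is the passage from coarse to uniform control: the straightening constants $D_k$ grow with $k$, yet uniqueness keeps $\rho(g)$ canonical, and it is the finiteness of bounded subsets of $\Lambda^\infty$ that upgrades the coarse estimate coming from the KL condition into genuine uniform discreteness of $\rho(\Lambda)$ in $G$. I expect the uniform-distance comparison ${\rm orb}\circ\rho|_\Lambda\sim q$, which simultaneously underlies relative density, small kernel, and discreteness, to be the step demanding the most care.
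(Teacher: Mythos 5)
Your proposal is correct and follows essentially the same route as the paper: both straighten the left-regular quasi-action of Proposition \ref{LRQA} through a quasi-isometry using the Kleiner--Leeb property to obtain the homomorphism $\rho$ (the paper phrases this as $\rho = \lambda_G^{-1}\circ\psi\circ\lambda$ via Proposition \ref{RLQAKL}, you phrase it via the orbit map to $X$, which is the same identification), and both then verify relative density, discreteness and small kernel by comparing $\rho|_\Lambda$ with the given quasi-isometry. The only noticeable divergence is the discreteness step, where you deduce uniform discreteness of $\rho(\Lambda)$ directly from the accumulation-point criterion of Proposition \ref{TopCharDelone}(i) together with finiteness of balls in the finitely generated group $\Lambda^\infty$, while the paper proves the formally stronger local finiteness of $\rho(\Lambda)^3$ via properness of the map $(\gamma,x)\mapsto(x,\lambda_\gamma(x))$ so as to invoke Proposition \ref{PropDiscreteEquiv} --- the same finiteness input, packaged differently, and your shortcut is legitimate because $\rho(\Lambda)$ is automatically an approximate subgroup as the image of $\Lambda$ under a group homomorphism.
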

The first step in the proof of Theorem \ref{QIRigidity} is the following observation. Recall from Definition \ref{DefLRQAGroup} the definition of the left-regular quasi-action of a compactly-generated lcsc group.
\begin{proposition}\label{RLQAKL} If $G$ is the isometry group of a Kleiner--Leeb space, then $G$ is compactly-generated and the left-regular quasi-action $\lambda_G: G \to {\rm QI}(G)$ is an isomorphism.
\end{proposition}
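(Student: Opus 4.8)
The plan is to transfer the entire question from $G$ to the space $X$ by means of the orbit map, and then to read off the conclusion from the defining property of a Kleiner--Leeb space, which (as noted right after that definition) already makes the natural inclusion ${\rm Is}(X) \to {\rm QI}(X)$ an isomorphism. First I would dispose of compact generation: since $X$ is proper, $G = {\rm Is}(X)$ acts properly on $X$, and by hypothesis the action is cocompact; as $X$ is moreover geodesic, this proper cocompact isometric action forces $G$ to be compactly generated by \cite[Thm.~4.C.5]{CdlH}. Fixing a base point $x_0 \in X$, the Milnor--Schwarz Lemma (Theorem \ref{MSClassic}, in its lcsc form) then shows that the orbit map $\Phi\colon G \to X$, $g \mapsto g.x_0$, is a quasi-isometry with respect to the canonical QI class of $G$; I fix once and for all a quasi-inverse $\overline{\Phi}$ of $\Phi$.

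The central step is to transport $\lambda_G$ along $\Phi$. Any quasi-isometry $\Phi\colon G \to X$ induces a group isomorphism $\Phi_*\colon {\rm QI}(G) \to {\rm QI}(X)$ by $[f] \mapsto [\Phi \circ f \circ \overline{\Phi}]$, and I claim that $\Phi_*$ intertwines the left-regular quasi-action $\lambda_G$ with the natural homomorphism $\iota\colon {\rm Is}(X) \to {\rm QI}(X)$, $\phi \mapsto [\phi]$. This rests on the equivariance of the orbit map: for $g \in G$ one has $\Phi(\lambda_g(h)) = (gh).x_0 = g.\bigl(\Phi(h)\bigr)$, so $\Phi \circ \lambda_g = g \circ \Phi$ (where $g$ on the right denotes the isometry $g$ acting on $X$). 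Hence $\Phi_*(\lambda_G(g)) = [\,g \circ (\Phi \circ \overline{\Phi})\,] = [g] = \iota(g)$, because $\Phi \circ \overline{\Phi}$ lies at uniformly bounded distance from $\mathrm{id}_X$ and $g$ is an isometry. Thus $\lambda_G = \Phi_*^{-1} \circ \iota$, and since $\Phi_*$ is an isomorphism, the proposition reduces to showing that $\iota$ is an isomorphism.

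Finally, $\iota\colon {\rm Is}(X) \to {\rm QI}(X)$ is an isomorphism by the KL condition: surjectivity is immediate, since every $(K,C)$-quasi-isometry $f$ of $X$ admits an isometry $\widehat{f}$ at uniformly bounded distance, whence $[f] = \iota(\widehat{f})$; and the uniqueness clause in the definition yields injectivity, as two isometries at bounded distance from one another are both at bounded distance from the same quasi-isometry and hence coincide. Combining this with the previous step gives that $\lambda_G$ is an isomorphism. The main obstacle I anticipate is purely in the bookkeeping of the second paragraph: one must verify that $\Phi_*$ is well defined on equivalence classes and is a homomorphism (tracking that all the relevant compositions change distances only within uniform bounds), and confirm at the outset that $\lambda_G$ really has uniform image so that it is a genuine quasi-action into ${\rm QI}(G)$ — this last point is exactly the example preceding Definition \ref{DefLRQAGroup}. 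The KL hypothesis is designed so that the concluding identification is essentially automatic, so the real content lies in the equivariant transport along $\Phi$.
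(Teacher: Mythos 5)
Your proposal is correct and follows essentially the same route as the paper's (much terser) proof: compact generation and the orbit map being a quasi-isometry come from the cited results in \cite{CdlH}, the KL condition makes ${\rm Is}(X)\to{\rm QI}(X)$ an isomorphism, and the identification of $\lambda_G$ with the composite $\Phi_*^{-1}\circ\iota$ via equivariance of the orbit map is exactly the paper's closing observation that ``the composition of these two isomorphisms coincides with the left-regular quasi-action.'' The only difference is that you spell out the bookkeeping the paper leaves implicit.
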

\begin{proof} It follows from \cite[Thm 4.C.5 and Prop. 5.B.10]{CdlH} that $G$ is compactly generated and that the orbit maps $G \to X$ are quasi-isometries. In particular, ${\rm QI}(X) \cong {\rm QI}(G)$.  Moreover, the KL assumption implies that the canonical map ${\rm Is}(X, d) \to {\rm QI}(X)$ is an isomorphism, and the composition of these two isomorphisms coincides with the left-regular quasi-action.
\end{proof}
Now let $G$ be the isometry group of a KL space and let  $(\Lambda, \Lambda^\infty)$ be a finitely-generated approimate group quasi-isometric to $G$. We fix a quasi-isometry $\varphi: \Lambda \to G$ and a quasi-inverse $\overline{\varphi}: G \to \Lambda$ with $\varphi(e) = e$ and $\overline{\varphi}(e) = e$. Denote by $\psi:  {\rm QI}(\Lambda) \to {\rm QI}(G)$ the isomorphism $f \mapsto\varphi \circ f \circ  \overline{\varphi}$. Denote by $\lambda: \Lambda^\infty \to {\rm QI}(\Lambda)$ the left-regular quasi-action of $(\Lambda, \Lambda^\infty)$ (cf. Definition \ref{DefLRQA}). By Proposition \ref{RLQAKL} we then obtain a homomorphism
\[
\rho: \Lambda^\infty \xrightarrow{\lambda} {\rm QI}(\Lambda) \xrightarrow{\psi} {\rm QI}(G) \xrightarrow{\lambda_G^{-1}} G. 
\]
We are going to show that $\rho$ is a geometric representation. For this we have to establish the following three items.
\begin{enumerate}[(GR1)]
\item For every $\gamma \in \Lambda$ the elements $\rho(\gamma)$ and $\varphi(\gamma)$ are at uniformly bounded distance. Since $\varphi(\Lambda)$ is relatively dense in $G$, this implies that $\rho(\Lambda)$ is relatively dense in $G$.
\item $\rho(\Lambda)^3 \subset G$ is uniformly finite, and hence $\rho(\Lambda)$ is an approximate lattice in $G$ by (GR1) and Proposition \ref{PropDiscreteEquiv}.
\item $\rho$ has small kernel.
\end{enumerate}
For the proof of (GR1) we need:
\begin{lemma}\label{QIRigidityMain} For every $k \geq 1$ there exists $D_k > 0$ such that for all $\gamma \in \Lambda^k$,
\[
 \sup_{h \in G} d(\varphi(\lambda_\gamma(\overline{\varphi}(h))), \rho(\gamma)h) < D_k.
\]
\end{lemma}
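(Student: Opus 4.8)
Write $\Phi_\gamma := \varphi \circ \lambda_\gamma \circ \overline{\varphi}\colon G \to G$, so that $\varphi(\lambda_\gamma(\overline{\varphi}(h))) = \Phi_\gamma(h)$ and the quantity to be bounded is $\sup_{h \in G} d(\Phi_\gamma(h), \rho(\gamma)h)$. By the very definition of $\rho$ and of the isomorphism $\psi$ one has $\lambda_G(\rho(\gamma)) = \psi(\lambda(\gamma)) = [\Phi_\gamma]$ in ${\rm QI}(G)$, so left-multiplication $L_{\rho(\gamma)}\colon h \mapsto \rho(\gamma)h$ and $\Phi_\gamma$ represent the \emph{same} class in ${\rm QI}(G)$. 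For each fixed $\gamma$ this already yields $\sup_h d(\Phi_\gamma(h), \rho(\gamma)h) < \infty$; the whole point of the lemma is that the bound can be chosen uniformly over $\gamma \in \Lambda^k$, and it is exactly here that the uniform Kleiner--Leeb condition must enter.

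The plan is to transport everything to $X$ and apply the KL condition with a single pair of quasi-isometry constants. First I would record that the $\Phi_\gamma$ have quasi-isometry constants depending only on $k$: by Proposition \ref{LRQA}.(ii) the set $\lambda(\Lambda^k) \subset {\rm QI}(\Lambda)$ is uniform (concretely, each $\lambda_\gamma$ with $\gamma \in \Lambda^k$ is a $(1,2k\delta)$-quasi-isometry), and pre- and post-composing with the fixed quasi-isometries $\overline{\varphi}$ and $\varphi$ makes each $\Phi_\gamma$ a $(K_1,C_1)$-quasi-isometry with $K_1, C_1$ depending only on $k$. Fixing a basepoint $x_0 \in X$, the orbit map $\pi\colon G \to X$, $g \mapsto g.x_0$, is a quasi-isometry by Proposition \ref{RLQAKL}; choosing a quasi-inverse $\overline{\pi}$, the conjugates $\widetilde{\Phi}_\gamma := \pi \circ \Phi_\gamma \circ \overline{\pi}\colon X \to X$ are $(K_2,C_2)$-quasi-isometries of $X$ with $K_2, C_2$ again depending only on $k$.

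Now I would apply the KL condition to the family $\{\widetilde{\Phi}_\gamma\}_{\gamma \in \Lambda^k}$ with the single pair $(K_2,C_2)$: this produces one constant $D_k' = D(K_2,C_2)$ such that each $\widetilde{\Phi}_\gamma$ is at distance $< D_k'$ from some isometry of $X$. It remains to identify that isometry with $x \mapsto \rho(\gamma).x$. Since $\pi$ intertwines left-multiplication on $G$ with the $G$-action on $X$, i.e.\ $\pi(\rho(\gamma)h) = \rho(\gamma).\pi(h)$, and since $[L_{\rho(\gamma)}] = [\Phi_\gamma]$ in ${\rm QI}(G)$, transporting along $\pi$ gives $[\,x \mapsto \rho(\gamma).x\,] = [\widetilde{\Phi}_\gamma]$ in ${\rm QI}(X)$; in particular $x \mapsto \rho(\gamma).x$ is an isometry at \emph{finite} distance from $\widetilde{\Phi}_\gamma$. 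As ${\rm Is}(X) \to {\rm QI}(X)$ is injective (this is part of the KL isomorphism used in Proposition \ref{RLQAKL}), two isometries at finite distance apart are equal, so the isometry furnished by the KL condition is precisely $x \mapsto \rho(\gamma).x$. Hence $\sup_{x \in X} d(\widetilde{\Phi}_\gamma(x), \rho(\gamma).x) < D_k'$ uniformly over $\gamma \in \Lambda^k$.

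Finally I would push this bound back to $G$ by routine coarse geometry. Setting $x = \pi(h)$ and using that $\overline{\pi}\circ\pi$ is at bounded distance from the identity, that $\Phi_\gamma$ is coarsely Lipschitz with constants controlled by $k$, and that $\pi$ is coarsely Lipschitz, one compares $\widetilde{\Phi}_\gamma(\pi(h)) = \pi(\Phi_\gamma(\overline{\pi}(\pi(h))))$ with $\pi(\Phi_\gamma(h))$ and, since $\rho(\gamma).\pi(h) = \pi(\rho(\gamma)h)$, deduces that $d(\pi(\Phi_\gamma(h)), \pi(\rho(\gamma)h))$ is uniformly bounded over $h \in G$ and $\gamma \in \Lambda^k$; because $\pi$ is a quasi-isometric embedding this in turn bounds $d(\Phi_\gamma(h), \rho(\gamma)h)$ uniformly, giving the desired $D_k$. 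The only non-formal ingredient is the uniformity of $D_k'$ in the KL condition; everything else is bookkeeping. I expect the main obstacle to be precisely the interplay of the two uniformities: one must ensure that the constants produced at each stage (passing $\Lambda^k \rightsquigarrow \Phi_\gamma \rightsquigarrow \widetilde{\Phi}_\gamma$ and back) never depend on the individual $\gamma$ but only on $k$, so that Proposition \ref{LRQA}.(ii) feeds the KL condition with a fixed quasi-isometry constant.
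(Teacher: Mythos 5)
Your proposal is correct and follows essentially the same route as the paper: the paper's (much terser) proof likewise observes that $\rho(\gamma)$ is by definition the unique group element with $D(\gamma)<\infty$, invokes Proposition \ref{LRQA} to get uniformity of $\lambda(\Lambda^k)$ and hence of $\psi(\lambda(\Lambda^k))$ in ${\rm QI}(G)$, and then appeals to the KL property for the uniform bound. What you add is an explicit account of the steps the paper leaves implicit — transporting the uniform family to $X$ via the orbit map, identifying the KL isometry with $x\mapsto\rho(\gamma).x$ via injectivity of ${\rm Is}(X)\to{\rm QI}(X)$, and pushing the bound back to $G$ — all of which is carried out correctly.
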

\begin{proof} By definition $\rho(\gamma)$ is the unique element in $G$ such that 
\[D(\gamma) := \sup_{h \in G} d(\varphi(\lambda_\gamma(\overline{\varphi}(h))), \rho(\gamma)h)< \infty.\]
Moreover, by Proposition \ref{LRQA} the set $\lambda(\Lambda^k) \subset {\rm QI}(\Lambda^k)$ is uniform, and thus the set $\varphi(\lambda(\Lambda^k))$ is uniform. It thus follows from the KL property that $D(\gamma)$ is uniformly bounded as $\gamma$ ranges over $\Lambda^k$.
\end{proof}
\begin{proof}[Proof of (GR1)] Apply Lemma \ref{QIRigidityMain} with $k =1$ to obtain for all $\gamma \in \Lambda$ the inequality
\[
d(\varphi(\gamma), \rho(\gamma)) = d(\varphi(\lambda_\gamma(\overline{\varphi}(e))), \rho(\gamma)e) \leq \sup_{h \in G} d(\varphi(\lambda_\gamma(\overline{\varphi}(h))), \rho(\gamma)h) < D_1.\qedhere
\]
\end{proof}
Continuing towards (GR2) we observe that the map 
\[
p: \Lambda^3 \times \Lambda \to \Lambda \times \Lambda, \quad (\gamma, x) \mapsto (x, \lambda_\gamma(x))
\]
is proper, i.e.\ pre-images of balls are finite. For the proof of this observation we fix a word metric $d$ on $\Lambda^\infty$ and observe that by Proposition \ref{LRQA} there exists $C>0$ such that
 \begin{equation}\label{ConstantC}
 d(\lambda_\gamma(x), \gamma x) < C
 \end{equation} 
 for all $\gamma \in \Lambda^3$ and $x \in \Lambda$. Now let $R>0$ and assume that $(\gamma, x) \in \Lambda^3 \times \Lambda$ satisfies
 \[
 p(\gamma, x) = (x, \lambda_\gamma(x)) \in (B_R(e) \cap \Lambda) \times (B_R(e) \cap \Lambda),
 \]
i.e.\ $d(e,x) < R$ and $ d(e, \lambda_\gamma(x)) <R$. From the latter inequality and \eqref{ConstantC} we deduce that
\[
d(x, \gamma^{-1}) =  d(e, \gamma x) \leq d(e, \lambda_\gamma(x))  + d(\lambda_\gamma(x), \gamma x)< R + C,
\]
and hence $\gamma \in B_{R+C}(x)^{-1}$. We deduce that for every $R>0$ the set
\[
p^{-1}\left((B_R(e) \cap \Lambda) \times (B_R(e) \cap \Lambda)\right) \subset B_R(e) \times \left(\bigcup_{x \in B_R(e)} B_{R+C}(x)^{-1}\right)
\]
is finite, and hence $p$ is proper as claimed.
\begin{lemma}\label{QIRLemma} Let $\Xi := \rho(\Gamma)^3 = \rho(\Lambda^3)$ and equip $\Xi$ with the discrete topology. Then the map $\Xi \times G \to G \times G$ given by $(\xi, g) \mapsto (g, \xi.g)$ is proper. 
\end{lemma}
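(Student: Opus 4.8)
The plan is to deduce properness of the map $q\colon(\xi,g)\mapsto(g,\xi g)$ from the properness of the map $p\colon\Lambda^3\times\Lambda\to\Lambda\times\Lambda$, $(\gamma,x)\mapsto(x,\lambda_\gamma(x))$ established just above the statement, transporting it across the quasi-isometry $\varphi$. First I would make the reduction explicit: since $\Xi$ carries the discrete topology and $G$ is locally compact, a subset of $\Xi\times G$ is relatively compact precisely when its projection to $G$ is bounded and only finitely many $\xi\in\Xi$ occur in it. As $q$ is continuous, it therefore suffices to show that for all compact $K_1,K_2\subset G$ the preimage $q^{-1}(K_1\times K_2)=\{(\xi,g):g\in K_1,\ \xi g\in K_2\}$ meets only finitely many fibres $\{\xi\}\times G$; the $g$-coordinate is automatically confined to the compact set $K_1$. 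In other words, the content of the lemma is exactly that $\Xi=\rho(\Lambda^3)$ is locally finite in $G$.

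The key tool is Lemma \ref{QIRigidityMain} with $k=3$: for every $\gamma\in\Lambda^3$ the composition $\varphi\circ\lambda_\gamma\circ\overline\varphi$ lies within a uniform distance $D_3$ of left translation by $\rho(\gamma)$. Fix a quasi-inverse constant $E$ with $d(y,\overline\varphi(\varphi(y)))\le E$ for $y\in\Lambda$, and recall that the quasi-isometries $\varphi,\overline\varphi$ send bounded sets to bounded sets. Now take $(\xi,g)\in q^{-1}(K_1\times K_2)$ and choose $\gamma\in\Lambda^3$ with $\rho(\gamma)=\xi$. Put $x:=\overline\varphi(g)$. Since $g\in K_1$, the point $x$ lies in the bounded set $\overline\varphi(K_1)\subset\Lambda$. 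Moreover
\[
d\bigl(\varphi(\lambda_\gamma(x)),\,\rho(\gamma)g\bigr)=d\bigl(\varphi(\lambda_\gamma(\overline\varphi(g))),\,\rho(\gamma)g\bigr)<D_3,
\]
and $\rho(\gamma)g=\xi g\in K_2$, so $\varphi(\lambda_\gamma(x))\in N_{D_3}(K_2)$, a bounded subset of $G$. Applying $\overline\varphi$ and the quasi-inverse estimate, $\lambda_\gamma(x)$ then lies in a bounded subset $B_2\subset\Lambda$ depending only on $K_2$ (and on $D_3,E$), not on $\gamma$ or $g$.

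Consequently $p(\gamma,x)=(x,\lambda_\gamma(x))$ lies in the product $\overline\varphi(K_1)\times B_2$, which is bounded in $\Lambda\times\Lambda$ and hence finite, because bounded subsets of $\Lambda\subset\Lambda^\infty$ in the word metric are finite. Properness of $p$ therefore forces $(\gamma,x)$ into the fixed finite set $p^{-1}(\overline\varphi(K_1)\times B_2)\subset\Lambda^3\times\Lambda$. In particular only finitely many $\gamma\in\Lambda^3$ arise this way; and since the chosen assignment $\xi\mapsto\gamma$ is injective (as $\rho(\gamma)=\xi$ recovers $\xi$ from $\gamma$), only finitely many $\xi=\rho(\gamma)$ occur in $q^{-1}(K_1\times K_2)$. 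Thus $q^{-1}(K_1\times K_2)$ is a closed subset of a set of the form $S\times K_1$ with $S$ finite, hence compact, and $q$ is proper. The only real subtlety is the uniformity of the comparison constant over the whole of $\Lambda^3$, which is precisely what Lemma \ref{QIRigidityMain} supplies; everything else is routine bookkeeping with the discrete topology and with boundedness under quasi-isometries.
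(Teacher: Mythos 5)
Your argument is correct and follows essentially the same route as the paper's: reduce properness to showing that only finitely many $\xi\in\Xi$ can carry one fixed compact set into another, then use Lemma \ref{QIRigidityMain} with $k=3$ to pull the data back through $\overline{\varphi}$ to a point of $p^{-1}(\text{bounded set})$ and invoke the properness of $p$. The only differences are cosmetic (you work with $\gamma$ and general compacta $K_1,K_2$ and keep constants implicit as ``bounded sets'', where the paper works with $\gamma^{-1}$, balls $B_N(e)$, and an explicit constant $C'$), so there is nothing substantive to add.
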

\begin{proof} By properness of $p$ we know that for every $R>0$ the set
\[
F_R(\Lambda^3) := \{\gamma \in \Lambda^k \mid B_R(e) \cap \lambda_\gamma(B_R(e)) \neq \emptyset\}
\]
is finite, and we need to show that for every $N \in \mathbb N$ the set
\[
F_N(\Xi) := \{\xi \in \Xi \mid B_N(e) \cap \xi.B_N(e) \neq \emptyset\}
\]
is finite. 
Choose $(R, C)$ such that $\varphi, \overline{\varphi}$ are $(R, C)$-quasi-isometries and their compositions are bounded by $C$, and let $D_k$ as in Lemma \ref{QIRigidityMain}. Finally, let $N \in \mathbb N$ and $\xi \in F_N(\Xi)$ and define
\[
C' := \max\{RN+C, R(N+D_3)+2C\}.
\]

Since $\rho: \Lambda^3 \to \Xi$ is surjective we then find $\gamma \in \Lambda^3$ such that $\xi = \rho(\gamma)$, and since $\xi \in F_N(\Xi)$ we have $B_N(e) \cap \rho(\gamma)B_N(e) \neq \emptyset$. We thus find $g \in G$ such that $d(g,e) < N$ and $d(\rho(\gamma^{-1})g, e) < N$. The former inequality implies that $d(\overline{\varphi}(g),e) < RN+C \leq C'$, and we deduce from the latter inequality and Lemma \ref{QIRigidityMain} that
\[
d(\varphi(\lambda_{\gamma^{-1}}(e)), e) \leq d(\varphi(\lambda_{\gamma^{-1}}(\overline{\varphi}(e))), \rho(\gamma^{-1})g) + d(\rho(\gamma^{-1})g, e) < N+D_3,
\]
and hence
\[
d(\lambda_{\gamma^{-1}}(e), e) \leq d(\overline{\varphi}(\varphi(\lambda_{\gamma^{-1}}(e))), \overline{\varphi}(e))+C < R(N+D_3)+2C \leq C'.
\]
Combining these two observatione we conclude that
\[
\max\{d(\overline{\varphi}(g),e), d(\lambda_{\gamma^{-1}}(\overline{\varphi}(g))), e)\} \leq C',
\] 
and hence $\overline{\varphi}(g)  \in B_{C'}(e) \cap \lambda_\gamma(B_{C'}(e))$ , which implies $\gamma \in F_{C'}(\Lambda^3)$. This shows that $F_N(\Xi) \subset \rho(F_{C'}(\Lambda^3))$ is finite and finishes the proof.
\end{proof}
\begin{proof}[Proof of (GR2)] By Lemma \ref{QIRLemma} the map $\rho(\Lambda)^3 \times G \to G \times G$, $(\gamma, g) \mapsto (g, \gamma g)$ is proper. It follows that for every $R>0$ the set
\[
\rho(\Lambda)^3 \cap B_R(e) = \{\xi \in  \rho(\Lambda)^3\mid \xi \in B_R(e)\} \subset \{\xi \in \rho(\Lambda^3) \mid B_{2R}(e) \cap \xi B_{2R}(e) \neq \emptyset\}
\]
is finite, i.e.\ the set $\rho(\Lambda)^3$ is locally finite. 
\end{proof}
\begin{proof}[Proof of (GR3)] We consider the maps
\[
\Lambda \to \rho(\Lambda) \hookrightarrow G.
\]
By (GR1) and (GR2) the subset $\rho(\Lambda) \subset G$ is a uniform approximate lattice, hence the second of these maps is a quasi-isometry by Theorem \ref{MilnorSchwarz}. Moreover, by (GR1) the set $\rho(\Lambda)$ is at bounded distance from $\varphi(\Lambda)$. Since $\varphi$ is a quasi-isometry it follows that also the composition of the two maps is a quasi-isometry. We can thus invert this quasi-isometry to obtain a quasi-isometry 
\[
\rho(\Lambda) \hookrightarrow G \to \Lambda,
\]
which is quasi-inverse to $\Lambda \to \rho(\Lambda)$.
\end{proof}
This finishes the proof of Theorem \ref{QIRigidity}.

\subsection{Quasi-isometries from Freiman homomorphisms}\label{SecFreiman}
There is a well-established theory of ``partial homomorphism'' between approximate groups, and we will see that in many cases partial isomorphisms induce quasi-isometries between finitely-generated approximate group. The following definition is taken from \cite{Breuillard}.
\begin{definition} Let $(\Lambda, \Lambda^\infty)$, $(\Xi, \Xi^\infty)$ be approximate groups and $k \in \mathbb N$. A map $\varphi: \Lambda \to \Xi$ is called a 
\emph{$k$-Freiman homomorphism} if for all $g_1, \dots, g_k, \gamma_1, \dots, \gamma_k \in \Lambda$
\[
g_1 \cdots g_k = \gamma_1 \cdots \gamma_k \quad \implies \quad \varphi(g_1) \cdots \varphi(g_k) = \varphi(\gamma_1) \cdots \varphi(\gamma_k).
\]
A \emph{$k$-Freiman isomorphism} is a bijective $k$-Freiman homomorphism whose inverse is also a $k$-Freiman homomorphism.
\end{definition}
By definition, every $k$-Freiman homomorphism extends uniquely to a map $\widehat{\varphi}: \Lambda^k \to \Xi^k$ satisfying $\widehat{\varphi}(gh) = \widehat{\varphi}(g) \widehat{\varphi}(h)$ for all $g \in \Lambda^l$, $h \in \Lambda^m$ as long as $l+m \leq k$. It thus follows that the image of an approximate group under a Freiman $2$-homomorphism is again an approximate group, and that properties of approximate groups which only involve $\Lambda^k$ and partial multiplications $\Lambda^l \times \Lambda^m \to \Lambda^k$ for $l+m \leq k$ are invariant under Freiman $k$-isomorphisms. We will see in Proposition \ref{FreimanConvex} below that under suitable convexity assumptions on the approximate groups involved, Freiman $k$-isomorphisms between finitely-generated approximate groups are quasi-isometries. 

To formulate these assumptions we introduce the following concepts. Given a metric space $X$, we refer to a $(K, C)$-quasi isometric embedding $\varphi: [a,b] \to X$ as a $(K,C)$-quasi-geodesic of length $b-a$. A subset $A \subset X$ will be called \emph{weakly $(R, K, C)$-quasi-convex} in $X$ provided for all $x,y \in A$ there is  a $(K,C)$-quasi-geodesic between $x$ and $y$ contained in $N_R(A)$. If we do not want to specify the parameters we refer to $A$ as a \emph{weakly quasi-convex} subset. The latter notion is invariant under quasi-isometries in the sense that if $A$ is weakly quasi-convex in $X$ and $\varphi: X \to Y$ is a quasi-isometry, then $\varphi(A)$ is weakly quasi-convex in $Y$ (possibly with larger parameters). Indeed, the image of a quasi-geodesic is again a quasi-geodesic, and if $\varphi$ is a $(K,C)$-quasi-isometry, then it maps any subset of $N_R(A)$ to a subset of $N_{KR+C}(\varphi(A))$. 
\begin{definition} A finitely generated approximate group $(\Lambda, \Lambda^\infty)$ is \emph{quasi-convex} if $\Lambda$ is a weakly quasi-convex subset of the Cayley graph ${\rm Cay}(\Lambda^\infty, S)$ for some finite generating set $S \subset \Lambda^\infty$.
\end{definition}
Note that this notion is actually independent of the generating set, since by the previous discussion weak quasi-convexity is a QI-invariant. Assume now that $(\Lambda, \Lambda^\infty)$ is quasi-convex and let $S$ denote a finite generating set of $\Lambda^\infty$. Since for sufficiently large $R = R(S)$ we have
\begin{equation*}\label{NbhdLambda}
N_{k}(\Lambda) \subset N_1(\Lambda^{k+1}) \subset N_{kR}(\Lambda),
\end{equation*}
there exist parameters $(k, R, C)$ depending on $S$ such that for all $x,y \in \Lambda$ there exists an $(R, C)$-quasigeodesic from $x$ to $y$ in ${\rm Cay}(\Lambda^\infty, S)$ whose vertices are contained in $\Lambda^k$. We then say that $(\Lambda, \Lambda^\infty)$ is \emph{$k$-quasi-convex} with respect to $S$ and call $k$ a \emph{convexity parameter} of $(\Lambda, \Lambda^\infty)$ with respect to $S$. 

We will be particularly interested in generating sets $S$ contained in $\Lambda$. If $\Lambda^\infty$ is finitely generated, then such generating sets always exist, since every finite generating set is contained in $\Lambda^k$ for some $k \geq 1$ and every element in $\Lambda^k$ is a finite product of elements in $\Lambda$.
\begin{proposition}\label{FreimanConvex}  Let  $(\Lambda, \Lambda^\infty)$, $(\Xi, \Xi^\infty)$ be quasi-convex finitely-generated approximate groups. Let  $S_\Lambda \subset \Lambda$ and $S_\Xi \subset \Xi$ be finite generating sets with convexity parameter $k$, and let $\varphi: \Lambda \to \Xi$ be a Freiman $(k+1)$-isomorphism with $\varphi(S_\Lambda) = S_\Xi$. Then: 
\begin{enumerate}[(i)]
\item $\varphi$ is a quasi-isometry.
\item $\varphi$ induces an isomorphism of the left-regular quasi-actions of $\Lambda$ and $\Xi$. 
\end{enumerate}
\end{proposition}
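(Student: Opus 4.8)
The plan is to derive (i) by pushing paths through the partial extension of $\varphi$, and then to obtain (ii) by checking a single identity on generators. Throughout, write $d_\Lambda$ and $d_\Xi$ for the word metrics determined by $S_\Lambda$ and $S_\Xi$, and let $\widehat\varphi\colon \Lambda^{k+1}\to\Xi^{k+1}$ be the canonical extension of the Freiman $(k+1)$-isomorphism $\varphi$, so that $\widehat\varphi(gh)=\widehat\varphi(g)\,\widehat\varphi(h)$ whenever $g\in\Lambda^l$, $h\in\Lambda^m$ and $l+m\le k+1$. Applying this to $e\cdot e=e$ and to $s\cdot s^{-1}=e$ (both of length $\le 2\le k+1$) gives $\varphi(e)=e$ and $\varphi(s^{-1})=\varphi(s)^{-1}$ for $s\in\Lambda$; together with $\varphi(S_\Lambda)=S_\Xi$ this shows that $\varphi$ maps $S_\Lambda\cup S_\Lambda^{-1}$ bijectively onto $S_\Xi\cup S_\Xi^{-1}$. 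Since $\varphi$ is a bijection of $\Lambda$ onto $\Xi$, its image is automatically relatively dense, so for (i) only the two-sided metric comparison is at issue; by symmetry (the same hypotheses hold for $\varphi^{-1}$, $\Xi$ and $S_\Xi$, using $\varphi^{-1}(S_\Xi)=S_\Lambda$) it suffices to establish one inequality.

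For the upper bound fix $x,y\in\Lambda$. The crucial input is that $k$-quasi-convexity of $(\Lambda,\Lambda^\infty)$ with respect to $S_\Lambda$ lets me join $x$ to $y$ by an \emph{edge path} $x=z_0,z_1,\dots,z_n=y$ in ${\rm Cay}(\Lambda^\infty,S_\Lambda)$ whose vertices all lie in $\Lambda^k$ and whose length obeys $n\le R\,d_\Lambda(x,y)+C$; here consecutive vertices differ by a single generator, $z_{i+1}=z_i s_i$ with $s_i\in S_\Lambda\cup S_\Lambda^{-1}\subset\Lambda$. The ledger $z_i\in\Lambda^k$, $s_i\in\Lambda$, total level $k+1$, is exactly what the partial multiplicativity of $\widehat\varphi$ can absorb, and it yields $\widehat\varphi(z_{i+1})=\widehat\varphi(z_i)\,\varphi(s_i)$ with $\varphi(s_i)\in S_\Xi\cup S_\Xi^{-1}$. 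Thus $\widehat\varphi(z_0),\dots,\widehat\varphi(z_n)$ is an edge path in ${\rm Cay}(\Xi^\infty,S_\Xi)$ from $\varphi(x)$ to $\varphi(y)$, giving $d_\Xi(\varphi(x),\varphi(y))\le n\le R\,d_\Lambda(x,y)+C$. Running the identical argument with $\varphi^{-1}$ produces the reverse inequality, so $\varphi$ is a quasi-isometry and (i) follows.

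For (ii), conjugation by the quasi-isometry $\varphi$ (with genuine Freiman inverse $\varphi^{-1}$) gives a group isomorphism $\psi\colon {\rm QI}(\Lambda)\to{\rm QI}(\Xi)$, $[f]\mapsto[\varphi\circ f\circ\varphi^{-1}]$. Denote by $\lambda$ and $\lambda'$ the left-regular quasi-actions of $\Lambda$ and $\Xi$; both are homomorphisms whose images are generated by $\lambda(S_\Lambda)$ and by $\lambda'(S_\Xi)=\lambda'(\varphi(S_\Lambda))$ respectively, so it is enough to verify the single identity $\psi(\lambda(s))=\lambda'(\varphi(s))$ for each $s\in S_\Lambda$, after which $\psi$ restricts to an isomorphism of $\lambda(\Lambda^\infty)$ onto $\lambda'(\Xi^\infty)$ intertwining the quasi-actions. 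To check it I use the characterization in Proposition \ref{LRQA}(i): $\lambda'_{\varphi(s)}$ is the unique class with a representative at uniformly bounded $d_\Xi$-distance from left multiplication by $\varphi(s)$. So I estimate, for $x\in\Lambda$, the distance between $\varphi(\lambda_s(x))$ and $\varphi(s)\varphi(x)=\widehat\varphi(sx)$: since $\lambda_s(x)\in\Lambda$ and $sx\in\Lambda^2$ differ by an element of the finite set $\overline{B}_\delta(e)\cap\Lambda^3$ (finite because the word metric is proper), partial multiplicativity of $\widehat\varphi$ on these bottom-level products carries them to a uniformly bounded pair in $\Xi$. This shows $\varphi\circ\lambda_s\circ\varphi^{-1}\sim\lambda'_{\varphi(s)}$, completing (ii).

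The main obstacle is the first step of (i): converting $k$-quasi-convexity into a genuine single-generator edge path confined to $\Lambda^k$, rather than a coarse quasi-geodesic whose bounded-length steps would create products of level $k+O(1)$ and thereby overrun the available Freiman order. The precise bookkeeping ``vertices in $\Lambda^k$, steps in $\Lambda$, total $k+1$'' is what forces the Freiman order to be $k+1$ and makes the transport legitimate; once it is in place, the remaining points — surjectivity, the reverse inequality, and the bounded-scale comparisons in (ii) — reduce to properness of the word metric and the partial multiplicativity of $\widehat\varphi$.
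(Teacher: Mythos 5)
Your proof of (i) is correct and follows the paper's argument exactly: the paper packages the same transport as a graph isomorphism $\widehat{\varphi}\colon \Gamma_k(\Lambda,S_\Lambda)\to\Gamma_k(\Xi,S_\Xi)$ between the full subgraphs of the Cayley graphs on the vertex sets $\Lambda^k$ and $\Xi^k$, whereas you push the confined quasi-geodesic edge by edge, but the bookkeeping (vertices in $\Lambda^k$, steps in $\Lambda$, hence Freiman order $k+1$) is identical, as is the symmetry argument for the reverse inequality. For (ii) the paper merely asserts that the conjugation isomorphism $\varphi_*\colon{\rm QI}(\Lambda)\to{\rm QI}(\Xi)$ intertwines the left-regular quasi-actions, so your generator-by-generator verification supplies detail the paper omits; the only caveat is that rewriting $\widehat{\varphi}(sx)$ as $\varphi(\lambda_s(x))\,\widehat{\varphi}(w)$ with $w\in\Lambda^3$ invokes partial multiplicativity at level $1+3=4$ and hence needs $k\ge 3$ — for small $k$ one should instead expand $\lambda_s(x)^{-1}sx$ as a product of three elements of the symmetric set $\Lambda$, which only requires Freiman order $3$.
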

\begin{proof} (i) Denote by $\Gamma_k(\Lambda, S_\Lambda)$ the full subgraph of the Cayley graph ${\rm Cay}(\Lambda^\infty, S_\Lambda)$ on the vertex set $\Lambda^k$, and define $\Gamma_k(\Xi, S_\Xi)$ accordingly. Note that in order to construct these graphs we only need to know the partial multiplication $\Lambda^{k} \times \Lambda \to \Lambda^{k+1}$. In particular, every Freiman $(k+1)$-isomorphism $\varphi: \Lambda \to \Xi$ as in (i) induces a graph isomorphism
\[
 \widehat{\varphi}: \Gamma_k(\Lambda, S_\Lambda) \to  \Gamma_k(\Xi, S_\Xi).
\]
By assumption there exist constants $(K,C)$ and for every pair $x,y \in \Lambda$ a $(K, C)$-quasi-geodesic $\gamma_{x,y}$ in ${\rm Cay}(\Lambda^\infty, S_\Lambda)$ between $x$ and $y$ contained in $ \Gamma_k(\Lambda, S_\Lambda)$. The length $\ell(\gamma_{x,y})$ of this curve is then contained in $[\frac{1}{K}d(x,y)-C, K d(x,y) + C]$, and since $\widehat{\varphi}$ is a graph isomorphism, the curve $\widehat{\varphi}(\gamma_{x,y})$ is of the same length. Note that  $\widehat{\varphi}(\gamma_{x,y})$ connects $\varphi(x)$ and $\varphi(y)$ in $\Gamma_k(\Xi, S_\Xi)$, and hence in ${\rm Cay}(\Xi^\infty, S_\Xi)$. This shows that
\[Kd_{S_\Lambda}(x,y) + C \geq \ell(\gamma_{x,y}) = \ell(\widehat{\varphi}(\gamma_{x,y}))  \geq d_{S_{\Xi}}(\varphi(x), \varphi(y)) \quad \text{ for all  }x,y \in \Lambda.\]
and a symmetric argument then yields the existence of constants $K', C'$ such that
\[
(K')^{-1}d_{S_\Lambda}(x,y) - C' \leq d_{S_{\Xi}}(\varphi(x), \varphi(y)) \quad \text{ for all  }x,y \in \Lambda.\]
Since the constants $K,C$ and similarly $K', C'$ are independent of $x$ and $y$, this shows that the bijection $\varphi$ is a quasi-isometry.

(ii) follows from (i), since the induced isomorphism $\varphi_*:{\rm QI}(\Lambda) \to {\rm QI}(\Xi)$ intertwines the corresponding left-regular quasi-actions. 
\end{proof}
This motivates the following problem.
\begin{problem}
Which finitely-generated approximate groups are quasi-convex? 
\end{problem}
It turns out that many, but certainly not all, finitely-generated approximate groups are quasi-convex. Non-examples can be constructed by thickenings of ``almost normal'' non-quasi-convex subgroups as in the following example, which we learned from Emily Stark.
\begin{example}[E. Stark]\label{EStark} Let $\Lambda^\infty := {\rm BS}(1,2) :=  \langle a, b \mid bab^{-1} = a^2 \rangle$ denote the Baumslag-Solitar group of type $(1,2)$, and let $\Lambda :=  \langle a \rangle \cup \{b,b^{-1}\}$. By definition, $\Lambda$ is symmetric, contains the identity and generates $\Lambda^\infty$. A short calculation involving the defining relation (and using that $(b^{-1}ab)^2 = a$) shows that 
\[
\Lambda^2 \subset \Lambda \{e, b, b^{-1}, b^{-1}a\},
\]
hence $(\Lambda, \Lambda^\infty)$ is a finitely-generated approximate group. We claim that it is not quasi-convex. To see this, we fix the generating set $S:=\{a^{\pm 1}, b^{\pm 1}\}$ and consider $w_n := a^{2^n}$. Since $w_n = b^nab^{-n}$ we have $\|w_n\|_S \leq 2n+1$. If $\Lambda$ was weakly quasi-convex inside ${\rm Cay}(\Gamma^\infty, S)$ we would thus find $k \geq 1$ such that $e$ and $w_n$ can be joined by a curve of length $O(n)$ in ${\rm Cay}(\Gamma^\infty, S)$ whose vertices are contained in $\Lambda^k$. However, one can show that for $n > 10k+10$ the shortest such curve has vertices 
\[e, b, \dots, b^{k}, b^{k}a, \dots, b^k a^{2^{n-k}}, \dots,  b^k a^{2^{n-k}}b^{-k} = w_n\]
and length $2k + 2^{n-k}$, which is exponential in $n$.
\end{example}
On the other hand, there do exist many quasi-convex examples:
\begin{proposition} If an approximate group is isomorphic to a uniform model set in a lcsc group, then it is quasi-convex.
\end{proposition}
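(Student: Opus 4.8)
The plan is to reduce to the model set itself and then invoke the Milnor--Schwarz lemma. First I would note that quasi-convexity depends only on the isomorphism type of an approximate group: if $\rho : (\Xi, \Xi^\infty) \to (\Lambda, \Lambda^\infty)$ is an isomorphism with $\rho(\Xi) = \Lambda$ and $S \subset \Xi^\infty$ is a finite generating set, then $\rho(S)$ generates $\Lambda^\infty$ and $\rho$ induces a graph isomorphism $\mathrm{Cay}(\Xi^\infty, S) \to \mathrm{Cay}(\Lambda^\infty, \rho(S))$ which is an isometry carrying $\Xi$ onto $\Lambda$. Since weak quasi-convexity is invariant under quasi-isometries (as recorded in the discussion preceding Definition of quasi-convexity), $\Xi$ is quasi-convex if and only if $\Lambda$ is, so it suffices to treat the case where $\Lambda = P_0(\Gamma, W_0) = \tau^{-1}(W_0)$ is an honest uniform model set over a uniform cut-and-project scheme $(G, H, \Gamma)$ with $W_0$ symmetric and containing the identity. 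Because quasi-convexity is only defined for finitely-generated approximate groups, the assertion already presupposes that $\Lambda^\infty$ is finitely generated; by Theorem \ref{ThmFiniteType} this is equivalent to $G$ being compactly generated, which I therefore assume throughout.

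The heart of the argument is the identification of the intrinsic geometry of $\Lambda$ with that of its envelope. By Proposition \ref{ModelSet} the model set $\Lambda$ is a uniform approximate lattice in $G$, and in particular a relatively dense subset of $G$. Fixing a finite generating set $S \subset \Lambda$ of $\Lambda^\infty$ and a compact generating set $K$ of $G$, the Milnor--Schwarz lemma (Theorem \ref{MilnorSchwarz}) shows that the canonical QI class of $G$ restricts to the canonical QI class of $\Lambda$; concretely, the inclusion $(\Lambda, d_S|_\Lambda) \hookrightarrow (G, d_K)$ is a quasi-isometry. Since $G$ is compactly generated, $(G, d_K)$ is large-scale geodesic, and as this property is a quasi-isometry invariant the space $(\Lambda, d_S|_\Lambda)$ is large-scale geodesic as well. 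This step is where the real content lies, and it is also the main obstacle: the word metric $d_S$ on $\Lambda^\infty$ bears no direct relation to the subspace metric inherited from $G$ (indeed $\Lambda^\infty$ is typically dense in $G$, so the latter is not even proper), and it is precisely the Milnor--Schwarz lemma that guarantees these two metrics agree up to quasi-isometry once restricted to the uniformly discrete, relatively dense set $\Lambda$. Without this identification the cut-and-project description gives no handle on $d_S$.

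It then remains to deduce weak quasi-convexity from large-scale geodesy, which is formal. Let $\psi : (\Lambda, d_S|_\Lambda) \to Z$ be a quasi-isometry onto a geodesic metric space $Z$, with quasi-inverse $\overline{\psi}$. Given $x, y \in \Lambda$, I would choose a geodesic $\sigma : [0, L] \to Z$ from $\psi(x)$ to $\psi(y)$, sample it at unit intervals, and pull the samples back to points $\overline{\psi}(\sigma(0)), \overline{\psi}(\sigma(1)), \dots \in \Lambda$. Consecutive sample points lie at uniformly bounded $d_S$-distance, and the resulting discrete path is a $(K', C')$-quasi-geodesic in $(\Lambda, d_S|_\Lambda)$ with constants independent of $x$ and $y$; prepending $x$ and appending $y$, each at bounded distance from the nearest endpoint, yields a quasi-geodesic from $x$ to $y$ in $\mathrm{Cay}(\Lambda^\infty, S)$ all of whose vertices lie in $\Lambda$. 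This exhibits $\Lambda$ as a weakly $(0, K', C')$-quasi-convex subset of $\mathrm{Cay}(\Lambda^\infty, S)$, so $(\Lambda, \Lambda^\infty)$ is quasi-convex. Thus the only nontrivial input is the Milnor--Schwarz identification of the second paragraph, while the reduction and the final extraction of quasi-geodesics are routine.
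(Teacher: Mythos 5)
Your proof is correct, but it takes a genuinely different route from the one in the paper. The paper never applies the approximate Milnor--Schwarz lemma here: it works with the abstract presentation $\Lambda = (G\times W_0)\cap\Gamma$, $\Lambda^\infty=\Gamma$, uses that $\Gamma$ is a uniform \emph{lattice} in $G\times H$ (so the inclusion $\Gamma\hookrightarrow G\times H$ is a quasi-isometry by the classical Milnor--Schwarz lemma) to obtain quasi-isometries of pairs $(\Lambda,\Lambda^\infty)\to(G\times W_0,\, G\times H)\to(G,\, G\times H)$, and then concludes from the weak quasi-convexity of the factor $G$ in the product $G\times H$ together with the QI-invariance of weak quasi-convexity. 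You instead discard the cut-and-project structure entirely and argue intrinsically: Theorem \ref{MilnorSchwarz} applied to the uniform approximate lattice $\Lambda\subset G$ makes $(\Lambda, d_S|_\Lambda)$ quasi-isometric to the large-scale geodesic space $(G,d_K)$, and you then extract uniform discrete quasi-geodesics with vertices in $\Lambda$ by sampling geodesics in a geodesic model. Your reduction to an honest model set via an isomorphism of approximate groups and your use of Theorem \ref{ThmFiniteType} to secure compact generation of $G$ are both fine. What your route buys is a visibly stronger conclusion: since the window and the lattice $\Gamma<G\times H$ are never used, your argument shows that \emph{every} finitely-generated approximate group isomorphic to a uniform approximate lattice in a lcsc group is quasi-convex, which subsumes the proposition and partially answers the paper's Problem on quasi-convexity; this is consistent with Example \ref{EStark}, whose $\Lambda$ is not large-scale geodesic in $d_S$ and hence cannot be realized as a uniform approximate lattice in any compactly generated group. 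What the paper's route buys is brevity, staying at the level of pairs and of the classical Milnor--Schwarz lemma. Two cosmetic points: with the convention $N_R(A)=\bigcup_{x\in A}B_R(x)$ via open balls, $N_0(\Lambda)$ is empty, so you should claim weak $(R,K',C')$-quasi-convexity for some $R>0$ (or take $R=C'$ after joining consecutive sample points by edge paths in the Cayley graph, which puts the vertices in $\Lambda^k$ for a suitable $k$ as in the paper's discussion of convexity parameters); and since a quasi-geodesic is by definition a map from an interval $[a,b]$, your finite chain should be promoted to a step function, which costs only an additive constant.
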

\begin{proof} Let $(G, H, \Gamma)$ be a uniform cut-and-project scheme and $\Lambda$ be an associated model set with window $W_0$. As an abstract approximate group $(\Lambda, \Lambda^\infty)$ is given by $\Lambda := (G \times W_0) \cap \Gamma$  and $\Lambda^\infty := \Gamma$. Since the inclusion $\Gamma \hookrightarrow G \times H$ is a quasi-isometry, we have quasi-isometries of pairs
\[
(\Lambda, \Lambda^\infty) \to (G \times W_0, G\times H) \to (G, G \times H).
\]
Thus the proposition follows from the fact that $G$ is weakly quasi-convex in $G\times H$.
\end{proof}

\section{From uniform to non-uniform approximate lattices}\label{SecNonUniform}

\subsection{The hull of a closed subset}\label{SecHull} Let $G$ be a lscs group and $\Gamma<G$ be a discrete subgroup. An important object in the study of $\Gamma$ is the homogeneous space $G/\Gamma$. For instance, $\Gamma$ is a uniform lattice if and only if $G/\Gamma$ is compact, and it is a lattice if and only of $G/\Gamma$ admits a $G$-invariant probability measure. If $\Lambda \subset G$ is merely an approximate subgroup, then one can still associate with $\Lambda$ a canonical $G$-space $X_\Lambda$ called the \emph{hull} of $\Lambda$. This $G$-space is typically non-homogeneous, but it can sometimes serve as a weak substitute for the homogeneous space $G/\Gamma$. We now turn to the definition of this space.

Given a lcsc space $X$ we denote by $\mathcal C(X)$ the collection of closed subsets of $X$ with the Chabauty-Fell topology, i.e.\ the topology on $\mathcal C(X)$ generated by the basic open sets 
\[
U_K = \{A \in \mathcal C(X) \mid A \cap K = \emptyset\} \quad \text{and} \quad U^{V} = \{A \in \mathcal C(X) \mid A \cap V \neq \emptyset\},
\]
where $K$ runs over all compact subsets of $X$ and $V$ runs over all open subsets of $X$. Under the present assumptions on $X$, the space $\mathcal C(X)$ is a compact metrizable space (see e.g. \cite[Prop. 1.7 and Prop. 1.8]{Paulin}). The Chabauty-Fell topology has the following convenient property:
\begin{lemma}\label{CFSequences} Assume that $P_n \to P$ in $\mathcal C(X)$. Then for every $p \in P$ there exist elements $p_n \in P_n$ such that $p_n \to p$.
\end{lemma}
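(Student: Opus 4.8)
The plan is to read this statement as the standard lower-semicontinuity property of the Chabauty--Fell topology: the basic open sets $U^{V} = \{A \in \mathcal{C}(X) \mid A \cap V \neq \emptyset\}$ express precisely that ``meeting a fixed open set'' is an \emph{open} condition on $\mathcal{C}(X)$. Since $p \in P$, the limit set $P$ meets every open neighbourhood of $p$; the openness of the sets $U^{V}$ will let me transport this meeting property from $P$ to the approximants $P_n$ for large $n$, and the witnessing points can be taken arbitrarily close to $p$.

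Concretely, I would first fix a metric $d$ on $X$ compatible with its topology (possible since $X$ is locally compact, second countable and Hausdorff, hence metrizable) and record for each $m \geq 1$ the open neighbourhood $B_{1/m}(p)$ of $p$. Because $p \in P \cap B_{1/m}(p)$, we have $P \in U^{B_{1/m}(p)}$; as this set is open and $P_n \to P$, there is an index $N_m$ with $P_n \in U^{B_{1/m}(p)}$, i.e.\ $P_n \cap B_{1/m}(p) \neq \emptyset$, for all $n \geq N_m$. After replacing the $N_m$ by a strictly increasing sequence, I would, for each $n \geq N_1$, set $m(n) := \max\{m \mid N_m \leq n\}$ and choose some $p_n \in P_n \cap B_{1/m(n)}(p)$; this choice is legitimate exactly because that intersection is non-empty. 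For the finitely many remaining indices $n < N_1$ the value of $p_n$ is irrelevant to the conclusion and may be chosen arbitrarily from $P_n$ whenever that set is non-empty.

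It then remains to verify $p_n \to p$, which is immediate: given $\varepsilon > 0$, pick $m$ with $1/m < \varepsilon$; for every $n \geq N_m$ one has $m(n) \geq m$ and hence $d(p_n, p) < 1/m(n) \leq 1/m < \varepsilon$. (If one prefers to avoid a metric, the same argument runs verbatim with any decreasing countable neighbourhood basis $V_m$ of $p$, which exists by second countability.)

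The only genuinely delicate point is the bookkeeping concealed in the phrase ``choose $p_n \in P_n$'': one must make the selections compatibly across all scales via the single index $m(n)$, rather than running an independent argument for each $m$, so that one sequence $(p_n)$ serves simultaneously for every neighbourhood of $p$. A related subtlety, worth a remark since $\mathcal{C}(X)$ contains $\emptyset$, is that some early $P_n$ may be empty; this is harmless, because the construction already forces $P_n \cap B_1(p) \neq \emptyset$ (in particular $P_n \neq \emptyset$) for all $n \geq N_1$, so only finitely many arbitrary choices occur and they do not affect the limit. I expect no real obstacle beyond organising this selection cleanly.
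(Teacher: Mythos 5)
Your proof is correct and follows essentially the same route as the paper's: both use the openness of the basic sets $U^{B_\epsilon(p)}$ together with convergence $P_n \to P$ to produce points $p_{n,\epsilon} \in P_n \cap B_\epsilon(p)$ for large $n$. The paper leaves the final diagonal selection implicit (``the lemma follows''), whereas you spell out the bookkeeping via the index $m(n)$; this is a welcome clarification but not a different argument.
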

\begin{proof} If $p \in P$ then $P \in U^{B_\epsilon(p)}$ for every $\epsilon > 0$. Thus if $P_n \to P$ then for every $\epsilon > 0$ we find $n_0 \in \mathbb N$ such that $P_n \in U^{B_\epsilon(p)}$ for every $n \geq n_0$. Thus there exists a point $p_{n, \epsilon} \in P_n \cap B_\epsilon(p)$ and the lemma follows.
\end{proof}
If $G$ is a lcsc group, then $G\times G$ acts on $\mathcal C(G)$ by
\[
(g,h).\Lambda = g\Lambda h^{-1},
\]
and this action is jointly continuous, since it maps basic open sets to basic open set:
\[
(g,h).U_{K} = U_{gKh^{-1}}, \quad (g,h).U^V = U^{gVh^{-1}} \quad (g,h\in G).
\]
Restricting the action of $G \times G \curvearrowright \mathcal C(G)$ to the factors and the diagonal we obtain three topological dynamical systems over $G$, where $G$ acts from the left, the right or by conjugation. The former two dynamical systems are isomorphic via the isomorphisms $P \mapsto P^{-1}$, but the conjugation system has very different properties. Here we will focus on the action of $G$ on the left as given by $(g, P) \mapsto gP$ for $g \in G$, $P \in \mathcal C(G)$.

\begin{definition} Let $\Lambda \subset G$ be a closed subset. Then the \emph{(right-)hull} $X_\Lambda$ of $\Lambda$ is defined as the closure of the orbit $G.\Lambda$ in $\mathcal C(G)$.
\end{definition}
Note that by definition the hull of a closed subset is always a compact metrizable $G$-space, since it is a closed subset of $\mathcal C(G)$.
\begin{example} If $\Gamma < G$ is a subgroup of $G$, then the map $G/\Gamma \to X_\Gamma$, $[g] \mapsto g\Lambda$ is a continuous injection with dense image. If $\Gamma$ is a uniform lattice, then $G/\Gamma$ is compact, hence we obtain a homeomorphism $G/\Gamma \cong X_\Gamma$. We warn the reader that if $\Gamma< G$ is a non-cocompact subgroup, then the compact space $X_\Gamma$ will always be strictly larger than the image of the non-compact space $G/\Gamma$. In particular, it will always contain the empty-set by the following proposition.
\end{example}
\begin{proposition}\label{Emptyset} A closed subset $\Lambda \subset G$ is relatively dense if and only if $\emptyset \not \in X_\Lambda$.
\end{proposition}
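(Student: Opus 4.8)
The plan is to prove the two implications separately, having fixed once and for all a proper left-invariant metric $d$ on $G$ inducing the topology (such a metric exists by Struble's theorem). The single observation driving both directions is that for a \emph{compact} set $K \subset G$ the condition ``$A \cap K \neq \emptyset$'' is a \emph{closed} condition on $A \in \mathcal{C}(G)$, since $\{A \in \mathcal{C}(G) \mid A \cap K \neq \emptyset\}$ is exactly the complement of the basic open set $U_K$; and of course $\emptyset$ never satisfies it.

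First I would show that relative density forces $\emptyset \notin X_\Lambda$. Assume $\Lambda$ is $R$-relatively dense. Because $d$ is left-invariant we have $N_R(g\Lambda) = g\,N_R(\Lambda) = gG = G$ for every $g \in G$, so every element of the orbit $G.\Lambda$ is again $R$-relatively dense. In particular $e \in N_R(g\Lambda)$, which means there is $x \in g\Lambda$ with $d(e,x) < R$; hence $g\Lambda \cap K \neq \emptyset$ for the compact set $K := \overline{B}_R(e)$ (compact by properness of $d$). Thus the whole orbit $G.\Lambda$ lies in the closed set $\{A \mid A \cap K \neq \emptyset\}$, and therefore so does its closure $X_\Lambda$. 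Since $\emptyset$ is excluded from this set, $\emptyset \notin X_\Lambda$.

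For the converse I would argue contrapositively, constructing the empty set in $X_\Lambda$ out of the failure of relative density. If $\Lambda$ is not relatively dense, then for each $n$ there is a point $g_n \in G$ with $d(g_n, x) \geq n$ for all $x \in \Lambda$; setting $P_n := g_n^{-1}\Lambda \in G.\Lambda$ and using left-invariance gives $d(e,y) \geq n$ for every $y \in P_n$, i.e.\ $P_n \cap B_n(e) = \emptyset$. Since $X_\Lambda$ is compact and metrizable (being a closed subset of the compact metrizable space $\mathcal{C}(G)$), after passing to a subsequence I may assume $P_n \to P$ for some $P \in X_\Lambda$. I claim $P = \emptyset$: were there a point $p \in P$, Lemma \ref{CFSequences} would produce $p_n \in P_n$ with $p_n \to p$, so that $d(e,p_n)$ would stay bounded, contradicting $d(e,p_n) \geq n \to \infty$. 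Hence $\emptyset = P \in X_\Lambda$.

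The two directions together give the stated equivalence. Neither presents a genuine obstacle, but the one point deserving care is the convergence step in the converse: one must rule out that a point of the limit set $P$ ``escapes to infinity''. This is precisely where Lemma \ref{CFSequences} and the properness of $d$ do the work, properness guaranteeing that balls are bounded so that a sequence with $d(e,p_n)\to\infty$ cannot converge to any point at finite distance from $e$.
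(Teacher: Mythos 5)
Your proof is correct. The forward implication is essentially the paper's argument in disguise: saying that the orbit $G.\Lambda$ is contained in the closed set $\{A \in \mathcal C(G) \mid A \cap K \neq \emptyset\}$ (the complement of $U_K$) is the same as saying that the basic neighbourhood $U_K$ of $\emptyset$ misses the orbit, which is exactly how the paper excludes $\emptyset$ from the closure. Where you diverge is the converse. The paper never extracts a limit point: it simply observes that the sets $U_K$, $K$ compact, generate the neighbourhood filter of $\emptyset$, so $\emptyset \in X_\Lambda$ if and only if for every compact $K$ there is some $g$ with $g\Lambda \cap K = \emptyset$, i.e.\ $g^{-1} \notin \Lambda K^{-1}$; failure of relative density (equivalently, of left-syndeticity, via Proposition \ref{TopCharDelone}) hands you such a $g$ for each $K$ and you are done in one line. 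You instead pass to a convergent subsequence of translates $g_n^{-1}\Lambda$ using compactness and metrizability of $\mathcal C(G)$, and identify the limit as $\emptyset$ via Lemma \ref{CFSequences}. Both are valid; the paper's version is more economical and needs neither sequential compactness nor the convergence lemma, while yours has the mild virtue of not requiring the explicit description of the neighbourhood filter of $\emptyset$ for that direction. (One pedantic point in your converse: Lemma \ref{CFSequences} produces points $p_n \in P_n$, which presupposes $P_n \neq \emptyset$; this holds as soon as $\Lambda \neq \emptyset$, and the case $\Lambda = \emptyset$ is trivial, so nothing is lost.)
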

\begin{proof} For every compact $K \subset G$ and open $V \subset G$ we have $\emptyset \in U_K$ and $\emptyset \not \in U^V$, hence the sets $U^K$ generate the neighbourhood filter of $\emptyset$. In particular,  $\emptyset \in X_\Lambda$ if and only if for every $K \subset G$ compact there exists $g_K \in G$ such that $g_K\Lambda  \in U^{K}$, or equivalently $g_K^{-1} \not \in \Lambda K^{-1}$. Thus $\emptyset \not \in X_\Lambda$ if and only if for some compact set $K$ we have $G = \Lambda K^{-1}$, meaning that $\Lambda$ is relatively dense.
\end{proof}
In general, the hull of a discrete set does not need to consist of discrete sets. On the other hand, the hull of a FLC set consists of uniformly discrete sets with uniform parameters:
\begin{proposition}\label{FLCHull} Assume that $\Lambda \subset G$ has finite local complexity. Then there exists an open subset $U\subset G$ such that $|P \cap gU| \leq 1$ for all $P \in X_\Lambda$ and $g \in G$. In particular, for every compact subset $K\subset G$ there exists $C_K>0$ such that for all $P \in X_\Lambda$,
\[
|P \cap K|< C_K.
\]
\end{proposition}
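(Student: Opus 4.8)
The plan is to extract from the finite local complexity hypothesis a single open neighborhood $U$ of the identity that simultaneously separates the points of \emph{every} member of the hull, and then to deduce the uniform bound on $|P \cap K|$ by a compactness argument. The whole proof hinges on the structural observation that
\[
P^{-1}P \subseteq \Lambda^{-1}\Lambda \qquad \text{for every } P \in X_\Lambda.
\]
Once this is in hand, everything else is routine. Note first that finite local complexity means precisely that $\Lambda^{-1}\Lambda$ is locally finite, hence closed and discrete; in particular $e$ is isolated in $\Lambda^{-1}\Lambda$, so I may choose an open neighborhood $W$ of $e$ with $W \cap \Lambda^{-1}\Lambda = \{e\}$, and then, by continuity of multiplication and inversion, an open neighborhood $U$ of $e$ with $U^{-1}U \subseteq W$.

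To prove the displayed containment, fix $P \in X_\Lambda$. Since $\mathcal C(G)$ is metrizable and $X_\Lambda$ is the closure of the orbit $G.\Lambda$, I can choose a sequence $g_n \in G$ with $g_n\Lambda \to P$ in the Chabauty-Fell topology. Given $p, q \in P$, Lemma \ref{CFSequences} yields points $p_n, q_n \in g_n\Lambda$ with $p_n \to p$ and $q_n \to q$. Writing $p_n = g_n a_n$ and $q_n = g_n b_n$ with $a_n, b_n \in \Lambda$, one computes $p_n^{-1}q_n = a_n^{-1}b_n \in \Lambda^{-1}\Lambda$. As multiplication and inversion are continuous, $p_n^{-1}q_n \to p^{-1}q$, and since $\Lambda^{-1}\Lambda$ is closed this forces $p^{-1}q \in \Lambda^{-1}\Lambda$, proving the claim (the case $P = \emptyset$ being vacuous). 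This step, together with the lifting of points of $P$ via Lemma \ref{CFSequences}, is where I expect the only real subtlety: one must pass correctly from two nearby points of an approximating translate $g_n\Lambda$ to two nearby points of the limit $P$, and it is the \emph{closedness} of $\Lambda^{-1}\Lambda$ coming from FLC (rather than the mere uniform discreteness of $\Lambda$ recorded in \eqref{Discreteness}) that makes the argument close.

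With $U$ as above the separation property is then immediate: if $p, q \in P \cap gU$ for some $g \in G$, then $p = gu_1$ and $q = gu_2$ with $u_1, u_2 \in U$, whence $p^{-1}q = u_1^{-1}u_2 \in U^{-1}U \subseteq W$; but $p^{-1}q \in P^{-1}P \subseteq \Lambda^{-1}\Lambda$ by the claim, so $p^{-1}q \in W \cap \Lambda^{-1}\Lambda = \{e\}$ and therefore $p = q$. This gives $|P \cap gU| \leq 1$ for all $P \in X_\Lambda$ and $g \in G$. Finally, for a compact $K \subseteq G$, the translates $\{kU : k \in K\}$ cover $K$ (since $e \in U$), so by compactness $K \subseteq \bigcup_{i=1}^N g_iU$ for finitely many $g_i$; then $|P \cap K| \leq \sum_{i=1}^N |P \cap g_iU| \leq N$ for every $P \in X_\Lambda$, and one takes $C_K := N+1$.
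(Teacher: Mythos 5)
Your proposal is correct and follows essentially the same route as the paper: the key containment $P^{-1}P \subseteq \overline{\Lambda^{-1}\Lambda}$ via Lemma \ref{CFSequences} (isolated in the paper as Lemma \ref{P-1P}), combined with closedness and discreteness of $\Lambda^{-1}\Lambda$ from FLC to produce the separating neighbourhood $U$ with $U^{-1}U$ inside an identity neighbourhood meeting $\Lambda^{-1}\Lambda$ only in $\{e\}$. The concluding finite-cover argument for $|P\cap K|<C_K$ is also the intended one.
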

The proof is based on the following lemma.
\begin{lemma}\label{P-1P} For every closed subset $\Lambda \subset G$ and all $P \in X_\Lambda$ we have $P^{-1}P \subset \overline{\Lambda^{-1}\Lambda}$.
\end{lemma}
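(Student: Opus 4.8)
The plan is to reduce the general statement to the elementary observation that the left translation action leaves the set $\Lambda^{-1}\Lambda$ literally invariant, and then to pass to limits using the metrizability of $\mathcal C(G)$. The whole argument is a soft continuity argument, so I expect no genuinely hard computation; the only thing to be careful about is the direction of approximation in the Chabauty--Fell topology.

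First I would verify the claim on the orbit itself. For any $g \in G$ and $P = g.\Lambda = g\Lambda$ one computes directly $P^{-1}P = (g\Lambda)^{-1}(g\Lambda) = \Lambda^{-1}g^{-1}g\Lambda = \Lambda^{-1}\Lambda$. Thus the claim holds with equality, even before taking closures, for every point of the orbit $G.\Lambda$. This is the structural reason the lemma is true: left translation cancels itself out in the expression $P^{-1}P$.

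Next I would treat an arbitrary $P \in X_\Lambda = \overline{G.\Lambda}$. Since $\mathcal C(G)$ is metrizable, I may write $P = \lim_n g_n\Lambda$ for some sequence $g_n \in G$. Let $z \in P^{-1}P$, say $z = p^{-1}q$ with $p, q \in P$. The key step is to approximate $p$ and $q$ from within the translates $g_n\Lambda$, and this is exactly the content of Lemma \ref{CFSequences}: there exist $p_n, q_n \in g_n\Lambda$ with $p_n \to p$ and $q_n \to q$. Then the first step gives $p_n^{-1}q_n \in (g_n\Lambda)^{-1}(g_n\Lambda) = \Lambda^{-1}\Lambda$, while joint continuity of multiplication and inversion in $G$ yields $p_n^{-1}q_n \to p^{-1}q = z$. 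Hence $z \in \overline{\Lambda^{-1}\Lambda}$, and since $z \in P^{-1}P$ was arbitrary, $P^{-1}P \subset \overline{\Lambda^{-1}\Lambda}$, as required.

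The main obstacle, such as it is, lies in the one-sidedness of convergence in the Chabauty--Fell topology: knowing $g_n\Lambda \to P$ guarantees a priori only that each point of the limit $P$ is approached by points of the approximating sets $g_n\Lambda$, not conversely. Fortunately this is precisely the implication supplied by Lemma \ref{CFSequences}, and it is exactly the direction the argument requires, since we start from points $p, q$ of the limit set $P$ and need to produce approximants in the $g_n\Lambda$. Once this is in place, everything else is continuity of the group operations.
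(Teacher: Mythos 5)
Your proof is correct and is essentially the same as the paper's: both arguments take $g_n\Lambda \to P$, use Lemma \ref{CFSequences} to approximate $p,q \in P$ by points of $g_n\Lambda$, and conclude by continuity of multiplication and inversion after the left-translation cancels in $p_n^{-1}q_n$. The only difference is notational (the paper names the elements of $\Lambda$ rather than of $g_n\Lambda$).
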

\begin{proof} If $P \in X_\Lambda$ then there exist $g_n \in G$ such that $g_n\Lambda \to P$. By Lemma \ref{CFSequences} we thus find for every $p,q \in P$ sequence $(p_n)$ $(q_n)$ in $\Lambda$ such that $g_np_n \to p$ and $g_nq_n \to q$. By continuity of multiplication and inversion in $G$ we obtain $p_n^{-1}q_n \to p^{-1}q$ and thus $P^{-1}P \subset \overline{\Lambda^{-1}\Lambda}$.
\end{proof}
\begin{proof}[Proof of Proposition \ref{FLCHull}] By assumption, $\Lambda^{-1}\Lambda$  is closed and discrete. The former implies by Lemma \ref{P-1P} that for all $P \in X_\Lambda$ we have $P^{-1}P \subset \Lambda^{-1}\Lambda$ and the latter implies that there exists an open identity neighbourhood $V$ such that $\Lambda^{-1}\Lambda \cap V = \{e\}$. Combining these two observations we obtain $P^{-1}P \cap V = \{e\}$ for all $P \in X_\Lambda$. Now let $U \subset G$ be an open identity neighbourhood with $U^{-1}U \subset V$. Given $g \in G$ we either have $P \cap gU = \emptyset$ or there exist $p \in P$ and $u \in U$ such that $p = gu$, i.e.\ $g = pu^{-1}$. In the latter case we have
\[
P \cap gU = P\cap pu^{-1}U = p(p^{-1}P \cap u^{-1}U) \subset p(P^{-1}P \cap U^{-1}U) \subset p(P^{-1}P \cap V) = \{p\},
\] 
hence $|P \cap gU| \leq 1$ in either case.
\end{proof}
We record the following consequence of Proposition \ref{FLCHull} for later use.
\begin{corollary}\label{ChabautyConvergence} Let $\Lambda \subset G$ be of finite local complexity, $K \subset G$ compact and asssume that $P_n \to P$ in $X_\Lambda$. Then there exist $k, n_0 \in \mathbb N$ and elements $g_1, \dots, g_k, g_1^{(n)}, \dots, g_k^{(n)} \in G$ such that for all $n \geq n_0$ we have
\[
K \cap P = \{g_1, \dots, g_k\}, \quad K \cap P_n = \{g_1^{(n)}, \dots, g_k^{(n)}\} \quad \text{and} \quad g_i^{(n)} \to g_i.
\]

\end{corollary}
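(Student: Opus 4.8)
The plan is to combine the uniform local finiteness furnished by Proposition~\ref{FLCHull} with the lower semicontinuity of Chabauty--Fell convergence recorded in Lemma~\ref{CFSequences}. First I would fix the proper left-invariant metric $d$ on $G$ and extract from Proposition~\ref{FLCHull} a \emph{uniform} separation constant: since $\Lambda$ has finite local complexity, the open identity neighbourhood $U$ produced there (satisfying $|Q\cap gU|\le 1$ for all $Q\in X_\Lambda$ and $g\in G$) contains some ball $B_\eta(e)$, and left-invariance of $d$ then gives $\delta:=\eta>0$ with $d(p,q)\ge\delta$ for any two distinct points $p,q$ of any member of $X_\Lambda$. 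In particular $K\cap P$ is finite, say $K\cap P=\{g_1,\dots,g_k\}$ with the $g_i$ pairwise $\delta$-separated, and I would fix $\epsilon$ with $0<2\epsilon<\delta$ so small that the closed balls $\overline{B}_\epsilon(g_i)$ are pairwise disjoint.

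For the ``lower'' inclusion I would apply Lemma~\ref{CFSequences} to each $g_i\in P$ to obtain points $g_i^{(n)}\in P_n$ with $g_i^{(n)}\to g_i$; for $n$ large these lie in $B_\epsilon(g_i)$, hence are pairwise distinct, giving $|K\cap P_n|\ge k$. For the ``upper'' inclusion I would rule out any surviving extra points by a direct topological argument: setting $L:=\{x\in K: d(x,g_i)\ge\epsilon\ \text{for all}\ i\}$, this is a compact set disjoint from $P$ (the only points of $P$ in $K$ are the $g_i$), so $P$ lies in the open set $U_L$ and therefore $P_n\in U_L$ for all large $n$. Thus every point of $K\cap P_n$ lies within $\epsilon$ of some $g_i$, and the uniform $\delta$-separation together with $2\epsilon<\delta$ forces each $\overline{B}_\epsilon(g_i)$ to carry at most one point of $P_n$. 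Combining the two steps yields $K\cap P_n=\{g_1^{(n)},\dots,g_k^{(n)}\}$ with $g_i^{(n)}\to g_i$.

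The step I expect to be the main obstacle is the behaviour of $P$ on the topological boundary of $K$, and it is a genuine one rather than a technicality. The ``lower'' step as written is only legitimate when each $g_i$ lies in the interior $\operatorname{int}(K)$, for otherwise the approximants $g_i^{(n)}$ may converge to $g_i$ from \emph{outside} $K$ and thus fail to contribute to $K\cap P_n$. Concretely, for $\Lambda=\Z\subset\R$, $K=[0,1]$, $P=\Z$ and $P_n=\Z+\tfrac1n\to P$, one has $K\cap P=\{0,1\}$ but $K\cap P_n=\{\tfrac1n\}$, since the approximant $1+\tfrac1n$ of the boundary point $1$ escapes $K$; so the literal equality in the statement can fail.

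I would therefore carry out the argument under the genericity hypothesis $\partial K\cap P=\emptyset$, equivalently $K\cap P\subset\operatorname{int}(K)$, which is exactly what makes the ``lower'' inclusion work (then $g_i^{(n)}\in B_\epsilon(g_i)\subset\operatorname{int}(K)$ for large $n$) while leaving the ``upper'' inclusion untouched. This costs nothing in the intended applications, where one is free to perturb the radius of a ball $K$ slightly so that its boundary sphere avoids the locally finite set $P$; one can even note that the set of ``bad'' radii is at most countable. With this hypothesis in force, the two inclusions above combine to give precisely the asserted conclusion, and the uniform bound $|K\cap P_n|\le C_K$ from Proposition~\ref{FLCHull} guarantees that no subtlety about infinitely many escaping points can arise.
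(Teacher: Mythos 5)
Your argument is, in its mechanics, the same as the paper's: finiteness and separation of $K\cap P$ come from Proposition \ref{FLCHull}, the lower inclusion from Lemma \ref{CFSequences}, and the upper inclusion from the observation that $L:=K\setminus N_\epsilon(K\cap P)$ is a compact set missed by $P$, hence eventually missed by $P_n$. What you add, and this is the substantive part, is the observation that the statement is literally false when $P$ meets $\partial K$, and your counterexample is valid: in $G=\R$ with $\Lambda=\Z$ the sets $P_n=\Z+\frac{1}{n}$ lie in $X_\Lambda$ and converge to $P=\Z$, yet for $K=[0,1]$ one has $|K\cap P|=2$ while $|K\cap P_n|=1$ for all $n$, so no labelling can satisfy both $K\cap P_n=\{g_1^{(n)},g_2^{(n)}\}$ and $g_i^{(n)}\to g_i$. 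The paper's own proof glosses over exactly this point when it concludes that ``the corollary follows from Lemma \ref{CFSequences}'': that lemma produces approximants $g_i^{(n)}\in P_n$ with $g_i^{(n)}\to g_i$ but gives no control on whether they lie in $K$. Your repair (assume $\partial K\cap P=\emptyset$, which local finiteness of $P$ always allows one to arrange by perturbing $K$) is the right one, and it does not endanger the one place the corollary is used, namely continuity of the periodization map: there $K=\supp(f)$ and the continuous function $f$ vanishes on $\partial K$, so any points of $P$ or $P_n$ near $\partial K$ contribute terms tending to $0$ to the sums $\mathcal{P}f(P_n)$ and $\mathcal{P}f(P)$ whether or not they land inside $K$. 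In short: same method as the paper, executed more carefully, with a correct diagnosis of a harmless but genuine error in the stated corollary.
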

\begin{proof} The set $K \cap P$ is finite by Proposition \ref{FLCHull}, say $K \cap P = \{g_1, \dots, g_k\}$. For every $\epsilon>0$ the set $L := K \setminus N_\epsilon(K \cap P)$ is compact, and since $P \in U_L$ we have $P_n \in U_L$ for all sufficiently large $n$ (depending on $\epsilon$). This means that $P_n \cap K \subset N_\epsilon(K \cap P) = \bigcup B_{\epsilon}(g_i)$. If $\epsilon$ is chosen small enough, then it follows from Proposition \ref{FLCHull} that $|P_n \cap B_{\epsilon}(g_i)| \leq 1$ for all sufficiently large $n$. Then the corollary follows from Lemma \ref{CFSequences}.
\end{proof}

\subsection{Tentative definition of non-uniform approximate lattices}
The goal of this subsection is to discuss various possible definitions of the notion of an approximate lattice $\Lambda$ in a lcsc group $G$. We certainly want $\Lambda$ to be a uniformly discrete approximate subgroup (hence of finite locally complexity), so we assume this from now on. Since we think of the hull $X_\Lambda$ as a substitute for the homogeneous space $G/\Gamma$ of a group, we could simply call $\Lambda$ an approximate lattice if there exists an $G$-invariant probability measure on $X_\Lambda$. However, in this naive definition every non-relatively dense uniformly discrete approximate subgroup would be an approximate lattice, which is clearly not desirable. Indeed, by Proposition \ref{Emptyset} we have $\emptyset \in X_\Lambda$ for any such $\Lambda$ and thus the Dirac measure $\delta_\emptyset$ defines a $G$-invariant measure on $X_\Lambda$. In order to obtain a reasonable definition of an approximate lattice, we have to exclude such measures.
\begin{definition}\label{NonTrivialMeasure} Let $\Lambda \subset G$ be a closed subset. A probability measure $\nu$ on $X_\Lambda$ is called \emph{non-trivial} if $\nu(\{\emptyset\}) = 0$.
\end{definition}
We now have the following first tentative definition of an approximate lattice:
\begin{definition}\label{StrongAL} A uniformly discrete approximate subgroup $\Lambda \subset G$ is called a \emph{strong approximate lattice} if its hull $X_\Lambda$ admits a non-trivial $G$-invariant probability measure.
\end{definition}
\begin{example} Every lattice $\Lambda < G$, uniform or non-uniform, is a strong approximate lattice. Indeed, the unique invariant probability measure on $G/\Lambda$ pushes forward to a non-trivial invariant probability measure on $X_\Lambda$ via the canonical map $G/\Lambda \to X_\Lambda$.
\end{example}
\begin{example} 
Non-uniform non-lattice examples of strong approximate lattices arise again from cut-and-project constructions. Indeed, it was established in \cite[Cor. 3.5]{BHP} that every regular model set $\Lambda$ with symmetric window containing the identity is a strong approximate lattice. In fact, the invariant probability measure on $X_\Lambda \setminus \{\emptyset\}$ is unique in this case. A regular model set is a uniform approximate lattice if and only if the underlying lattice is uniform. This shows that there exist non-uniform strong approximate lattices which are not contained in any lattice.
\end{example}
A problem with Definition \ref{StrongAL} is that we are not able to show that every uniform approximate lattice in an arbitrary lcsc group $G$ is a strong approximate lattice. While there is a natural way to construct measures on the hull of a uniform approximate lattices, these measures will a priori only satisfy a weaker invariance property called \emph{stationarity}. To define this property, let us call a probability measure $\mu$ on $G$ \emph{admissible} if it is absolutely continuous with respect to Haar measure and its support generates $G$ as a semigroup. If $\mu$ is such a measure and $Y$ is a compact $G$ space, then a probability measure $\nu$ on $Y$ is called \emph{$\mu$-stationary} if it is a fixpoint for the convolution action of $\mu$, i.e.\ $\mu \ast \nu = \nu$. 
It follows from the  Kakutani fixed point theorem that if $Y$ is a compact $G$-space, then $Y$-admits a $\mu$-stationary probability measure for every admissible probability measure $\mu$ on $G$. In particular, the hull of a closed subset $\Lambda \subset G$ will always admit a $\mu$-stationary probability measure for every admissible $\mu$, and if $\Lambda$ is relatively dense, then this measure will be non-trivial by Proposition \ref{Emptyset}.
\begin{definition} Let $\Lambda \subset G$ be a uniformly discrete approximate subgroup.
\begin{enumerate}
\item $\Lambda$ is called an  \emph{approximate lattice} if its hull $X_\Lambda$ admits a non-trivial $\mu$-stationary probability measure for every admissible probability measure $\mu$ on $G$.
\item $\Lambda$ is called a \emph{weak approximate lattice} if its hull $X_\Lambda$ admits a non-trivial $\mu$-stationary probability measure for some admissible probability measure $\mu$ on $G$.
\end{enumerate}
\end{definition}
 The discussion preceding the definition shows:
\begin{corollary}\label{UALAL} Every uniform approximate lattice is an approximate lattice.\qed
\end{corollary}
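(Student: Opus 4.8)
The plan is to assemble the statement directly from the three facts isolated in the discussion preceding the corollary, so that essentially no new work is required. The content splits into an existence part (producing \emph{some} $\mu$-stationary measure on the hull) and a non-triviality part (ruling out that all the mass escapes to $\emptyset$), and both have already been prepared.

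First I would record that a uniform approximate lattice $\Lambda \subset G$ is, by definition, a Delone approximate subgroup, hence in particular both uniformly discrete and relatively dense. Uniform discreteness forces $\Lambda$ to be locally finite, and therefore closed, by the chain of implications \eqref{Discreteness}; consequently the hull $X_\Lambda = \overline{G.\Lambda}$ is a well-defined closed subset of the compact metrizable space $\mathcal C(G)$, and so is itself a compact metrizable $G$-space. Next I would fix an arbitrary admissible probability measure $\mu$ on $G$ and apply the Kakutani fixed point theorem to the (affine, weak-$*$ continuous) convolution action $\nu \mapsto \mu \ast \nu$ on the weak-$*$ compact convex set of Borel probability measures on $X_\Lambda$; this yields a $\mu$-stationary probability measure $\nu$ on $X_\Lambda$.

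It remains to check that every such $\nu$ is automatically non-trivial, and this is exactly where Proposition \ref{Emptyset} does the work: since $\Lambda$ is relatively dense, that proposition gives $\emptyset \notin X_\Lambda$, so $\{\emptyset\}$ is not even a point of the space on which $\nu$ lives and the condition $\nu(\{\emptyset\}) = 0$ holds vacuously. As $\mu$ was an arbitrary admissible measure, this shows that $X_\Lambda$ admits a non-trivial $\mu$-stationary probability measure for every admissible $\mu$, which is precisely the definition of an approximate lattice. The only genuine input beyond the definitions is the existence of stationary measures, which is the standard Kakutani argument already quoted in the text; the conceptual point --- and the step I would flag as carrying the real weight --- is Proposition \ref{Emptyset}, which translates the geometric hypothesis of relative density into the dynamical statement $\emptyset \notin X_\Lambda$ and thereby makes non-triviality free.
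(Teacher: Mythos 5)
Your argument is correct and is exactly the one the paper intends: the hull of a (closed, because locally finite) uniformly discrete set is a compact $G$-space, Kakutani's fixed point theorem produces a $\mu$-stationary probability measure for every admissible $\mu$, and Proposition \ref{Emptyset} converts relative density into $\emptyset \notin X_\Lambda$, making non-triviality automatic. This is precisely the content of the paragraph preceding the corollary in the paper, which is why the corollary is stated with an immediate \qed.
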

\begin{remark}\label{stst}\label{Strongness}
We have the obvious implications 
\[
\text{strong approximate lattice}  \implies \text{approximate lattice} \implies  \text{weak approximate lattice},
\]
and the question suggests itself, whether these implications can be reversed. This is possible in certain cases, but not in general: 
\begin{enumerate}
\item  By a theorem of Kaimanovich--Vershik \cite{Kaimanovich/Vershik} and independently Rosenblatt \cite{Rosenblatt}, if $G$ is amenable then there exists an admissible probability measure on $G$ such that every $\mu$-stationary measure is actually invariant. Thus if $G$ is an amenable lcsc group, then the notions of an approximate lattice and a strong approximate lattice in $G$ coincide.
\item By definition, a group is non-amenable if and only if there exist some compact $G$-space which admits a $\mu$-stationary probability measure for every admissible $\mu$, but no $G$-invariant probability measure. Thus for non-amenable groups there is a priori no reason to expect that an approximate lattice should be strong. However, we do not know any counterexamples. In fact, we do not even know whether every uniform approximate lattice in a non-amenable lcsc group is strong.
\item A proper subclass of the class of amenable groups is given by the class of \emph{Choquet--Deny groups}. Here a lcsc group $G$ is called a Choquet--Deny group if for every admissible probability measure $\mu$ on $G$, every $\mu$-stationary measure is invariant. By definition, all three notions of approximate lattice coincide for such groups. 
Examples of Choquet--Deny groups include abelian and more generally nilpotent groups \cite{Raugi}. 
\item For some groups $G$ one can also reverse the implication of Corollary \ref{UALAL}. For example, we will see in Theorem \ref{NilpotentUniform} below that in a lcsc nilpotent group $G$ the notions of a uniform approximate lattice, strong approximate lattice, approximate lattice and weak approximate lattice all coincide, and the same holds for discrete groups $G$ by Remark \ref{DiscreteUniformWeak}.
\item Even for amenable groups, there exist weak approximate lattices, which are not approximate lattices. We will see an explicit example in Subsection \ref{NonUnimodular} below.
\end{enumerate}
\end{remark}
\begin{problem} Is every (uniform) approximate lattice in a non-amenable lcsc group a strong approximate lattice?
\end{problem}
\begin{example}[A. Fish]\label{Fish} The following simple example, which was pointed out to us by A. Fish, shows that given a strong approximate lattice $\Lambda \subset G$ we can in general not expect the non-trivial invariant measure on $X_\Lambda$ to be unique, even if $G$ is abelian. Indeed, let $G = \R$ and let $\Lambda \subset \R$ be defined as
\[
\Lambda := \left\{\pm \sum_{j=1}^n x_n \mid n \geq 0 \right\},
\]
where the sequence $(x_n)$ is given by
\[
2,3,2,2,3,3,2,2,2,3,3,3,2,2,2,2,3,3,3,3,\dots
\]
By construction, $\Lambda$ is a symmetric subset of $\Z$ containing $0$. Distances between consecutive points in $\Lambda$ are either $2$ or $3$, and there are arbitrary long blocks of consecutive points of distance $2$ (and similarly, distance $3$) in $\Lambda$.  The former property implies that $\Lambda$ is relatively dense in $\Z$, hence a uniform approximate lattice in $\R$, whereas the latter property implies that $2\Z$ and $3\Z$ are contained in the orbit closure $X_\Lambda$. The two $\R$-orbits of $2\Z$ and $3\Z$ in $X_\Lambda$ give rise to two disjoint closed leaves homeomorphic to $S^1$. Each of these leaves then supports an $\R$-invariant probability measure, hence $X_\Lambda$ is not uniquely ergodic. It is obvious how to modify this example in order to obtain hulls of approximate lattices supporting an arbitrary finite number of disjoint probability meaures.
\end{example}

\subsection{Strong approximate lattices are bi-syndetic}

By definition, an approximate lattice is uniform if and only if it is left-syndetic, or equivalently right-syndetic. While strong approximate lattices need not be uniform, they always satisfy a weaker syndeticity property.
\begin{definition} A subset $\Lambda$ of a lcsc group $G$ is \emph{bi-syndetic} if there exist compact subsets $K, L \subset G$ such that $G = K \Lambda L$.
\end{definition}
For abelian groups, bi-syndeticity is of course equivalent to left-syndeticity. The following example illustrates that for subsets of $\SL_2(\bR)$ bi-syndeticity is a very weak notion.
\begin{example}
Let $G :=  \SL_2(\bR)$, $K := \SO_2(\bR)$ and $A^+ := \{a(t) \mid t \geq 0\}$, where
\[
a(t) := 
\left(
\begin{array}{cc}
e^{t/2} & 0 \\
0 & e^{-t/2}
\end{array}
\right) \quad (t \in \R).
\]
Then we have the \emph{Cartan decomposition} $G = KA^{+}K$, and more precisely every $g \in G$ can be written as  $g = k_1 a(t_g) k_2$ for some $k_1, k_2 \in K$ and a \emph{unique} $t_g \geq 0$, called the \emph{Cartan projection} of $g$. Given a subset $\Lambda \subset G$ let us denote by 
\[
\Sigma_\Lambda := \left\{ t_g \mid g \in \Lambda \right\}
\] 
the set of its Cartan projections. Then by the Cartan decomposition, $\Lambda$ is bi-syndetic provided it is \emph{Cartan-syndetic}, i.e.\ if $\Sigma_\Lambda$ is relatively dense in $[0,\infty)$. Let us now analyze what Cartan-syndeticity amounts to. Given 
\[
g  =
\left(
\begin{array}{cc}
a_g & b_g \\
c_g & d_g
\end{array}
\right) \in \SL_2(\bR), 
\]
we set $s_g := \cosh(t_g) = (a_g^2 + b_g^2 + c_g^2 + d_g^2)/2$. Assume now that $\Lambda \subset G$ is countable and order the elements $g_1,g_2,\ldots$ in $\Lambda$ so that 
\[
t_{g_1} \leq t_{g_2} \leq t_{g_3} \leq \ldots.
\]
Then $\Lambda$ is Cartan-syndetic if and only if there exists $C > 0$ such that $t_{g_{n+1}} - t_{g_n} \leq C$ for every $n$. Since
\begin{eqnarray*}
t_{g_{n+1}} - t_{g_n} 
&=& 
\ln\left(s_{g_{n+1}} + \sqrt{s_{g_{n+1}}^2 - 1}\big) - \ln\big(s_{g_n} + \sqrt{s_{g_n}^2 - 1}\right) \\
&= &\ln\left( \frac{s_{g_{n+1}} + \sqrt{s_{g_{n+1}}^2 - 1}}{s_{g_n} + \sqrt{s_{g_n}^2 - 1}} \right)
\leq \ln 2 + \ln \frac{s_{g_{n+1}}}{s_{g_n}},
\end{eqnarray*}
this holds whenever the quotients $s_{g_{n+1}}/s_{g_n}$ are uniformly
bounded from above. The latter condition can be readily verified for many subsets of interest.
\end{example}

We are going to show:
\begin{theorem}\label{bi-syndetic} Every strong approximate lattice in a lcsc group is bi-syndetic.
\end{theorem}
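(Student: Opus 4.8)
The plan is to extract bi-syndeticity from a recurrence (packing) argument applied to the invariant measure, and to convert the resulting one-sided dynamical statement into the genuinely two-sided set equation $G = K\Lambda L$ by means of the containment $P^{-1}P \subset \overline{\Lambda^{-1}\Lambda}$ from Lemma \ref{P-1P}. Fix a proper left-invariant metric on $G$ and let $\nu$ be the $G$-invariant probability measure on $X_\Lambda$ with $\nu(\{\emptyset\}) = 0$. Since $X_\Lambda \setminus \{\emptyset\} = \bigcup_n \{P : P \cap B_n \neq \emptyset\}$ is an increasing union of open sets of total mass $1$, there is a precompact open neighbourhood $V \ni e$ (a metric ball) with
\[
c := \nu(A) > 0, \qquad A := \{P \in X_\Lambda : P \cap V \neq \emptyset\}.
\]
First I would study the return set $R := \{g \in G : \nu(gA \cap A) > 0\}$ and show that it is \emph{syndetic}, i.e.\ $FR = G$ for some finite $F \subset G$.

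The syndeticity of $R$ comes from a standard packing estimate that uses only invariance of the probability measure $\nu$: if $g_1, \dots, g_N$ satisfy $\nu(g_i A \cap g_j A) = 0$ for $i \neq j$, then by invariance $Nc = \sum_i \nu(g_i A) = \nu\big(\bigcup_i g_i A\big) \leq 1$, so $N \leq 1/c$. Choosing such a family $g_1, \dots, g_N$ of maximal cardinality, maximality forces, for every $g \in G$, some index $i$ with $\nu(g_i A \cap gA) > 0$; applying $g_i^{-1}$ gives $g_i^{-1} g \in R$, hence $g \in g_i R$. Thus $G = \{g_1, \dots, g_N\} R$, which is the desired finite $F$.

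Next I would convert this into bi-syndeticity. For $g \in R$ pick $P \in gA \cap A$; then $P$ meets both $V$ and $gV$, say $p_1 \in P \cap V$ and $p_2 \in P \cap gV$. Since $p_1, p_2 \in P$, Lemma \ref{P-1P} gives $p_1^{-1}p_2 \in P^{-1}P \subset \overline{\Lambda^{-1}\Lambda} = \overline{\Lambda^2}$ (using $\Lambda = \Lambda^{-1}$), while $p_1 \in V$, $p_2 \in gV$ give $p_1^{-1}p_2 \in V^{-1}gV$; hence $g \in V\,\overline{\Lambda^2}\,V^{-1}$, so that $R \subset V\,\overline{\Lambda^2}\,V^{-1}$. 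Because $\Lambda$ is uniformly discrete, hence closed, condition (AG3) yields $\Lambda^2 \subset F_0\Lambda$ with $F_0$ finite and $F_0\Lambda$ closed, so $\overline{\Lambda^2} \subset F_0\Lambda$. Combining,
\[
G = FR \subset FV\,\overline{\Lambda^2}\,V^{-1} \subset (FVF_0)\,\Lambda\,V^{-1}.
\]
Taking closures of the precompact outer factors gives $G = K\Lambda L$ with $K := \overline{FVF_0}$ and $L := \overline{V^{-1}}$ compact, which is exactly bi-syndeticity.

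The main obstacle I anticipate is the two-sidedness. The measure $\nu$ is only \emph{left}-invariant, and in non-uniform examples (e.g.\ regular model sets over non-cocompact lattices) $\nu$-almost every configuration fails to be relatively dense, so there is no hope of making $\Lambda$, or any power $\Lambda^k$, left-syndetic; the weaker, genuinely two-sided conclusion $G = K\Lambda L$ is the best one can expect. The decisive idea is therefore to let the recurrence produce \emph{two} points of a common configuration, one near $e$ and one near $g$, and to feed their quotient through $P^{-1}P \subset \overline{\Lambda^2}$: this places an element of $\overline{\Lambda^2}$ into the two-sided tube $V^{-1}gV$, which is precisely what manufactures the \emph{right} compact factor $L$ that left-invariance alone cannot supply.
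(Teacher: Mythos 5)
Your proof is correct, and it shares the paper's basic strategy---recurrence of the positive-measure open set $A=\{P\in X_\Lambda: P\cap V\neq\emptyset\}$ under the invariant measure forces, for each $g\in G$, a single configuration to meet both $V$ and $gV$---but it differs from the paper's argument at both technical junctures. First, to manufacture the finite set $F$ the paper does not use a packing/maximality argument: it fixes a countable dense subgroup $\Gamma<G$, proves $\Gamma A=X_\Lambda\setminus\{\emptyset\}$ (Lemma \ref{UepsilonLemma}), extracts a finite $F\subset\Gamma$ with $\nu(FA)>1-\nu(A)$, and concludes $\nu(FA\cap gA)>0$ for every $g$ by a direct measure count; your maximal almost-disjoint family accomplishes exactly the same thing. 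Second, and more substantively, the paper never invokes Lemma \ref{P-1P}: it observes that $FA\cap gA$ is a non-empty \emph{open} subset of $X_\Lambda$ and hence meets the dense orbit $G.\Lambda$, which lets it compute the set of left-return times literally, via $(A)_\Lambda=B_\epsilon(e)\Lambda^{-1}$, and read off $G=FB_\epsilon(e)\Lambda^{-1}\Lambda B_\epsilon(e)^{-1}$ with no closures appearing. You instead take $P$ to be an arbitrary hull point of $gA\cap A$ and pass through $P^{-1}P\subset\overline{\Lambda^{-1}\Lambda}$, which costs you the extra (correct) step $\overline{\Lambda^2}\subset F_0\Lambda$, justified because $F_0\Lambda$ is closed when $\Lambda$ is locally finite. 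Both routes then finish identically by absorbing $\Lambda^{-1}\Lambda$ into $F_0\Lambda$ via (AG3). What your version buys is that it uses only $gA\cap A\neq\emptyset$ on a syndetic set of $g$ together with the hull-wide inclusion of Lemma \ref{P-1P}, so it works with any point of $\operatorname{supp}\nu$ rather than with orbit points; what the paper's version buys is that by staying on the dense orbit it avoids taking closures altogether.
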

Since in lcsc abelian groups left-, right- and bi-syndeticity coincide and every weak approximate lattice in a lcsc abelian group is strong we deduce in particular:
\begin{corollary}\label{AbelianUniform} Every weak approximate lattice in a lcsc abelian group is uniform. \qed
\end{corollary}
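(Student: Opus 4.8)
The plan is to exploit the non-trivial $G$-invariant probability measure $\nu$ on $X_\Lambda$ via an overlap argument: for each $g \in G$ I will produce a single $P \in X_\Lambda$ that meets both a fixed compact region and its $g$-translate, and then read off a bi-syndeticity relation using Lemma \ref{P-1P}. As preparation, note that since $\Lambda$ is uniformly discrete it is locally finite by \eqref{Discreteness}, hence a closed subset of $G$, so Lemma \ref{P-1P} applies: every $P \in X_\Lambda$ satisfies $P^{-1}P \subset \overline{\Lambda^{-1}\Lambda} = \overline{\Lambda^2}$, using $\Lambda = \Lambda^{-1}$. Writing $U^V = \{P : P \cap V \neq \emptyset\}$, the sets $U^{B_n(e)}$ increase to $X_\Lambda \setminus \{\emptyset\}$, so $\nu(\{\emptyset\}) = 0$ together with continuity of measure from below gives $\nu(U^{B_n(e)}) \to 1$. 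I fix $n$ with $\nu(U^{V_0}) > \tfrac12$, where $V_0 := B_n(e)$, and set $K_0 := \overline{V_0}$.

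For the overlap step, the action $g.P = gP$ gives $g^{-1}.U^{V_0} = U^{g^{-1}V_0}$, so $G$-invariance of $\nu$ yields $\nu(U^{g^{-1}V_0}) = \nu(U^{V_0}) > \tfrac12$ for every $g$. Fixing $g$, we then have $\nu(U^{V_0} \cap U^{g^{-1}V_0}) \geq \nu(U^{V_0}) + \nu(U^{g^{-1}V_0}) - 1 > 0$, so this intersection is non-empty. Choosing $P$ in it together with points $q \in P \cap V_0$ and $p \in P \cap g^{-1}V_0$, we have $q \in K_0$ and $gp \in K_0$, while $p^{-1}q \in P^{-1}P \subset \overline{\Lambda^2}$, whence
\[
g = (gp)\,(p^{-1}q)\,q^{-1} \in K_0\,\overline{\Lambda^2}\,K_0^{-1}.
\]

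It remains to replace $\overline{\Lambda^2}$ by a translate of $\Lambda$ itself. Since $\Lambda$ is closed and $F$ is finite, $F\Lambda$ is a finite union of closed sets and hence closed; as $\Lambda^2 \subset F\Lambda$ by (AG3), this forces $\overline{\Lambda^2} \subset F\Lambda$. Therefore $G = K_0\,\overline{\Lambda^2}\,K_0^{-1} \subset (K_0F)\,\Lambda\,K_0^{-1}$, and taking the compact sets $K := K_0F$ and $L := K_0^{-1}$ we conclude $G = K\Lambda L$, i.e.\ $\Lambda$ is bi-syndetic.

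The one genuinely delicate point is the overlap step, and it is precisely there that full $G$-invariance of $\nu$ (as opposed to mere stationarity) is used: invariance is what makes $\nu(U^{g^{-1}V_0})$ independent of $g$, so that two sets of measure exceeding $\tfrac12$ are forced to meet for \emph{every} $g$ simultaneously. The remaining ingredients — closedness of $\Lambda$ and of $F\Lambda$, and the inclusion $\overline{\Lambda^2} \subset F\Lambda$ — are routine, and no finite-local-complexity hypothesis on $\Lambda$ is needed beyond uniform discreteness.
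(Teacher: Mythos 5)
Your overlap argument itself is sound, and it is essentially the mechanism of the paper's Theorem \ref{bi-syndetic}: where the paper forces the overlap by covering $X_\Lambda\setminus\{\emptyset\}$ with finitely many translates of a small window $U^{B_\epsilon(e)}$ along a countable dense subgroup (Lemma \ref{UepsilonLemma}), you instead enlarge the window until $\nu(U^{V_0})>\tfrac12$; both routes are valid, and your closing steps ($P^{-1}P\subset\overline{\Lambda^2}$ via Lemma \ref{P-1P}, $\overline{\Lambda^2}\subset F\Lambda$ because $F\Lambda$ is closed, hence $G=(K_0F)\Lambda K_0^{-1}$, which in an abelian group reads $G=\Lambda\cdot K_0FK_0^{-1}$, i.e.\ relative density) are correct.

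The genuine gap is at the very first line: the statement concerns \emph{weak} approximate lattices, which by definition only come equipped with a non-trivial $\mu$-stationary probability measure on $X_\Lambda$ for \emph{some} admissible $\mu$, not with a $G$-invariant one. You correctly identify that your overlap step needs full invariance --- it is what makes $\nu(U^{g^{-1}V_0})=\nu(U^{V_0})>\tfrac12$ for \emph{every} $g$ --- but you never produce an invariant measure. Stationarity only gives the averaged identity $\int_G \nu(U^{s^{-1}V_0})\,d\mu(s)=\nu(U^{V_0})$, which does not force each individual translate to have measure exceeding $\tfrac12$. As written, your argument proves that every \emph{strong} approximate lattice in an abelian lcsc group is uniform. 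The missing ingredient is the Choquet--Deny theorem: abelian (indeed nilpotent) lcsc groups are Choquet--Deny groups, so for every admissible $\mu$ every $\mu$-stationary probability measure on a compact $G$-space is automatically $G$-invariant, and hence weak and strong approximate lattices coincide in this setting. This is exactly the reduction the paper performs in the sentence preceding the corollary, via Remark \ref{Strongness}.(3) and the reference to Raugi. With that one additional citation your proof closes.
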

We will extend this result to nilpotent lcsc groups in the Theorem \ref{NilpotentUniform} below. 
\begin{remark}
We should point out that for an actual lattice $\Gamma$ in a lcsc group $G$ the proof of Theorem \ref{bi-syndetic} is very simple: Just choose a compact subset $K \subset G$ and a pre-compact open identity neighbourhood $U \subset G$ such that $m_G(\Gamma K) > {\rm covol}(\Gamma) = m_G(\Gamma U)$. Then every right-translate of $\Gamma U$ in $\Gamma\backslash G$ will meet $\Gamma K$, which implies that $G = U^{-1}\Gamma K$ and thus $G = \overline{U^{-1}}\Gamma K$. The above proof appears explicitly in \cite[Prop. 2.9]{CapraceMonod}, but the authors point out that the idea goes back at least to \cite[Lemma 1.4]{Borel60}, which in turn is based on an even older (and apparently unpublished) result of Selberg.
\end{remark}
Turning to the proof of Theorem \ref{bi-syndetic} in the general case we introduce the following notation. Let $G$ be a lcsc group and $\Lambda \subset G$ a FLC subset. For the moment we do not assume $\Lambda$ to be an approximate lattice. Since $G$ is second countable, it admits a dense countable subset, and hence a dense countable subgroup $\Gamma < G$, and we fix such a subgroup once and for all. We also fix a proper left-invariant metric $d$ on $G$ inducing the given topology and define a family of open subset $U_\epsilon \subset X_\Lambda$ by
\[
U_\epsilon := \{C \in X_\Lambda \mid C \cap B_{\epsilon}(e) \neq \emptyset\}.
\]
\begin{lemma}\label{UepsilonLemma} For every $\epsilon > 0$ we have $\Gamma U_\epsilon = X_\Lambda\setminus \{\emptyset\}$.
\end{lemma}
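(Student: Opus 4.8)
The plan is to prove the two inclusions separately, with the reverse inclusion carrying essentially all of the (modest) content.

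For the inclusion $\Gamma U_\epsilon \subset X_\Lambda \setminus \{\emptyset\}$, I would first record that the hull $X_\Lambda = \overline{G.\Lambda}$ is $G$-invariant: for fixed $g \in G$ the map $C \mapsto gC$ is a homeomorphism of $\mathcal{C}(G)$ preserving the orbit $G.\Lambda$, hence its closure, so $gX_\Lambda = X_\Lambda$; in particular $\Gamma$ preserves $X_\Lambda$. Next, by definition every $C \in U_\epsilon$ meets $B_\epsilon(e)$ and is therefore nonempty, and since left-translation by $\gamma \in \Gamma$ is a bijection it cannot send a nonempty set to $\emptyset$. Combining these two observations gives $\Gamma U_\epsilon \subset X_\Lambda \setminus \{\emptyset\}$.

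For the reverse inclusion, I would start from an arbitrary $C \in X_\Lambda$ with $C \neq \emptyset$ and aim to produce $\gamma \in \Gamma$ with $\gamma^{-1}C \in U_\epsilon$, which then rewrites $C = \gamma(\gamma^{-1}C)$ as an element of $\Gamma U_\epsilon$. Choose any point $c \in C$. Since $\Gamma$ is dense in $G$, there is $\gamma \in \Gamma$ with $d(\gamma, c) < \epsilon$. Using left-invariance of $d$ I would then compute $d(\gamma^{-1}c, e) = d(c, \gamma) < \epsilon$, so that $\gamma^{-1}c \in \gamma^{-1}C \cap B_\epsilon(e)$; by the $G$-invariance of the hull $\gamma^{-1}C \in X_\Lambda$, and hence $\gamma^{-1}C \in U_\epsilon$. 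This shows $C \in \Gamma U_\epsilon$, completing the reverse inclusion.

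The argument uses only the density of $\Gamma$, the left-invariance of the metric, and the $G$-invariance of the hull; the finite-local-complexity hypothesis on $\Lambda$ plays no role at this stage. I do not anticipate a genuine obstacle: the only point requiring a moment's care is that translating by the \emph{discrete} group $\Gamma$, rather than by all of $G$, still suffices, which is exactly what density buys us — any point $c$ of a nonempty $C \in X_\Lambda$ can be moved to within $\epsilon$ of the identity by an element of $\Gamma$.
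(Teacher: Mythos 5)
Your proof is correct and is essentially the paper's argument unwound into two pointwise inclusions: the paper computes in one stroke that $\Gamma U_\epsilon = \{C' \in X_\Lambda \mid C' \cap \Gamma B_\epsilon(e) \neq \emptyset\}$ and then uses density to get $\Gamma B_\epsilon(e) = G$, which is exactly the content of your reverse inclusion, while your forward inclusion is the trivial containment implicit in that identity. No gaps; the use of left-invariance of $d$ and $G$-invariance of the hull is exactly as in the paper.
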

\begin{proof} Let $\epsilon > 0$. We compute
\begin{eqnarray*}
\Gamma U_\eps 
&=& 
\bigcup_{\gamma \in \Gamma} \big\{ \gamma C  \in X_\Lambda \mid C \cap B_\eps(e) \neq \emptyset \big\} \\
&=&
\bigcup_{\gamma \in \Gamma} \big\{ \gamma C  \in X_\Lambda \mid \gamma C \cap \gamma B_\eps(e) \neq \emptyset \big\} \\
&=&
\bigcup_{\gamma \in \Gamma} \big\{ C' \in X_\Lambda \mid C' \cap \gamma B_\eps(e) \neq \emptyset \big\} \\
&=&
\big\{ C'  \in X_\Lambda \mid C' \cap \Gamma B_\eps(e) \neq \emptyset \big\}.
\end{eqnarray*}
Since $\Gamma$ is dense in $G$ we have $\Gamma B_\epsilon(e) = G$, and the lemma follows.
\end{proof}
In the sequel, given a subset $\Lambda \subset G$ and a subset $A \subset X_\Lambda$ we denote by
\[
A_\Lambda := \{g \in G\mid g \Lambda \in A\}
\]
the set of \emph{left-return times} of $\Lambda$ to A. Note that for $A, B \subset X_\Lambda$ and $g \in G$ we have
\begin{equation}\label{ReturnTimes}
(A \cap B)_\Lambda = A_\Lambda \cap B_\Lambda \quad \text{and} \quad (gA)_\Lambda  = gA_\Lambda.
\end{equation}
\begin{corollary} If $\Lambda \subset G$ is a closed subset such that $X_\Lambda$ admits a non-trivial $G$-invariant probability measure $\nu$, then for every $\epsilon > 0$ there is a finite set $F \subset G$ such that
\[
G = FB_\epsilon(e) \Lambda^{-1}\Lambda B_{\epsilon}(e).
\]
In particular, $ \Lambda^{-1}\Lambda$ is bi-syndetic.
\end{corollary}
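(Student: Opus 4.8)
The plan is to turn the $G$-invariant probability measure $\nu$ on $X_\Lambda$ into a finite cover of $G$ by means of a packing argument among the left-translates of the open set $U_\epsilon$.

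First I would record that $\Lambda \neq \emptyset$ (otherwise $X_\Lambda = \{\emptyset\}$ carries no non-trivial measure) and that $\nu(U_\epsilon) > 0$ for each $\epsilon > 0$. Indeed, non-triviality gives $\nu(X_\Lambda \setminus \{\emptyset\}) = 1$, while Lemma \ref{UepsilonLemma} identifies $X_\Lambda \setminus \{\emptyset\} = \Gamma U_\epsilon = \bigcup_{\gamma \in \Gamma} \gamma U_\epsilon$. As $\Gamma$ is countable and $\nu$ is $G$-invariant, every translate $\gamma U_\epsilon$ has the common measure $\nu(U_\epsilon)$; if this vanished, countable subadditivity would force $\nu(X_\Lambda \setminus \{\emptyset\}) = 0$, a contradiction. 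Write $c := \nu(U_\epsilon) > 0$.

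The geometric heart of the argument is the implication: if $g, h \in G$ with $gU_\epsilon \cap hU_\epsilon \neq \emptyset$, then $g^{-1}h \in B_\epsilon(e)\,\Lambda^{-1}\Lambda\,B_\epsilon(e) =: M$. A point $C \in gU_\epsilon \cap hU_\epsilon$ satisfies $C \cap gB_\epsilon(e) \neq \emptyset$ and $C \cap hB_\epsilon(e) \neq \emptyset$, so $C$ lies in the open set $U^{gB_\epsilon(e)} \cap U^{hB_\epsilon(e)}$. Picking $g_n \in G$ with $g_n\Lambda \to C$, the sets $g_n\Lambda$ eventually lie in this open set, producing $p_n, q_n \in \Lambda$ with $g^{-1}g_np_n \in B_\epsilon(e)$ and $h^{-1}g_nq_n \in B_\epsilon(e)$ for large $n$. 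The factorisation
\[
g^{-1}h = (g^{-1}g_np_n)\,(p_n^{-1}q_n)\,(q_n^{-1}g_n^{-1}h),
\]
together with $p_n^{-1}q_n \in \Lambda^{-1}\Lambda$ and the symmetry $B_\epsilon(e)^{-1} = B_\epsilon(e)$ of balls of a left-invariant metric, then places $g^{-1}h$ in $M$. I expect this to be the main obstacle: one must argue through the approximating sequence $g_n\Lambda$ rather than through points of $C$ itself, since the latter would only yield the middle factor in the closure $\overline{\Lambda^{-1}\Lambda}$ (via Lemma \ref{P-1P}), whereas the statement demands $\Lambda^{-1}\Lambda$ on the nose.

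Finally I would run the packing argument. Any family $\{fU_\epsilon\}_{f \in F'}$ of pairwise disjoint translates obeys $|F'|\,c \le \nu(X_\Lambda) = 1$, so all such families have cardinality at most $\lfloor 1/c\rfloor$; let $F$ be one of maximum cardinality. For $g \in G \setminus F$ the enlarged family $\{fU_\epsilon\}_{f \in F \cup \{g\}}$ cannot be pairwise disjoint, and since the translates indexed by $F$ already are, there is $f \in F$ with $fU_\epsilon \cap gU_\epsilon \neq \emptyset$; the previous step gives $f^{-1}g \in M$, i.e.\ $g \in fM \subseteq FM$. As $e \in M$ (using $e \in \Lambda^{-1}\Lambda$, which holds since $\Lambda \neq \emptyset$), also $F \subseteq FM$, so $G = FM = FB_\epsilon(e)\Lambda^{-1}\Lambda B_\epsilon(e)$ with $F$ finite. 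Bi-syndeticity of $\Lambda^{-1}\Lambda$ then follows by taking $K := F\overline{B_\epsilon(e)}$ and $L := \overline{B_\epsilon(e)}$, which are compact because $d$ is proper and $F$ is finite, yielding $G = K\,\Lambda^{-1}\Lambda\,L$.
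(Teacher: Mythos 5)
Your proof is correct and follows essentially the same strategy as the paper's: both arguments rest on $\nu(U_\epsilon)>0$ (deduced from Lemma \ref{UepsilonLemma}, countability of $\Gamma$ and invariance of $\nu$), a measure-pigeonhole step producing a finite set $F$ such that every translate $gU_\epsilon$ meets some $fU_\epsilon$ with $f\in F$, and density of the orbit $G.\Lambda$ in $X_\Lambda$ to turn such a nonempty intersection of open sets into the containment $g\in FB_\epsilon(e)\Lambda^{-1}\Lambda B_\epsilon(e)$. The only differences are cosmetic: the paper obtains $F$ by choosing $\nu(FU_\epsilon)>1-\nu(U_\epsilon)$ and applying inclusion--exclusion rather than your maximal disjoint family, and it packages your approximating-sequence computation as the return-time identity $(U_\epsilon)_\Lambda=B_\epsilon(e)\Lambda^{-1}$.
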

\begin{proof} Fix $\epsilon >0$ and set $\delta := \nu(U^\eps)$. By Lemma \ref{UepsilonLemma} we have $\nu(\Gamma U^\eps) = \nu(X_\Lambda \setminus \{\emptyset\}) = 1$, so we can find a finite subset $F\subset \Gamma \subset G$ such that $\nu(FU^\eps) > 1 - \delta$. Now for every $g \in G$ we have $\nu(gU^\eps) = \delta$, hence 
\[
\nu(FU^\eps \cap g U^\eps) > 0.
\]
In particular, for every $g \in G$ the set $FU^\eps \cap g U^\eps \subset X_\Lambda$ is a non-empty open set, and hence meets the $G$-orbit of $\Lambda$ non-trivially, i.e.\ $(FU^\eps \cap gU^\eps)_\Lambda \neq \emptyset$. Now observe that
\[
(U^\eps)_\Lambda  = \big\{ g \in G \mid g\Lambda \cap B_\eps(e) \neq \emptyset \big\} = \big\{ g \in G\mid g \cap B_\eps(e) \Lambda^{-1} \neq \emptyset \big\} = B_\eps(e) \Lambda^{-1}.
\]
We deduce with \eqref{ReturnTimes} that \[
\emptyset \neq (FU^\eps \cap gU^\eps)_\Lambda = F(U^\eps)_\Lambda \cap g (U^\eps)_\Lambda
= FB_\eps(e) \Lambda^{-1} \cap g B_\eps(e) \Lambda^{-1},
\]
for every $g \in G$, and thus
\[
G = FB_\eps(e) \Lambda^{-1} \Lambda B_\eps(e)^{-1},
\]
which finishes the proof.
\end{proof}
\begin{proof}[Proof of Theorem \ref{bi-syndetic}] If $\Lambda \subset G$ is a strong approximate subgroup, then $\Lambda^{-1}\Lambda = \Lambda^2 \subset F_o\Lambda$ for some finite $F_o$, and thus $G = FB_\epsilon(e) \Lambda^{-1}\Lambda B_{\epsilon}(e) = FB_\epsilon(e)F_o \Lambda B_{\epsilon}(e)$.
\end{proof}

If $G$ happens to be discrete, then we may assume that $d$ takes only integral values and thus $B_\epsilon(e) = \{e\}$ for $\epsilon < 1$. We thus recover a classical result of F\o lner \cite{Folner}:
\begin{corollary}\label{DiscreteUniform} If $G$ is discrete and $\Lambda \subset G$ is a subset such that $X_\Lambda$ admits a non-trivial $G$-invariant probability measure $\nu$, then $\Lambda^{-1}\Lambda$ is right-syndetic. In particular, every strong approximate lattice in a discrete group is uniform. \qed
\end{corollary}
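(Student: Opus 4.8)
The plan is to deduce both assertions directly from the preceding Corollary, which produces, for any closed $\Lambda$ whose hull carries a non-trivial $G$-invariant probability measure and any $\epsilon>0$, a finite set $F\subset G$ with $G=FB_\epsilon(e)\,\Lambda^{-1}\Lambda\,B_\epsilon(e)$; the whole point is that in a discrete group this covering relation collapses.

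First I would fix on the discrete group $G$ a proper left-invariant metric $d$ taking only integer values---for instance a word metric with respect to a generating set, or simply the metric with $d(g,h)=1$ whenever $g\neq h$---so that $B_\epsilon(e)=\{e\}$ as soon as $0<\epsilon<1$. Every subset of a discrete group is closed, so the preceding Corollary applies to $\Lambda$, and choosing any $\epsilon\in(0,1)$ the two buffer factors $B_\epsilon(e)$ both reduce to the identity. Hence the relation $G=FB_\epsilon(e)\,\Lambda^{-1}\Lambda\,B_\epsilon(e)$ becomes $G=F\Lambda^{-1}\Lambda$ with $F$ finite. Since the compact subsets of a discrete group are precisely its finite subsets, this is exactly the statement that $\Lambda^{-1}\Lambda$ is right-syndetic.

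For the second assertion I would take a strong approximate lattice $\Lambda\subset G$. By symmetry we have $\Lambda^{-1}\Lambda=\Lambda^2$, and by (AG3) there is a finite $F_0$ with $\Lambda^2\subset F_0\Lambda$. As the hull of a strong approximate lattice carries a non-trivial invariant probability measure, the first part yields $G=F\Lambda^{-1}\Lambda=F\Lambda^2\subset FF_0\Lambda$ with $FF_0$ finite, so $\Lambda$ is right-syndetic. Inverting this covering and using $\Lambda=\Lambda^{-1}$ gives $G=G^{-1}=\Lambda(FF_0)^{-1}$, so $\Lambda$ is left-syndetic as well, i.e.\ relatively dense by Proposition~\ref{TopCharDelone}. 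Being uniformly discrete by the very definition of a strong approximate lattice, $\Lambda$ is then a Delone set, hence a uniform approximate lattice.

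Since all the measure-theoretic work---extracting the finite set $F$ from the invariant measure through the return-time computation---is already packaged in the preceding Corollary, I do not anticipate any real obstacle. The only step requiring mild care is converting the right-syndeticity of $\Lambda$ into relative density, which is resolved by taking inverses and invoking the symmetry $\Lambda=\Lambda^{-1}$.
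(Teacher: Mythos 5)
Your argument is correct and is essentially the paper's own: the corollary is obtained by specializing the preceding covering relation $G=FB_\epsilon(e)\,\Lambda^{-1}\Lambda\,B_\epsilon(e)$ to the discrete case where $B_\epsilon(e)=\{e\}$ for small $\epsilon$, and the ``in particular'' follows by absorbing $\Lambda^{-1}\Lambda=\Lambda^2$ into $F_0\Lambda$ via (AG3) and then inverting, exactly as in the paper's proof of Theorem \ref{bi-syndetic}. One peripheral slip: the metric with $d(g,h)=1$ for $g\neq h$ is not proper on an infinite discrete group (its closed unit ball is all of $G$), but this does not affect the argument, since any proper left-invariant metric inducing the discrete topology already satisfies $B_\epsilon(e)=\{e\}$ for $\epsilon$ small enough.
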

\begin{remark}\label{DiscreteUniformWeak}
In fact, it follows from results in \cite{Bjo} that every \emph{weak} approximate lattice in a discrete group is uniform as well. Indeed, if $G$ is a discrete group, $\mu$ an admissible probability measure on $G$ and $\Lambda \subset G$ is a subset whose right hull admits a non-trivial $\mu$-stationary
probability measure, then by  \cite[Theorem 1.9]{Bjo} the difference set $\Lambda^{-1} \Lambda$ equals
the intersection of a right syndetic set and a ``left thick'' set, whence 
$(\Lambda^{-1} \Lambda)^2$ is right syndetic (note that $\Lambda^{-1}$ is a $\cL_\mu$-large
set in the notation of \cite{Bjo}). In particular, if $\Lambda$ is symmetric,
then $\Lambda^4$ is right syndetic. 
Hence, if we in addition assume that $\Lambda$ is an 
approximate group (so that it is a weak approximate lattice in $G$), then we conclude from 
above that $\Lambda$ must be right syndetic in $G$, and  thus a uniform approximate lattice in $G$. 
\end{remark}

\subsection{Approximate lattices in nilpotent groups}
We have seen in the previous subsection that every approximate lattice in a lcsc abelian group is uniform. In this subsection we extend this result to nilpotent lcsc groups:
\begin{theorem}\label{NilpotentUniform} Every weak approximate lattice in a nilpotent lcsc group is uniform.
\end{theorem}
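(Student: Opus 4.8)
The plan is to deduce the theorem from two facts established earlier --- that nilpotent groups are Choquet--Deny and that strong approximate lattices are bi-syndetic (Theorem \ref{bi-syndetic}) --- together with an inductive argument on the nilpotency class that upgrades bi-syndeticity to relative density. First I would pass to the strong case. Since $G$ is nilpotent, Remark \ref{Strongness}.(3) tells us that the notions of weak, plain and strong approximate lattice all coincide in $G$ (every $\mu$-stationary measure on a compact $G$-space is already $G$-invariant, so the non-trivial $\mu$-stationary measure on $X_\Lambda$ witnessing weakness is in fact invariant). Hence $\Lambda$ is a strong approximate lattice, and by Theorem \ref{bi-syndetic} it is bi-syndetic: $G = K\Lambda L$ for some compact $K, L \subseteq G$. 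As $\Lambda$ is uniformly discrete by hypothesis, it remains only to prove that $\Lambda$ is relatively dense, so the whole theorem reduces to the following purely metric statement: \emph{every bi-syndetic approximate subgroup of a nilpotent lcsc group is left-syndetic.}

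I would prove this claim by induction on the nilpotency class $c$ of $G$. If $c \le 1$ then $G$ is abelian and $G = K\Lambda L = \Lambda(KL)$, so $\Lambda$ is left-syndetic. For the inductive step let $Z := Z(G)$ be the (nontrivial) center and $\pi: G \to G/Z$ the quotient map, so that $G/Z$ is nilpotent of class $c-1$. The image $\pi(\Lambda)$ is again a bi-syndetic approximate subgroup of $G/Z$ (images of approximate subgroups under homomorphisms are approximate subgroups, and $G/Z = \pi(K)\pi(\Lambda)\pi(L)$), so by the inductive hypothesis $G/Z = \pi(\Lambda)\overline{M}$ for some compact $\overline{M} \subseteq G/Z$. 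Lifting $\overline{M}$ to a compact set $M \subseteq G$ with $\pi(M) = \overline{M}$, we obtain $G = \Lambda M Z$. On the other hand, writing an arbitrary $z \in Z$ as $z = k\lambda l$ and projecting shows $\pi(\lambda) \in \pi(KL)$; hence, setting $\Lambda_0 := \Lambda \cap \pi^{-1}(\pi(KL))$, we have $Z \subseteq K\Lambda_0 L$ with $\pi(\Lambda_0)$ relatively compact.

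The crux, and the step I expect to be the main obstacle, is to remove the non-compact central factor $Z$ from $G = \Lambda M Z$; a naive rearrangement fails because conjugating a compact set by an element of $\Lambda$ produces commutators that are unbounded as the $\Lambda$-element ranges over all of $\Lambda$. The key observation that saves the argument is that conjugation is insensitive to the central factor: for $g, c \in G$ the element $g^{-1}cg$ depends only on $\pi(g) \in G/Z$, so $(\pi(g), c) \mapsto g^{-1}cg$ is a well-defined continuous map $G/Z \times G \to G$ and therefore carries compact sets to compact sets. Applying this with $C := MK$ (compact) and with $\Lambda_0$, whose image lies in the compact set $\pi(KL)$, the set $C'' := \{\lambda_0^{-1}c\lambda_0 : c \in C,\ \lambda_0 \in \Lambda_0\}$ is relatively compact, and from $c\lambda_0 = \lambda_0(\lambda_0^{-1}c\lambda_0)$ we get $C\Lambda_0 \subseteq \Lambda_0 \overline{C''}$.

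Finally I would combine the two displays with the symmetric approximate-group inclusion $\Lambda^2 \subseteq \Lambda F$ ($F$ finite) to compute
\[
G = \Lambda M Z \subseteq \Lambda(MK)\Lambda_0 L \subseteq \Lambda\Lambda_0\overline{C''}L \subseteq \Lambda^2\,\overline{C''}L \subseteq \Lambda\,(F\,\overline{C''}L),
\]
where $F\,\overline{C''}L$ is compact. Thus $\Lambda$ is left-syndetic, which proves the claim and hence the inductive step. Together with the uniform discreteness of $\Lambda$, relative density shows that $\Lambda$ is a Delone set, i.e.\ a uniform approximate lattice, completing the proof.
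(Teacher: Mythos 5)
Your proposal is correct, and its overall skeleton coincides with the paper's: both pass from weak to strong via the Choquet--Deny property of nilpotent groups (Remark \ref{Strongness}), invoke Theorem \ref{bi-syndetic} to obtain bi-syndeticity, and then reduce the theorem to the purely metric claim that every bi-syndetic approximate subgroup of a nilpotent lcsc group is left-syndetic (the paper's notion of a \emph{balanced} group), proved by induction along central quotients. Where you genuinely diverge is in the execution of the central-extension step. The paper fixes a symmetric, locally bounded Borel section $s: Q \to G$ and carries out a fiber-by-fiber analysis, constructing a pre-compact set $N$ with $(N\Lambda^2)_q = Z$ for all $q \in \pi(\Lambda)$ (Lemmas \ref{MainLemmaNilpotent}, \ref{efiber} and \ref{KL}). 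You instead exploit the observation that $c \mapsto g^{-1}cg$ depends only on the coset $gZ$, so conjugation descends to a continuous map $G/Z \times G \to G$; since $\pi(\Lambda_0)$ lies in a compact set, the conjugates $\lambda_0^{-1}c\lambda_0$ stay in a compact set and the compact factor $MK$ can be slid past $\Lambda_0$. This is arguably cleaner --- it avoids measurable sections entirely --- and it establishes the paper's Proposition \ref{NilpotentMainProp} in the same generality, since the conjugation trick only needs $Z$ to be a closed central subgroup, not the full center. The only blemish is cosmetic: projecting $z = k\lambda l$ with $z \in Z$ gives $\pi(\lambda) = \pi(k^{-1}l^{-1}) \in \pi(K^{-1}L^{-1})$ rather than $\pi(KL)$, so $\Lambda_0$ should be defined accordingly (or one replaces $K$ and $L$ by symmetric compacta from the outset); nothing downstream is affected.
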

\begin{remark}\label{padic} Concerning Theorem \ref{NilpotentUniform}, it is instructive to compare the class of nilpotent Lie groups to the class of semisimple $p$-adic groups. Every lattice is uniform in groups from either class, but for very different reasons. In the case of nilpotent Lie groups the reason for the non-existence of non-uniform lattices is geometric, and we explain below how to extend the argument to show non-existence of non-uniform approximate lattice. In the p-adic case, the reason for the non-existence of non-uniform lattice is purely group-theoretic. Consider for example the groups $G_p := {\rm SL}_2(\Q_p)$ and denote by $V_p$ the set of vertices of their respective Bruhat--Tits trees. Then a discrete subgroup $\Gamma  < G_p$ is a lattice if and only if
\[
\sum_{[x] \in \Gamma\backslash V_p} \frac{1}{|\Gamma_x|} < \infty.
\]
Since $\Gamma_x < G_p$ is finite for every $x \in V_p$ and the size of finite subgroups of $G_p$ is uniformly bounded, this is possible only if $|\Gamma\backslash V_p| < \infty$, i.e.\@ if $\Gamma$ is uniform. Thus the reason for the non-existence of non-uniform lattices in ${\rm SL}_2(\Q_p)$ is that the latter does not contain torsion subgroups of arbitrary large order.
No such obstruction exists in the approximate setting, and in fact the groups ${\rm SL}_2(\Q_p)$ (and more generally, the groups ${\rm SL}_n(\Q_p)$ for $n \geq 2$) do admit non-uniform approximate lattice. Explicit examples are given by regular model sets arising from the non-uniform cut and project scheme $({\rm SL}_n(\Q_p), {\rm SL}_n(\R), {\rm SL}_n(\Z[1/p]))$. In particular, the analogue of Theorem \ref{NilpotentUniform} does not hold for the class of semisimple $p$-adic groups.
\end{remark}
For the proof of Theorem \ref{NilpotentUniform} we introduce the following terminology:
\begin{definition}
A lcsc group $G$ is called \emph{balanced} if every bi-syndetic approximate subgroup of $G$ is left- (equivalently, right-) syndetic. 
\end{definition}
By Theorem \ref{bi-syndetic} every strong approximate lattice in a balanced lcsc group is uniform. Since every weak approximate lattice in a nilpotent group is strong, the proof of Theorem \ref{NilpotentUniform} reduces to showing that every nilpotent lcsc group is balanced. We are going to show this by induction on the nilpotency degree. Obviously every $1$-step nilpotent, i.e.\ abelian lcsc group $G$ is balanced. The induction step then amounts to proving the following proposition, which is also of independent interest.
\begin{proposition}\label{NilpotentMainProp} Let $0 \to Z \to G \xrightarrow{\pi} Q \to \{e\}$ be a central extension of lcsc groups. If $Q$ is balanced, then so is $G$.
\end{proposition}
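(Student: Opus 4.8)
The plan is to take a bi-syndetic approximate subgroup $\Lambda\subset G$ and show it is left-syndetic, thereby verifying that $G$ is balanced. The natural first move is to push everything down to the balanced quotient $Q$ and then control what happens in the central fibre $Z$. Since $\pi$ is a homomorphism, $\pi(\Lambda)$ is again an approximate subgroup of $Q$ (it is symmetric, contains the identity, and $\pi(\Lambda)^2\subset\pi(F)\pi(\Lambda)$ whenever $\Lambda^2\subset F\Lambda$). Writing $G=K\Lambda L$ with $K,L\subset G$ compact and applying $\pi$ gives $Q=\pi(K)\pi(\Lambda)\pi(L)$, so $\pi(\Lambda)$ is bi-syndetic in $Q$. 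As $Q$ is balanced, $\pi(\Lambda)$ is syndetic: $Q=\pi(\Lambda)C_0$ for some compact $C_0\subset Q$. Lifting $C_0$ to a compact set $\tilde C_0\subset G$ (every continuous open surjection of lcsc groups lifts compacta, by a standard finite-subcover argument) and using that $Z=\ker\pi$ is central, one readily obtains $G=\Lambda Z\tilde C_0$.

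It remains to absorb the central factor $Z$. The heart of the argument is to show that $\Lambda$ already \emph{reaches all of $Z$ up to a compact error}, and this is where bi-syndeticity and centrality are used together. Given $z\in Z$, write $z=k\lambda l$ with $k\in K$, $\lambda\in\Lambda$, $l\in L$; applying $\pi$ shows $\pi(\lambda)\in\pi(K)^{-1}\pi(L)^{-1}=:C_1$, a fixed compact subset of $Q$, so $\lambda$ lies in $P:=\Lambda\cap\pi^{-1}(C_1)$. Choosing a compact lift $\tilde C_1$ of $C_1$ and, for each $\lambda\in P$, an element $c_\lambda\in\tilde C_1$ with $\pi(c_\lambda)=\pi(\lambda)$, the product $\phi(\lambda):=\lambda c_\lambda^{-1}$ lands in $Z$, indeed in $\Lambda\tilde C_1^{-1}\cap Z$. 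Substituting $\lambda=\phi(\lambda)c_\lambda$ into $z=k\lambda l$ and pulling the central element $\phi(\lambda)$ to the front yields $z^{-1}\phi(\lambda)=l^{-1}c_\lambda^{-1}k^{-1}$, which lies in the compact central set $E:=L^{-1}\tilde C_1^{-1}K^{-1}\cap Z$. Hence every $z\in Z$ satisfies $z\in\phi(P)E^{-1}\subset(\Lambda\tilde C_1^{-1}\cap Z)E^{-1}$, that is, $Z\subset(\Lambda\tilde C_1^{-1}\cap Z)E^{-1}$.

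Finally I would reassemble the two ingredients. Substituting the last inclusion into $G=\Lambda Z\tilde C_0$ gives $G\subset\Lambda(\Lambda\tilde C_1^{-1}\cap Z)E^{-1}\tilde C_0$, and since the elements of $\Lambda\tilde C_1^{-1}\cap Z$ have the form $\lambda'c^{-1}$ with $\lambda'\in\Lambda$, $c\in\tilde C_1$, we have $\Lambda(\Lambda\tilde C_1^{-1}\cap Z)\subset\Lambda^2\tilde C_1^{-1}$. The crucial point now is to apply the \emph{right-handed} reduction $\Lambda^2\subset\Lambda F$ (valid for symmetric approximate groups after replacing $F$ by $F\cup F^{-1}$): this keeps the finite factor on the \emph{right}, giving $G\subset\Lambda\,(F\tilde C_1^{-1}E^{-1}\tilde C_0)$ with $F\tilde C_1^{-1}E^{-1}\tilde C_0$ compact. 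Thus $G=\Lambda K'$ for a compact $K'$, so $\Lambda$ is left-syndetic and $G$ is balanced.

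I expect the main obstacle to be the central-fibre step of the second paragraph: producing, from mere bi-syndeticity, a genuinely syndetic \emph{shadow} of $\Lambda$ inside $Z$ requires the section $\phi$ together with the commutation afforded by centrality, and it is essential here that $Z$ be central rather than merely normal. A secondary but important subtlety is the bookkeeping of sides in the reassembly: absorbing the central syndetic part into $\Lambda$ via the \emph{left-handed} reduction $\Lambda^2\subset F\Lambda$ would leave a compact factor on the left and merely reproduce bi-syndeticity, so one must use the right-handed reduction to conclude genuine one-sided syndeticity.
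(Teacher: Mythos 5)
Your proof is correct and follows essentially the same route as the paper's: both arguments reduce to balancedness of $Q$ applied to $\pi(\Lambda)$ together with the key observation (your second paragraph; the paper's Lemma \ref{KL}) that writing a central element as $k\lambda l$ forces $\pi(\lambda)$ into a fixed compact set, so that centrality produces a syndetic ``shadow'' of $\Lambda$ inside $Z$. The only difference is bookkeeping: the paper organizes the argument via a locally bounded Borel section $s:Q\to G$ and the fibers $\Omega_q$ (Lemmas \ref{MainLemmaNilpotent} and \ref{efiber}), whereas you work directly with compact lifts of compact subsets of $Q$ and finish with the right-handed inclusion $\Lambda^2\subset\Lambda F$, which is a slightly more economical but mathematically equivalent packaging.
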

The proof of the proposition will occupy the remainder of this subsection. We fix a central extension $0 \to Z \to G \xrightarrow{\pi} Q \to \{e\}$ and a Borel section $s: Q \to G$ of $\pi$, which we assume to be symmetric (i.e.\ $s(q)^{-1} = s(q^{-1})$ and locally bounded (i.e.\ images of compact sets are pre-compact). Given $\Omega \subset G$ and $q \in Q$ we then denote by
\[
\Omega_q := \{z \in Z \mid zs(q) \in \Omega\} = \{z \in Z \mid s(q)z \in \Omega\} \subset Z
\]
the ``fiber'' over $q$ so that
\begin{equation}\label{FiberUnion}
\Omega = \bigcup_{q \in \pi(\Omega)} \Omega_q s(q).
\end{equation}
By definition we have for all $\Omega_1, \Omega_2 \subset G$ and $q \in Q$ the inclusion
\begin{equation}\label{ProductFibers}
(\Omega_1\Omega_2)_q \supset (\Omega_1)_q (\Omega_2)_e.
\end{equation}
We now assume that $Q$ is balanced and fix a bi-syndetic approximate subgroup $\Lambda \subset G$. Then $\pi(\Lambda)$ is a bi-syndetic approximate subgroup of $Q$, hence left-syndetic by assumption, say $Q = E\pi(\Lambda)$ for some $E \subset Q$ compact. 
\begin{lemma}\label{MainLemmaNilpotent} Let $\Lambda, E$ be as above and assume that there exists a pre-compact set $N \subset Z$ such that for all $q \in \pi(\Lambda)$ we have $(N\Lambda^2)_q = Z$. Then
$G = s(E)N\Lambda^2$. In particular, $\Lambda$ is right-syndetic in $G$.
\end{lemma}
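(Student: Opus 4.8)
The plan is to establish the two assertions in turn: the equality $G = s(E)N\Lambda^2$ by a direct descent-and-lift argument through the central extension, and the ``right-syndetic'' consequence by inverting and invoking the approximate group property. Throughout I use the fibre formalism already set up, in particular that $\Omega_q = \{z \in Z \mid zs(q) \in \Omega\}$ and that $\pi \circ s = \mathrm{id}$.

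First I would take an arbitrary $g \in G$ and push it down to $Q$. Since $\pi(\Lambda)$ is left-syndetic with $Q = E\pi(\Lambda)$, I can write $\pi(g) = e'q$ with $e' \in E$ and, crucially, $q \in \pi(\Lambda)$. The key move is to lift this splitting to $G$: set $z := s(e')^{-1}\,g\,s(q)^{-1}$. A one-line computation using $\pi\circ s = \mathrm{id}$ gives $\pi(z) = e'^{-1}\pi(g)q^{-1} = e'^{-1}(e'q)q^{-1} = e$, so $z$ lies in the central fibre $Z$, and by construction $g = s(e')\,z\,s(q) = s(e')\,(zs(q))$. Now $zs(q)$ belongs to $N\Lambda^2$ precisely when $z \in (N\Lambda^2)_q$; since $q \in \pi(\Lambda)$, the hypothesis $(N\Lambda^2)_q = Z$ applies and yields $z \in Z = (N\Lambda^2)_q$, hence $zs(q) \in N\Lambda^2$. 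Therefore $g \in s(e')N\Lambda^2 \subset s(E)N\Lambda^2$, and as $g$ was arbitrary this proves $G = s(E)N\Lambda^2$.

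For the ``in particular'' clause I would invert. Taking inverses of $G = s(E)N\Lambda^2$ and using $G = G^{-1}$ together with the symmetry $(\Lambda^2)^{-1} = \Lambda^2$ gives $G = \Lambda^2 N^{-1} s(E)^{-1}$. Since $(\Lambda,\Lambda^\infty)$ is an approximate group and $\Lambda$ is symmetric, condition (AG3${}^{\mathrm{op}}$) furnishes a finite set $F$ with $\Lambda^2 \subset \Lambda F$, so $G = \Lambda F N^{-1} s(E)^{-1}$. The factor $K := \overline{F N^{-1} s(E)^{-1}}$ is compact: $F$ is finite, $N$ is pre-compact by hypothesis, and $s(E)$ is pre-compact because $s$ is locally bounded and $E$ is compact, while inversion and finite products preserve pre-compactness. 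Thus $G = \Lambda K$ with $K$ compact, i.e.\ $\Lambda$ is right-syndetic.

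The argument is essentially bookkeeping once the fibre formalism is in place, and I do not expect a genuine obstacle inside this lemma. The only point that requires care is arranging the descent $\pi(g) = e'q$ so that the second factor $q$ lands in $\pi(\Lambda)$ rather than merely in $Q$, since the fibre hypothesis $(N\Lambda^2)_q = Z$ is assumed only there; this is exactly why one splits off the compact factor $E$ on the left. Everything else---producing the central element $z$ and the symmetry/compactness manipulations for right-syndeticity---is routine. The substantive part of the surrounding induction, namely the construction of a pre-compact $N \subset Z$ with $(N\Lambda^2)_q = Z$ for every $q \in \pi(\Lambda)$, lies outside this lemma and is handled separately.
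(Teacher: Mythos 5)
Your proof is correct and follows essentially the same route as the paper: the paper phrases the argument as a set containment, $N\Lambda^2 \supset \bigcup_{q \in \pi(\Lambda)}(N\Lambda^2)_q s(q) = Zs(\pi(\Lambda))$ and hence $s(E)N\Lambda^2 \supset Zs(E\pi(\Lambda)) = Zs(Q) = G$, which is just the set-wise version of your element chase through the decomposition $\pi(g) = e'q$ and the fibre hypothesis. Your handling of the ``in particular'' clause is also fine (note that since $\Lambda$ is symmetric, left- and right-syndeticity coincide, so whether one inverts or instead uses $\Lambda^2 \subset F\Lambda$ directly to get $G = s(E)NF\Lambda$ is immaterial).
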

\begin{proof} Since $e \in \Lambda$ and $N \subset Z = \ker(\pi)$ we have $\pi(\Lambda) \subset \pi(\Lambda^2) = \pi(N\Lambda^2)$. We thus deduce from \eqref{FiberUnion} that
\[
N\Lambda^2 = \bigcup_{q \in \pi(N\Lambda^2)} (N\Lambda^2)_q s(q) =  \bigcup_{q \in \pi(\Lambda^2)} (N\Lambda^2)_q s(q) \supset \bigcup_{q \in \pi(\Lambda)} (N\Lambda^2)_qs(q) = Zs(\pi(\Lambda)),
\]
and hene
\[
s(E)N\Lambda^2 \supset s(E) Z s(\pi(\Lambda)) = Z s(E) s(\pi(\Lambda)) = Z s(E\pi(\Lambda)) = Zs(Q) = G,
\]
where we have used the assumptions that $E \pi(\Lambda) = Q$ and $Zs(Q) = G$ together with the observation that since $\ker \pi = Z$, we have $Zs(A)s(B) = Zs(AB)$ for all subsets $A, B \subset Q$.
\end{proof}
We are thus left with the task to construct for any given $\Lambda$ a pre-compact set $N \subset Z$ as in Lemma \ref{MainLemmaNilpotent}. By assumption we have $G = K\Lambda L$ for compact sets $K, L \subset G$.  We may assume that $K = K_Zs(K_Q)$, $L = L_Zs(L_Q)$ for compact sets $K_Z, L_Z \subset Z$ and $K_Q, L_Q \subset Q$. We then define 
\[
M_o := \big\{ s(k_Q) s(k_Q^{-1} l_Q^{-1})s(l_Q) \mid k_Q \in K_Q, \enskip l_Q \in L_Q \big\} \subset Z \quad \text{and} \quad M:= K_ZL_ZM_o \subset Z.
\]
We also set $\Sigma := \pi(\Lambda) \cap K_QL_Q$ and
\begin{equation}\label{DefNNilpotent}
N_o := s((K_QL_Q)^{-1}) \subset G \quad \text{and} \quad N:= MN_o \subset G.
\end{equation}
We will show that $N$ satisfies the assumptions of Lemma \ref{MainLemmaNilpotent} and thereby finish the proof. Note that $N$ is pre-compact by construction. To show that $(N\Lambda^2)_q = Z$ for all $q \in \pi(\Lambda)$ we need two lemmas.
\begin{lemma}\label{efiber}  $(N_o\Lambda)_e = \bigcup_{q\in \Sigma}\Lambda_q$.
\end{lemma}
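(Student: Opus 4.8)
The plan is to prove the set identity by double inclusion, in each direction merely unwinding the definition of the fiber over $e$ and exploiting two structural features: the symmetry $s(q^{-1}) = s(q)^{-1}$ of the section and the centrality of $Z$. Everything rests on the single identity
\[
s(q^{-1})\bigl(z\,s(q)\bigr) = z\,s(q^{-1})s(q) = z\,s(q)^{-1}s(q) = z \qquad (z \in Z,\ q \in Q),
\]
which records that multiplying a fiber element $z\,s(q)$ of $\Lambda$ lying over $q$ on the left by $s(q^{-1}) \in N_o$ returns the central element $z$ into the identity fiber. Assuming, as we may, that $s(e)=e$ (a symmetric section can be adjusted at the single point $e$ without affecting Borel-ness or local boundedness), the left-hand side simplifies to $(N_o\Lambda)_e = N_o\Lambda \cap Z$.

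For the inclusion $\supseteq$ I would take $q \in \Sigma = \pi(\Lambda) \cap K_QL_Q$ and $z \in \Lambda_q$, so that $z\,s(q) \in \Lambda$. Since $q \in K_QL_Q$ we have $q^{-1} \in (K_QL_Q)^{-1}$ and hence $s(q^{-1}) \in N_o$; the displayed identity then gives $z = s(q^{-1})\bigl(z\,s(q)\bigr) \in N_o\Lambda$, and as $z \in Z$ this places $z$ in $(N_o\Lambda)_e$. This direction is immediate once the definitions are spelled out.

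For the reverse inclusion $\subseteq$ I would start from $z \in (N_o\Lambda)_e$ and write $z = s(w)\lambda$ with $w \in (K_QL_Q)^{-1}$ and $\lambda \in \Lambda$. Applying $\pi$ and using $z \in \ker\pi$ forces $e = w\,\pi(\lambda)$, so $\pi(\lambda) = w^{-1} \in K_QL_Q$; together with $\pi(\lambda) \in \pi(\Lambda)$ this shows $q := \pi(\lambda) \in \Sigma$. Writing $\lambda = \zeta\,s(q)$ with $\zeta \in \Lambda_q$ and using the identity (now with $w = q^{-1}$) collapses $z = s(q^{-1})\,\zeta\,s(q) = \zeta \in \Lambda_q \subset \bigcup_{q\in\Sigma}\Lambda_q$.

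The calculation is routine; the step I would treat as the crux is checking that the index set obtained by applying $\pi$ is \emph{exactly} $\Sigma$ and nothing larger. This matches the definition $\Sigma = \pi(\Lambda)\cap K_QL_Q$ precisely because $N_o = s((K_QL_Q)^{-1})$ supplies the constraint $\pi(\lambda) \in K_QL_Q$, while membership in $\pi(\Lambda)$ is automatic from $\lambda \in \Lambda$; it is for this bookkeeping that $N_o$ was chosen no larger than $s((K_QL_Q)^{-1})$. Everything else is a transparent consequence of symmetry and centrality, so I expect no further obstruction.
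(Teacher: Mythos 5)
Your proof is correct and follows essentially the same route as the paper's: the paper writes $N_o\Lambda$ as the double union $\bigcup_{t \in K_QL_Q}\bigcup_{u \in \pi(\Lambda)} s(t^{-1})s(u)\Lambda_u$ and observes that only the diagonal terms $t=u$ contribute to the fiber over $e$, which is precisely your two inclusions packaged together, resting on the same use of the symmetry $s(q^{-1})=s(q)^{-1}$ and the centrality of $Z$. Your explicit normalization $s(e)=e$ addresses a point the paper leaves implicit, but it changes nothing of substance.
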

\begin{proof} By \eqref{FiberUnion} we have
\[
\Lambda = \bigcup_{u \in \pi(\Lambda)} \Lambda_u s(u),
\]
whence
\[
N_o\Lambda = (s((K_QL_Q)^{-1})\Lambda = \bigcup_{t \in K_Q L_Q} \bigcup_{u \in \pi(\Lambda)} s(t^{-1})s(u) \Lambda_u.
\]
We see that the only sets in the union which contribute to the fiber above $e$ have $t = u = :q$, and thus 
\[
\big(N_o\Lambda\big)_e = \bigcup_{q \in K_Q L_Q \cap \pi(\Lambda)} \Lambda_q,
\]
which finishes the proof.
\end{proof}
\begin{lemma}
\label{KL}
For every $z \in Z$, there exists $q \in \Sigma$ such that $z \in M  \Lambda_q$. In particular, 
\[
M\big(\bigcup_{q \in \Sigma} \Lambda_q\big) = Z.
\]
\end{lemma}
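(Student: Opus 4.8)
The plan is to feed an arbitrary element of $Z$ through the bi-syndeticity decomposition $G = K\Lambda L$ and read off the resulting fiber. Fix $z \in Z$. Regarding $z$ as an element of $G$, I would write $z = k\lambda l$ with $k \in K$, $\lambda \in \Lambda$, $l \in L$, and then unpack the coordinates using $K = K_Z s(K_Q)$ and $L = L_Z s(L_Q)$: thus $k = k_Z s(k_Q)$, $l = l_Z s(l_Q)$ with $k_Z \in K_Z$, $l_Z \in L_Z$, $k_Q \in K_Q$, $l_Q \in L_Q$. Setting $q := \pi(\lambda) \in \pi(\Lambda)$ and writing $\lambda = \lambda_q s(q)$ with $\lambda_q \in \Lambda_q$, everything is now expressed in terms of the central data $k_Z, \lambda_q, l_Z$ and the section values $s(k_Q), s(q), s(l_Q)$.

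The first step is to pin down $q$. Applying $\pi$ to $z = k\lambda l$ and using $\pi(z) = e$ yields $k_Q\, q\, l_Q = e$, so $q = k_Q^{-1} l_Q^{-1}$. Having normalized $K$ and $L$ to be symmetric (so that $K_Q$ and $L_Q$ are symmetric --- a harmless enlargement that I would record already at the point where the decompositions of $K,L$ are introduced), this places $q \in K_Q L_Q$, and combined with $q \in \pi(\Lambda)$ gives $q \in \Sigma$.

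The second and decisive step is the $Z$-bookkeeping, which is exactly what the set $M_o$ was engineered for. Substituting the coordinate expressions into $z = k\lambda l$ and moving the central factors $k_Z, \lambda_q, l_Z$ to the front (legitimate since $Z$ is central) I obtain
\[
z = k_Z \lambda_q l_Z \cdot s(k_Q)\, s(q)\, s(l_Q).
\]
Because $s$ is only a section and not a homomorphism, the trailing product does not collapse to a single section value; instead, since $q = k_Q^{-1} l_Q^{-1}$, the element $w := s(k_Q)\, s(k_Q^{-1} l_Q^{-1})\, s(l_Q)$ projects to $e$ under $\pi$, hence lies in $Z$, and by definition it is one of the generators of $M_o$. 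Collecting centrally once more gives $z = (k_Z l_Z w)\,\lambda_q$ with $k_Z l_Z w \in K_Z L_Z M_o = M$ and $\lambda_q \in \Lambda_q$, i.e.\ $z \in M\Lambda_q$ for this $q \in \Sigma$. This proves the pointwise assertion; the displayed identity then follows immediately, since the inclusion $M\big(\bigcup_{q\in\Sigma}\Lambda_q\big) \subset Z$ is clear from $M \subset Z$ and $\Lambda_q \subset Z$, while the reverse inclusion is exactly what was just shown.

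I expect the only real subtlety to be the handling of the section $s$: the whole point of $M_o$ is to absorb the central ``cocycle'' discrepancy $s(k_Q)s(q)s(l_Q)$, and the computation closes precisely because the constraint $q = k_Q^{-1} l_Q^{-1}$ forced by $z \in Z$ matches the shape of the generators of $M_o$. The secondary point requiring care is the symmetry normalization of $K_Q, L_Q$, without which $q = k_Q^{-1} l_Q^{-1}$ would land in $K_Q^{-1} L_Q^{-1}$ rather than in $K_Q L_Q$, the index set underlying $\Sigma$.
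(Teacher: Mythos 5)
Your proof is correct and essentially identical to the paper's: the same decomposition $z = k_Z s(k_Q)\,\lambda_Z\, s(q)\, l_Z s(l_Q)$, the same identification $q = \pi(\lambda) = k_Q^{-1}l_Q^{-1} \in \Sigma$ forced by $\pi(z)=e$, and the same central rearrangement placing $z$ in $K_Z L_Z M_o \lambda_Z = M\Lambda_q$. Your remark that $K_Q, L_Q$ should be normalized to be symmetric so that $q = k_Q^{-1}l_Q^{-1}$ lands in $K_QL_Q$ rather than $K_Q^{-1}L_Q^{-1}$ is a legitimate (and harmless) point that the paper leaves implicit.
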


\begin{proof} Every $z \in Z \subset G = K\Lambda L$ can be written as 
\[
z  = k_Z s(k_Q) \lambda l_Z s(l_Q)  =  k_Z s(k_Q)\lambda_Z
s(\pi(\lambda))  l_Z s(l_Q)
\]
with $k_Z \in K_Z$, $k_Q \in K_Q$, $\lambda \in \Lambda$, $\lambda_Z \in \Lambda_{\pi(\lambda)}$, $l_Z \in L_Z$ and $l_Q \in L_Q$. Note that
\[
e = \pi(z) = k_Q\pi(\lambda)l_Q
\]
and thus  
\[q := \pi(\lambda) = k_Q^{-1}l_Q^{-1} \in K_Q L_Q \cap \pi(\Lambda) = \Sigma.\]
Since $\lambda_Z$, $k_Z$ and $l_Z$ are central we conclude that
\begin{eqnarray*}
z 
&=& 
k_Z s(k_Q) \lambda_Z s(\pi(\lambda)) l_Z s(l_Q) \\
&=& 
 k_Z \ l_Z s(k_Q) s(\pi(\lambda)) s(l_Q)\lambda_Z \\
&=&
 k_Z l_Z s(k_Q) s(k_Q^{-1}l_Q^{-1}) s(l_Q)\lambda_Z\\ &\in&  K_ZL_ZM_o\lambda_Z=M\lambda_Z,
\end{eqnarray*}
and since $\lambda_Z \in \Lambda_{\pi(z)} = \Lambda_q$ and $q \in \Sigma$ the lemma follows.
\end{proof}
\begin{proof}[Proof of Proposition \ref{NilpotentMainProp}] In view of Lemma \ref{MainLemmaNilpotent} it remains to show only that the set $N$ defined in \eqref{DefNNilpotent} satisfies
$(N\Lambda^2)_q = Z$ for all $q \in \pi(\Lambda)$. Since $M$ is contained in $Z$, Lemma \ref{efiber} and Lemma \ref{KL} yield
\[
(N\Lambda)_e = (MN_o\Lambda)_e = M (N_o\Lambda)_e = M\left(\bigcup_{p\in \Sigma}\Lambda_p\right) = Z.
\]
Now let $q \in \pi(\Lambda)$. We apply apply \eqref{ProductFibers} with $\Omega_1 = N\Lambda$ and $\Omega_2 = \Lambda$ to obtain
\[
(N\Lambda^2)_q \supset (N\Lambda)_e \Lambda_q  = Z \Lambda_q.
\]
Since $\Lambda_q \neq \emptyset$ we deduce that $(N\Lambda^2)_q = Z$, finishing the proof.
\end{proof}
This concludes the proof of Theorem \ref{NilpotentUniform}.

\section{Unimodularity}\label{SecUnimodular}

\subsection{The periodization map}

If $\Gamma < G$ is a lattice in a lcsc group, then there is a periodization map
\[
\mathcal P_\Gamma: C_c(G) \to C_c(G/\Gamma), \quad \mathcal P_\Gamma(f)(g\Gamma) := \sum_{\gamma \in \Gamma} f(g\gamma),
\]
and we are going to define an analogous periodization map for approximate lattices. Recall from Proposition \ref{FLCHull} that if $\Lambda \subset G$ is a uniformly discrete approximate subgroup of a lcsc group $G$, then every $P \in X_\Lambda$ is locally finite. More precisely, for every compact set there exists a constant $C_K$ such that 
\begin{equation}\label{PeriodBound}|P \cap K| < C_K\quad \text{for all } P \in X_\Lambda.
\end{equation}
In particular, given $f \in C_c(G)$, we can define the \emph{periodization} $\mathcal Pf$ of $f$ along $\Lambda$ by the finite sums
\[
\mathcal Pf(P) := \sum_{x \in P}f(x) \quad (P \in X_\Lambda)
\]
Note that the map $f \mapsto \mathcal Pf$ is equivariant with respect to the left-action of $G$ on itself and the $G$-action on $X_\Lambda$.
\begin{proposition} It $\Lambda \subset G$ is an approximate lattice, then for every $f \in C_c(G)$ the periodization $\mathcal Pf : X_\Lambda \to \R$ is continuous with respect to the Chabauty-Fell topology.
\end{proposition}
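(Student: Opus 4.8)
The plan is to prove \emph{sequential} continuity of $\mathcal{P}f$ and then invoke metrizability: since $X_\Lambda$ is a closed subset of the compact metrizable space $\mathcal{C}(G)$, it is itself metrizable, so it suffices to show that $P_n \to P$ in $X_\Lambda$ implies $\mathcal{P}f(P_n) \to \mathcal{P}f(P)$. The advantage of this reduction is that the behaviour of the hull along convergent sequences on a fixed compact set is governed by Corollary \ref{ChabautyConvergence}, which will do almost all of the work.

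First I would record that $\Lambda$, being a uniformly discrete approximate subgroup, has finite local complexity, so that Proposition \ref{FLCHull} and Corollary \ref{ChabautyConvergence} are available. Indeed, from $\Lambda^2 \subset F\Lambda$ with $F$ finite and $\Lambda$ locally finite, every compact $K$ meets $\Lambda^{-1}\Lambda = \Lambda^2$ in $\bigcup_{t \in F} t(\Lambda \cap t^{-1}K)$, a finite set; hence $\Lambda^2$ is locally finite. In particular the points of the hull are uniformly discrete with uniform parameters by Proposition \ref{FLCHull}.

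Next, fix $f \in C_c(G)$ and set $K := \supp f$. Since $f$ vanishes off $K$, we have $\mathcal{P}f(Q) = \sum_{x \in Q \cap K} f(x)$ for every $Q \in X_\Lambda$, a finite sum by finite local complexity. The heart of the argument is to track the finitely many points of $P_n$ lying near $K$ and show that they converge one-by-one to the points of $P \cap K$. To prevent points from crossing $\partial K$ as $n$ grows, I would choose a compact neighbourhood $L$ of $K$ (so $K \subset \operatorname{int} L$) whose boundary is disjoint from $P$, with a definite gap on both sides; this is possible because $P$ is locally finite, so only finitely many of its points lie near $K$ and one may select the radius defining $L$ to avoid their finitely many distances to $K$. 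Applying Corollary \ref{ChabautyConvergence} to the compact set $L$ then yields $k, n_0 \in \mathbb{N}$ and points with $L \cap P = \{g_1, \dots, g_k\}$, $L \cap P_n = \{g_1^{(n)}, \dots, g_k^{(n)}\}$ and $g_i^{(n)} \to g_i$ for all $n \geq n_0$.

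Finally, since $f$ vanishes outside $K \subset \operatorname{int} L$, the only points contributing to the periodization lie in $L$, and the points of $L \setminus K$ contribute zero; hence $\mathcal{P}f(P) = \sum_{i=1}^k f(g_i)$ and $\mathcal{P}f(P_n) = \sum_{i=1}^k f(g_i^{(n)})$ for $n \geq n_0$. Continuity of $f$ together with $g_i^{(n)} \to g_i$ then gives $\mathcal{P}f(P_n) \to \mathcal{P}f(P)$, completing the proof. The main obstacle is precisely the danger that ``mass'' of $f$ is lost or gained through points of $P_n$ migrating across $\partial K$, or through the number of relevant points failing to stabilize; this is exactly what the uniform local finiteness of the hull (Proposition \ref{FLCHull}) and the point-by-point convergence on compacta (Corollary \ref{ChabautyConvergence}) rule out, once $L$ is chosen with $\partial L \cap P = \emptyset$.
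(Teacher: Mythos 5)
Your proof is correct and follows essentially the same route as the paper's: reduce to sequential continuity, apply Corollary \ref{ChabautyConvergence} to a compact set containing $\supp f$, and conclude from the continuity of $f$. The only difference is your extra care in enlarging $K$ to a compact neighbourhood $L$ with $\partial L \cap P = \emptyset$; the paper applies the corollary directly to $K = \supp f$ (which is harmless here since $f$ vanishes on $\partial(\supp f)$), so your version is a slightly more scrupulous rendering of the same argument.
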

\begin{proof} Let $K:= {\rm supp}(f)$ and assume $P_n \to P$ in $X_\Lambda$. By Corollary \ref{ChabautyConvergence} we have $P \cap K = \{g_1, \dots, g_k\}$ and $P_n \cap K = \{g_1^{(n)}, \dots, g_k^{(n)}\}$ with $g_i^{(k)} \to g_i$ for all sufficiently large $n$. We deduce that
\[
\mathcal Pf(P_n) = \sum_{x \in P_n \cap K} f(x) = \sum_{i=1}^k f(g_i^{(n)}) \to \sum_{i=1}^k f(g_i) = \sum_{g \in P \cap K} f(x)  = \mathcal Pf(P). \qedhere
\]
\end{proof}
\begin{definition} The map $\mathcal P: C_c(G) \to C(X_\Lambda)$, $f \mapsto \mathcal P f$ is called the \emph{periodization map} of the uniformly discrete approximate subgroup $\Lambda \subset G$.
\end{definition}
One important difference to the group case concerns the range of the periodization map: If $\Gamma < G$ is a uniform lattice, then the periodization map $C_c(G) \to C(G/\Gamma)$ is in fact surjective (see e.g. \cite[Lemma 1.1]{Raghunathan}). This need not be the case for uniform approximate lattices. However, we at least have:
\begin{proposition}\label{PointSeparation} The image $\mathcal P(C_c(G))$ of the periodization map separates points in $X_\Lambda \setminus \{\emptyset\}$.
\end{proposition}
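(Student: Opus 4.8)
The plan is to separate two distinct non-empty elements $P, Q \in X_\Lambda$ by exhibiting a single bump function whose periodization detects a point of one set but no point of the other. First I would observe that since $P \neq Q$ as subsets of $G$, at least one of the two set-differences is non-empty; after possibly exchanging the roles of $P$ and $Q$, I may fix a point $p \in P$ with $p \notin Q$.

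Next I would localize around $p$. Because $Q$ is closed and $p \notin Q$, there is a radius $\epsilon_1 > 0$ with $B_{\epsilon_1}(p) \cap Q = \emptyset$. Because every element of $X_\Lambda$ is locally finite by Proposition \ref{FLCHull}, the point $p$ is isolated in $P$, so there is $\epsilon_2 > 0$ with $B_{\epsilon_2}(p) \cap P = \{p\}$. Setting $\epsilon := \min\{\epsilon_1, \epsilon_2\}$, the open ball $B_\epsilon(p)$ then meets $P$ in exactly $\{p\}$ and is disjoint from $Q$.

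With this separation in hand, I would invoke local compactness of $G$ together with properness of $d$ (which makes closed balls compact) and Urysohn's lemma to produce $f \in C_c(G)$ with $0 \leq f \leq 1$, $f(p) = 1$, and $\supp(f) \subset B_\epsilon(p)$. Evaluating the periodization then gives $\mathcal Pf(P) = \sum_{x \in P} f(x) = f(p) = 1$, since $p$ is the only point of $P$ lying in $\supp(f)$, whereas $\mathcal Pf(Q) = 0$, since $\supp(f)$ is disjoint from $Q$. Hence $\mathcal Pf$ distinguishes $P$ from $Q$, as required.

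The argument is essentially a one-step Urysohn construction, so I do not anticipate a serious obstacle; the only point requiring care is the passage from distinctness of $P$ and $Q$ to the existence of a point lying in one but not the other that is \emph{isolated} in its set, which is precisely where the local finiteness supplied by Proposition \ref{FLCHull} enters. I would also note that non-emptiness of $P$ and $Q$ is never used beyond their distinctness, so the same construction in fact separates $\emptyset$ from any non-empty element of the hull as well.
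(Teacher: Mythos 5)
Your proof is correct and follows essentially the same route as the paper: pick $p\in P\setminus Q$ (after relabelling), use local finiteness/closedness of $Q$ to find a ball around $p$ missing $Q$, and apply a bump function supported there. The extra step of shrinking the ball so that it meets $P$ only in $\{p\}$ is harmless but unnecessary, since $f\geq 0$ already gives $\mathcal Pf(P)\geq f(p)>0$ while $\mathcal Pf(Q)=0$.
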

\begin{proof} Let $P_1, P_2 \in X_\Lambda$ be distinct points. Changing enumeration if necessary we may assume that there exists $x \in P_1 \setminus P_2$. Since $P_2$ is locally finite by Proposition \ref{FLCHull}, there exists $r>0$ such that $B_r(x) \cap P_2 = \emptyset$. Now choose $f \in C_c(G)$ with $f \geq 0$, $f(x) >0$ and ${\rm supp}(f) \subset B_r(x)$. Then $\mathcal Pf(P_1) \geq f(x)  >0$ and $\mathcal Pf(P_2) = 0$, hence $\mathcal Pf$ separates $P_1$ and $P_2$. 
\end{proof}

\subsection{Periodization of measures}
We can use the periodization map to transfer measures on the hull to measures on the group. To make this precise, we recall that a \emph{Radon measure} on $G$ is a positive linear functional $\eta: C_c(G) \to \R$ such that for every compact subset $K \subset G$  there is a constant $C_K$ such that for every $f \in C_c(G)$ with ${\rm supp}(f) \subset K$ we have
\[
\eta(f) \leq C_K \cdot \|f\|_\infty.
\]
Assume now that we are given a probability measure $\nu$ on $X_\Lambda$. We then obtain a linear functional $\eta:= \mathcal P^*\nu$ on $C_c(G)$ by setting
$\eta(f) := \nu(\mathcal Pf)$. It turns out that $\eta$ is a Radon measure by the following proposition.
\begin{proposition} For every compact subset $K \subset G$ there exists a constant $C_K$ such that for $f \in C_c(G)$ with ${\rm supp}(f) \subset K$ we have 
\[
\|\mathcal Pf\|_\infty \leq C_K \cdot \|f\|_\infty.
\]
\end{proposition}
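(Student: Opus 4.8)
The plan is to reduce the asserted bound directly to the uniform local finiteness estimate \eqref{PeriodBound}, which has already been established. The key observation is that for $f$ supported in $K$, the defining sum $\mathcal Pf(P) = \sum_{x \in P} f(x)$ only picks up the finitely many points of $P$ that lie in $K = \supp(f)$, and the number of such points is controlled \emph{uniformly} in $P$ by Proposition \ref{FLCHull}. So the entire content of the statement is packaged into that earlier uniform bound, and what remains is a one-line triangle-inequality estimate.

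Concretely, I would fix a compact set $K \subset G$ and let $C_K$ be the constant furnished by Proposition \ref{FLCHull}, i.e.\ the constant in \eqref{PeriodBound}, so that $|P \cap K| < C_K$ for every $P \in X_\Lambda$. Given $f \in C_c(G)$ with $\supp(f) \subset K$ and any $P \in X_\Lambda$, I would estimate
\[
|\mathcal Pf(P)| = \Big| \sum_{x \in P \cap K} f(x) \Big| \leq \sum_{x \in P \cap K} |f(x)| \leq |P \cap K| \cdot \|f\|_\infty \leq C_K \cdot \|f\|_\infty,
\]
where the first equality uses that $f$ vanishes off $K$ and the sum is finite because $P$ is locally finite. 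Taking the supremum over all $P \in X_\Lambda$ then yields $\|\mathcal Pf\|_\infty \leq C_K \cdot \|f\|_\infty$, which is precisely the claimed inequality, and with the \emph{same} constant $C_K$.

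I do not expect any genuine obstacle here: all the substantive work lies in the uniform bound \eqref{PeriodBound} on $|P \cap K|$, which in turn rests on the finite local complexity of $\Lambda$ (available since $\Lambda$ is a uniformly discrete approximate subgroup, whence $\Lambda^{-1}\Lambda$ is locally finite). The only point worth flagging is bookkeeping of constants: one should carry through the very same $C_K$ from \eqref{PeriodBound} so that the resulting estimate is genuinely uniform over all $f$ supported in $K$, which is exactly what is needed to conclude in the following step that $\eta := \mathcal P^*\nu$ satisfies the Radon-measure bound $\eta(f) \leq C_K \cdot \|f\|_\infty$.
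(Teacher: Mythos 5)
Your proof is correct and coincides with the paper's own argument: both choose $C_K$ to be the constant from \eqref{PeriodBound} (Proposition \ref{FLCHull}) and conclude via the chain $|\mathcal Pf(P)| \leq \sum_{x \in P \cap K}|f(x)| \leq |P\cap K|\cdot\|f\|_\infty \leq C_K\cdot\|f\|_\infty$. Nothing further is needed.
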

\begin{proof} If we choose $C_K$ as in \eqref{PeriodBound}, then 
\[
|\mathcal Pf(P)| \leq \sum_{x \in P \cap K} |f(x)| \leq |P \cap K| \cdot \|f\|_\infty \leq C_K \cdot  \|f\|_\infty.\qedhere
\]
\end{proof}
\begin{definition} The Radon measure $\mathcal P^*\nu$ is called the periodization of the probability measure $\nu$ on $X_\Lambda$.
\end{definition}
\begin{lemma} If $\nu$ is a non-trivial measure on $X_\Lambda$ in the sense of Definition \ref{NonTrivialMeasure}, then $\mathcal P^*\nu$ is non-zero.
 \end{lemma}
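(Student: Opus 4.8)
The plan is to produce a single function $f \in C_c(G)$ with $f \geq 0$ and $\nu(\mathcal Pf) > 0$; since $\mathcal P^*\nu(f) = \nu(\mathcal Pf)$ by definition, this immediately gives $\mathcal P^*\nu \neq 0$. The underlying idea is that non-triviality of $\nu$ must place positive mass on the collection of closed subsets meeting some fixed compact ball, and on such subsets a suitably normalized periodization is bounded below by $1$.

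Concretely, fix a proper left-invariant metric $d$ on $G$ inducing the topology and, for $n \in \mathbb N$, set $V_n := \{P \in X_\Lambda \mid P \cap \overline{B}_n(e) \neq \emptyset\}$. Each $\overline{B}_n(e)$ is compact by properness of $d$, so $V_n = X_\Lambda \setminus U_{\overline{B}_n(e)}$ is the complement of a basic open set and hence relatively closed, in particular Borel. The $V_n$ increase with $n$, and I claim $\bigcup_n V_n = X_\Lambda \setminus \{\emptyset\}$: if $P \neq \emptyset$, pick $x \in P$; since $d(x,e) < \infty$ the point $x$ lies in $\overline{B}_n(e)$ for all large $n$, so $P \in V_n$, whereas $\emptyset \cap \overline{B}_n(e) = \emptyset$ for every $n$.

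Now I invoke non-triviality. Since $\nu(\{\emptyset\}) = 0$ and $\nu$ is a probability measure, $\nu(X_\Lambda \setminus \{\emptyset\}) = 1$, so continuity of $\nu$ along the increasing union gives $\lim_n \nu(V_n) = 1$; in particular there is some $n_0$ with $\nu(V_{n_0}) > 0$. Put $K := \overline{B}_{n_0}(e)$ and use Urysohn's lemma (valid as $G$ is locally compact Hausdorff) to choose $f \in C_c(G)$ with $0 \leq f \leq 1$ and $f \equiv 1$ on $K$. Then $\mathcal Pf \geq 0$ everywhere, and for every $P \in V_{n_0}$ there is some $x \in P \cap K$ with $f(x) = 1$, whence $\mathcal Pf(P) = \sum_{y \in P} f(y) \geq 1$. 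Integrating,
\[
\mathcal P^*\nu(f) = \nu(\mathcal Pf) \geq \int_{V_{n_0}} \mathcal Pf \, d\nu \geq \nu(V_{n_0}) > 0,
\]
so $\mathcal P^*\nu \neq 0$.

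The only genuinely load-bearing point — and hence the step I would treat as the main (if mild) obstacle — is the identity $\bigcup_n V_n = X_\Lambda \setminus \{\emptyset\}$, which rests on properness of $d$ (so that the compact balls $\overline{B}_n(e)$ exhaust $G$) and mirrors precisely the characterization of $\emptyset$ inside $X_\Lambda$ used in Proposition \ref{Emptyset}. Everything else is a routine continuity-of-measure argument combined with Urysohn's lemma.
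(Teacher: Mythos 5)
Your proof is correct, but it takes a genuinely different route from the paper's. The paper chooses a point $P \in \supp(\nu)\setminus\{\emptyset\}$, invokes Proposition \ref{PointSeparation} to find $f \geq 0$ with $\mathcal{P}f(P) > 0$, and then uses continuity of $\mathcal{P}f$ to get $\mathcal{P}f \geq \epsilon$ on an open set of positive $\nu$-measure. You instead exhaust $X_\Lambda \setminus \{\emptyset\}$ by the closed sets $V_n$ of configurations meeting $\overline{B}_n(e)$, use continuity of measure from below to find $n_0$ with $\nu(V_{n_0}) > 0$, and then a Urysohn function equal to $1$ on $\overline{B}_{n_0}(e)$ forces $\mathcal{P}f \geq 1$ on $V_{n_0}$. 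Your argument is somewhat more self-contained: it bypasses Proposition \ref{PointSeparation} entirely and does not even need continuity of $\mathcal{P}f$ (Borel measurability suffices for the final integration), relying only on the definition of the Chabauty--Fell topology, the exhaustion of $G$ by compact balls of a proper metric, and Urysohn's lemma; it also yields the explicit lower bound $\mathcal{P}^*\nu(f) \geq \nu(V_{n_0})$. The paper's proof is shorter because it reuses machinery already in place, and the point-separation lemma it leans on is needed elsewhere anyway. Both arguments are complete and correct.
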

\begin{proof} Choose $P \in {\rm supp}(\nu) \setminus \{\emptyset\}$; by Proposition \ref{PointSeparation} there exists $f \in C_c(G)$ satisfying $f \geq 0$ and  $\mathcal Pf(P)  >0$,  hence there is $\epsilon$ such that $\mathcal Pf \geq \epsilon$ on an open subset of ${\rm supp}(\nu)$. We deduce that $\mathcal P^*\nu(f) = \nu(\mathcal Pf) > 0$.
\end{proof}
Since the periodization map is $G$-equivariant, the periodization of a $G$-invariant measure on $X_\Lambda$ is invariant under the $G$-action on itself by left-multiplication. Similarly, periodization preserves stationarity, but some care has to be taken to make this statement precise. Namely, given a Radon measure $\eta$ on $G$ and an admissible probability measure $\mu$ on $G$ the convolution $\mu \ast \eta$ may not be defined, since the integral may not converge. This problem does not occur if $\mu$ is compactly supported. In this case, we call $\eta$ a \emph{$\mu$-stationary Radon measure} provided $\mu \ast \eta = \eta$. With this terminology understood we have:
\begin{corollary}\label{PeriodizationCts}  Let $\mu$ be a compactly-supported admissible probability measure on $G$. If $\nu$ is a $\mu$-stationary probability measure on $X_\Lambda$, then $\mathcal P^*\nu$ is a $\mu$-stationary Radon measure on $G$. If $\nu$ is $G$-invariant, then so is $\mathcal P^*\nu$, and if $\nu$ is non-trivial, then $\mathcal P^*\nu$ is non-zero.\qed
\end{corollary}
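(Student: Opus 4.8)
The plan is to deduce all three assertions from two facts already in hand: the $G$-equivariance of the periodization map $\mathcal{P}$ recorded immediately after its definition, and the continuity of $\mathcal{P}f$ for $f \in C_c(G)$ established in the proposition preceding Proposition \ref{PointSeparation}. The last claim --- that $\mathcal{P}^*\nu \neq 0$ when $\nu$ is non-trivial --- is literally the content of the lemma immediately above, so nothing new is needed there. The real task is to transport the invariance, respectively the stationarity, of $\nu$ through the adjoint $\mathcal{P}^*$, and this is essentially formal once the conventions are fixed.

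First I would fix the conventions: $G$ acts on $C_c(G)$ by $(g.f)(x) = f(g^{-1}x)$ and on $C(X_\Lambda)$ by $(g.\phi)(P) = \phi(g^{-1}P)$, so that equivariance reads $\mathcal{P}(g.f) = g.\mathcal{P}f$. The one computation that does all the work is the identity
\[
(\mathcal{P}^*\nu)(g^{-1}.f) = \nu\big(\mathcal{P}(g^{-1}.f)\big) = \nu\big(g^{-1}.\mathcal{P}f\big) = \int_{X_\Lambda} \mathcal{P}f(gP)\,d\nu(P) = (g_*\nu)(\mathcal{P}f),
\]
which repackages equivariance as a statement about pushforwards. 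For the invariance assertion this finishes matters at once: if $\nu$ is $G$-invariant then $g_*\nu = \nu$, so $(\mathcal{P}^*\nu)(g^{-1}.f) = \nu(\mathcal{P}f) = (\mathcal{P}^*\nu)(f)$ for every $g$ and every $f$, which is precisely invariance of $\mathcal{P}^*\nu$ under left translation.

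For stationarity I would unwind the convolution on $G$ as $(\mu \ast \mathcal{P}^*\nu)(f) = \int_G (\mathcal{P}^*\nu)(g^{-1}.f)\,d\mu(g)$ and substitute the displayed identity to obtain $\int_G (g_*\nu)(\mathcal{P}f)\,d\mu(g) = (\mu \ast \nu)(\mathcal{P}f)$; in other words $\mathcal{P}^*$ intertwines the two convolutions. Since $\nu$ is $\mu$-stationary the right-hand side equals $\nu(\mathcal{P}f) = (\mathcal{P}^*\nu)(f)$, giving $\mu \ast \mathcal{P}^*\nu = \mathcal{P}^*\nu$. The only points needing care are analytic rather than algebraic: one must justify the interchange of the $\mu$- and $\nu$-integrals and the very definition of $\mu \ast \mathcal{P}^*\nu$ as a Radon measure. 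This is exactly where compact support of $\mu$ enters, and it is the single step that is not purely formal. It is nonetheless harmless here: because $\mathcal{P}f$ is continuous on the \emph{compact} space $X_\Lambda$, it is bounded, so $g \mapsto (g_*\nu)(\mathcal{P}f)$ is a bounded continuous integrand and Fubini applies over the compact set $\mathrm{supp}(\mu)$. I expect this convergence bookkeeping to be the main (and essentially only) obstacle, and it dissolves once this boundedness is observed.
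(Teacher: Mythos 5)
Your proposal is correct and follows exactly the route the paper intends: the corollary is stated with no written proof precisely because it is meant to follow from the $G$-equivariance of $\mathcal P$, the compactness of $X_\Lambda$ (which makes $\mathcal Pf$ bounded so that the convolution $\mu\ast\mathcal P^*\nu$ is well defined for compactly supported $\mu$ and Fubini applies), and the immediately preceding lemma for the non-vanishing claim. Your write-up simply makes these three ingredients explicit, with the conventions matching those of the paper.
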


\subsection{The unimodularity theorem}

Recall that a lcsc group $G$ is called unimodular if every left-Haar measure $m_G$ on $G$ is a right-Haar measure. Examples of unimodular groups include all discrete, compact and simple lcsc groups and their products. If a lcsc group $G$ contains a lattice, then it must be unimodular. Here we establish the following generalization of this result:
\begin{theorem}\label{ThmUnimodular} Let $G$ be a lcsc group. Assume that either
\begin{enumerate}
\item $G$ contains a strong approximate lattice $\Lambda$; or
\item $G$ contains a uniform approximate lattice $\Lambda$ and is compactly generated.
\end{enumerate}
Then $G$ is unimodular.
\end{theorem}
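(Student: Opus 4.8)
The plan is to prove unimodularity by showing that the modular character $\Delta_G\colon G\to\mathbb{R}_{>0}$ is trivial, with the periodization map $\mathcal{P}$ and Corollary \ref{PeriodizationCts} as the main engine. First I would record a clean reduction: it suffices to show that $\Delta_G(\Lambda)$ is a \emph{bounded} subset of $\mathbb{R}_{>0}$. Indeed, in case (1) Theorem \ref{bi-syndetic} gives compact sets $K,L$ with $G=K\Lambda L$, while in case (2) relative density gives a compact $K_0$ with $G=\Lambda K_0$; in either case $\Delta_G(G)=\Delta_G(K)\Delta_G(\Lambda)\Delta_G(L)$ (respectively $\Delta_G(\Lambda)\Delta_G(K_0)$) would then be a \emph{bounded subgroup} of $\mathbb{R}_{>0}$, hence trivial, since $\Delta_G$ maps compact sets to bounded sets. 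Thus the whole problem is localized to controlling $\Delta_G$ along $\Lambda$.

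Next I would transport the measure to $G$. In case (1), $\Lambda$ being a strong approximate lattice furnishes a non-trivial $G$-invariant probability measure $\nu$ on $X_\Lambda$; by the results of this section $\eta:=\mathcal{P}^*\nu$ is a non-zero Radon measure on $G$ invariant under left translation, hence equal to $c\,m_G$ for a left Haar measure $m_G$ and some $c\in(0,\infty)$. The modular relation for $m_G$ gives, for $f\in C_c(G)$ and $g\in G$, the identity $\eta(R_g f)=\Delta_G(g)^{-1}\eta(f)$, where $R_gf(x)=f(xg)$; on the other hand, unwinding the definition of $\mathcal{P}^*$ yields $\eta(R_gf)=\int_{X_\Lambda}\mathcal{P}f(Pg)\,d\nu(P)$. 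Comparing these two expressions shows that proving unimodularity is equivalent to showing that the periodized measure is also right-invariant.

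The crux, and the step I expect to be the main obstacle, is to upgrade left-invariance of $\eta$ to two-sided invariance --- equivalently, to bound $\Delta_G$ along $\Lambda$ using only the finiteness of $\nu$. Here I would exploit the symmetry $\Lambda=\Lambda^{-1}$: the inversion $P\mapsto P^{-1}$ carries the left hull $X_\Lambda$ to the right hull of $\Lambda$ and intertwines the left and right $G$-actions, so that $\nu$ simultaneously produces a right-invariant periodized measure. The difficulty is that right translation is not an isometry of a left-invariant metric and does not preserve $X_\Lambda$, so the scaling factor $\Delta_G(g)$ cannot be read off from Hausdorff-boundedness of $P\mapsto Pg$. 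The plan is instead to extract from the finite invariant measure a balance condition playing the role that $\Delta_G|_\Gamma=\Delta_\Gamma=1$ plays for a genuine lattice $\Gamma$, namely that along the difference set $\Lambda^{-1}\Lambda=\Lambda^2$ the character $\Delta_G$ must take bounded values; this is precisely where I anticipate the technical heart of the argument to lie.

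Finally, for case (2) the measure $\nu$ produced by the approximate-lattice property is only $\mu$-stationary rather than invariant, so $\eta=\mathcal{P}^*\nu$ is merely a non-zero $\mu$-stationary Radon measure (Corollary \ref{PeriodizationCts}) and need not be proportional to Haar. To compensate I would use compact generation of $G$: by Theorem \ref{CompatibleGeneratingSets} the group $\Lambda^\infty$ is finitely generated by a set $F=\Lambda^2\cap K$ contained in a compact $K$, so that triviality of $\Delta_G$ on all of $\Lambda^\infty$ --- and hence boundedness of $\Delta_G(\Lambda)$ --- reduces to the assertion that $\Delta_G(\gamma)=1$ for the finitely many $\gamma\in F$, i.e.\ for bounded separations only. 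Running the stationary analogue of the balance argument above on these bounded elements should then close case (2), after which the reduction of the first paragraph completes the proof.
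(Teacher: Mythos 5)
Your high-level skeleton for case (1) matches the paper --- periodize the invariant measure to a left Haar measure $\eta=\mathcal P^*\nu$, reduce everything to showing that $\Delta_G$ is bounded on $\Lambda$, and finish with Theorem \ref{bi-syndetic} and the fact that $\R_{>0}$ has no non-trivial bounded subgroups --- but the step you yourself flag as ``precisely where I anticipate the technical heart of the argument to lie'' is left entirely open, and it is the whole content of the proof. The paper's mechanism is an elementary consequence of the approximate group axiom: for $t\in\Lambda$ one has $\Lambda t\subset\Lambda^2\subset\Lambda F$, whence the pointwise inequality $\mathcal P(f\cdot t)\leq\sum_{c\in F}\mathcal P(f\cdot c)$ on $X_\Lambda$ for $f\geq 0$ (Lemma \ref{LemmaUnimodularityMainEstimate}); integrating against $\nu$ and using that $\eta$ is a left Haar measure gives $\Delta_G(t^{-1})\leq\sum_{c\in F}\Delta_G(c^{-1})$, and applying this to both $t$ and $t^{-1}$ bounds $\log\Delta_G$ on $\Lambda$. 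Your alternative idea of exploiting $\Lambda=\Lambda^{-1}$ via the inversion $P\mapsto P^{-1}$ to a right hull does not lead anywhere concrete and is not needed; without the displayed estimate your case (1) is a statement of what must be proved rather than a proof.

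Case (2) has a more serious structural problem. Your reduction --- that it suffices to control $\Delta_G$ on the finite generating set $F=\Lambda^2\cap K$ --- conflates boundedness with triviality: $\Delta_G$ is automatically bounded on the finite set $F$, and since $\Delta_G$ is a homomorphism this says nothing about its values on $\Lambda^\infty$ or on $\Lambda$ (the values on $F^n$ can grow geometrically). What you would need is exact triviality on $F$, for which no argument is offered, and a merely $\mu$-stationary $\nu$ does not produce the scaling identity $\eta(f\cdot t)=\Delta_G(t^{-1})\eta(f)$ you are implicitly relying on: $\mathcal P^*\nu$ is only $u\,m_G$ for a $\rho$-harmonic density $u$, not a Haar measure. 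The paper's actual argument is by contradiction and genuinely different: assuming $G$ is not unimodular, one constructs a compactly supported admissible $\mu=\rho\,m_G$ with $\int_G\rho\,\Delta_G\,dm_G<1$ (Lemma \ref{UnimodularityFunction}), periodizes a non-trivial $\mu$-stationary measure to obtain a harmonic density $u>0$, combines the same Lemma \ref{LemmaUnimodularityMainEstimate} estimate with $G=K\Lambda$ to get $u(g)\leq M\,\Delta_G(g)^{-1}$, and then iterates the convolution $u=\rho^{\ast n}\ast u$ against the contraction $\int\rho\,\Delta_G\,dm_G<1$ to force $u(e)=0$ and hence $u\equiv 0$, contradicting non-triviality. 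None of this is recoverable from your sketch.
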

\begin{problem} Let $G$ be a non-amenable compactly generated lcsc group which contains an approximate lattice. Is $G$ necessarily unimodular?
\end{problem}
For the proof of Theorem \ref{ThmUnimodular} we denote by $\Delta_G: G \to \R^{>0}$ the modular function of $G$. We use the convention that
$m_G(Ag) = \Delta_G(g) m_G(A)$ for any pre-compact measurable set $A \subset G$ or equivalently 
\[
\int_G f(xg)\,dm_G(x) = \Delta_G(g)^{-1} \int_G f(x) \,dm_G(x) \quad (f \in C_c(G), g \in G).
\]
We warn the reader that the opposite convention is also in use. Note that unimodularity of $G$ amounts to $\Delta_G \equiv 1$.

The proof of Theorem \ref{ThmUnimodular} will make use of the periodization map $\mathcal P: C_c(G) \to C(X_\Lambda)$. Note that if $\Gamma< G$ is a lattice, then 
the periodization map $\mathcal P: C_c(G) \to C_c(G/\Gamma)$ is not only equivariant with respect to the left-regular action of $G$, but also invariant under the action of $\Gamma$ on $G$ by multiplication on the right. For approximate uniform lattices, this invariance still holds approximately. Indeed, given $f \in C_c(G)$ and $t \in \Lambda$, denote by $f \cdot t$ the function $g\mapsto f(gt)$; then we have:
\begin{lemma}\label{LemmaUnimodularityMainEstimate} Assume that $\Lambda \subset G$ is a uniform approximate lattice and $F \subset G$ finite with $\Lambda^2 \subset \Lambda F$. Then for every $t \in \Lambda$ and $f \in C_c(G)$ with $f \geq 0$ we have
\begin{equation*}\label{UnimodularityMainEstimate}
\mathcal P(f \cdot t) \leq \sum_{c \in F} \mathcal P(f \cdot c).
\end{equation*}
\end{lemma}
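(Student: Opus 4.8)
The plan is to prove the inequality \emph{pointwise} on $X_\Lambda$, and to reduce the verification to the dense orbit $G.\Lambda \subset X_\Lambda$. Both sides of the asserted inequality are periodizations of functions in $C_c(G)$: indeed $f\cdot t$ and each $f\cdot c$ lie in $C_c(G)$ (with supports $\mathrm{supp}(f)\,t^{-1}$, respectively $\mathrm{supp}(f)\,c^{-1}$) and are non-negative, so $\mathcal P(f\cdot t)$ and $\sum_{c\in F}\mathcal P(f\cdot c)$ are continuous functions on $X_\Lambda$ by continuity of the periodization map (recall that a uniform approximate lattice is an approximate lattice by Corollary \ref{UALAL}, so that proposition applies). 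Since $G.\Lambda$ is dense in $X_\Lambda$ by definition of the hull, and since a weak inequality between continuous functions that holds on a dense subset holds on the closure, it suffices to establish the inequality at each orbit point $P = g\Lambda$ with $g\in G$.

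At such a point I would first unwind the left-hand side. Because left translation by $g$ is a bijection $\Lambda \to g\Lambda$, and because $P = g\Lambda$ is (uniformly) locally finite by Proposition \ref{FLCHull}, the defining sum is finite and
\[
\mathcal P(f\cdot t)(g\Lambda) \;=\; \sum_{x\in g\Lambda}(f\cdot t)(x) \;=\; \sum_{\lambda\in\Lambda} f(g\lambda t).
\]
The key step is then to exploit the hypothesis $\Lambda^2\subset\Lambda F$: for every $\lambda\in\Lambda$ we have $\lambda t\in\Lambda^2\subset\Lambda F$, so we may choose a decomposition $\lambda t = \mu_\lambda c_\lambda$ with $\mu_\lambda\in\Lambda$ and $c_\lambda\in F$. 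This defines a map $\lambda\mapsto(\mu_\lambda,c_\lambda)$ from $\Lambda$ into $\Lambda\times F$, and this map is injective: if $(\mu_{\lambda_1},c_{\lambda_1})=(\mu_{\lambda_2},c_{\lambda_2})$, then $\lambda_1 t = \mu_{\lambda_1}c_{\lambda_1} = \mu_{\lambda_2}c_{\lambda_2} = \lambda_2 t$, whence $\lambda_1=\lambda_2$.

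Using $f\geq 0$ together with this injectivity, I would then bound the left-hand side by the full (finite, since $f\in C_c$) sum over $\Lambda\times F$, discarding the injective parametrization:
\[
\sum_{\lambda\in\Lambda} f(g\lambda t) \;=\; \sum_{\lambda\in\Lambda} f(g\mu_\lambda c_\lambda) \;\leq\; \sum_{(\mu,c)\in\Lambda\times F} f(g\mu c) \;=\; \sum_{c\in F}\sum_{\mu\in\Lambda} f(g\mu c) \;=\; \sum_{c\in F}\mathcal P(f\cdot c)(g\Lambda),
\]
which is precisely the desired inequality at $P=g\Lambda$. The computation itself is routine; the only point demanding care, and the place where I expect the real subtlety, is the passage from orbit points to the whole hull. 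One should resist the temptation to prove instead a set-theoretic inclusion $Pt\subset\bigcup_{c\in F}Pc$ for arbitrary $P\in X_\Lambda$ by taking limits of the decompositions $\lambda t=\mu c$, since verifying membership of a limit point in a Chabauty--Fell limit set is delicate (it requires the ``upper'' half of Kuratowski convergence, which is not among the lemmas recorded here). Routing the argument through continuity of the periodization and density of $G.\Lambda$ sidesteps this difficulty entirely.
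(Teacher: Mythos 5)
Your proposal is correct and follows essentially the same route as the paper: the paper also verifies the inequality at orbit points $g\Lambda$ via the chain $\sum_{\lambda\in\Lambda}f(g\lambda t)\leq\sum_{\lambda\in\Lambda^2}f(g\lambda)\leq\sum_{\lambda\in\Lambda F}f(g\lambda)\leq\sum_{c\in F}\mathcal P(f\cdot c)(g\Lambda)$, using $f\geq 0$ and $\Lambda t\subset\Lambda^2\subset\Lambda F$, and then concludes by density of $G.\Lambda$ in $X_\Lambda$. Your version merely makes explicit the injectivity of $\lambda\mapsto\lambda t$, the over-counting by $\Lambda\times F$, and the continuity argument that the paper leaves implicit in its final sentence.
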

\begin{proof} For all $g \in G$ and $t \in \Lambda$ we have
\[
\mathcal P(f \cdot t)(g\Lambda) = \sum_{\lambda \in \Lambda}f(g\lambda t) \leq \sum_{\lambda \in \Lambda^2}f(g\lambda) \leq \sum_{\lambda \in \Lambda F}f(g\lambda) \leq \sum_{c \in F}\mathcal P(f \cdot c)(g\Lambda),
\]
and since $G.\Lambda \subset X_\Lambda$ is dense the lemma follows.
\end{proof}
Part (1) of the theorem now follows by combining this lemma with Theorem \ref{bi-syndetic}:

\begin{proof}[Proof of Theorem \ref{ThmUnimodular}(1)] Let $\nu$ be a non-trivial $G$-invariant probability measure on $X_\Lambda$ and denote by $\eta := \mathcal P^*\mu$ its periodization. By Corollary \ref{PeriodizationCts}, $\eta$ is non-zero left-$G$-invariant Radon measure on $G$, i.e.\ a left-Haar measure.

By Lemma \ref{LemmaUnimodularityMainEstimate} we have for every $f\in C_c(G)$ with $f \geq 0$ and every $t \in \Lambda$,
\[
\eta(f \cdot t) = \nu\left(\mathcal P(f \cdot t)\right) \leq \sum_{c \in F} \nu\left(\mathcal P(f \cdot c)\right) = \sum_{c \in F} \eta(f \cdot c).
\]
Since $\eta$ is a left-Haar measure we obtain
\[
\Delta_G(t^{-1})\eta(f) = \eta(f \cdot t) \leq \sum_{c \in F} \eta(f \cdot c) = \left(\sum_{c\in F}\Delta_G(c^{-1})\right)\eta(f),
\]
and thus for all $t \in \Lambda$,
\[
\Delta_G(t^{-1}) \leq \sum_{c \in F} \Delta_G(c^{-1}).
\]
Applying this inequality to both $t$ and $t^{-1}$ we deduce that the homomorphism $\log \Delta_G: G \to \R$ is bounded uniformly on $\Lambda$.

On the other hand, by Theorem \ref{bi-syndetic} there exist compact subsets $K, L \subset G$ such that $G =  K\Lambda L$. Since $\log \Delta_G$ is continuous, it is bounded on the compact sets $K$ and $L$. Since it is moreover a homomorphism and bounded on $\Lambda$, it is thus bounded on all of $G$. Since $\R$ has no non-trivial bounded subgroups, we deduce that $\log \Delta_G \equiv 0$, i.e., that $G$ is unimodular.
\end{proof}
To establish unimodularity also for uniform approximate lattices, which are not strong, we need to work with stationary measures instead of invariant measures. The main new ingredient is the construction of a specific compactly supported admissible probability measure $\mu$ on $G$ with a continuous density with the following special properties.
\begin{lemma}\label{UnimodularityFunction} Assume that $G$ is a compactly generated non-unimodular lcsc group. Then there exists $\rho \in C_c(G)$ with the following properties.
\begin{enumerate}[(i)]
\item $\rho \geq 0$ and $\int_G \rho(s) dm_G(s) = 1$.
\item ${\rm supp}(\rho)$ generates $G$ as a semigroup.
\item $\int_G \rho(s) \Delta_G(s) dm_G(s) < 1$.
\end{enumerate}
\end{lemma}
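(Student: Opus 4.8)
The plan is to construct $\rho$ as a convex combination of two nonnegative bumps that serve two independent purposes: one bump, supported where $\Delta_G$ is small, will force the weighted integral in (iii) below $1$, while a second bump, supported on a compact generating set, will secure admissibility (property (ii)). The point is that these two requirements can be decoupled, since the support of a convex combination of nonnegative functions is the union of the supports, so the generating bump alone guarantees (ii) and leaves the first bump free to be concentrated wherever we like.

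First I would exploit non-unimodularity. Since $\Delta_G\colon G\to\R^{>0}$ is a continuous homomorphism that is not identically $1$, there is $g$ with $\Delta_G(g)\neq 1$, and replacing $g$ by $g^{-1}$ if necessary (using $\Delta_G(g^{-1})=\Delta_G(g)^{-1}$) I obtain $g_0\in G$ with $\Delta_G(g_0)<1$. By continuity of $\Delta_G$ I then fix a relatively compact open neighbourhood $V$ of $g_0$ with $c_1:=\sup_{s\in\overline V}\Delta_G(s)<1$, and choose $\rho_1\in C_c(G)$ with $\rho_1\geq 0$, $\supp\rho_1\subset\overline V$ and $\int_G\rho_1\,dm_G=1$, so that $\int_G\rho_1\Delta_G\,dm_G\leq c_1<1$. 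Next, using that $G$ is compactly generated, I fix a compact symmetric neighbourhood $K$ of the identity generating $G$; being symmetric, $K$ generates $G$ as a semigroup as well (every element of $G$ is a product of elements of $K=K^{-1}$). I choose $\rho_2\in C_c(G)$ with $\rho_2\geq 0$, $\int_G\rho_2\,dm_G=1$ and $\rho_2>0$ on $K$ (so $K\subseteq\supp\rho_2$), and set $M:=\sup_{s\in\supp\rho_2}\Delta_G(s)<\infty$. Finally I pick $\epsilon\in(0,1)$ small enough that $(1-\epsilon)c_1+\epsilon M<1$, which is possible because the left-hand side tends to $c_1<1$ as $\epsilon\to 0$, and put
\[
\rho:=(1-\epsilon)\rho_1+\epsilon\rho_2\in C_c(G).
\]

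The verification is then immediate. Property (i) holds since $\rho\geq 0$ and $\int_G\rho\,dm_G=(1-\epsilon)+\epsilon=1$. Property (ii) holds since both coefficients are positive, whence $\supp\rho=\supp\rho_1\cup\supp\rho_2\supseteq K$, and $K$ generates $G$ as a semigroup. Property (iii) follows from
\[
\int_G\rho(s)\Delta_G(s)\,dm_G(s)=(1-\epsilon)\int_G\rho_1\Delta_G\,dm_G+\epsilon\int_G\rho_2\Delta_G\,dm_G\leq(1-\epsilon)c_1+\epsilon M<1.
\]
I do not expect any genuine obstacle: the only conceptual step is realizing that admissibility and the inequality (iii) pull in opposite directions (a generating support must meet regions where $\Delta_G$ is large, whereas (iii) wants mass where $\Delta_G$ is small) and that a convex combination with a tiny weight on the generating part reconciles them. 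The two hypotheses enter transparently: non-unimodularity is exactly what produces $g_0$ with $\Delta_G(g_0)<1$, and compact generation is exactly what allows $K$, hence $\supp\rho$, to be compact while still generating $G$.
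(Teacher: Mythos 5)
Your proof is correct and follows essentially the same strategy as the paper's: a convex combination in which one nonnegative bump (with generating support) secures admissibility and the other, placed where $\Delta_G$ is small, pulls the weighted integral below $1$ once its coefficient dominates. The paper realizes the second bump as a left-translate $\rho_o(st)$ of the first with $\Delta_G(s)$ large, which is the same idea in a slightly more compressed form.
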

\begin{proof} Clearly there exists $\rho_o \in C_c(G)$ satisfying (1) and (2). Define
\[
\gamma := \int_G \rho_o(s) \Delta_G(s) dm_G(s).
\]
Let $a >0$ such that $a\gamma < \frac 1 2$. Since the homomorphism $\Delta$ is unbounded, we can find $s \in G$ such that $(1-a)\gamma\Delta(s)^{-1}< 1/2$. Fix such an $s \in G$ and define
\[
\rho(t) := a\rho_o(t) + (1-a) \rho_o(st).
\]
Then $\rho$ still satisfies (1) (by left-invariance of $m_G$) and (2), and we have
\begin{eqnarray*}
\int_G \rho(t) \Delta(t) dm_G(t) &=& a \int_G \rho_o(t) \Delta_G(t) dm_G(t) + (1-a) \int_G \rho_o(st) \Delta_G(t) dm_G(t)\\
&=& a\gamma + (1-a) \int_G \rho_o(t) \Delta_G(s^{-1}t) dm_G(t)\\
&=&  a\gamma + (1-a)\gamma\Delta(s)^{-1},
\end{eqnarray*}
which by assumption is strictly smaller than $1/2+1/2 = 1$.
\end{proof}
Combining this construction with the estimate in \eqref{UnimodularityMainEstimate} we can now finish the proof of Theorem \ref{ThmUnimodular}.
\begin{proof}[Proof of Theorem \ref{ThmUnimodular}(2)] Let $G$ be a non-discrete compactly generated lcsc group and $\Lambda \subset G$ a uniform approximate lattice. We assume for contradiction that $G$ is not unimodular and define a compactly-supported admissible probability measure $\mu$ on $G$ by $\mu := \rho m_G$ on $G$, where $\rho$ is chosen as in Lemma \ref{UnimodularityFunction}. We denote by $\nu$ a non-trivial $\mu$-stationary probability measure on $X_\Lambda$ and define $\eta := \mathcal P^*\nu$. By Corollary \ref{PeriodizationCts}, $\eta$ is a non-zero $\mu$-stationary Radon measure on $G$. Stationarity implies that $\eta$ has a continuous density $u: G \to \R^{>0}$ which is $\rho$-harmonic, i.e, for all $t \in G$,
\[
u(t) = (\rho \ast u)(t) = \int_G \rho(s) u(s^{-1}t)\; dm_G(s).
\]
By  \eqref{UnimodularityMainEstimate} we have, for every $f\in C_c(G)$ with $f \geq 0$ and every $t \in \Lambda$,
\[
\eta(f \cdot t) = \nu\left(\mathcal P(f \cdot t)\right) \leq \sum_{c \in F} \nu\left(\mathcal P(f \cdot c)\right) = \sum_{c \in F} \eta(f \cdot c).
\]
Since $\eta = u m_G$ we have for all $g \in G$ and $t \in \Lambda$,
\[
\eta(f \cdot t) =  \int_G f(gt)u(g) dm_G(g) = \int_G f(g) u(gt^{-1})\Delta_G(t)^{-1} dm_G(g),
\]
which allows us to rewrite the previous inequality as
\[
\int_G f(g) \left(u(gt^{-1})\Delta_G(t)^{-1}\right)dm_G(g) \leq \int_G f(g)  \left( \sum_{c \in F}u(gc^{-1})\Delta_G(c)^{-1}\right)  dm_G(g).
\]
Since this holds for every $f$ and $u$ and $\Delta_G$ are continuous, we obtain for all $g \in G$ and $t \in \Lambda$,
\begin{equation}\label{ModularEstimate}
u(gt^{-1})\Delta_G(t)^{-1} \leq \sum_{c \in F}u(gc^{-1})\Delta_G(c)^{-1}.
\end{equation}
Now let $K \subset G$ be a compact subset satisfying $G = K\Lambda$. Every $g \in G$ can then be written as $g = kt$ with $k \in K$ and $t \in \Lambda$, and hence by \eqref{ModularEstimate} we obtain
\[
u(g)\Delta_G(g) = u(kt^{-1})\Delta_G(kt^{-1}) = \Delta_G(k) \cdot u(kt^{-1})\Delta_G(t^{-1}) \leq  \Delta_G(k) \cdot \sum_{c \in F}u(kc^{-1})\Delta_G(c)^{-1}. 
\]
Since $K$ and $F$ are compact and $u$ and $\Delta_G$ are continuous we thus find a uniform constant $M$ such that for all $g \in G$
\[
u(g)\leq M \cdot \Delta_G(g)^{-1}.
\]
Since $u$ is $\rho$-harmonic it follows that for every $n > 0$,
\[
u(e) = (\rho^{\ast n} \ast u)(e) \leq M( \rho^{\ast n} \ast \Delta_G^{-1})(e) = M \cdot  \int_G \rho^{\ast n}(s) \Delta_G(s) dm_G(s).
\]
Since $\Delta_G$ is a homomorphism and $\int \rho \Delta_G dm_G < 1$ we deduce that
\begin{eqnarray*}
u(e) &\leq&  \int_G\cdots \int_G \rho(s_1^{-1}s_2) \rho(s_2^{-1}s_3) \dots \rho(s_{n-1}^{-1}s_n)\Delta_G(s_1 \cdots s_n) \; dm_G(s_1) \dots dm_G(s_n)\\
&=&\left( \int_G \rho(s)\Delta_G(s)\right)^n \quad \overset{n \to \infty}\longrightarrow \quad 0,
\end{eqnarray*}
i.e.\ $u(e) = 0$. This implies that for all $n >0$
\[
0 = u(e) =  (\rho^{\ast n} \ast u)(e) = \int_G \rho^{\ast n}(s) u(s^{-1})dm_G(s),
\]
and since ${\rm supp}(\rho)$ generates $G$ as a semigroup and $u$ is continuous we conclude that $u \equiv 0$ and thus $\eta = 0$, which is a contradiction.
\end{proof}
\subsection{A weak approximate lattice in a non-unimodular lcsc group}\label{NonUnimodular}
The goal of this subsection is to show by example that Theorem \ref{ThmUnimodular} does not extend to weak approximate lattices. This shows in particular, that not every weak approximate lattice is an approximate lattice.

Define an action of $\R$ on $\R$ by $\alpha: \R \to {\rm Aut}(\R)$, $\alpha(a).b := e^a b$ and let $G = \bR \rtimes_\alpha \bR$ denote the corresponding semi-direct product. With the Euclidean topology on $\R$ the group G is a compactly generated, non-discrete, amenable and non-unimodular lcsc group. A left-Haar measure on $G$ is given by $dm_G(b,a) = \frac{db da}{e^a}$ and the modular function is $\Delta_G(b,a) = e^{-a}$. 

Since $G$ is non-unimodular, it does not contain any strong approximate lattices by Theorem \ref{ThmUnimodular}. Since $G$ is amenable, every approximate lattice in $G$ is automatically strong, so $G$ does not contain any approximate lattices at all. We will now show that $G$ nevertheless contains a weak approximate lattice.

Indeed, let $\Lambda$ denote the discrete subgroup $\{0\} \rtimes_\alpha \bZ$ of $G$, and note that this subgroup has infinite covolume in $G$. We shall nevertheless show that for a large class of 
admissible probability measures on $G$, there are always stationary probability measures on $X_\Lambda$,
and thus $\Lambda$ is a weak approximate lattice in $G$. It will suffice to construct stationary probability measures on the (non-compact) homogeneous space $G/\Lambda$, since these push-forward to stationary probability measures on the hull $X_\Lambda$ via the canonical map $G/\Lambda \to X_\Lambda$. We note that $G/\Lambda$ can be $G$-equivariantly identified with the direct product $Y := \bR \times \bR/\bZ$ via the map $(b,a) +\Lambda \mapsto (b,a+\bZ)$, and we will work in the latter model.

Now let $m_{\bT}$ denote the Haar probability measure on $\bT = \bR/\bZ$ and define a probability measure $\nu_o = \delta_o \otimes m_{\bT}$ on $Y$. We shall show that if
$\mu$ is any compactly supported admissible probability measure on $G$ satisfying the contraction condition
\begin{equation}
\label{contractive}
\int_G e^{a} \, d\mu(b,a) = \int_G \Delta(g)^{-1} \, d\mu(g)  < 1,
\end{equation}
then  $\mu^{*n} * \nu_o$ converges in the vague topology. In particular, the limit measure then defines a $\mu$-stationary probability measure on $Y$, hence $X_\Lambda$ admits a $\mu$-stationary probability measure for every contractive $\mu$.
\begin{remark} Note that the measures satisfying \eqref{contractive} are very different from the measure used in the proof of Theorem \ref{ThmUnimodular}, which were assumed to satisfy the opposite condition
\[
\int_G \Delta_G(g) \, d\mu(g) < 1.
\]
For this reason, the existence of a $\mu$-stationary probability measure for every contractive $\mu$ does not contradict unimodularity.
\end{remark}

To prove convergence of the measures $\mu^{*n} * \nu_o$ for a contractive $\mu$ we argue as follows. If we abbreviate $[(b,a)] := (b,a+\bZ) \in Y$, then for all $(b_1,a_1),\ldots,(b_n,a_n) \in G$ we have
\[
(b_1,a_1) \cdots (b_n,a_n) \cdot [(b,a)] = \left[\left( \sum_{k=1}^{n-1} e^{A_k} b_k + A_n b, A_n + a\right) \right],
\]
where $A_1 = 0$, and $A_k = a_1 + \ldots + a_{k-1}$ for $k \geq 2$. Hence,
\[
(\mu^{*n} * \nu_o)(f) = \int_{G^n} f\left(\sum_{k=1}^{n-1} e^{A_k} b_k,a\right) \, d\mu(b_1,a_1) \cdots d\mu(b_n,a_n) \, dm_{\bT}(a),
\]
for every $f \in C_o(G)$. In order to show that $(\mu^{*n} * \nu_o)(f)$ converges for every $f$, it thus suffices to 
check that the series 
\[
B_\infty := \sum_{k=1}^{\infty} e^{A_k} b_k
\]
converges for $\mu^{\bN}$-almost every $((b_1,a_1),(b_2,a_2),\ldots) \in G^{\bN}$. Since $\mu$ is compactly supported, there exists some $R > 0$ such that $|b| \leq R$ for $\mu$-almost every $(b,a) \in \supp(\mu)$. 
Hence, by the monotone convergence theorem, the series $B_\infty$ is absolutely convergent $\mu^{\bN}$-almost everywhere if
\[
\sup_n \int_{G^n} \sum_{k=1}^{n-1} e^{A_k} \, d\mu(b_1,a_1) \cdots d\mu(b_n,a_n) < \infty. 
\] 
Since 
\[
\int_{G^n} \sum_{k=1}^{n-1} e^{A_k} \, d\mu(b_1,a_1) \cdots d\mu(b_n,a_n)
= \sum_{k=1}^{n-1} \Big( \int_G e^a \, d\mu(b,a) \Big)^{k-1},
\]
this follows from our assumption \eqref{contractive}. This finishes the proof, and we conclude:
\begin{proposition} 
\begin{enumerate}[(i)]
\item There exist weak approximate lattices which are not approximate lattices. 
\item There exist non-unimodular lcsc groups which admit a weak approximate lattice.
\item There exists subgroups of infinite covolume in lcsc groups which are weak approximate lattices.
\item All three phenomena occur even in amenable lcsc groups.\qed
\end{enumerate}
\end{proposition}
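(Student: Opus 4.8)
The plan is to exhibit a single example witnessing all four assertions at once, namely the group $G = \bR \rtimes_\alpha \bR$ with $\alpha(a).b := e^a b$ together with the discrete subgroup $\Lambda := \{0\} \rtimes_\alpha \bZ$. First I would record the elementary structural properties of $G$ directly from the explicit formulas $dm_G(b,a) = e^{-a}\,db\,da$ for a left-Haar measure and $\Delta_G(b,a) = e^{-a}$ for the modular function: the group is compactly generated, non-discrete, amenable (being solvable) and non-unimodular, since $\Delta_G \not\equiv 1$. The subgroup $\Lambda$ is discrete and of infinite covolume, in particular not a lattice, which I take from the identification $G/\Lambda \cong \bR \times \bR/\bZ$.

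The substantive step is to prove that $\Lambda$ is a weak approximate lattice, i.e.\ that $X_\Lambda$ carries a non-trivial $\mu$-stationary probability measure for at least one admissible $\mu$. I would construct such measures on the homogeneous space $Y := G/\Lambda \cong \bR \times \bR/\bZ$ and push them forward along the $G$-equivariant map $G/\Lambda \to X_\Lambda$, $[g] \mapsto g\Lambda$; the push-forward is automatically a non-trivial probability measure, since $g\Lambda$ is never empty, and stationarity is preserved by equivariance. Concretely, starting from $\nu_o := \delta_o \otimes m_{\bT}$ I would show that for every compactly supported admissible $\mu$ satisfying the contraction condition $\int_G e^a\,d\mu(b,a) < 1$ the iterates $\mu^{*n} * \nu_o$ converge vaguely, so that the limit is a $\mu$-stationary probability measure on $Y$. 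The point is to expand the action of a word $(b_1,a_1)\cdots(b_n,a_n)$ on $Y$ and reduce convergence of the iterates to absolute convergence of the random series $B_\infty = \sum_{k \geq 1} e^{A_k} b_k$ with $A_k = a_1 + \dots + a_{k-1}$; since $\mu$ is compactly supported the $|b_k|$ are uniformly bounded, and the estimate $\int_{G^n} \sum_k e^{A_k} = \sum_k \big(\int_G e^a\,d\mu(b,a)\big)^{k-1}$ is finite by the contraction hypothesis, so monotone convergence yields almost-sure absolute convergence of $B_\infty$.

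Finally, I would argue that $\Lambda$ is \emph{not} an approximate lattice, which is what separates the weak notion from the genuine one. By Remark \ref{Strongness}.(1), amenability of $G$ forces every approximate lattice in $G$ to be strong; but by Theorem \ref{ThmUnimodular} a non-unimodular group contains no strong approximate lattice. Hence $G$ admits no approximate lattice at all, and in particular the weak approximate lattice $\Lambda$ is not one. Collecting these facts gives all four claims: (i) $\Lambda$ is weak but not approximate; (ii) $G$ is non-unimodular and contains the weak approximate lattice $\Lambda$; (iii) $\Lambda$ is a genuine subgroup of infinite covolume that is a weak approximate lattice; and (iv) all three phenomena occur inside the amenable group $G$. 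The main obstacle is the second step: establishing vague convergence of the convolution iterates and checking that the contraction condition $\int_G e^a\,d\mu(b,a) < 1$ is precisely what controls the unstable $\bR$-direction and guarantees almost-sure convergence of $B_\infty$, with the compact support of $\mu$ doing essential work.
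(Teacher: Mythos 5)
Your proposal is correct and follows essentially the same route as the paper: the same group $G = \bR \rtimes_\alpha \bR$ with $\Lambda = \{0\} \rtimes_\alpha \bZ$, the same reduction to stationary measures on $G/\Lambda \cong \bR \times \bR/\bZ$ starting from $\nu_o = \delta_o \otimes m_{\bT}$, the same contraction condition $\int_G e^a\,d\mu(b,a) < 1$ controlling the almost-sure convergence of $B_\infty = \sum_k e^{A_k} b_k$, and the same appeal to amenability plus Theorem \ref{ThmUnimodular} to rule out $\Lambda$ being an approximate lattice. Your explicit remark that the push-forward to $X_\Lambda$ is automatically non-trivial (since $g\Lambda \neq \emptyset$) is a small but welcome clarification that the paper leaves implicit.
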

In view of the proposition we believe that the notion of a weak approximate lattice is too weak to allow for a far reaching theory.

\subsection{A unimodular group without approximate lattices}\label{SecCornulier}
Non-unimodularity is really only the first obstruction for a lcsc group to contain a lattice, and there are many unimodular groups which do not contain any lattices. We expect that the same is true for approximate lattices. The following concrete class of examples of unimodular groups without (weak) approximate lattices was pointed out to us by Yves de Cornulier.

\begin{example}[Y. de Cornulier]\label{ExCornulier}
Let $K$ be a compact abelian group and $f: \Z \to K$ be an injective homomorphism with dense image. Denote by $G_f$ the group whose underlying set is given by $\Z \times \Z \times K$ with multiplication given by
\[
(n, m, k) \cdot (n', m', k') := (n+n', m+m', k+k'+f(nm')).
\]
Then $K$ is a maximal compact normal subgroup of $G_f$ which coincides with the center of $G_f$, and $G_f/K \cong \Z^2$, in particular $G_f$ is $2$-step nilpotent and thus unimodular. If we set $Z := f(\Z)$, then $\Z \times \Z \times Z$ is a dense subgroup of $G_f$, which is isomorphic to the integral Heisenberg group.

We claim that $G_f$ does not contain any weak approximate lattices. By Theorem \ref{NilpotentUniform} it suffices to show that it does not contain a uniform approximate lattice. Assume for contradiction that $\Lambda \subset G_f$ was a uniform approximate lattice and denote by $\pi: G_f \to G_f/K \cong \Z^2$ the canonical projection. Since $\Lambda$ is left-syndetic in $G_f$ we deduce that $\pi(\Lambda)$ is left-syndetic in $\Z^2$, and in particular that the set
\[
\{nm' - n'm \mid (n,m), (n', m') \in \pi(\Lambda)\}
\]
is infinite. Since
\[
[(n, m, k), (n', m', k')] = (0, 0, f(nm' - n'm)),
\]
this implies that $[\Lambda, \Lambda] \subset \Lambda^4 \cap K$ is infinite, contradicting the fact that $\Lambda^4$ is locally finite.
\end{example}

\subsection{Amenability} We end this paper by an application which combines measurable and QI techniques in the form of Theorem \ref{ThmUnimodular} and Theorem \ref{MilnorSchwarz}. Recall that a subset of a  proper discrete metric space $(\Lambda, d)$ is called an $(R, \epsilon)$-F\o lner set provided
\[
\frac{|\{x \in \Lambda\mid \max\{d(x, F), d(x, \Lambda \setminus F)\} < R\}|}{|F|} < \epsilon,
\] 
and that $(\Lambda, d)$ is called \emph{metrically amenable} if it admits an $(R, \epsilon)$-F\o lner set for all
for all $R>0$ and $\epsilon>0$. A general (possibly non-discrete) metric space if called \emph{metrically amenable} if it admits a metrically amenable Delone subset. By \cite[Prop. 3.D.35]{CdlH}, metric amenability is invariant under quasi-isometries. In particular, a metric space is metrically amenable if and only if it admits a Delone set and all of its Delone sets are metrically amenable.
\begin{proposition}\label{PropAmenable} Let $G$ be a lcsc group and $\Lambda \subset G$ a finitely generated uniform approximate lattice. Then the following are equivalent:
\begin{enumerate}[(i)]
\item $G$ is amenable (as a lcsc group).
\item $\Lambda$ is metrically amenable (with respect to any metric in its canonical QI class).
\end{enumerate}
\end{proposition}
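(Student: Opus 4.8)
The plan is to reduce assertion (ii) to metric amenability of $G$ as a coarse metric space, and then to identify the latter with amenability of $G$ as a lcsc group, exploiting that the hypotheses force $G$ to be compactly generated and unimodular. First I would collect these structural consequences: since $\Lambda$ is finitely generated, Theorem \ref{ThmFiniteType} shows that $G$ is compactly generated, and being compactly generated and containing the uniform approximate lattice $\Lambda$, it is unimodular by Theorem \ref{ThmUnimodular}(2). Fix a word metric $d_K$ on $G$ with respect to a compact generating set $K$, representing the canonical QI class of $G$. By the Milnor--Schwarz lemma (Theorem \ref{MilnorSchwarz}), the canonical QI class of $\Lambda$ is represented by the restriction $d_K|_\Lambda$, and since $\Lambda$ is a Delone set the inclusion $(\Lambda, d_K|_\Lambda) \hookrightarrow (G, d_K)$ is a quasi-isometry.

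Next I would invoke quasi-isometry invariance of metric amenability \cite[Prop. 3.D.35]{CdlH}. As $\Lambda$ is a Delone subset of $G$ quasi-isometric to $(G, d_K)$, assertion (ii) is equivalent to metric amenability of the metric space $(G, d_K)$ in the sense defined above (i.e.\ $G$ admits a metrically amenable Delone subset, equivalently all of its Delone subsets are metrically amenable). The proposition therefore reduces to the statement that a compactly generated unimodular lcsc group is amenable if and only if its canonical coarse metric space is metrically amenable.

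For this remaining equivalence I would compare Følner sets in the two senses by transporting between a left-Haar measure $m_G$ and counting measure on the Delone set $\Lambda$. If $G$ is amenable, then by unimodularity it possesses right-Følner sets $F_n$, i.e.\ $m_G(F_n K)/m_G(F_n) \to 1$; uniform discreteness and relative density of $\Lambda$ make $A \mapsto |\Lambda \cap A|$ comparable to $m_G$ up to a fixed multiplicative constant and boundary terms of bounded radius, so the sets $\Lambda \cap F_n$ are $(R,\epsilon)$-Følner in $(\Lambda, d_K|_\Lambda)$ for every $R$ and all large $n$. Conversely, metric Følner subsets $A_n \subset \Lambda$ thicken to $\bigcup_{\lambda \in A_n} \lambda W$, for a fixed relatively compact tile $W$ whose left translates cover $G$, and the same comparison shows these are Haar-Følner sets, so $G$ is amenable.

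The crux, and the only place where unimodularity is genuinely used, is this last comparison. A left-invariant metric has $R$-neighbourhoods $N_R(F) = F B_R(e)$ that are \emph{right} thickenings, whose Haar measure is comparable to $m_G(F)$ only when $G$ is unimodular; without unimodularity an amenable group such as the $ax+b$ group fails to be metrically amenable, so the implication (i) $\Rightarrow$ (ii) really relies on the conclusion of Theorem \ref{ThmUnimodular}(2). Together with the reductions above this completes the proof; alternatively, the final equivalence for unimodular compactly generated groups may simply be quoted from \cite{CdlH}.
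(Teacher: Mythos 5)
Your proposal is correct and follows essentially the same route as the paper: reduce via Theorem \ref{MilnorSchwarz} and QI-invariance of metric amenability to the statement that $G$ itself is metrically amenable, then invoke the equivalence of metric amenability and amenability for compactly generated unimodular groups from \cite{CdlH}, with unimodularity supplied by Theorem \ref{ThmUnimodular}. Your additional F\o lner-set sketch of that last equivalence is a nice supplement but not needed, since the paper (like your final remark) simply quotes \cite[Lemma 4.F.4(2) and Prop. 4.F.8]{CdlH}.
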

\begin{proof} From Theorem \ref{MilnorSchwarz} and quasi-isometric invariance of metric amenability we deduce immediately that $\Lambda$ is metrically amenable if and only if $G$ is metrically amenable with respect to any word metric of a compact generating set. It thus suffices to show that $G$ is metrically amenable if and only if it is amenable as a lcsc group. This equivalence is established in \cite[Lemma 4.F.4(2) and Prop. 4.F.8]{CdlH} under the additional assumption that $G$ is unimodular. However, we know from Theorem \ref{ThmUnimodular} that unimodularity of $G$ holds automatically, and the proposition follows.
\end{proof}

\end{document}